\documentclass[11pt,oneside,english]{amsart}
\usepackage[T1]{fontenc}
\usepackage[latin9]{inputenc}
\usepackage{xcolor}
\usepackage{babel}
\usepackage{mathrsfs}
\usepackage{mathtools}
\usepackage{amstext}
\usepackage{amsthm}
\usepackage{amssymb}
\usepackage[pdfusetitle,
bookmarks=true,bookmarksnumbered=false,bookmarksopen=false,
breaklinks=false,pdfborder={0 0 0},backref=false,colorlinks=false]
 {hyperref}

\makeatletter
\numberwithin{equation}{section}
\numberwithin{figure}{section}
\theoremstyle{plain}
\newtheorem{thm}{\protect\theoremname}[section]
\theoremstyle{definition}
\newtheorem{defn}[thm]{\protect\definitionname}
\theoremstyle{remark}
\newtheorem{rem}[thm]{\protect\remarkname}
\theoremstyle{plain}
\newtheorem{prop}[thm]{\protect\propositionname}
\theoremstyle{plain}
\newtheorem{assumption}[thm]{\protect\assumptionname}
\theoremstyle{plain}
\newtheorem{lem}[thm]{\protect\lemmaname}
\theoremstyle{plain}
\newtheorem{cor}[thm]{\protect\corollaryname}
\usepackage[a4paper]{geometry}
\usepackage{amsfonts}

\usepackage{times}

\allowdisplaybreaks[3]

\newcommand{\dd}{\mathrm d}
\newcommand{\one}{\mathbf 1}
\makeatother

\providecommand{\assumptionname}{Assumption}
\providecommand{\definitionname}{Definition}
\providecommand{\lemmaname}{Lemma}
\providecommand{\remarkname}{Remark}
\providecommand{\theoremname}{Theorem}
\providecommand{\corollaryname}{Corollary}
\providecommand{\propositionname}{Proposition}
\begin{document}
\title[Entropy approach to doubly nonlinear obstacle problem]{An entropy approach to doubly nonlinear parabolic obstacle problems}
\author{Ruoyang Liu}
\thanks{The author was supported by the Natural Science Foundation of Shanghai (Grant No.25ZR1402408).}
\address{Department of Mathematics, Shanghai Normal University, Shanghai, China}
\email{ryliu@shnu.edu.cn}
\begin{abstract}
This paper studies doubly nonlinear parabolic obstacle problems. 
We introduce a renormalized entropy formulation in which the reaction measure is encoded by evaluating the entropy multiplier at the obstacle.
For diffusion depending only on the gradient and continuous time-independent obstacles, we prove a global $L^1$ comparison estimate, uniqueness of the solution, and a minimality principle among a measure-free entropy supersolution class.

For diffusion depending on both the solution and its gradient, we prove existence for continuous time-dependent obstacles with a boundary-controlled decomposition.
Using two ordered penalization procedures, we obtain convergence of the approximate solutions and of the corresponding reaction terms.
The limiting reaction consists of an absolutely continuous part with bounded density and a finite Radon measure concentrated on a prescribed compact spatial region.
A one-sided time regularization yields strong convergence of the gradients and identifies the nonlinear flux.
Together, the results yield existence, uniqueness and $L^1$ stability under their common assumptions.
\end{abstract}

\keywords{doubly nonlinear equations, entropy solutions, obstacle problems,
comparison principles, reaction measures.}
\subjclass[2020]{35K86, 35K55, 35B51, 35D30.}
\maketitle
\section{Introduction}

Obstacle problems for doubly nonlinear parabolic equations combine a nonlinear time derivative and nonlinear diffusion with a unilateral state constraint.
Both nonlinearities may be degenerate or singular, while the constraint is enforced by a reaction that may be only a finite Radon measure.
The interaction of these features makes comparison and uniqueness particularly delicate.

Let $D\subset\mathbb R^d$ be a bounded Lipschitz domain, and let $T>0$.
We study the lower obstacle problem
\begin{equation}
\partial_t\beta(u)=\operatorname{div}A(u,\nabla u)+f(x,t,u)+\nu,\qquad u\geq\psi\quad\text{in }D\times(0,T),\label{eq:doubly nonlinear eqn}
\end{equation}
with homogeneous Dirichlet boundary condition and initial value $\xi$.
Here $\beta$ is nondecreasing, and reaction $\nu$ is a nonnegative finite Radon measure enforcing the constraint.
In sufficiently regular obstacle problems, $\nu$ is concentrated on the contact set $\{u=\psi\}$ \cite{pierre1979problems,pierre1980representant,denis2014obstacle}.
In the present setting, the degeneracy or singularity of the equation leads to low regularity of the solution. 
Since $\nu$ may also have a singular part, the value of $u$ on the support of this singular part need not be determined by its Lebesgue representative.
Therefore, the classical contact condition may not in general be imposed directly on the singular part of the reaction measure.

A basic prototype is
\begin{equation}
\partial_t\bigl(|u|^{m-1}u\bigr)=\operatorname{div}\bigl(|\nabla u|^{p-2}\nabla u\bigr)+f(x,t,u)+\nu,\qquad m>0,\quad p>1.\label{eq:p-laplace porous media}
\end{equation}
For $m=1$ this contains the evolutionary $p$-Laplace equation.
When $p=2$, the change of dependent variable through $\beta(u)=|u|^{m-1}u$ leads to a porous-medium/fast-diffusion type equation.
Thus degeneracy or singularity may occur in both the time nonlinearity and the diffusion.

A direct application of the doubly nonlinear obstacle problem arises in shallow ice-sheet dynamics: Calvo, Diaz, Durany, Schiavi and Vazquez \cite{calvo2003ice} derived and analyzed a doubly nonlinear parabolic obstacle problem for non-Newtonian ice flow, with the associated free boundary describing the evolving ice margin.
Closely related unilateral constraints also occur in reflected porous-medium type equations and stochastic nonlinear diffusion obstacle problems \cite{rockner2013stochastic,liu2024obstacle,du2024well}.
More broadly, the underlying doubly nonlinear diffusion class includes porous-medium and filtration models \cite{vazquez2007porous} and Stefan-type phase-transition models \cite{visintin2007stefan}.

The classical theory of parabolic obstacle problems is rooted in variational inequalities and evolution equations with unilateral constraints; see \cite{lions1967variational,brezis1972problemes,mignot1977inequations,alt1983quasilinear}.
Variational and potential-theoretic approaches have also been developed for porous-medium and evolutionary $p$-Laplace equations with irregular obstacles; see, for example, \cite{bogelein2015obstacle,lindqvist2012irregular,bogelein2017parabolic,moring2024two}.
For doubly nonlinear obstacle problems, minimizing-movement and variational methods provide broad existence theories \cite{bogelein2018doubly,schatzler2019existence,schatzler2020obstacle}.
Sch\"atzler \cite{schatzler2020obstacle} identifies uniqueness as an open problem for the variational solution class considered there.

Entropy and renormalized methods yield comparison and uniqueness for obstacle-free doubly nonlinear equations \cite{blanchard1998renormalized,carrillo1999uniqueness,
bogelein2024comparison}.
The obstacle adds a new difficulty: in the doubly entropy inequalities, each solution carries its own reaction measure, and the two measures are neither ordered nor directly comparable.

This paper establishes two complementary results.
For continuous time-independent obstacles and fluxes of the form $A(\nabla u)$, we formulate an obstacle-adapted renormalized entropy solution by evaluating the entropy multiplier in each reaction term at the common obstacle.
After doubling variables, the reaction contributions are controlled by differences in obstacle values, which vanish under spatial localization due to the continuity of $\psi$.
This gives a global $L^1$ comparison principle, $L^1$ stability with respect to the initial value, uniqueness of $\beta(u)$, and, when $\beta$ is strictly increasing, uniqueness of both the function $u$ and the reaction measure $\nu$.
The same one-sided comparison also gives minimality in a natural measure-free entropy supersolution class.

For continuous time-dependent obstacles and fluxes of the form $A(u,\nabla u)$, we prove the existence under a boundary-controlled decomposition of the obstacle.
The two ordered penalization limits we used produce a bounded reaction density and a finite Radon measure.
In the second limit, the measure-valued reaction prevents a direct time pairing with the limit solution.
We overcome it by a one-sided time regularization of a truncation of the limit.
The resulting relative-energy estimate gives strong convergence of the gradients and identifies the nonlinear flux.
The precise decomposition and boundary conditions are stated in Subsection \ref{subsec:existence}.

\subsection{Reaction measures and entropy encoding}

An alternative to the variational formulation is to retain the reaction and seek a pair $(u,\nu)$ satisfying the equation together with a contact or minimality condition.
For a sufficiently regular one-sided problem, the natural complementarity relation is
\begin{equation}
u\geq\psi,\qquad\nu\geq0,\qquad\int_{D\times[0,T)}(u-\psi)\,\dd\nu=0.\label{eq:introduction-complementarity}
\end{equation}
It expresses that the reaction acts only where the constraint is active.
Reaction-measure formulations of this kind have been developed in parabolic potential theory and for equations with measure data; see \cite{pierre1979problems,pierre1980representant,klimsiak2015obstacle,klimsiak2018obstacle}.
In the stochastic literature, the analogous contact relation is usually called the Skorokhod condition; see \cite{matoussi2010obstacle,denis2014obstacle}.

For low-regularity solutions, however, \eqref{eq:introduction-complementarity} need not be meaningful as written.
The solution is initially defined only up to sets of Lebesgue measure zero, whereas the reaction may be singular.
Existing reaction-measure theories therefore interpret contact through suitable quasi-continuous or precise representatives and measures compatible with parabolic capacity.

In the entropy formulation, contact condition \eqref{eq:introduction-complementarity} is encoded in the reaction term of the entropy inequality.
Following the entropy idea of Du and Liu \cite{du2024well}, let $\eta$ be a bounded nondecreasing entropy multiplier and let $\phi\geq0$ be a smooth test function. 
A formal nonlinear test by $\eta(u)\phi$ produces the reaction contribution 
\[
-\int_{D\times[0,T)}\eta(u)\phi\,\dd\nu.
\]
For a singular reaction measure, however, the value of $\eta(u)$ on the singular support of $\nu$ may not be determined by the Lebesgue equivalence class of $u$.
A key observation in \cite{du2024well} is that the continuous obstacle provides a canonical value there.
This motivates encoding the reaction contribution as
\begin{equation}
-\int_{D\times[0,T)}\eta(\psi)\phi\,\dd\nu.\label{eq:introduction-entropy-replacement}
\end{equation}
Under the classical contact relation \eqref{eq:introduction-complementarity}, one has $u=\psi$ $\nu$-a.e., so \eqref{eq:introduction-entropy-replacement} agrees with the formal term involving $\eta(u)$. 

For the present comparison argument, this encoding has a further role.
When the entropy inequalities for two solutions are doubled, the reaction terms can be estimated through differences of the common obstacle, leading to errors of the form $\eta(\psi(y)-\psi(x))$.
These errors are controlled by the modulus of continuity of $\psi$ and vanish under spatial localization, without requiring any ordering of the two reaction measures.
Moreover, $\eta(\psi)\phi$ is a bounded continuous coefficient on the compact support of the test, so the reaction contribution is stable under the weak-* measure limits used in the existence proof.

Reaction measures, obstacle-dependent entropy inequalities, and minimal supersolution principles have appeared previously in related problems; see, for example, \cite{klimsiak2015obstacle,klimsiak2018obstacle,levi2001entropy,akdim2012entropy}.
The new point here is to implement the obstacle evaluation in a renormalized entropy formulation for a doubly nonlinear parabolic equation with a finite Radon reaction, and to show that it is strong enough to compare two solutions whose reaction measures are not ordered.
The same sign structure recovers the associated minimality principle: after the reaction is removed and only the negative one-sided entropy inequalities are retained, the solution with finite reaction is the smallest member of the resulting measure-free supersolution class.

\subsection{Comparison, uniqueness, and minimality}
\begin{thm}[Comparison, uniqueness, and minimality]
\label{thm:introduction-comparison}
Assume the hypotheses of Theorem \ref{thm:main-comparison}.
In particular, let $A=A(\nabla u)$ be continuous and monotone, let $\beta$ be
continuous and nondecreasing, and let the common obstacle satisfy
\[
\psi\in C(\overline D),\qquad\psi\leq0\quad\text{on }\partial D,
\]
and be independent of time.
For $i=1,2$, let $(u_i,\nu_i)$ be renormalized entropy solutions with data $(f_i,\xi_i)$.
If
\[
 f_1(x,t,r)\leq f_2(x,t,r)
\]
for almost every $(x,t)\in D\times[0,T]$ and every $r\in\mathbb R$, then, for almost every $t\in(0,T)$,
\begin{equation}
\int_D\bigl(\beta(u_1(t))-\beta(u_2(t))\bigr)^+\,\dd x\leq\mathrm e^{L_2t}\int_D\bigl(\beta(\xi_1)-\beta(\xi_2)\bigr)^+\,\dd x,\label{eq:introduction-comparison}
\end{equation}
where $L_2$ is a Lipschitz constant for $f_2$ with respect to $\beta(u)$.
Consequently, equal source and initial data imply uniqueness of $\beta(u)$.
If $\beta$ is strictly increasing, then both $u$ and $\nu$ are unique.

Moreover, let $(u,\nu)$ be a solution with finite reaction and let $v$ be a measure-free entropy supersolution with the same obstacle, source, and initial value.
Then
\[
 \beta(u)\leq\beta(v)\quad\text{a.e. in }D_T.
\]
If $\beta$ is strictly increasing, then $u\leq v$ almost everywhere.
\end{thm}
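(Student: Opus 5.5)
This theorem is an introductory restatement: under the hypotheses of Theorem \ref{thm:main-comparison}, it should follow from that result and its corollaries. The plan below is the doubling-of-variables argument I expect to underlie Theorem \ref{thm:main-comparison}, followed by the deductions of uniqueness and minimality.

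\emph{Comparison.} I would follow the Carrillo doubling-of-variables method. For $(u_1,\nu_1)$ in variables $(x,t)$ I take the renormalized entropy inequality with multiplier $\eta=H_\delta(\cdot-k)$, a Lipschitz approximation of $\mathrm{sign}^+$, and then set $k=u_2(y,s)$. For $(u_2,\nu_2)$ in variables $(y,s)$ I take the reversed inequality with multiplier $-H_\delta(k-\cdot)$ and set $k=u_1(x,t)$. Both are tested with $\phi(x,t)\rho_\varepsilon(x-y)\rho_\tau(t-s)$, where $\phi$ is supported inside $D$ at first.

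The parabolic and flux terms are handled as in the obstacle-free case \cite{carrillo1999uniqueness,bogelein2024comparison}. The flux depends only on $\nabla u$, so letting $\delta\to0$ along the sets $\{u_1\neq u_2\}$ gives a nonpositive monotonicity term. The source terms are controlled by $f_1\le f_2$ and the Lipschitz bound $L_2$ in $\beta(u)$.

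The new ingredient is the reaction terms, which are encoded at the obstacle. The $\nu_1$ contribution is $-\int H_\delta(\psi(x)-u_2(y,s))\phi\rho\,\dd\nu_1$. Because $u_2(y,s)\ge\psi(y)$, it is bounded above by $\int H_\delta(\psi(x)-\psi(y))\phi\rho\,\dd\nu_1\cdot(-1)\le0$, up to an error that is nonzero only when $\psi(x)-\psi(y)>0$. Since $H_\delta$ is monotone and bounded by one, this error is at most $\int \one_{\{|\psi(x)-\psi(y)|>0\}}\ldots$. To make this quantitative, I would instead use the plain sign and bound the error by $\sup_{|x-y|<\varepsilon}H_\delta(\psi(x)-\psi(y))\,\nu_1(\operatorname{supp}\phi)$. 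Choosing the order of limits with $\varepsilon\to0$ before $\delta\to0$, so that $\omega_\psi(\varepsilon)/\delta\to0$, makes this vanish; here finiteness of $\nu_i$ and uniform continuity of $\psi$ on $\overline D$ are used. The $\nu_2$ term is symmetric, using $u_1\ge\psi$.

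This gives the local Kato inequality
\[
\partial_t(\beta(u_1)-\beta(u_2))^+\le\operatorname{div}\bigl[\mathrm{sign}^+(u_1-u_2)(A(\nabla u_1)-A(\nabla u_2))\bigr]+L_2(\beta(u_1)-\beta(u_2))^+.
\]
To globalize it, I would let $\phi\uparrow\one_D$. Near $\partial D$ I would use the Dirichlet condition together with $\psi\le0$ on $\partial D$, which keeps the reaction from charging the boundary layer in the limit. Gronwall then yields \eqref{eq:introduction-comparison}.

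\emph{Uniqueness.} Applying the estimate in both directions gives $\beta(u_1)=\beta(u_2)$. If $\beta$ is strictly increasing, then $u_1=u_2$, and the equation identifies $\nu=\partial_t\beta(u)-\operatorname{div}A(\nabla u)-f$ in $\mathcal D'$. Hence $\nu$ is unique.

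\emph{Minimality.} I would repeat the doubling with $u_1=u$ and $u_2=v$, using only the negative one-sided inequality for $v$, which contains no reaction term. The only reaction term left is the $\nu$ term for $u$. With multiplier $H_\delta(\cdot-v(y,s))$ and $v\ge\psi$, it has the favorable sign up to the same obstacle-continuity error. The resulting estimate has zero initial difference, so $\beta(u)\le\beta(v)$, and $u\le v$ when $\beta$ is strictly increasing.

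\emph{Main obstacle.} I expect the hardest step to be the reaction error: justifying the order of limits in $\varepsilon,\delta,\tau$ so that $H_\delta(\psi(x)-\psi(y))$ vanishes uniformly against the finite measure $\nu_i$ while the parabolic doubling limits still go through. The boundary passage needed to globalize is a secondary difficulty.
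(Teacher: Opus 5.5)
Your treatment of the reaction terms matches the paper's key estimate. Evaluating the multiplier at $\psi$ and using $u_2\geq\psi$ at the Lebesgue-integrated point bounds the reaction contribution by $C\,\omega_\psi(C\varsigma)\,\delta^{-1}(\nu_1+\nu_2)(D\times[0,T))$. This vanishes when the spatial parameter goes to zero before $\delta$. In the paper, however, this is the easy step. The globalization you sketch is where the proposal has a genuine gap. Letting $\phi\uparrow\one_D$ in the interior Kato inequality leaves the term $\int\one_{\{u_1>u_2\}}[h_l(u_2)A(\nabla u_2)-h_l(u_1)A(\nabla u_1)]\cdot\nabla\phi$. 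On a boundary layer of width $\epsilon$ this is of order $\epsilon^{-1}$ times the flux mass there, and it has no sign. The reaction plays no role at this point: it is a finite measure on $D\times[0,T)$, and its obstacle error is already uniform up to $\partial D$. So $\psi\leq0$ on $\partial D$ does not help. Nor can you double with tests touching $\partial D$. The multiplier $q_\delta(\cdot-u_2(y,s))$ does not vanish at $0$ where $u_2(y,s)<0$, and $-q_\delta(u_1(x,t)-\cdot)$ does not vanish at $0$ where $u_1(x,t)>0$, so neither is admissible there.

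The missing idea is the signed splitting $(u_1-u_2)^+=(u_1^+-u_2^+)^++(u_2^--u_1^-)^+$. The multipliers $q_\delta(r-k^+)$ for $u_1$ and $-q_\delta(r^--k^-)$ for $u_2$ lie in $\mathcal E_0$, so their tests may touch the boundary. Inward-shifted kernels $\rho_{\varsigma,j}$ keep the companion tests compactly supported in $D$ while preserving $\nabla_x\rho_{\varsigma,j}+\nabla_y\rho_{\varsigma,j}=0$. What remains are the zero-level functionals $\mathscr L^{-}_{l,u_2}$ and $\mathscr L^{+}_{l,u_1}$. They are nonnegative (entropy inequality with $-q_\delta(-r)$, respectively $q_\delta(r)$), hence Radon measures, hence they vanish on shrinking boundary layers. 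This also requires the chartwise constant to be independent of $\nabla\chi$, so that one may take $\chi=\chi_j(1-\sigma_{\widehat\varsigma,j})$ and let $\widehat\varsigma\downarrow0$.

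Second, ``Gronwall then yields'' skips the initial condition. The doubled tests have compact time support in $(0,T)$, and the definition gives no $L^1$-continuity of $t\mapsto\beta(u(t))$. You therefore need an averaged strong initial trace, $\tau^{-1}\int_0^\tau\|\beta(u(t))-\beta(\xi)\|_{L^1(D)}\,\dd t\to0$. The paper obtains it from the multipliers $q_\delta(r-\xi(y))$ and $-q_\delta(\xi(y)-r)$, tested against the inward kernel and $(1-t/\tau)^+$. The limits are taken in the order $\tau\downarrow0$, $\varsigma\downarrow0$, $\delta\downarrow0$, which is the reverse of the Kato order because of the $\varsigma^{-1}$ boundary flux. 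Here $\xi\geq\psi$ controls the reaction, and a one-sided tail estimate removes $h_l$. For minimality you also need the one-sided trace $\tau^{-1}\int_0^\tau\int_D(\beta(\tilde\xi)-\beta(v))^+\,\dd x\,\dd t\to0$, taken from the negative inequality of $v$.

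Two smaller points. First, your sketch never uses the cutoffs $h_l$ or the defect measures $\mu_{l,K}$. Their uniformity over $\|\eta\|_{L^\infty}\leq K$ is what allows the entropy to depend on the other solution. They also generate the truncation mismatch $[h_l(u_1)-h_l(u_2)]A(\nabla u_2)\cdot(\nabla u_1-\nabla u_2)$ in doubled variables, which must still be shown to vanish. Second, $A(\nabla u)$ is only integrable after truncation, so $\nu$ is not read off from a distributional equation. It is identified via $\eta=\pm1$, which bounds the truncated balance residual by $\int\phi\,\dd\mu^u_{l,1}$, followed by $l\to\infty$.
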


The precise statements are Theorem \ref{thm:main-comparison} and Corollary \ref{cor:minimality-finite-reaction}.
Equal source and initial data and strict monotonicity of $\beta$ then give equality of the functions $u_i$, and the distributional equation identifies the reaction measures $\nu_i$.

The proof combines the doubling method on bounded domain \cite{kruvzkov1970first,otto1996l1,blanchard1998renormalized,carrillo1999uniqueness} with the obstacle reaction estimate.
After the two entropy inequalities are added, the reaction contribution is bounded by the spatial oscillation of the common obstacle. 
Taking the spatial localization limit removes this contribution. 
Inward kernels are considered for the homogeneous Dirichlet boundary and preserve
\[
\nabla_x\rho_{\varsigma,j}+\nabla_y\rho_{\varsigma,j}=0.
\]
The same boundary-adapted kernels also yield the averaged strong initial trace of $\beta(u)$ needed to recover the initial term.
The resulting Kato inequality yields the global $L^1$ estimate.

The minimality result follows from the same one-sided comparison.
The measure-free supersolution has no reaction term, while the reaction term of the solution can be discarded in the entropy inequality using the definition of supersolution.

\subsection{Existence for a decomposed time-dependent obstacle}\label{subsec:existence}
The existence argument does not require the full obstacle to have a bounded equation residual as in \cite{bogelein2015obstacle}.
Instead, we decompose
\begin{equation}
\psi=\psi_1+\psi_2,\label{eq:eq:introduction-obstacle-decomposition}
\end{equation}
so that the residual and boundary conditions are imposed only on $\psi_1$, while $\psi_2$ is required to be nonnegative and spatially localized.
More precisely, $\psi_1$ has a nonpositive boundary trace and a bounded equation residual, whereas $\psi_2$ vanishes outside a compact set $K_2\subset\overline D$.
If $K_2\Subset D$, then $\psi=\psi_1$ near the boundary.
If $K_2$ meets the boundary, the full obstacle is instead assumed to remain strictly below the zero Dirichlet value on $K_2\cap\partial D$.
The precise assumptions are stated in Assumption \ref{assu:existence-obstacle}.

\begin{thm}[Existence for a decomposed time-dependent obstacle]
\label{thm:introduction-existence}
Under the hypotheses of Theorem \ref{thm:existence-general-time-obstacle}, there exists a renormalized entropy solution $(u,\nu)$ with $u\geq\psi$ a.e. in $D_T$.
Moreover,
\[
 \nu=\nu_1\,\dd x\,\dd t+\nu_2,
\]
where $\nu_1\in L^\infty(D_T)$ is nonnegative and $\nu_2$ is a nonnegative finite Radon measure concentrated on $(K_2\cap D)\times[0,T)$.
\end{thm}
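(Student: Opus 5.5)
The plan is a two-level penalization, with the order of the limits dictated by the decomposition $\psi=\psi_1+\psi_2$ of Assumption \ref{assu:existence-obstacle}.

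\textbf{Step 1: the penalized problems.} For $\varepsilon,\delta>0$ I would solve the obstacle-free doubly nonlinear problem
\[
\partial_t\beta(u_{\varepsilon,\delta})=\operatorname{div}A(u_{\varepsilon,\delta},\nabla u_{\varepsilon,\delta})+f(x,t,u_{\varepsilon,\delta})+\nu_1^{\varepsilon}+\tfrac1\delta\bigl(\psi-u_{\varepsilon,\delta}\bigr)^+ .
\]
Here $\nu_1^\varepsilon$ is a bounded penalization pushing the solution above $\psi_1$. Concretely, $\nu_1^\varepsilon=\min\{\tfrac1\varepsilon(\psi_1-u)^+,\ (\text{residual of }\psi_1)^-+C\}$, or the analogous ordered construction. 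Existence for fixed parameters comes from the obstacle-free renormalized/weak theory (Alt--Luckhaus type) with bounded right-hand side.

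\textbf{Step 2: first limit, $\varepsilon\to0$.} The bounded equation residual of $\psi_1$ gives $\|\nu_1^\varepsilon\|_\infty\le C$ uniformly. This follows by testing with $\one_{\{u<\psi_1\}}$-type truncations and comparing with $\psi_1$, a subsolution-type argument that also uses that $\psi_1\le0$ on the boundary. Hence $\nu_1^\varepsilon\rightharpoonup\nu_1$ weakly-* in $L^\infty$.

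\textbf{Step 3: second limit, $\delta\to0$.} Since $\psi_2\ge0$ vanishes outside $K_2$, the term $\tfrac1\delta(\psi-u)^+$ is supported in $K_2$ once $u\ge\psi_1$. Testing with a cutoff $\chi\in C_c$ equal to $1$ near $K_2\cap D$ gives a uniform $L^1$ bound. Where $K_2$ meets $\partial D$, I use that $\psi<0$ there, so near such boundary points $u\ge\psi$ is implied by the Dirichlet data and the penalization vanishes. Hence $\nu_2$ is a finite Radon measure concentrated on $(K_2\cap D)\times[0,T)$.

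\textbf{Step 4: compactness and the constraint.} Uniform energy estimates (testing with truncations $T_k(u)$) give bounds on $\nabla T_k(u)$ in $L^p$. Compactness of $\beta(u)$ in $L^1$ follows from an Alt--Luckhaus time-translate estimate. The penalization bounds force $u\ge\psi$ a.e. in the limit.

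\textbf{Step 5: passing to the limit in the entropy inequalities.} The reaction terms $\eta(\psi)\phi\,\dd\nu$ pass to the limit directly, because $\eta(\psi)\phi$ is continuous with compact support. The approximate terms $\eta(u)\phi\nu^{\delta}$ are compared with $\eta(\psi)\phi\nu^\delta$ using that $\nu^\delta$ lives where $u\le\psi$ and $\eta$ is nondecreasing; this gives the correct sign.

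\textbf{Step 6: strong gradient convergence (main obstacle).} This is needed to identify $A(u,\nabla u)$, and it is the hardest step. Because the measure $\nu_2$ is present, one cannot test the limit equation with $T_k(u)$ in time directly. I would try a one-sided time regularization first: the Landes-type mollification $T_k(u)_\mu$ with $\partial_t T_k(u)_\mu=\mu(T_k(u)-T_k(u)_\mu)$, modified so that it stays above $\psi$ or else enters the reaction with a favorable sign. Testing the approximate equations with $T_k(u_\delta)-T_k(u)_\mu$ then has a controlled sign in the time term and in the reaction term. This yields the relative-energy estimate
\[
\limsup\int\bigl(A(u_\delta,\nabla T_k u_\delta)-A(u_\delta,\nabla T_k u)\bigr)\cdot\nabla\bigl(T_ku_\delta-T_ku\bigr)\le0 .
\]
Minty's trick then gives a.e. convergence of gradients and identifies the flux. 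A further subtlety is keeping the regularized test function admissible near the singular support of $\nu_2$.
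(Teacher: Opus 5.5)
Your architecture matches the paper's: two ordered penalizations, a bounded first reaction from the residual of $\psi_1$, a localized second penalty converging weakly-* to a measure, and a Landes-type regularization to identify the flux. (In the paper's notation the roles of $\varepsilon$ and $\delta$ are swapped.) The gap is the case where $K_2$ meets $\partial D$, which is exactly the case \eqref{eq:K2-boundary-gap} is assumed for.

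In Step 3 you assert that near $K_2\cap\partial D$ ``$u\geq\psi$ is implied by the Dirichlet data and the penalization vanishes''. This does not follow. The boundary condition holds only in the trace sense, and the approximate solutions are merely bounded in $L^p(0,T;W_0^{1,p}(D))$ with $p\le d$ allowed. They may therefore dip below $\psi$ in every boundary layer, and in general no $\chi\in C_c(D)$ equals $1$ near $K_2\cap D$. So you have no uniform mass bound for the second penalty. The obstacle error in your Step 6 (how far the regularized comparison function lies below $\psi$, times the penalty mass) is then uncontrolled as well. Even for $K_2\Subset D$, your cutoff bound needs the flux uniformly in $L^{p'}$, while your energy bound contains $\int P\,\mathcal T_k(u)$, which is only bounded by $M_{\mathrm{obs}}\int P$; the two estimates have to be closed together.

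The missing idea is Lemma \ref{lem:penalty-footprint-majorant}. Because $\psi\le-2\sigma_*$ in a boundary strip of $K_2$, a multiple of $\operatorname{dist}(x,\partial D)$ gives $H_*\in W_0^{1,p}(D)\cap L^\infty(D)$ with $H_*\geq\psi+\sigma_*$ on $K_2\times[0,T]$. Test the penalized equation with $u_{\delta,\varepsilon}-H_*$. The penalty acts only where $u_{\delta,\varepsilon}<\psi\le H_*-\sigma_*$, so it contributes $-\sigma_*\int P_\varepsilon$. Lemma \ref{lem:uniform-energy-general-obstacle} then yields the energy bound and the penalty mass bound simultaneously, uniformly in both parameters.

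This localization, and the constraint after your first limit, both need $u\geq\psi_1$. Your capped penalty $\min\{\varepsilon^{-1}(\psi_1-u)^+,\cdot\}$ does not provide it: its mass bound says nothing about $(\psi_1-u)^+$, and comparison with $\psi_1$ fails where the cap is inactive. The paper's $\alpha_\varepsilon\kappa_\delta(\beta(u)-\beta(\psi_1))$, with $\kappa_\delta=1$ on $(-\infty,0]$, gives $u\geq\psi_1$ at every level by Lemma \ref{lem:comparison-lower-barrier}. Likewise, $\delta^{-1}(\psi-u)^+$ is neither bounded nor Lipschitz in $\beta(u)$ for degenerate $\beta$; this is why the paper uses $P_\varepsilon=\varepsilon^{-1}\min\{(\beta(\psi)-\beta(u))^+,K_P\}$.

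The same issue reappears in Step 5. Even with a mass bound, the weak-* limit of $P_\varepsilon\,\dd x\,\dd t$ on $K_2\times[0,T]$ may charge $(K_2\cap\partial D)\times[0,T]$ and $\{t=T\}$. The paper does not rule this out; it defines $\nu_2$ as the restriction \eqref{eq:def for nu2}, so concentration on $(K_2\cap D)\times[0,T)$ holds by construction. Your claim that $\eta(\psi)\phi$ has compact support fails for admissible tests of the second kind ($\eta\in\mathcal E_0$, $\phi$ touching $\partial D$), so the reaction term does not pass to the limit directly. What is needed is the gap again: $\psi<0$ on $K_2\cap\partial D$ gives $\eta(\psi)\le\eta(0)=0$ there. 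The lateral mass therefore enters with a favorable sign and can be dropped, while terminal mass is invisible because $\phi$ vanishes near $t=T$. Combined with $P_\varepsilon\eta(u_\varepsilon)h_l(u_\varepsilon)\le P_\varepsilon\eta(\psi)$, this yields only the required one-sided inequality.

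In Step 6, a regularization of $\mathcal T_k(u)$ cannot be kept above a time-dependent $\psi$. The paper starts it from $\xi_k\in W_0^{1,p}(D)\cap L^\infty(D)$ with $\xi_k\geq\psi(\cdot,0)$ on $K_2\cap D$ and $\int_D\mathcal B_\beta(\xi\mid\xi_k)\,\dd x\to0$ (Lemma \ref{lem:initial-energy-approximation}, which again uses the gap). It then accepts $z_{k,\lambda}\geq\psi-c_\lambda$ on $(K_2\cap D)\times(0,T)$, with $c_\lambda$ built from the time modulus of $\psi$. The resulting error $c_\lambda\int P_{\varepsilon_n}\le Cc_\lambda$ is absorbed through the uniform mass bound. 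Since only the penalized equations are tested, there is no admissibility issue near the support of $\nu_2$.
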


The construction uses two ordered penalization limits.  
The first enforces $\psi_1$ and produces a bounded reaction density.
The second further enforces the localized correction $\psi_2$ and produces a finite Radon measure.
The comparison principle orders the penalized families and therefore gives monotone pointwise limits. 
Uniform energy estimates provide the compactness of the solutions and a uniform mass bound for the second penalization term.

Another key point for the proof in the second limit is the identification of the nonlinear flux. 
The reaction measure $\nu_2$ prevents a direct time pairing with the limit solution $u$.
We therefore use a one-sided time regularization \cite{landes1981existence,schatzler2020obstacle,farroni2023noncoercive} of a truncation of the limit solution with parameter $\lambda$.
The regularized comparison function stays above the obstacle up to an error $c_\lambda$.
The associated penalty term is bounded by $c_\lambda$ times the uniform reaction mass and vanishes as $\lambda\downarrow0$.
The relative-energy estimate then gives strong convergence of gradients and identifies the nonlinear flux.
See Lemma \ref{lem:minty-obstacle-reactions} and Remark \ref{rem:time regularization} for details.

For the prototype in \eqref{eq:p-laplace porous media}, the comparison theorem covers $m>0$ and $p>1$, subject to its remaining assumptions.
The existence theorem applies for $m>0$ and $p\geq2$, which is due to the assumption of $p$-monotonicity \eqref{eq:A-uniform-monotonicity} as in Alt and Luckhaus \cite{alt1983quasilinear}.

The scopes of the two main results are complementary.
The existence theorem allows $A=A(u,\nabla u)$ and a decomposed time-dependent obstacle.
The comparison theorem covers the full range $p>1$ and general continuous spatial
obstacles, but requires $A=A(\nabla u)$ and a time-independent obstacle.
Their intersection gives Corollary \ref{cor:wellposedness-common-class}.

Section \ref{sec:entropy formulation} introduces the entropy formulation and states the full comparison theorem.
Section \ref{sec:comparison-dirichlet} proves comparison, the averaged initial trace, and minimality in measure-free supersolutions.
Section \ref{sec:existence-general} develops the two-penalty construction and proves existence.
The appendix records the space--time approximation used in the Alt--Luckhaus scheme.

\section{Entropy formulation and main result}\label{sec:entropy formulation}

Write $D_T:=D\times(0,T)$, $p\in(1,\infty)$ and $p':={p}/({p-1})$.
For a locally compact Hausdorff space $X$, we denote by $\mathcal M(X)$ the space of finite signed Radon measures on $X$, equipped with the total variation norm
\[
\|\mu\|_{\mathcal M(X)}:=|\mu|(X),
\]
and by $\mathcal M_+(X)$ the space of finite nonnegative Radon measures on $X$.
We first consider
\begin{align}
\partial_t\beta(u)&=\operatorname{div}A(\nabla u)+f(x,t,u)+\nu &&\text{in }D_T,\nonumber\\
u&=0&&\text{on }\partial D\times(0,T),\label{eq:ito}\\
u(\cdot,0)&=\xi&&\text{in }D,\nonumber
\end{align}
with the lower obstacle
\begin{equation}
u\geq\psi\quad\text{a.e. in }D_T.\label{eq:lower obstacle}
\end{equation}

\begin{assumption}[Time nonlinearity]\label{assu:assumption for b}
The function $\beta:\mathbb R\to\mathbb R$ is continuous and nondecreasing satisfying $\beta(0)=0$.
\end{assumption}

\begin{assumption}[Diffusion]\label{assu:assumption for a}
The map $A:\mathbb R^d\to\mathbb R^d$ is continuous, $A(0)=0$, and there is $a_1>0$ such that
\begin{align}
|A(\zeta)|&\leq a_1(1+|\zeta|^{p-1}), \label{eq:A-growth-uniqueness}\\
\bigl(A(\zeta)-A(\widehat\zeta)\bigr) \cdot(\zeta-\widehat\zeta)&\geq0 \label{eq:A-monotonicity-uniqueness}
\end{align}
for all $\zeta,\widehat\zeta\in\mathbb R^d$.
\end{assumption}

\begin{assumption}[Source and initial value]\label{assu:assumption for initial}
The function $f:D\times[0,T]\times\mathbb R\to\mathbb R$ is Carath\'eodory, $f(\cdot,\cdot,0)\in L^1(D\times[0,T])$, and there is $L_f\geq0$ such that
\begin{equation}
|f(x,t,r)-f(x,t,s)| \leq L_f|\beta(r)-\beta(s)| \label{eq:f-beta-Lipschitz-uniqueness}
\end{equation}
for almost every $(x,t)\in D\times[0,T]$ and all $r,s\in\mathbb R$.
The initial value $\xi:D\to\mathbb R$ is measurable and $\beta(\xi)\in L^1(D)$.
\end{assumption}
\begin{assumption}[Static obstacle]\label{assu:assumption for barrier}
The obstacle $\psi$ belongs to $C(\overline D)$ and satisfies $\psi\leq0$ on $\partial D$.
Set $M:=\|\psi\|_{L^\infty(D)}$.
\end{assumption}
We impose the compatibility condition
\begin{equation*}
\xi\geq\psi\quad\text{a.e. in }D. 
\end{equation*}

Condition \eqref{eq:f-beta-Lipschitz-uniqueness} and $\beta(u)\in L^\infty(0,T;L^1(D))$ imply
\begin{equation}
|f(x,t,u)|\leq |f(x,t,0)|+L_f|\beta(u)|, \qquad f(\cdot,\cdot,u)\in L^1(D\times[0,T]). \label{eq:source-integrability-uniqueness}
\end{equation}

For $k>0$, define
\begin{equation*}
\mathcal T_k(r):=\max\{-k,\min\{r,k\}\}.
\end{equation*}
If $\mathcal T_k(u)\in L^p(0,T;W_0^{1,p}(D))$ for every $k>0$, there is a unique measurable vector field, denoted by $\nabla u$, such that
\begin{equation*}
\nabla\mathcal T_k(u)=\one_{\{|u|<k\}}\nabla u \quad\text{a.e. in }D_T.
\end{equation*}
Indeed, the Sobolev chain rule shows that the gradients of $\mathcal T_k(u)$ and $\mathcal T_l(u)$ agree on $\{|u|<\min\{k,l\}\}$.

Set
\[
\mathcal E:=\{\eta\in C^1(\mathbb R):\eta'\geq0, \ \eta\in L^\infty(\mathbb R)\}, \qquad \mathcal E_0:=\{\eta\in\mathcal E:\eta(0)=0\}.
\]
For $l>0$, let
\[
\mathcal H_l:=\left\{h\in C_c^2(\mathbb R;[0,1]): h=1\text{ on }[-l,l],\quad h=0\text{ on }\mathbb R\setminus(-l-1,l+1)\right\}.
\]
Fix once and for all a family $(h_l)_{l>0}$ such that
\begin{equation}
h_l\in\mathcal H_l\quad\text{for every }l>0, \qquad \sup_{l>0}\|h_l'\|_{L^\infty(\mathbb R)}<\infty.\label{eq:admissible-cutoff-family}
\end{equation}

For $\eta\in\mathcal E$, a nonnegative function $\phi\in C^\infty(\overline D\times[0,T))$ is called admissible for $\eta$ if $\phi$ vanishes in a neighborhood of  $t=T$ and one of the following conditions holds:
\begin{enumerate}
\item there exists a compact set $K_\phi\Subset D$ such that $\operatorname{supp}\phi\subset K_\phi\times[0,T)$;
\item $\eta\in\mathcal E_0$, in which case $\phi$ may meet $\partial D\times[0,T)$.
\end{enumerate}
In the second case, the function $r\longmapsto \eta(r)h_l(r)$ is Lipschitz and vanishes at the origin. 
Hence $\eta(u)h_l(u)$ has zero Sobolev trace.
Throughout the paper, integrals with respect to $\dd\beta$ are understood as Lebesgue--Stieltjes integrals associated with the continuous nondecreasing function $\beta$.

\begin{defn}[Renormalized entropy solution]\label{def:entropy solution with ob}
A pair $(u,\nu)$ is a renormalized entropy solution of \eqref{eq:ito}--\eqref{eq:lower obstacle} if the following properties hold.
\begin{itemize}
\item[(i)] The function $u:D\times[0,T]\to\mathbb R$ is measurable, $u\geq\psi$ a.e.,
\[
\beta(u)\in L^\infty(0,T;L^1(D)),\qquad\mathcal T_k(u)\in L^p(0,T;W_0^{1,p}(D)) \quad\text{for every }k>0.
\]
\item[(ii)] The reaction $\nu$ is a nonnegative finite Radon measure on $D\times[0,T)$.
\item[(iii)] For every $K>0$, there exists a family $(\mu_{l,K}^u)_{l>M+1}\subset\mathcal M_+(D\times[0,T))$ such that for every $l>M+1$, $\eta\in\mathcal E$ with $\|\eta\|_{L^\infty(\mathbb R)}\leq K$, and every test function $\phi$ admissible for $\eta$,
\begin{align}
&-\int_{D_T}\partial_t\phi\int_{\xi}^{u}\eta(r)h_l(r)\,\dd\beta(r)\,\dd x\,\dd t -\int_{D\times[0,T)}\eta(\psi)\phi\,\dd\nu\nonumber\\
&\leq-\int_{D_T}h_l(u)A(\nabla u)\cdot\nabla\bigl(\eta(u)\phi\bigr)\,\dd x\,\dd t \nonumber\\
&\quad+\int_{D_T}f(x,t,u)\eta(u)h_l(u)\phi\,\dd x\,\dd t +\int_{D\times[0,T)}\phi\,\dd\mu_{l,K}^u, \label{eq:entropy formula-1}
\end{align}
and
\begin{equation}
\lim_{l\to\infty}\mu_{l,K}^u(D\times[0,T))=0.\label{eq:defect-tail-uniqueness}
\end{equation}
\end{itemize}
\end{defn}

\begin{rem}[Uniform tail control]
The measure $\mu_{l,K}^u$ is uniform over all entropies with $\|\eta\|_{L^\infty(\mathbb{R})}\leq K$. 
This point is essential: in the doubling argument the entropy is evaluated at the other solution and is therefore not fixed in advance.
\end{rem}

\begin{thm}[Comparison and uniqueness]\label{thm:main-comparison}
Let Assumption \ref{assu:assumption for b}, Assumption \ref{assu:assumption for a}, and
Assumption \ref{assu:assumption for barrier} hold.
For $i=1,2$, let $(f_i,\xi_i)$ satisfy Assumption \ref{assu:assumption for initial} and $\xi_i\geq\psi$ a.e. in $D$.
Let $(u_i,\nu_i)$ be a renormalized entropy solution with data $(f_i,\xi_i)$ and the same obstacle $\psi$.
Suppose
\[
f_1(x,t,r)\leq f_2(x,t,r) \quad\text{for a.e. }(x,t)\in D_T\text{ and every }r\in\mathbb R.
\]
If $L_2$ is a Lipschitz constant for $f_2$ in \eqref{eq:f-beta-Lipschitz-uniqueness}, then, for almost every $t\in(0,T)$,
\begin{equation}
\int_D\bigl(\beta(u_1(t))-\beta(u_2(t))\bigr)^+\,\dd x \leq \mathrm e^{L_2t}\int_D\bigl(\beta(\xi_1)-\beta(\xi_2)\bigr)^+\,\dd x. \label{eq:main-comparison}
\end{equation}
If, in addition $f_1(x,t,r)=f_2(x,t,r)$ for a.e. $(x,t)\in D_T$ and every $r\in\mathbb{R}$, and $\xi_1=\xi_2$ a.e. in $D$, then $\beta(u_1)=\beta(u_2)$ a.e. in $D_T$.
If $\beta$ is strictly increasing, then
\[
u_1=u_2\quad\text{a.e. in }D_T, \qquad \nu_1=\nu_2\quad\text{in }\mathcal M(D\times[0,T)).
\]
\end{thm}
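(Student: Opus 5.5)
We prove Theorem \ref{thm:main-comparison} by Kru\v{z}kov doubling of variables, adapted to the renormalized setting as in Carrillo--Wittbold, with $u_1=u_1(x,t)$ and $u_2=u_2(y,s)$. For a fixed constant $\kappa$, we write the entropy inequality \eqref{eq:entropy formula-1} for $u_1$ with multiplier $\eta_\delta(r-\kappa)$, where $\eta_\delta$ is a smooth bounded nondecreasing approximation of $\operatorname{sign}^+$, and then set $\kappa=u_2(y,s)$. Symmetrically, we write the inequality for $u_2$ with multiplier $-\eta_\delta(\kappa-r)$ (the reflected entropy, which is nondecreasing in $r$) and set $\kappa=u_1(x,t)$. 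Both multipliers are bounded by $1$, so the uniform defect measures $\mu^{u_i}_{l,1}$ of Definition \ref{def:entropy solution with ob} apply even though the entropy depends on the other variable. After integrating in the other pair of variables, we add the two inequalities using the test function $\phi(x,t,y,s)=\rho_{\varsigma,j}(x,y)\,\omega_\varepsilon(t-s)\,\theta(t)$, built from the inward boundary kernels with $\nabla_x\rho+\nabla_y\rho=0$.

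The terms are handled as follows.

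\textbf{Time terms.} These combine into $-\int\partial_t\theta\,(\beta(u_1)-\beta(u_2))^+$ in the limit $\delta\to0$, $\varepsilon\to0$. The initial data enter through the averaged strong initial trace of $\beta(u_i)$, which comes from the boundary-adapted kernels.

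\textbf{Diffusion terms.} Since $\nabla_x\rho=-\nabla_y\rho$, the diffusion terms combine into
\[-\int h_l h_l\,\eta_\delta'(u_1-u_2)\,(A(\nabla u_1)-A(\nabla u_2))\cdot(\nabla_xu_1-\nabla_yu_2)\,\phi\le0\]
plus terms carrying $h_l'$. The first term is nonpositive by \eqref{eq:A-monotonicity-uniqueness}. The $h_l'$ terms are supported on $\{l<|u_i|<l+1\}$ and are absorbed into $\mu_{l,1}$, using the uniform bound on $h_l'$ from \eqref{eq:admissible-cutoff-family}.

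\textbf{Boundary.} Admissibility near $\partial D$ requires $\eta\in\mathcal E_0$. We use the kernels that are inward in one variable, and exploit $\psi\le0$ on $\partial D$ together with $u_i=0$ there, following Carrillo--Wittbold.

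\textbf{Sources.} Using $f_1\le f_2$ and \eqref{eq:f-beta-Lipschitz-uniqueness}, the source terms give at most $L_2\int(\beta(u_1)-\beta(u_2))^+\theta$.

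\textbf{Reactions (the key step).} The reaction terms enter as
\[\int\eta_\delta(\psi(x)-u_2(y,s))\phi\,\dd\nu_1-\int\eta_\delta(u_1(x,t)-\psi(y))\phi\,\dd\nu_2 .\]
Since $u_2(y,s)\ge\psi(y)$ and $\eta_\delta$ is nondecreasing, the first integrand is at most $\eta_\delta(\psi(x)-\psi(y))$. Since $u_1(x,t)\ge\psi(x)$, the second integrand is at least $\eta_\delta(\psi(x)-\psi(y))$, so the second term is at most $-\int\eta_\delta(\psi(x)-\psi(y))\phi\,\dd\nu_2$, which is nonpositive when $\psi(x)\ge\psi(y)$ and harmless after the modification below. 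Both $\eta_\delta$ and the reflected entropy must vanish at $\pm\infty$ appropriately; we choose $\eta_\delta(r)$ to vanish for $r\le0$ and to equal $1$ for $r\ge\delta$. On $\operatorname{supp}\rho_{\varsigma,j}$ we have $|x-y|\le 1/j$. Hence, if $\omega_\psi(1/j)<\delta$, then $\eta_\delta(\psi(x)-\psi(y))=0$ and both reaction contributions are nonpositive. We therefore send $j\to\infty$ before $\delta\to0$, or more precisely keep $1/j$ small relative to $\delta$ through the uniform continuity of $\psi$. Here the finiteness of $\nu_i$ matters, as does the fact that $\eta(\psi)\phi$ is continuous. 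Delicately ordering this limit against the convergence of the diffusion and time terms is the main obstacle. The diffusion terms require $j\to\infty$ at fixed $\delta$ and the standard Lebesgue-point arguments, which are compatible with this order.

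\textbf{Conclusion.} Letting $l\to\infty$ kills the defects by \eqref{eq:defect-tail-uniqueness}. This yields the Kato inequality
\[-\int\theta'(\beta(u_1)-\beta(u_2))^+\le\theta(0)\int(\beta(\xi_1)-\beta(\xi_2))^++L_2\int\theta(\beta(u_1)-\beta(u_2))^+ ,\]
and Gronwall's inequality gives \eqref{eq:main-comparison}. Uniqueness of $\beta(u)$ follows by symmetry. If $\beta$ is strictly increasing, then $u_1=u_2$. Taking $\eta\equiv1$-type tests, more precisely testing the distributional equation obtained from the two-sided inequalities with $\eta=\pm$ constants, together with $l\to\infty$, shows that $\nu_1-\nu_2$ annihilates all $\phi\in C_c^\infty(D\times[0,T))$, so $\nu_1=\nu_2$.
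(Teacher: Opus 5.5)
Your interior mechanism matches the paper's Lemma \ref{lem:interior-local-Kato}: you double with $\eta_\delta(r-u_2)$ and $-\eta_\delta(u_1-r)$, bound the reactions through the common obstacle, and send the spatial scale to zero before $\delta$. The proposal has a genuine gap at the boundary, however, and that is where most of the paper's work lies.

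\textbf{The boundary gap.} Your entropies do not vanish at $r=0$. Indeed, $\eta_\delta(-u_2(y,s))\neq0$ where $u_2(y,s)<0$, and $-\eta_\delta(u_1(x,t))\neq0$ where $u_1(x,t)>0$. By Definition \ref{def:entropy solution with ob}, such entropies may only be paired with tests compactly supported in $D$. The inward kernel makes $x\mapsto\rho_{\varsigma,j}(x,y)$ compactly supported in $D$, but the $y$-support still reaches $\partial D$. So the inequality for $u_2$ that you need is not available, and reversing the orientation only moves the problem to $u_1$. Appealing to $\psi\le0$ and ``$u_i=0$'' on $\partial D$ does not fill this, since the renormalized definition gives no boundary information beyond the admissibility rule.

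There is also a smaller misreading here: $j$ in $\rho_{\varsigma,j}$ is a chart index, not a scale. The kernels are inward only within one chart, so a global test needs the partition $\chi_j$. The resulting fluxes $\mathscr G_l\cdot\nabla\chi_j$ cancel only after summing over $j$.

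\textbf{What the paper does instead.} The missing idea is the signed splitting
$(u_1-u_2)^+=(u_1^+-u_2^+)^++(u_2^--u_1^-)^+$.
The two solutions exchange places between the branches, so the function sitting in the boundary variable always carries an $\mathcal E_0$ entropy: $\eta_\delta(r-u_2^+)$ in one branch, $-\eta_\delta(r^--u_1^-)$ in the other. The companion inequalities then leave zero-level remainders, including the reactions restricted to $\{\psi<0\}$ and $\{\psi>0\}$. These remainders form the functionals $\mathscr L^{\pm}_{l,v}$. One must show they are nonnegative, hence Radon measures on $\overline D\times(0,T)$. They are then removed by testing with $\chi_j(1-\sigma_{\widehat\varsigma,j})$ and letting $\widehat\varsigma\downarrow0$. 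This last step is legitimate only because the constant in Lemma \ref{lem:chartwise-signed-Kato} does not depend on derivatives of $\chi$. Without these steps you have only a local Kato inequality, which does not yield the global $L^1$ estimate.

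\textbf{Three smaller repairs.}

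First, keeping $\omega_\psi(|x-y|)<\delta$ does not give $\eta_\delta(\psi(x)-\psi(y))=0$, because $\eta_\delta>0$ on $(0,\delta)$. Use instead $\eta_\delta(\psi(x)-\psi(y))\le\|\eta_\delta'\|_{L^\infty(\mathbb R)}\omega_\psi(C\varsigma)\le C\omega_\psi(C\varsigma)/\delta$. Multiplied by the finite mass of $\nu_1$, this vanishes as $\varsigma\downarrow0$ at fixed $\delta$.

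Second, the defect measures $\mu^{u_i}_{l,1}$ are given abstractly by the definition and cannot absorb new errors created by doubling. Each inequality carries its own factor $h_l(u_i)$, so the diffusion terms combine to $[h_l(u_1)A(\nabla_xu_1)-h_l(u_2)A(\nabla_yu_2)]\cdot(\nabla_xu_1-\nabla_yu_2)$, not to a product $h_lh_l$. The mismatch $[h_l(u_1)-h_l(u_2)]A(\nabla_yu_2)\cdot(\nabla_xu_1-\nabla_yu_2)\,\eta_\delta'(u_1-u_2)$ must be shown to vanish as $\delta\downarrow0$. This follows from $\eta_\delta'(u_1-u_2)|u_1-u_2|\le C\one_{\{0<u_1-u_2<\delta\}}$.

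Third, the averaged initial trace is not a by-product of the Kato argument and needs its own proof (Theorem \ref{thm:averaged-initial-trace}). That proof uses a different limit order, $\tau\downarrow0$, then $\varsigma\downarrow0$, then $\delta\downarrow0$, together with a one-sided tail estimate to remove $h_l$. Accordingly, the Kato inequality should first be derived for $\theta\in C_c^\infty((0,T))$, with the initial datum inserted afterwards by averaging over $(0,\tau)$.
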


\section{Comparison under the homogeneous Dirichlet boundary condition}\label{sec:comparison-dirichlet}
To prepare for the proof of the comparison theorem, we first introduce some essential notation and lemmas.
Throughout this section, the hypotheses of Theorem \ref{thm:main-comparison} are in force.
We write
\[
(u_1,\nu_1,f_1,\xi_1)=(u,\nu,f,\xi),\qquad (u_2,\nu_2,f_2,\xi_2)=(\tilde u,\tilde\nu,\tilde f,\tilde\xi).
\]
Thus
\[
f(x,t,r)\leq\tilde f(x,t,r)\quad\text{for a.e. }(x,t)\in D_T\text{ and every }r\in\mathbb R.
\]

The absolute value of each entropy function $\eta$ used below is at most one.
Set
\[
\mathfrak e_l:=\mu_{l,1}^u(D\times[0,T))+\mu_{l,1}^{\tilde u}(D\times[0,T)).
\]
Definition \ref{def:entropy solution with ob} gives $\mathfrak e_l\to0$ as $l\to\infty$.

For $r\in\mathbb R$, write $r^+:=\max\{r,0\}$ and $r^-:=\max\{-r,0\}$.
For a measurable set $E$, let $\one_E$ denote its indicator.
For $\delta>0$, choose a nondecreasing $q_\delta\in C^1(\mathbb R)$ such that
\begin{equation}\label{eq:property of q1}
0\leq q_\delta\leq1,\qquad q_\delta(r)=0\ \text{for }r\leq0,\qquad q_\delta(r)=1\ \text{for }r\geq\delta,
\end{equation}
and
\begin{equation}\label{eq:property of q2}
q_\delta'\geq0,\qquad\operatorname{supp}q_\delta'\subset(0,\delta),\qquad\|q_\delta'\|_{L^\infty(\mathbb R)}\leq C\delta^{-1}.
\end{equation}

Fix $l>M+1$ and use the cutoff $h_l$ from \eqref{eq:admissible-cutoff-family}.
Define
\begin{equation}\label{eq:local-beta-l}
H_l(r):=\int_0^r h_l(\tilde{r})\,\dd\beta(\tilde{r}),
\end{equation}
and
\begin{equation}\label{eq:defBl}
\mathcal B_l(a,b):=\one_{\{a>b\}}\int_b^a h_l(r)\,\dd\beta(r)=\bigl(H_l(a)-H_l(b)\bigr)^+.
\end{equation}
We use the moduli
\[
\omega_{\beta,l}(r):=\sup_{\substack{|a|,|b|\leq l+2\\|a-b|\leq r}}|\beta(a)-\beta(b)|,\qquad
\omega_\psi(r):=\sup_{\substack{x,y\in\overline D\\|x-y|\leq r}}|\psi(x)-\psi(y)|.
\]

Choose an even function $\varrho\in C_c^\infty((-1,1))$ such that $\varrho\geq0$ and $\int_{\mathbb R}\varrho(r)\,\dd r=1$.
Define
\[
\varrho_\theta(r):=\theta^{-1}\varrho(r/\theta).
\]

\subsection{Boundary charts and inward spatial mollifiers}

\begin{lem}[Boundary covering]\label{lem:boundary covering}
There exist open sets $B_0,\ldots,B_J$, open sets $V_j$ with $\operatorname{supp}\chi_j\Subset V_j\Subset B_j$ for $j\in\{1,\ldots,J\}$, and functions $\chi_j\in C_c^\infty(B_j)$, $0\leq\chi_j\leq1$, such that
\[
\sum_{j=0}^J\chi_j=1\quad\text{on }\overline D.
\]
Moreover, $\overline{B_0}\subset D$. 
For each $j\in\{1,\ldots,J\}$, there exist an open set $B_j'$ with $B_j\Subset B_j'$, an orthogonal matrix $R_j$, a vector $c_j$, and a Lipschitz function $\gamma_j$ such that, in the coordinates $X=R_jx+c_j$,
\[
R_j(D\cap B_j')+c_j=\left\{X=(X',X_d)\in R_jB_j'+c_j:X_d>\gamma_j(X')\right\}.
\]
We denote the Lipschitz constant of $\gamma_j$ by $\tilde{L}_j$.
\end{lem}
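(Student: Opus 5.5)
The plan is to use the standard local description of a bounded Lipschitz domain together with compactness of $\partial D$ and a smooth partition of unity subordinate to the resulting finite cover. By the definition of a Lipschitz domain, every boundary point $x_0\in\partial D$ has an open neighborhood $U_{x_0}$ with the following property: after a rigid motion $X=R x+c$ ($R$ orthogonal), $D\cap U_{x_0}$ becomes the supergraph $\{X_d>\gamma(X')\}$ of a Lipschitz function $\gamma$, intersected with the image of $U_{x_0}$. Shrinking $U_{x_0}$ if necessary, we may take it to be a small cylinder or box adapted to the chart.

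First, for each $x_0\in\partial D$ we fix such a chart neighborhood $B'_{x_0}$. We then choose open sets $B_{x_0}\Subset B'_{x_0}$ containing $x_0$, for instance concentric cylinders of half the size. The graph representation is inherited by any smaller set of the form $B'\cap D$, so the property stated in the lemma holds on $B'_{x_0}$, which is all the lemma requires. By compactness of $\partial D$, finitely many $B_1,\dots,B_J$ (with corresponding $B_j'$, $R_j$, $c_j$, $\gamma_j$, and Lipschitz constants $\tilde L_j$) cover $\partial D$.

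Next, we cover the rest of $\overline D$. The set $\overline D\setminus\bigcup_{j\ge1}B_j$ is a compact subset of $D$, because it is closed and avoids $\partial D$. It therefore has positive distance $2\varepsilon$ from $\partial D$. Set
\[
B_0:=\{x\in D:\operatorname{dist}(x,\partial D)>\varepsilon\}.
\]
Then $\overline{B_0}\subset D$, and $B_0,B_1,\dots,B_J$ is a finite open cover of the compact set $\overline D$.

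Finally, we build the partition of unity. By the standard shrinking lemma for finite open covers of a compact set, there are open sets $W_j\Subset V_j\Subset B_j$, for $j=0,\dots,J$, with $\overline D\subset\bigcup_j W_j$. One can obtain these by covering $\overline D$ with small balls, each contained in some $B_j$, and taking finite unions. Take $g_j\in C_c^\infty(V_j)$ with $0\le g_j\le1$ and $g_j=1$ on $\overline{W_j}$. Then $G:=\sum_j g_j\ge1$ on a neighborhood $N$ of $\overline D$. Let $\zeta\in C_c^\infty(N)$ satisfy $\zeta=1$ on $\overline D$, and set
\[
\chi_j:=\frac{g_j\,\zeta}{\max\{G,1\}}.
\]
To ensure smoothness, we replace $\max\{G,1\}$ by $G$ itself on $N$, where $G\ge1$, and use $\zeta$ to kill everything outside $N$. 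This gives $\chi_j\in C_c^\infty(B_j)$ with $0\le\chi_j\le1$, $\operatorname{supp}\chi_j\subset V_j\Subset B_j$, and $\sum_j\chi_j=1$ on $\overline D$.

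No step is a real obstacle. The only points needing care are the nesting $\operatorname{supp}\chi_j\Subset V_j\Subset B_j\Subset B_j'$, which the shrinking step handles, and checking that the graph representation survives restriction to the smaller sets. The latter is immediate, since intersecting both sides of the chart identity with $R_jB_j'+c_j$ preserves it.
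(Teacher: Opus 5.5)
Your proposal is correct and follows the same route as the paper: Lipschitz graph charts at boundary points, compactness of $\partial D$, an interior set $B_0$ with $\overline{B_0}\subset D$, and a smooth partition of unity subordinate to the finite cover. You simply write out the shrinking step $W_j\Subset V_j\Subset B_j$ and the normalization $\chi_j=g_j\zeta/G$, which the paper's one-line proof leaves implicit.
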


\begin{proof}
The assertion follows from the compactness of $\partial D$, the local graph representation of a bounded Lipschitz domain, and the smooth partition of unity theorem.
\end{proof}
Choose $\rho\in C_c^\infty(\{z\in\mathbb{R}^d:|z|<1\})$ satisfying $\rho\geq0$ and $\int_{\mathbb R^d}\rho(z)\,\dd z=1$. 
For $j\in\{1,\ldots,J\}$, fix $\lambda_j>\tilde{L}_j+2$ and define
\[
\rho_{\varsigma,j}(x,y):=\varsigma^{-d}\rho\left(\frac{R_j(x-y)-\lambda_j\varsigma e_d}{\varsigma}\right),
\]
where $e_d$ denotes the unit vector in $\mathbb R^d$ with a $1$ in the $d$-th coordinate and 0 elsewhere. 
For the interior chart, set
\[
\rho_{\varsigma,0}(x,y):=\rho_{\varsigma}(x-y):=\varsigma^{-d}\rho\left(\frac{x-y}{\varsigma}\right).
\]

\begin{lem}[Properties of the inward kernels]
\label{lem:inward-kernel-properties}
For all sufficiently small $\varsigma>0$, $j\in\{1,\ldots,J\}$ and $y\in\overline D\cap V_j$, the function $x\mapsto\rho_{\varsigma,j}(x,y)$ is compactly supported in $D\cap B_j'$, and satisfies
\[
\int_D\rho_{\varsigma,j}(x,y)\,\dd x=1,\qquad
\nabla_x\rho_{\varsigma,j}(x,y)+\nabla_y\rho_{\varsigma,j}(x,y)=0.
\]

If
\[
\sigma_{\varsigma,j}(x):=\int_D\rho_{\varsigma,j}(x,y)\,\dd y,
\]
then $0\leq\sigma_{\varsigma,j}\leq1$ and $\sigma_{\varsigma,j}(x)\to1$ for every $x\in D$.
For $\chi\in C_c^\infty(V_j)$, we have $\chi\sigma_{\varsigma,j}\in C_c^\infty(D)$.
For $j=0$, the same conclusion holds for every $\chi\in C_c^\infty(D)$.
\end{lem}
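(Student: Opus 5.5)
The plan is to verify each property by working in the rotated coordinates $X=R_jx+c_j$ of Lemma \ref{lem:boundary covering}, in which $D\cap B_j'$ is the supergraph $\{X_d>\gamma_j(X')\}$. Since $\rho_{\varsigma,j}(x,y)$ depends only on $x-y$, the identity $\nabla_x\rho_{\varsigma,j}+\nabla_y\rho_{\varsigma,j}=0$ is immediate from the chain rule. Likewise $\int_{\mathbb R^d}\rho_{\varsigma,j}(x,y)\,\dd x=1$ follows from the substitution $z=(R_j(x-y)-\lambda_j\varsigma e_d)/\varsigma$, because $|\det R_j|=1$. Hence the normalization over $D$ reduces to the support claim.

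For the support claim, I fix $y\in\overline D\cap V_j$ and write $Y=R_jy+c_j$, so that $Y_d\ge\gamma_j(Y')$ (the inequality holds for $y\in\overline D$ by continuity). If $\rho_{\varsigma,j}(x,y)\neq0$, then $X-Y=\varsigma(\lambda_je_d+z)$ with $|z|<1$. This gives $|X'-Y'|<\varsigma$ and $X_d-Y_d>\varsigma(\lambda_j-1)$. The Lipschitz bound then yields
\[
X_d-\gamma_j(X')\ \ge\ X_d-Y_d-\tilde L_j|X'-Y'| \ >\ \varsigma(\lambda_j-1-\tilde L_j)\ >\ \varsigma,
\]
where the last step uses $\lambda_j>\tilde L_j+2$. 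Moreover $|X-Y|\le\varsigma(\lambda_j+1)$, so for $\varsigma<\operatorname{dist}(V_j,\partial B_j')/(\lambda_j+1)$ the point $x$ lies in $B_j'$ and strictly above the graph. The support of $x\mapsto\rho_{\varsigma,j}(x,y)$ is the closed ball $\overline{B}(y+\varsigma\lambda_jR_j^{-1}e_d,\varsigma)$, and by the same estimate (with a slightly enlarged radius) this ball stays in $D\cap B_j'$, so it is compact there. This single geometric step, namely choosing the smallness threshold on $\varsigma$ uniformly in $y\in\overline{V_j}$ using $V_j\Subset B_j'$, is the only real obstacle; the rest is bookkeeping.

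For $\sigma_{\varsigma,j}$: nonnegativity is clear, and $\sigma_{\varsigma,j}(x)\le\int_{\mathbb R^d}\rho_{\varsigma,j}(x,y)\,\dd y=1$. For fixed $x\in D$, the kernel in $y$ is supported in $\overline{B}(x-\varsigma\lambda_jR_j^{-1}e_d,\varsigma)\subset \overline{B}(x,\varsigma(\lambda_j+1))$, which lies in $D$ once $\varsigma(\lambda_j+1)<\operatorname{dist}(x,\partial D)$; then $\sigma_{\varsigma,j}(x)=1$, giving pointwise convergence. Smoothness of $\sigma_{\varsigma,j}$ follows by differentiating under the integral, since the kernel is smooth with compact support.

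For $\chi\in C_c^\infty(V_j)$, I will show that $\sigma_{\varsigma,j}$ vanishes near $\partial D$ within $\operatorname{supp}\chi$. If $x\in V_j$ and $\sigma_{\varsigma,j}(x)>0$, there is some $y\in D$ with $\rho_{\varsigma,j}(x,y)\neq0$. Once $\varsigma$ is small enough that $y$ stays in a region where the graph representation holds, the support argument above gives $X_d-\gamma_j(X')>\varsigma$. Hence $\operatorname{supp}(\chi\sigma_{\varsigma,j})\subset\operatorname{supp}\chi\cap\{X_d-\gamma_j(X')\ge\varsigma\}$, which is a compact subset of $D$. For $j=0$ with $\chi\in C_c^\infty(D)$, the kernel is the standard mollifier, and $\chi\sigma_{\varsigma,0}$ has support in $\operatorname{supp}\chi\Subset D$. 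All the other properties hold verbatim with $y\in D$ at distance greater than $\varsigma$ from $\partial D$.
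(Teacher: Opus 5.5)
Your proposal is correct and follows essentially the same route as the paper. Both arguments pass to the chart coordinates, write $X-Y=\varsigma(\lambda_je_d+z)$, and use the Lipschitz bound on $\gamma_j$ together with $\lambda_j>\tilde L_j+2$ to place the shifted support strictly above the graph; the normalization, the properties of $\sigma_{\varsigma,j}$, and the compact support of $\chi\sigma_{\varsigma,j}$ then follow. Your version is slightly more careful than the paper's because you first secure $x\in B_j'$, uniformly in $y\in\overline{V_j}$ via $V_j\Subset B_j'$, before invoking the graph description of $D\cap B_j'$; the paper does this in the reverse order.
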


\begin{proof}
Fix $j\in\{1,\ldots,J\}$ and write $X=R_jx+c_j$ and $Y=R_jy+c_j$.
If $\rho_{\varsigma,j}(x,y)\neq0$, then
\[
X-Y=\lambda_j\varsigma e_d+\varsigma z
\]
for some $z\in B_1(0)$. 
Hence, $X'=Y'+\varsigma z'$ and $X_d=Y_d+\lambda_j\varsigma+\varsigma z_d$. 
Since $y\in\overline D\cap B_j'$, we have
\[
Y_d\geq\gamma_j(Y').
\]
Consequently,
\[
\begin{aligned}
X_d-\gamma_j(X')&\geq Y_d+\lambda_j\varsigma-\varsigma-\gamma_j(Y')-\tilde{L}_j\varsigma  \\
&\geq(\lambda_j-\tilde{L}_j-1)\varsigma>0.
\end{aligned}
\]
Thus the $x$-support lies strictly inside $D$. 
After reducing $\varsigma$ if necessary, it also lies inside $B_j'$. 
Therefore, the integral over $D$ equals the integral over $\mathbb R^d$.

The gradient identity follows directly from the dependence of the kernel on $x-y$. 
Since the integral defining $\sigma_{\varsigma,j}$ is taken over a subset of $\mathbb R^d$, $0\leq\sigma_{\varsigma,j}\leq1$. 
For every fixed $x\in D$, the full $y$-support is contained in $D$ when $\varsigma$ is sufficiently
small, and hence $\sigma_{\varsigma,j}(x)=1$.

Since $\operatorname{supp}\chi\Subset V_j$, the preceding estimate is uniform for $x\in\operatorname{supp}\chi$. 
Note that $\sigma_{\varsigma,j}$ is smooth in $x$. 
Then, for all sufficiently small $\varsigma$, the product $\chi\sigma_{\varsigma,j}$ is smooth and vanishes in a fixed neighborhood of $\partial D$.
Therefore, we have $\chi\sigma_{\varsigma,j}\in C_c^\infty(D)$.
The interior-chart case is standard.
\end{proof}

Define the global inward kernel
\[
K_\varsigma(x,y):=\sum_{j=0}^J\chi_j(x)\rho_{\varsigma,j}(x,y).
\]
We have $K_\varsigma\geq0$, its support is contained in $\{|x-y|\leq C\varsigma\}$, and for every $g\in L^1(D)$,
\[
\int_{D\times D}K_\varsigma(x,y)|g(x)-g(y)|\,\dd x\,\dd y\longrightarrow0,\qquad\text{as }\varsigma\downarrow0.
\]
The last assertion follows by extending $g$ by zero outside $D$ and using the continuity of translations in $L^1(\mathbb R^d)$.

\subsection{Zero-level entropy functionals}

Let $(v,\nu_v)$ be a renormalized entropy solution with initial value $\xi_v$ and source $f_v$.
We write $\mu_{l,1}^v$ for the defect measure from Definition \ref{def:entropy solution with ob}. 
For every smooth function $\zeta$ on $\overline D\times(0,T)$ whose time support is compactly contained in $(0,T)$, define
\begin{align*}
\mathscr L_{l,v}^{-}(\zeta)&:=-\int_{D_T}\partial_t\zeta\int_{\xi_v}^{v}\one_{\{r<0\}}h_l(r)\,\dd\beta(r)\,\dd x\,\dd t\notag\\
&\quad-\int_{D_T}\one_{\{\psi<0\}}\zeta\,\dd\nu_v+\int_{D_T}\one_{\{v<0\}}h_l(v)A(\nabla v)\cdot\nabla\zeta\,\dd x\,\dd t\notag\\
&\quad-\int_{D_T}f_v(x,t,v)\one_{\{v<0\}}h_l(v)\zeta\,\dd x\,\dd t+\int_{D_T}\zeta\,\dd\mu_{l,1}^v,
\end{align*}
and
\begin{align*}
\mathscr L_{l,v}^{+}(\zeta)&:=\int_{D_T}\partial_t\zeta\int_{\xi_v}^{v}\one_{\{r>0\}}h_l(r)\,\dd\beta(r)\,\dd x\,\dd t\notag\\
&\quad+\int_{D_T}\one_{\{\psi>0\}}\zeta\,\dd\nu_v-\int_{D_T}\one_{\{v>0\}}h_l(v)A(\nabla v)\cdot\nabla\zeta\,\dd x\,\dd t\notag\\
&\quad+\int_{D_T}f_v(x,t,v)\one_{\{v>0\}}h_l(v)\zeta\,\dd x\,\dd t+\int_{D_T}\zeta\,\dd\mu_{l,1}^v.
\end{align*}

The functionals are well defined.
The factors $h_l(v)A(\nabla v)$ are integrable on $D_T$, since they only use $\nabla\mathcal T_{l+1}(v)$.
The test function $\zeta$ may meet $\partial D$ because the regularized zero-level entropies belong to $\mathcal E_0$.

\begin{lem}[Positivity]
For every nonnegative $\zeta\in C^1_c(\overline{D}\times(0,T))$, we have
\[
\mathscr L_{l,v}^{-}(\zeta)\geq0,\qquad\mathscr L_{l,v}^{+}(\zeta)\geq0.
\]
Consequently, there exist nonnegative Radon measures $\lambda_{l,v}^{\pm}$ on $\overline D\times(0,T)$ such that
\begin{equation}\label{eq:riesz-representation}
\mathscr L_{l,v}^{\pm}(\zeta)=\int_{\overline D\times(0,T)}\zeta\,\dd\lambda_{l,v}^{\pm}
\end{equation}
for every smooth $\zeta$ with compact time support in $(0,T)$.
\end{lem}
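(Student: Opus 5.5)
The plan is to obtain $\mathscr L_{l,v}^{\pm}(\zeta)\ge0$ as a limit of the entropy inequality \eqref{eq:entropy formula-1}, tested with entropies that approximate $-\one_{\{r<0\}}$ and $\one_{\{r>0\}}$. Once positivity holds, the representation \eqref{eq:riesz-representation} follows from the Riesz representation theorem.

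For $\mathscr L^+$, take $\eta_\delta:=q_\delta\in\mathcal E_0$. It satisfies $\|\eta_\delta\|_\infty\le1$, and $\zeta\ge0$ has compact time support, so $\zeta$ is admissible in case (2) even though it may meet $\partial D$. Then \eqref{eq:entropy formula-1} with $K=1$ gives
\begin{align*}
&-\int_{D_T}\partial_t\zeta\int_{\xi_v}^{v}q_\delta h_l\,\dd\beta\,\dd x\,\dd t-\int\!q_\delta(\psi)\zeta\,\dd\nu_v+\int_{D_T}h_l(v)q_\delta(v)A(\nabla v)\cdot\nabla\zeta\\
&\quad+\int_{D_T}h_l(v)q_\delta'(v)\zeta\,A(\nabla v)\cdot\nabla v-\int_{D_T}f_vq_\delta(v)h_l(v)\zeta-\int\zeta\,\dd\mu_{l,1}^v\le0.
\end{align*}
The term containing $q_\delta'(v)\,A(\nabla v)\cdot\nabla v$ is nonnegative. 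This follows from monotonicity \eqref{eq:A-monotonicity-uniqueness} with $\widehat\zeta=0$ and $A(0)=0$, so the term can be dropped. Multiplying by $-1$ then shows that the $\delta$-version of $\mathscr L^+_{l,v}(\zeta)$ is $\ge0$.

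It remains to let $\delta\downarrow0$. We have $q_\delta\to\one_{(0,\infty)}$ pointwise and boundedly. Dominated convergence passes the time term to the limit, since $|\int_{\xi_v}^v q_\delta h_l\,\dd\beta|\le|H_l(v)|+|H_l(\xi_v)|\in L^1$. It also handles the flux and source terms, because $h_l(v)A(\nabla v)$ and $f_v(\cdot,\cdot,v)$ lie in $L^1$ by \eqref{eq:source-integrability-uniqueness}. For the reaction term, $q_\delta(\psi)\to\one_{\{\psi>0\}}$ everywhere, so dominated convergence with respect to the finite measure $\nu_v$ applies.

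The expected subtle point is that the limit is taken with $\delta$ varying inside the entropy. This is allowed because $\mu_{l,1}^v$ does not depend on $\eta$ (uniform tail control), so the defect term is fixed throughout.

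For $\mathscr L^-$, take $\eta_\delta(r):=-q_\delta(-r)$. This function is nondecreasing, lies in $\mathcal E_0$, has $|\eta_\delta|\le1$, and converges to $-\one_{(-\infty,0)}$. In the gradient term, $\eta_\delta'(v)=q_\delta'(-v)\ge0$, so that term is again nonnegative and can be dropped. The same limit argument then yields the inequality for $\mathscr L^-$ directly; here the signs already match, and no multiplication by $-1$ is needed.

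For the representation, each $\mathscr L^{\pm}_{l,v}$ is linear in $\zeta$ and nonnegative on nonnegative test functions in $C^1_c(\overline D\times(0,T))$. A positive linear functional satisfies $|\mathscr L(\zeta)|\le\mathscr L(\chi)\|\zeta\|_\infty$, where $\chi\ge0$ is a fixed test function equal to $1$ on a compact set containing the support of $\zeta$. It therefore extends to a positive linear functional on $C_c(\overline D\times(0,T))$, using density of smooth functions. Since $\overline D\times(0,T)$ is locally compact, the Riesz theorem gives nonnegative Radon measures $\lambda^{\pm}_{l,v}$ satisfying \eqref{eq:riesz-representation}.
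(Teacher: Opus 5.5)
Your proof is correct and follows the paper's argument. As in the paper, you test \eqref{eq:entropy formula-1} with $q_\delta$ and $-q_\delta(-\cdot)$, drop the dissipation term using $A(\nabla v)\cdot\nabla v\geq0$, pass to the limit $\delta\downarrow0$ by dominated convergence (you rightly stress that the defect $\mu_{l,1}^v$ does not depend on $\eta$), and conclude with the local order-zero bound and the Riesz--Markov theorem; the one detail left implicit, here and in the paper, is that admissible tests must be $C^\infty$, so a $C^1_c$ function $\zeta$ should first be approximated by nonnegative smooth functions, which is routine.
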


\begin{proof}
Use the entropy inequality \eqref{eq:entropy formula-1} for $\nu$ with $\eta(r)=-q_\delta(-r)$. 
Its derivative is $q_\delta'(-r)\geq0$, and it satisfies $\eta(0)=0$. 
Expanding the diffusion term gives
\[
\begin{aligned}
&\int_{D_T}h_l(v)A(\nabla v)\cdot\nabla\bigl(q_{\delta}(-v)\zeta\bigr)\,\dd x\,\dd t \\
&=-\int_{D_T}q_\delta'(-v)h_l(v)A(\nabla v)\cdot\nabla v\,\zeta\,\dd x\,\dd t+\int_{D_T}q_\delta(-v)h_l(v)A(\nabla v)\cdot\nabla\zeta\,\dd x\,\dd t.
\end{aligned}
\]
By Assumption \ref{assu:assumption for a} and $A(0)=0$, we have $A(\nabla v)\cdot\nabla v\geq0$. Dropping this nonpositive dissipation term, rearranging the entropy inequality, and letting $\delta\downarrow0$ gives $\mathscr L_{l,v}^{-}(\zeta)\geq0$, which is based on the dominated convergence theorem.
The same argument with $\eta(r)=q_\delta(r)$ gives $\mathscr L_{l,v}^{+}(\zeta)\geq0$.

To justify \eqref{eq:riesz-representation}, fix a compact set $K\subset\overline D\times(0,T)$ and choose $\chi_K\in C_c^\infty(\overline D\times(0,T))$ such that $\chi_K\geq1$ on $K$.
If $\zeta$ is supported in $K$, then positivity gives
\[
-\|\zeta\|_{L^\infty(D_T)}\chi_K\leq\zeta\leq\|\zeta\|_{L^\infty(D_T)}\chi_K,\qquad\text{and}\qquad
|\mathscr L_{l,v}^{\pm}(\zeta)|\leq\mathscr L_{l,v}^{\pm}(\chi_K)\|\zeta\|_{L^\infty(\overline D\times(0,T))}.
\]
Hence $\mathscr L_{l,v}^{\pm}$ is locally of order zero and extends uniquely to a positive linear functional on $C_c(\overline D\times(0,T))$.
The Riesz--Markov theorem then yields a unique nonnegative Radon measure $\lambda_{l,v}^{\pm}$ satisfying \eqref{eq:riesz-representation}.
\end{proof}

\begin{lem}[Vanishing of boundary-layer functional terms]\label{lem:L-boundary-vanishing}
Let $\varphi\in C_c^\infty((0,T))$ be nonnegative. 
Then, for every boundary chart,
\[
\lim_{\widehat\varsigma\downarrow0}\lim_{\varsigma\downarrow0}\mathscr L_{l,v}^{\pm}\left(\chi_j\varphi(1-\sigma_{\widehat\varsigma,j})\sigma_{\varsigma,j}\right)=0.
\]
\end{lem}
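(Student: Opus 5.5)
By Lemma on positivity, $\mathscr L_{l,v}^{\pm}$ is represented by nonnegative Radon measures $\lambda_{l,v}^{\pm}$ on $\overline D\times(0,T)$. The test function is
\[
\zeta_{\varsigma,\widehat\varsigma}:=\chi_j\varphi(1-\sigma_{\widehat\varsigma,j})\sigma_{\varsigma,j}.
\]
It is smooth with compact time support, so
\[
\mathscr L_{l,v}^{\pm}(\zeta_{\varsigma,\widehat\varsigma})=\int_{\overline D\times(0,T)}\zeta_{\varsigma,\widehat\varsigma}\,\dd\lambda_{l,v}^{\pm}.
\]
The plan is to pass to the limit inside this integral by dominated convergence with respect to the finite measure $\lambda_{l,v}^\pm$ restricted to $\operatorname{supp}(\chi_j\varphi)$. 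That restriction is finite because $\operatorname{supp}(\chi_j\varphi)$ is compact in $\overline D\times(0,T)$. The integrand is bounded by $\chi_j\varphi\le\|\varphi\|_\infty$, since $0\le\sigma\le1$.

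\textbf{Inner limit $\varsigma\downarrow0$.} We need the pointwise behaviour of $\sigma_{\varsigma,j}$ on all of $\overline D$, including $\partial D$, since $\lambda^\pm_{l,v}$ may charge the lateral boundary. For $x\in D$, Lemma \ref{lem:inward-kernel-properties} gives $\sigma_{\varsigma,j}(x)\to1$. For $x\in\partial D\cap V_j$, I would show $\sigma_{\varsigma,j}(x)=0$ for small $\varsigma$ by the computation in that proof read in reverse. If $\rho_{\varsigma,j}(x,y)\ne0$, then $Y_d=X_d-\lambda_j\varsigma-\varsigma z_d$ and
\[
Y_d-\gamma_j(Y')\le X_d-\gamma_j(X')-(\lambda_j-\tilde L_j-1)\varsigma<0
\]
when $X_d=\gamma_j(X')$. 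Hence $y\notin D$ and the $y$-integral over $D$ vanishes. Thus $\sigma_{\varsigma,j}\to\one_D$ pointwise on $\operatorname{supp}\chi_j\cap\overline D$. Dominated convergence then gives
\[
\lim_{\varsigma\downarrow0}\mathscr L_{l,v}^{\pm}(\zeta_{\varsigma,\widehat\varsigma})=\int_{D\times(0,T)}\chi_j\varphi(1-\sigma_{\widehat\varsigma,j})\,\dd\lambda_{l,v}^\pm.
\]
The boundary charge drops out precisely because of the inward kernel.

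\textbf{Outer limit $\widehat\varsigma\downarrow0$.} On $D$ we have $1-\sigma_{\widehat\varsigma,j}\to0$ pointwise. The integrand is bounded by $\chi_j\varphi$, which is $\lambda^\pm$-integrable. A second application of dominated convergence gives the limit $0$.

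The main obstacle is the justification that the two integral representations may be used with the boundary-touching test functions. Specifically, $\zeta_{\varsigma,\widehat\varsigma}$ does not vanish near $\partial D$, because the factor $1-\sigma_{\widehat\varsigma,j}$ equals $1$ there. The Riesz representation in the positivity lemma is stated for smooth $\zeta$ with compact time support on $\overline D\times(0,T)$, which covers this case. The functionals $\mathscr L^\pm$ are legitimate for such $\zeta$ because the zero-level entropies lie in $\mathcal E_0$. I would double-check that $\sigma_{\varsigma,j}$ is smooth up to $\overline D$ on $\operatorname{supp}\chi_j$; this holds because it is a convolution of $\one_D$ with a smooth kernel.

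The other point to verify carefully is the pointwise boundary value $\sigma_{\varsigma,j}=0$ on $\partial D\cap\operatorname{supp}\chi_j$. It must be uniform enough that a single small $\varsigma$ works for each point, and the choice $\lambda_j>\tilde L_j+2$ provides the needed margin.
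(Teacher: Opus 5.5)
Your argument is correct and is essentially the paper's proof: you represent $\mathscr L^{\pm}_{l,v}$ by the measures $\lambda^{\pm}_{l,v}$, then take the inner limit by dominated convergence using $\sigma_{\varsigma,j}\to1$ on $D$ and $\chi_j\sigma_{\varsigma,j}=0$ on $\partial D$, and the outer limit using $1-\sigma_{\widehat\varsigma,j}\to0$ on $D$. One small correction: your ``main obstacle'' is illusory, since your own reverse computation shows that $\sigma_{\varsigma,j}$ vanishes on $\operatorname{supp}\chi_j$ in a boundary layer of width of order $\varsigma$, so for each sufficiently small $\varsigma$ the test function is in fact compactly supported in $D\times(0,T)$, as Lemma \ref{lem:inward-kernel-properties} states and as the paper's proof notes, contrary to your claim that it does not vanish near $\partial D$.
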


\begin{proof}
For fixed $\widehat\varsigma,\varsigma>0$, the function $\chi_j\varphi(1-\sigma_{\widehat\varsigma,j})\sigma_{\varsigma,j}$ is compactly supported in $D\times(0,T)$. 
Hence
\[
\begin{aligned}
&\mathscr L_{l,v}^{\pm}\left(\chi_j\varphi(1-\sigma_{\widehat\varsigma,j})\sigma_{\varsigma,j}\right)=\int_{\overline D\times(0,T)}\chi_j\varphi(1-\sigma_{\widehat\varsigma,j})\sigma_{\varsigma,j}\,\dd\lambda_{l,v}^{\pm}.
\end{aligned}
\]
For $x\in D$, we have $\sigma_{\varsigma,j}(x)\to1$ as $\varsigma\downarrow0$, whereas the product $\chi_j\sigma_{\varsigma,j}$ vanishes on $\partial D$. 
Dominated convergence theorem therefore gives
\[
\begin{aligned}
&\lim_{\varsigma\downarrow0}\mathscr L_{l,v}^{\pm}\left(\chi_j\varphi(1-\sigma_{\widehat\varsigma,j})\sigma_{\varsigma,j}\right) =\int_{D\times(0,T)}\chi_j\varphi(1-\sigma_{\widehat\varsigma,j})\,\dd\lambda_{l,v}^{\pm}.
\end{aligned}
\]
Since $1-\sigma_{\widehat\varsigma,j}(x)\to0$ as $\widehat\varsigma\downarrow0$ for every $x\in D$, a second application of the dominated convergence theorem proves the result.
\end{proof}

\subsection{A time-diagonal source lemma}

\begin{lem}[Time-diagonal source estimate]\label{lem:time-diagonal-source}
Let $a,b$ be finite measurable functions on $D_T$, let $F,G\in L^1(D_T)$, and let $\zeta\in L^\infty(D_T)$ be nonnegative with $\operatorname{supp}_t\zeta\Subset(0,T)$. 
Assume
\[
F-G\leq0\quad\text{a.e. on }\{a=b\}.
\]
Then
\begin{align}
&\limsup_{\theta\downarrow0}\int_0^T\!\int_0^T\!\int_D\zeta(x,t)\varrho_\theta(s-t)\one_{\{a(x,s)>b(x,t)\}}\bigl[F(x,s)-G(x,t)\bigr]\,\dd x\,\dd s\,\dd t\notag\\
&\leq\int_{D_T}\zeta(x,t)\one_{\{a>b\}}(F-G)\,\dd x\,\dd t.
\label{eq:time-diagonal-source}
\end{align}
\end{lem}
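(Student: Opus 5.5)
The plan is to split the integrand into a time-translation error and a diagonal term, and to control the diagonal term by the sign condition on $\{a=b\}$ and approximate continuity in time. Throughout, $\theta$ is small enough that $\operatorname{supp}\varrho_\theta(\cdot-t)\subset(0,T)$ for every $t\in\operatorname{supp}_t\zeta$; this is possible because $\operatorname{supp}_t\zeta\Subset(0,T)$.

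\emph{Step 1 (translation error).} Write
\[
F(x,s)-G(x,t)=\bigl[F(x,s)-F(x,t)\bigr]+\bigl[F(x,t)-G(x,t)\bigr].
\]
The contribution of the first bracket is bounded in absolute value by
\[
\|\zeta\|_{L^\infty}\int_{\mathbb R}\varrho_\theta(\tau)\int_{D_T}|F(x,t+\tau)-F(x,t)|\,\dd x\,\dd t\,\dd\tau ,
\]
where $F$ is extended by zero outside $D_T$. Here I simply drop the indicator, since it lies in $[0,1]$. This bound tends to $0$ as $\theta\downarrow0$ by continuity of translations in $L^1(\mathbb R;L^1(D))$. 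Hence only the second bracket matters, and it equals
\[
\int_{D_T}\zeta(x,t)\bigl(F-G\bigr)(x,t)\,g_\theta(x,t)\,\dd x\,\dd t,\qquad g_\theta(x,t):=\int_0^T\varrho_\theta(s-t)\one_{\{a(x,s)>b(x,t)\}}\,\dd s\in[0,1].
\]
The function $g_\theta$ is measurable by Fubini, since $\{(x,s,t):a(x,s)>b(x,t)\}$ is measurable.

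\emph{Step 2 (pointwise behaviour of $g_\theta$).} By Fubini, $a(x,\cdot)$ is a finite measurable function on $(0,T)$ for a.e.\ $x$. By Denjoy's theorem it is approximately continuous at a.e.\ $t$. So for a.e.\ $(x,t)$ and every $\varepsilon>0$, the set $\{s:|a(x,s)-a(x,t)|\geq\varepsilon\}$ has density zero at $t$. Since $\varrho_\theta\leq\|\varrho\|_\infty\theta^{-1}\one_{(-\theta,\theta)}$, the $\varrho_\theta$-weighted mass of such a set tends to $0$. I then distinguish three cases.
\begin{itemize}
\item If $a(x,t)>b(x,t)$, take $\varepsilon=a(x,t)-b(x,t)$; then $g_\theta(x,t)\to1$.
\item If $a(x,t)<b(x,t)$, the symmetric argument gives $g_\theta(x,t)\to0$.
\item On $\{a=b\}$, nothing is known about $g_\theta$ except $0\leq g_\theta\leq1$.
\end{itemize}
Making this density-to-kernel passage precise is the main technical point. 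It is elementary once approximate continuity is invoked, but it is the only place where the mere measurability of $a$ and $b$ must be handled.

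\emph{Step 3 (conclusion).} Split the diagonal integral over $\{a\neq b\}$ and $\{a=b\}$.

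On $\{a\neq b\}$, the integrand $\zeta(F-G)g_\theta$ is dominated by $\|\zeta\|_\infty|F-G|\in L^1(D_T)$. By Step 2 and dominated convergence, this part converges to
\[
\int_{D_T}\zeta\one_{\{a>b\}}(F-G)\,\dd x\,\dd t.
\]
On $\{a=b\}$, the hypothesis $F-G\leq0$ together with $\zeta\geq0$ and $g_\theta\geq0$ makes the integrand nonpositive for every $\theta$. Hence its $\limsup$ is at most $0$.

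Adding the vanishing translation error from Step 1 yields \eqref{eq:time-diagonal-source}.
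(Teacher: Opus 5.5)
Your proof is correct, but it is organized differently from the paper's. The paper works with the shifted functional $J(r)=\int_{D_T}\zeta(x,t)\one_{\{a(x,t+r)>b(x,t)\}}[F(x,t+r)-G(x,t)]\,\dd x\,\dd t$. It takes $r_n\to0$ and extracts a subsequence along which both $F(\cdot,\cdot+r_n)\to F$ and $a(\cdot,\cdot+r_n)\to a$ a.e.; these come from $L^1$-continuity of translations and convergence in measure of translates. It then applies the reverse Fatou lemma to bound $\limsup_{r\to0}J(r)$ by the right-hand side, and finally averages $J$ against $\varrho_\theta$. Because reverse Fatou needs an integrable upper bound uniform in $n$, the paper first treats bounded $F,G$ and then removes boundedness by truncation with $T_N$ and the uniform smallness of the translated tails.

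You instead split off the shift of $F$ at the outset as an $L^1$ translation error. The remaining integrand is then the fixed $L^1$ weight $\zeta(F-G)$ times the averaged indicator $g_\theta\in[0,1]$. This makes the truncation step unnecessary, and plain dominated convergence handles $\{a\neq b\}$; there you even get convergence, not just a $\limsup$ bound. The sign hypothesis enters only through the trivial bound on $\{a=b\}$.

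The price is a genuinely pointwise statement about $g_\theta$, for which you invoke approximate continuity (Denjoy) of $a(x,\cdot)$ instead of the softer fact that translates converge in measure. To pass from ``for a.e.\ $x$, a.e.\ $t$'' to ``a.e.\ $(x,t)$'' via Tonelli, you should say why the exceptional set is measurable. This holds because $\theta\mapsto g_\theta(x,t)$ is continuous on $(0,\infty)$, so the limits can be taken along rational $\theta$.

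Alternatively, you could avoid Denjoy entirely. The paper's observation that $\one_{\{a(\cdot,\cdot+\tau)>b\}}\to\one_{\{a>b\}}$ in measure on $\{a\neq b\}$ as $\tau\to0$ gives $g_\theta\to\one_{\{a>b\}}$ in $L^1(\{a\neq b\})$. Since $|g_\theta|\leq1$ and $\zeta(F-G)\in L^1(D_T)$, this suffices for your Step 3.
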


\begin{proof}
Extend all functions by zero outside $(0,T)$ in the time variable and set
\[
J(r):=\int_{D_T}\zeta(x,t)\one_{\{a(x,t+r)>b(x,t)\}}\bigl[F(x,t+r)-G(x,t)\bigr]\,\dd x\,\dd t.
\]
We claim that
\[
\limsup_{r\to0}J(r)\leq\int_{D_T}\zeta\one_{\{a>b\}}(F-G)\,\dd x\,\dd t.
\]

Let $r_n\to0$. 
Since translations are continuous in $L^1$, after passing to a subsequence, we have $F(x,t+r_n)\to F(x,t)$ almost everywhere. 
Since translations of a finite measurable function converge locally in measure, after a further subsequence, $a(x,t+r_n)\to a(x,t)$ almost everywhere.

On $\{a>b\}$, the indicator converges to one, and on $\{a<b\}$ it converges to zero.
On $\{a=b\}$, the assumption on $F-G$ gives
\[
\limsup_{n\to\infty}\one_{\{a(x,t+r_n)>b(x,t)\}}\bigl[F(x,t+r_n)-G(x,t)\bigr]\leq0.
\]
For bounded $F$ and $G$, the claim follows from the reverse Fatou inequality. 
We now consider general $F,G\in L^1(D_T)$.
Let $T_N(r):=\max\{-N,\min\{r,N\}\}$.
Since time translations are isometries in $L^1(\mathbb R;L^1(D))$, we have
\begin{align*}
&\sup_{|r|<r_0}\|F(\cdot,\cdot+r)-T_N(F(\cdot,\cdot+r))\|_{L^1(D_T)}=\|F-T_N(F)\|_{L^1(D_T)}\longrightarrow0\quad\text{as }N\to\infty.
\end{align*}
The same estimate holds for $G$.
Therefore, the translated tails are uniformly small for $r$ near zero.
We may first apply the bounded argument to $T_N(F)$ and $T_N(G)$, and then let $N\to\infty$.
Consequently, we obtain the claimed upper bound for general $F,G\in L^1(D_T)$.

Finally, note that
\begin{align*}
&\int_0^T\!\int_0^T\!\int_D\zeta(x,t)\varrho_\theta(s-t)\one_{\{a(x,s)>b(x,t)\}}\bigl[F(x,s)-G(x,t)\bigr]\,\dd x\,\dd s\,\dd t  \\
&=\int_{\mathbb R}\varrho_\theta(r)J(r)\,\dd r.
\end{align*}
Since $\varrho_\theta$ is nonnegative and concentrates at zero, \eqref{eq:time-diagonal-source} follows.
\end{proof}

\subsection{Interior doubled inequality}
\begin{lem}[Interior local Kato inequality]\label{lem:interior-local-Kato}
Let $\chi\in C_c^\infty(D)$ and $\varphi\in C_c^\infty((0,T))$ be nonnegative. 
Then, for every $l>M+1$,
\begin{align*}
&-\int_{D_T}\mathcal B_l(u,\tilde u)\chi\varphi'\,\dd x\,\dd t\notag\\
&\leq\int_{D_T}\one_{\{u>\tilde u\}}\left[h_l(\tilde u)A(\nabla\tilde u)-h_l(u)A(\nabla u)\right]\cdot\nabla\chi\,\varphi\,\dd x\,\dd t\notag\\
&\quad+\int_{D_T}\one_{\{u>\tilde u\}}\left[f(x,t,u)h_l(u)-\tilde f(x,t,\tilde u)h_l(\tilde u)\right]\chi\varphi\,\dd x\,\dd t+C\mathfrak e_l,
\end{align*}
where the constant $C$ depends on $\Vert\chi\Vert_{L^\infty(D)}$ and $\Vert\varphi\Vert_{L^\infty(0,T)}$.
\end{lem}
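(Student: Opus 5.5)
We use the Kružkov doubling of variables in space and time, with the entropies $\eta(r)=\pm q_\delta(\pm(r-k))$ evaluated at the other solution. Introduce variables $(x,t)$ for $u$ and $(y,s)$ for $\tilde u$. Define the test function
\[
\Phi(x,t,y,s)=\chi(x)\varphi(t)\rho_\varsigma(x-y)\varrho_\theta(t-s).
\]
Because $\chi\in C_c^\infty(D)$, for small $\varsigma$ the function is compactly supported in both spatial variables. Case (1) of admissibility therefore applies to every entropy in $\mathcal E$, and no $\eta(0)=0$ restriction is needed. The entropies are as follows.

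\begin{itemize}
\item For $u$, take $\eta(r)=q_\delta(r-\tilde u(y,s))$ and test with $\Phi(\cdot,\cdot,y,s)$.
\item For $\tilde u$, take $\eta(r)=-q_\delta(u(x,t)-r)$, which is nondecreasing in $r$, and test with $\Phi(x,t,\cdot,\cdot)$.
\end{itemize}

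Both entropies have sup norm at most $1$, so the defect measures $\mu^u_{l,1}$ and $\mu^{\tilde u}_{l,1}$ are uniform in the frozen parameter. After integrating over the frozen variables, their contributions are bounded by $\|\chi\|_\infty\|\varphi\|_\infty\int\rho_\varsigma\int\varrho_\theta\cdot\mathfrak e_l$, which gives the term $C\mathfrak e_l$. The initial terms vanish because $\varphi$ has compact support in $(0,T)$, so the $\xi$ lower limits contribute only constants killed by $\int\partial_t\phi$.

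Next we add the two inequalities and pass to the limits, in the order $\delta\downarrow0$, then $\theta\downarrow0$, then $\varsigma\downarrow0$.

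\textbf{Time terms.} The time derivatives of $\varrho_\theta$ cancel, because $\partial_t+\partial_s$ only hits $\varphi$. The two entropy primitives combine, as $\delta\downarrow0$, into $\mathcal B_l(u(x,t),\tilde u(y,s))$. Then $\theta,\varsigma\to0$ by $L^1$ continuity of translations yields $-\int\mathcal B_l(u,\tilde u)\chi\varphi'$.

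\textbf{Diffusion terms.} Expanding $\nabla(\eta(u)\Phi)$ produces three kinds of terms.
\begin{itemize}
\item Terms with $q_\delta'$: the cross term between the two solutions is handled by the classical Carrillo argument. Since $A=A(\nabla u)$ depends only on the gradient, the mixed product $q_\delta'(u-\tilde u)\bigl(h_l(u)A(\nabla u)-h_l(\tilde u)A(\nabla\tilde u)\bigr)\cdot(\nabla_x u-\nabla_y\tilde u)$ is handled via monotonicity. This is the main obstacle: $h_l$ multiplies $A$ and the gradients live at different points. I would first let $\delta\downarrow0$ with $\varsigma,\theta$ fixed. The $q_\delta'$ terms are then supported in $\{0<u-\tilde u<\delta\}$ and produce a nonpositive contribution up to $\|h_l'\|_\infty$ error terms localized on $\{l<|u|<l+1\}$. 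These errors I would absorb into the defect, or bound using that the $h_l'$ pieces were already accounted for in the entropy inequality's form.
\item Terms with $\nabla_x\rho_\varsigma+\nabla_y\rho_\varsigma=0$: these cancel.
\item Terms with $\nabla\chi$: these give the flux term in the claim.
\end{itemize}

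\textbf{Reaction terms.} These are $-\int q_\delta(\psi(x)-\tilde u(y,s))\Phi\,\dd\nu(x,t)+\int q_\delta(u(x,t)-\psi(y))\Phi\,\dd\tilde\nu(y,s)$. Since $\tilde u\ge\psi$ and $u\ge\psi$, the first is at most $0$ up to the replacement of $\psi(y)$ by $\psi(x)$. For the second I would write $u(x,t)-\psi(y)\le (u-\psi)(x)+\omega_\psi(\varsigma)$. In either case we discard nonpositive parts. The remaining positive part is bounded by $\one_{\{|\,\cdot\,|\lesssim\omega_\psi(\varsigma)\}}$-type terms, which I expect to be controlled by $\nu$, $\tilde\nu$ mass times $\mathbf 1_{\{\omega_\psi(\varsigma)\ge\text{gap}\}}$, and which vanish as $\varsigma\downarrow0$ (with $\delta$ coupled to $\varsigma$ if needed). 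The sign structure of $\eta(\psi)$ is exactly what makes this work.

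\textbf{Source terms.} These become $\one_{\{u(x,t)>\tilde u(y,s)\}}[f h_l(u)-\tilde f h_l(\tilde u)]$. Lemma~\ref{lem:time-diagonal-source} handles the time diagonal, with hypothesis $F-G\le0$ on $\{a=b\}$ from $f\le\tilde f$ and equality of the $h_l$ factors there. The space diagonal then follows by $L^1$ translation continuity.
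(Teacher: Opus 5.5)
Your skeleton (the entropies $q_\delta(r-k)$ and $-q_\delta(k-r)$ frozen at the other solution, the splitting of the primitives, the cancellation $\nabla_x\rho_\varsigma+\nabla_y\rho_\varsigma=0$, and Lemma~\ref{lem:time-diagonal-source}) matches the paper. There is, however, a genuine gap in the obstacle term, which is the one new ingredient of this lemma.

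\textbf{The obstacle term.} In your variables, the reaction contributions on the left-hand side of the summed inequality are
\[
-\int q_\delta\bigl(\psi(x)-\tilde u(y,s)\bigr)\Phi\,\dd\nu(x,t)\,\dd y\,\dd s
\quad\text{and}\quad
+\int q_\delta\bigl(u(x,t)-\psi(y)\bigr)\Phi\,\dd\tilde\nu(y,s)\,\dd x\,\dd t .
\]
The second term is nonnegative and is simply dropped. The first is nonpositive, and it is exactly the term that must be shown to be small. So ``discarding nonpositive parts'' goes the wrong way, and your upper bound on $u(x,t)-\psi(y)$ is not needed.

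What works is the following. Use $\tilde u(y,s)\geq\psi(y)$ for Lebesgue-a.e.\ $(y,s)$; this is legitimate because $(y,s)$ is integrated against $\dd y\,\dd s$ in this term. Combine it with the monotonicity of $q_\delta$, $q_\delta(0)=0$, and its Lipschitz constant:
\[
q_\delta\bigl(\psi(x)-\tilde u(y,s)\bigr)\leq q_\delta\bigl(\psi(x)-\psi(y)\bigr)\leq\|q_\delta'\|_{L^\infty(\mathbb R)}\,\omega_\psi(C\varsigma)\leq C\delta^{-1}\omega_\psi(C\varsigma).
\]
The error is therefore $C\delta^{-1}\omega_\psi(C\varsigma)\,\nu(D\times[0,T))$, and it vanishes only when $\varsigma\downarrow0$ with $\delta$ fixed.

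\textbf{Why your order fails.} If you send $\delta\downarrow0$ first, the term tends to $\int\one_{\{\psi(x)>\tilde u(y,s)\}}\Phi\,\dd\nu\,\dd y\,\dd s$, which need not vanish as $\varsigma\downarrow0$. Suppose, say, $u=\tilde u$ coincides with $\psi$ on a space--time region charged by $\nu$ on which $\psi$ is affine and nonconstant. Then the kernel average of $\one_{\{\psi(x)>\psi(y)\}}$ is $1/2$ for even $\rho$, independently of $\varsigma$. An error of order $\frac12\int\chi\varphi\,\dd\nu$ over that region survives. Your parenthetical ``with $\delta$ coupled to $\varsigma$ if needed'' contradicts the order you fixed and is not carried out. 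A threshold (``gap'') argument would likewise need $\omega_\psi(C\varsigma)$ to be small relative to $\delta$.

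\textbf{The remedy.} Take the limits in the order $\varsigma\downarrow0$, then $\delta\downarrow0$, then $\theta\downarrow0$, and handle the remaining terms in that order.
\begin{enumerate}
\item The time commutator between the $\delta$-primitives and $\mathcal B_l$ is of size $C\omega_{\beta,l}(2\delta)(1+\theta^{-1})$, so $\delta$ must go before $\theta$.
\item The $h_l$-mismatch in the diffusion term is not absorbed into $\mathfrak e_l$. It disappears because $|u-\tilde u|<\delta$ on the support of $q_\delta'$. Using $|h_l(a)-h_l(b)|\leq\|h_l'\|_{L^\infty(\mathbb R)}|a-b|$ and $q_\delta'(a-b)|a-b|\leq C\one_{\{0<a-b<\delta\}}$, it is bounded by the doubled integral of $\one_{\{0<u(x,t)-\tilde u(y,s)<\delta\}}\Phi\,\bigl(1+|\nabla\mathcal T_{l+2}(u)(x,t)|^p+|\nabla\mathcal T_{l+2}(\tilde u)(y,s)|^p\bigr)$. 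This vanishes after $\varsigma\downarrow0$ and then $\delta\downarrow0$.
\item Lemma~\ref{lem:time-diagonal-source} has to be applied on the spatial diagonal, i.e.\ after $\varsigma\downarrow0$. At fixed $\varsigma>0$, as in your order, its hypothesis would be needed on $\{u(x,t)=\tilde u(y,t)\}$ with $x\neq y$. There $f(x,t,w)-\tilde f(y,t,w)$ has no sign.
\end{enumerate}
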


\begin{proof}
Choose $\theta>0$ such that $
2\theta<\operatorname{dist}\bigl(\operatorname{supp}\varphi,\{0,T\}\bigr)$
and set 
\[
\Phi_{\theta,\varsigma}(x,t,y,s):=\chi(x)\rho_\varsigma(x-y)\varphi(t)\varrho_\theta(s-t).
\]
We will apply the space--time doubling-of-variables method \cite{kruvzkov1970first}. 
Note that throughout this section, unmarked integrals are taken over $D^2\times(0,T)^2$ with respect to $\dd x\,\dd t\,\dd y\,\dd s$. 
Write
\[
U:=u(y,s),\qquad V:=\tilde u(x,t),\qquad P:=\nabla_yu(y,s),\qquad Q:=\nabla_x\tilde u(x,t).
\]

For $\delta\in(0,1)$, use the entropy $r\mapsto q_\delta(r-V)$ in the entropy inequality \eqref{eq:entropy formula-1} for $u$ and $r\mapsto-q_\delta(U-r)$ in the entropy inequality for $\tilde u$.
After integrating in the doubled variables and adding, one obtains
\[
I_1\leq I_2+I_3+I_4+I_5+C\mathfrak e_l,
\]
where
\begin{align*}
I_1&=\int\partial_t\Phi_{\theta,\varsigma}\int_{\tilde\xi(x)}^Vq_\delta(U-r)h_l(r)\,\dd\beta(r)-\int\partial_s\Phi_{\theta,\varsigma}\int_{\xi(y)}^Uq_\delta(r-V)h_l(r)\,\dd\beta(r),\\
I_2&=\int q_\delta(\psi(y)-V)\Phi_{\theta,\varsigma}\,\dd\nu(y,s)\,\dd x\,\dd t-\int q_\delta(U-\psi(x))\Phi_{\theta,\varsigma}\,\dd\tilde\nu(x,t)\,\dd y\,\dd s,\\
I_3&=-\int q_\delta'(U-V)\Phi_{\theta,\varsigma}\left[h_l(U)A(P)\cdot P+h_l(V)A(Q)\cdot Q\right],\\
I_4&=\int q_\delta(U-V)\left[h_l(V)A(Q)\cdot\nabla_x\Phi_{\theta,\varsigma}-h_l(U)A(P)\cdot\nabla_y\Phi_{\theta,\varsigma}\right],\\
I_5&=\int\left[f(y,s,U)h_l(U)-\tilde f(x,t,V)h_l(V)\right]q_\delta(U-V)\Phi_{\theta,\varsigma}.
\end{align*}

Define
\[
\mathcal B_{\delta,l}^{1}(a,b):=\one_{\{a>b\}}\int_b^aq_\delta(r-b)h_l(r)\,\dd\beta(r)
\]
and
\[
\mathcal B_{\delta,l}^{2}(a,b):=\one_{\{a>b\}}\int_b^aq_\delta(a-r)h_l(r)\,\dd\beta(r).
\]

For the first time term, split the interval at $U$:
\[
\begin{aligned}
\int_{\tilde\xi(x)}^Vq_\delta(U-r)h_l(r)\,\dd\beta(r)&=\int_{\tilde\xi(x)}^Uq_\delta(U-r)h_l(r)\,\dd\beta(r)-\mathcal B_{\delta,l}^{2}(U,V).
\end{aligned}
\]
The first term on the right depends on $(x,y,s)$ but not on $t$.
Therefore, by Fubini's theorem,
\[
\begin{aligned}
&\int\partial_t\Phi_{\theta,\varsigma}\int_{\tilde\xi(x)}^Uq_\delta(U-r)h_l(r)\,\dd\beta(r)\\
&=\int_{D^2\times(0,T)}\left[\int_{\tilde\xi(x)}^Uq_\delta(U-r)h_l(r)\,\dd\beta(r)\right]\left[\int_0^T\partial_t\Phi_{\theta,\varsigma}(x,t,y,s)\,\dd t\right]\dd x\,\dd y\,\dd s.
\end{aligned}
\]
Since $\Phi_{\theta,\varsigma}$ vanishes near $t=0$ and $T$, the inner $t$-integral is zero.
Similarly, we have
\[
\begin{aligned}
\int_{\xi(y)}^Uq_\delta(r-V)h_l(r)\,\dd\beta(r)&=\int_{\xi(y)}^Vq_\delta(r-V)h_l(r)\,\dd\beta(r)+\mathcal B_{\delta,l}^{1}(U,V).
\end{aligned}
\]
The first term on the right is independent of $s$, and hence its product with $\partial_s\Phi_{\theta,\varsigma}$ also integrates to zero. 
Thus
\[
I_1=-\int\mathcal B_{\delta,l}^{2}(U,V)\partial_t\Phi_{\theta,\varsigma}-\int\mathcal B_{\delta,l}^{1}(U,V)\partial_s\Phi_{\theta,\varsigma}.
\]

For $i=1,2$, with the definition of $\mathcal B_{l}$ in \eqref{eq:defBl}, we have
\[
\left|\mathcal B_{\delta,l}^{i}(a,b)-\mathcal B_l(a,b)\right|\leq\omega_{\beta,l}(2\delta).
\]
It follows that
\[
I_1=-\int\mathcal B_l(U,V)(\partial_t+\partial_s)\Phi_{\theta,\varsigma}+R_{\delta,\theta,\varsigma}^{t},
\]
where
\[
(\partial_t+\partial_s)\Phi_{\theta,\varsigma}=\chi(x)\rho_\varsigma(x-y)\varphi'(t)\varrho_\theta(s-t),\qquad|R_{\delta,\theta,\varsigma}^{t}|\leq C\omega_{\beta,l}(2\delta)\left(1+\theta^{-1}\right).
\]
Moreover, since $V\geq\psi(x)$ and $q_\delta(0)=0$, we have
\[
q_\delta(\psi(y)-V)\leq q_\delta(\psi(y)-\psi(x))\leq\Vert q_\delta'\Vert_{L^\infty(\mathbb R)}|\psi(y)-\psi(x)|.
\]
Using the property of $q_\delta$ and $\omega_\psi$ and noting that the second term in $I_2$ is nonpositive, we have
\[
I_2\leq C\frac{\omega_\psi(C\varsigma)}{\delta}\nu(D\times[0,T)).
\]

Using $\nabla_x\rho_\varsigma+\nabla_y\rho_\varsigma=0$, a direct calculation gives
\[
\begin{aligned}
I_3+I_4&=-\int q_\delta'(U-V)\Phi_{\theta,\varsigma}\left[h_l(U)A(P)-h_l(V)A(Q)\right]\cdot(P-Q)\\
&\quad+\int q_\delta(U-V)\left[h_l(V)A(Q)-h_l(U)A(P)\right]\cdot\nabla\chi(x)\rho_\varsigma(x-y)\varphi(t)\varrho_\theta(s-t).
\end{aligned}
\]
Note that
\[
\begin{aligned}
&\left[h_l(U)A(P)-h_l(V)A(Q)\right]\cdot(P-Q)\\
&=h_l(U)\left[A(P)-A(Q)\right]\cdot(P-Q)+\left[h_l(U)-h_l(V)\right]A(Q)\cdot(P-Q).
\end{aligned}
\]
The first term on the right-hand side is nonnegative by \eqref{eq:A-monotonicity-uniqueness}.
The bounds
\[
|h_l(U)-h_l(V)|\leq\|h_l'\|_{L^\infty(\mathbb{R})}|U-V|,\qquad q_\delta'(U-V)|U-V|\leq C\one_{\{0<U-V<\delta\}}
\]
control the remaining term
\[
E_{\delta,\theta,\varsigma}:=-\int q_\delta'(U-V)\Phi_{\theta,\varsigma}[h_l(U)-h_l(V)]A(Q)\cdot(P-Q),
\]
which satisfies
\[
\begin{aligned}
|E_{\delta,\theta,\varsigma}|\leq C\int&\one_{\{0<U-V<\delta\}}\Phi_{\theta,\varsigma}\left[1+|\nabla\mathcal T_{l+2}(u(y,s))|^p+|\nabla\mathcal T_{l+2}(\tilde u(x,t))|^p\right].
\end{aligned}
\]

Fix $\delta,\theta>0$.
Since the family $\rho_\varsigma$ is a spatial approximate identity and all truncated fluxes are integrable, we let $\varsigma\downarrow0$.
Therefore, the spatially doubled terms converge to their spatial diagonal counterparts.
In particular, we have
\begin{align*}
\limsup_{\varsigma\downarrow0}|E_{\delta,\theta,\varsigma}|&\leq C\int_0^T\!\int_0^T\!\int_D\one_{\{0<u(x,s)-\tilde u(x,t)<\delta\}}\chi(x)\varphi(t)\varrho_\theta(s-t)\\
&\quad\times\left(1+|\nabla\mathcal T_{l+2}(u(x,s))|^p+|\nabla\mathcal T_{l+2}(\tilde u(x,t))|^p\right)\,\dd x\,\dd s\,\dd t.
\end{align*}
For fixed $\theta$, the integrand on the right-hand side is dominated by an integrable function.
Moreover, the indicator converges to zero almost everywhere as $\delta\downarrow0$.
Therefore, the dominated convergence theorem gives
\[
\lim_{\delta\downarrow0}\limsup_{\varsigma\downarrow0}|E_{\delta,\theta,\varsigma}|=0.
\]

For the source term, set
\[
F_l(x,t):=f(x,t,u(x,t))h_l(u(x,t)),\qquad\tilde F_l(x,t):=\tilde f(x,t,\tilde u(x,t))h_l(\tilde u(x,t)).
\]
After letting $\varsigma\downarrow0$ and then $\delta\downarrow0$, we have
\[
\begin{aligned}
I_5\longrightarrow\int_0^T\!\int_0^T\!\int_D\one_{\{u(x,s)>\tilde u(x,t)\}}\bigl[F_l(x,s)-\tilde F_l(x,t)\bigr]\chi(x)\varphi(t)\varrho_\theta(s-t)\,\dd x\,\dd s\,\dd t.
\end{aligned}
\]
On $\{(x,t)\in D_T: u(x,t)=\tilde u(x,t)\}$, we have almost everywhere
\[
F_l(x,t)-\tilde F_l(x,t)=h_l(u)\bigl[f(x,t,u)-\tilde f(x,t,u)\bigr]
\leq0.
\]
Lemma \ref{lem:time-diagonal-source} therefore gives
\[
\begin{aligned}
\limsup_{\theta\downarrow0}\limsup_{\delta\downarrow0}\limsup_{\varsigma\downarrow0}I_5\leq\int_{D_T}\one_{\{u>\tilde u\}}\bigl[F_l-\tilde F_l\bigr]\chi\varphi\,\dd x\,\dd t.
\end{aligned}
\]
Moreover, by passing to the limits in the order
\[
\varsigma\downarrow0,\qquad\delta\downarrow0,\qquad\theta\downarrow0,
\]
and using the continuity of translation in $L^1(D_T)$, the time term converges to $-\int_{D_T}\mathcal B_l(u,\tilde u)\chi\varphi'\,\dd x\,\dd t$, and the localization flux converges to
\[
\int_{D_T}\one_{\{u>\tilde u\}}\left[h_l(\tilde u)A(\nabla\tilde u)-h_l(u)A(\nabla u)\right]\cdot\nabla\chi\,\varphi\,\dd x\,\dd t.
\]
This proves the lemma.
\end{proof}

\subsection{Signed chartwise boundary inequality}

Define
\[
\mathscr G_l(u,\tilde u):=\one_{\{u>\tilde u\}}\left[h_l(\tilde u)A(\nabla\tilde u)-h_l(u)A(\nabla u)\right]
\]
and
\[
\mathscr S_l(u,\tilde u):=\one_{\{u>\tilde u\}}\left[f(x,t,u)h_l(u)-\tilde f(x,t,\tilde u)h_l(\tilde u)\right].
\]
For a smooth spatial function $\chi\in C_c^\infty(\mathbb{R}^d)$, set
\[
\begin{aligned}
\mathscr F_l(\chi,\varphi)&:=-\int_{D_T}\mathcal B_l(u,\tilde u)\chi\varphi'\,\dd x\,\dd t-\int_{D_T}\mathscr G_l(u,\tilde u)\cdot\nabla\chi\,\varphi\,\dd x\,\dd t-\int_{D_T}\mathscr S_l(u,\tilde u)\chi\varphi\,\dd x\,\dd t.
\end{aligned}
\]

We also use
\[
\mathbf A_l^+(v):=\one_{\{v>0\}}h_l(v)A(\nabla v),\qquad\mathbf A_l^-(v):=-\one_{\{v<0\}}h_l(v)A(\nabla v).
\]

\begin{lem}[Chartwise signed Kato inequality]\label{lem:chartwise-signed-Kato}
For $j\in\{1,\ldots,J\}$, let $\chi\in C_c^\infty(V_j)$ be nonnegative, and let $\varphi\in C_c^\infty((0,T))$ be nonnegative.
Then
\[
\begin{aligned}
\mathscr F_l(\chi,\varphi)&\leq\limsup_{\varsigma\downarrow0}\mathscr L_{l,\tilde u}^{-}\left(\chi\varphi\sigma_{\varsigma,j}\right)+\limsup_{\varsigma\downarrow0}\mathscr L_{l,u}^{+}\left(\chi\varphi\sigma_{\varsigma,j}\right)+C\mathfrak e_l.
\end{aligned}
\]
Here $C$ depends only on $\|\chi\|_{L^\infty(D)}$ and $\|\varphi\|_{L^\infty(0,T)}$.
In particular, it is independent of derivatives of $\chi$.
\end{lem}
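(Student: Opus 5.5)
We run the doubling argument of Lemma~\ref{lem:interior-local-Kato} again, with the symmetric kernel $\rho_\varsigma(x-y)$ replaced by the inward kernel $\rho_{\varsigma,j}(x,y)$ of Lemma~\ref{lem:inward-kernel-properties}. The new feature is the boundary: we double with the asymmetric choice in which $x$ (the variable of $\tilde u$) is pushed inward. Set
\[
\Phi_{\theta,\varsigma}(x,t,y,s):=\chi(y)\rho_{\varsigma,j}(x,y)\varphi(t)\varrho_\theta(s-t).
\]
For fixed $(y,s)$ with $y\in\overline D\cap V_j$, the map $x\mapsto\Phi_{\theta,\varsigma}$ is compactly supported in $D$. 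It is therefore admissible for the non-normalized entropy $r\mapsto -q_\delta(U-r)$ in the inequality for $\tilde u$. In the $y$-variable the function may meet $\partial D$, so for $u$ we cannot use $r\mapsto q_\delta(r-V)$ directly, since it need not vanish at $0$.

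We split this entropy at the zero level:
\[
q_\delta(r-V)=\bigl[q_\delta(r-V)-q_\delta(-V)\bigr]+q_\delta(-V).
\]
The first summand lies in $\mathcal E_0$, so it is admissible up to the boundary. The constant part $q_\delta(-V)$ is nonzero only where $V<0$. Written as a test against the equation for $u$, it is handled by the zero-level functional: where $V<0$ and $u$ may cross zero, we use the bound $q_\delta(r-V)\le \one_{\{r>0\}}$-type comparisons, and the pieces that cannot be absorbed become $\mathscr L^{+}_{l,u}$ and $\mathscr L^-_{l,\tilde u}$ tested against $\chi(x)\varphi(t)\int_D\rho_{\varsigma,j}(x,y)\,\dd y=\chi\varphi\sigma_{\varsigma,j}$. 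Concretely, we take the positive-part entropy $q_{\delta'}(r)$ for $u$ on $\{V\le 0\}$ and $-q_{\delta'}(-r)$ for $\tilde u$ on $\{U\ge0\}$. We then add these to the doubled inequality and use the positivity lemma so that only the functionals $\mathscr L^{\pm}$ appear, with a nonnegative sign on the right-hand side. That is why the statement is one-sided and carries no derivative of $\chi$: the extra terms are exactly $\mathscr L$ evaluated at $\chi\varphi\sigma_{\varsigma,j}$, and their gradient contributions are already contained in the definition of $\mathscr L$. Each use of \eqref{eq:entropy formula-1} costs $\mathfrak e_l$ times $\|\chi\|_\infty\|\varphi\|_\infty$.

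The remaining terms are estimated exactly as in Lemma~\ref{lem:interior-local-Kato}.
\begin{itemize}
\item[(a)] In the time terms we split the intervals at $U$ and $V$, and use that $\Phi$ vanishes near $t,s\in\{0,T\}$. The initial data therefore drop out, and we are left with $\mathcal B_l(U,V)(\partial_t+\partial_s)\Phi$ up to an error $\omega_{\beta,l}(2\delta)(1+\theta^{-1})$.
\item[(b)] In the reaction terms we evaluate at $\psi$. Since $\tilde u\ge\psi$, the bound is $C\delta^{-1}\omega_\psi(C\varsigma)\nu(D\times[0,T))$; the $\tilde\nu$ term is nonpositive, and the part with $\one_{\{\psi>0\}}$ or $\one_{\{\psi<0\}}$ is consistent with the corresponding terms inside $\mathscr L^\pm$.
\item[(c)] In the diffusion terms, $\nabla_x\rho_{\varsigma,j}+\nabla_y\rho_{\varsigma,j}=0$ gives the monotone dissipation, which we drop. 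The cutoff error $E$ vanishes as in the interior case, and the remaining flux is $\nabla\chi(y)$ times $\mathscr G_l$.
\item[(d)] The source terms are handled by Lemma~\ref{lem:time-diagonal-source}.
\end{itemize}
We then pass to the limits in the order $\varsigma\downarrow0$ (using that $\rho_{\varsigma,j}$ is an approximate identity on $D$), then $\delta\downarrow0$, then $\theta\downarrow0$.

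The main obstacle is the boundary bookkeeping in the zero-level splitting. We must check that after $\delta\downarrow0$ the constant-entropy pieces recombine exactly into $\mathscr L^{+}_{l,u}(\chi\varphi\sigma_{\varsigma,j})$ and $\mathscr L^-_{l,\tilde u}(\chi\varphi\sigma_{\varsigma,j})$ with the correct signs, so that what is left over is precisely $\mathscr F_l(\chi,\varphi)$. This is plausible because $\mathcal B_l(u,\tilde u)$ decomposes through $\one_{\{u>0\}}$ and $\one_{\{\tilde u<0\}}$ pieces via $H_l$. We would try first to verify this identity at the level of the integrands before doubling, working on the three regions $\{0\le V<U\}$, $\{V<U\le0\}$ and $\{V<0<U\}$.
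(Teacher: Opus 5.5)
There is a genuine gap, and it sits where you locate the main obstacle. You fix the roles for the whole argument: $u$ is always in the boundary-touching variable $y$, and $\tilde u$ is always in the inward variable $x$.

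On $\{V<0\}$ the entropy $r\mapsto q_\delta(r-V)$ equals $q_\delta(-V)>0$ at $r=0$. Equivalently, $(u-V)^+$ has the nonzero boundary value $-V$. Your constant piece $q_\delta(-V)$ therefore amounts to testing the full equation for $u$ against the $(y,s)$-function $\int q_\delta(-V)\Phi_{\theta,\varsigma}\,\dd x\,\dd t$. This is a weighted time average of $\chi(y)\varphi$ and does not vanish on $\partial D$. Definition~\ref{def:entropy solution with ob} gives nothing for a constant entropy against such a test.

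Splitting the constant as $\one_{\{r>0\}}+\one_{\{r<0\}}$ does not help. The first piece is an admissible $\mathcal E_0$ entropy, but $+\one_{\{r<0\}}$ is nonincreasing. The only thing you can write for it is the identity carrying $+\mathscr L^-_{l,u}$ evaluated at that boundary-touching test. That term contains the mass of $\lambda^-_{l,u}$ on $\partial D\times(0,T)$, i.e. the unknown boundary flux. It is not in the statement, and Lemma~\ref{lem:L-boundary-vanishing} cannot remove it later, because that lemma uses that the test vanishes on $\partial D$.

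Conversely, the term $\mathscr L^+_{l,u}(\chi\varphi\sigma_{\varsigma,j})$ in the statement is a functional of $u$ tested against a function of the inward variable. It can only come from an entropy inequality for $u$ with a test compactly supported in $D$, i.e. with $u$ placed in $x$, and that never happens in your scheme. Your handling of the $\tilde u$ side, extracting $\mathscr L^-_{l,\tilde u}$ against an $x$-marginal, is in the right spirit; the failure is on the $u$ side. A minor point: with $\chi(y)$ in $\Phi$, the $x$-marginal is $\varphi\int_D\chi(y)\rho_{\varsigma,j}(\cdot,y)\,\dd y$ rather than $\chi\varphi\sigma_{\varsigma,j}$. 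This is harmless, but it is cleaner to put $\chi$ on $x$.

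The missing idea is the sign splitting $(u-\tilde u)^+=(u^+-\tilde u^+)^++(\tilde u^--u^-)^+$, with a \emph{swap of roles} between the two pieces, so that the boundary-touching function always carries an $\mathcal E_0$ entropy.
\begin{itemize}
\item In the positive branch, $u$ sits at $y$ with $r\mapsto q_\delta(r-V^+)$, which vanishes at $0$ because $V^+\geq0$. $\tilde u$ sits at $x$ with $r\mapsto-q_\delta(U^+-r^+)$, after smoothing $r^+$.
\item In the negative branch, $\tilde u$ sits at $y$ with $r\mapsto-q_\delta(r^--W^-)\in\mathcal E_0$. $u$ sits at $x$ with $r\mapsto q_\delta(\tilde U^--r^-)$, where $W=u(x,t)$ and $\tilde U=\tilde u(y,s)$.
\end{itemize}
The inward entropies are constant on the opposite half-line: $-q_\delta(U^+)$ on $\{r<0\}$, respectively $q_\delta(\tilde U^-)$ on $\{r>0\}$. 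Those constant parts are exactly $\mathscr L^-_{l,\tilde u}(\zeta^-)$ and $\mathscr L^+_{l,u}(\zeta^+)$, with $\zeta^-=\int q_\delta(U^+)\Phi\,\dd y\,\dd s$ and $\zeta^+=\int q_\delta(\tilde U^-)\Phi\,\dd y\,\dd s$. Since $0\le\zeta^\mp\le\chi\varphi\sigma_{\varsigma,j}$, positivity of $\mathscr L^\pm$ gives the stated right-hand side with no derivative of $\chi$.

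The two branches recombine through $\mathcal B_l^++\mathcal B_l^-=\mathcal B_l$, the flux identity turning $\mathbf A_l^\pm$ into $\mathscr G_l$, and the pointwise identity for the source indicators. After that, your items (a)--(d) and the limit order $\varsigma\downarrow0$, $\delta\downarrow0$, $\theta\downarrow0$ go through as you describe. Your planned check on the regions $\{V<U\le0\}$ and $\{V<0<U\}$ cannot succeed without this swap. In those regions the boundary function $u$ is compared from above with a negative level, and that is exactly what cannot be tested up to $\partial D$.
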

\begin{rem}
The function $\chi$ in Lemma \ref{lem:chartwise-signed-Kato} is not required to coincide with the partition function $\chi_j$ from Lemma \ref{lem:boundary covering}.
In the proof of Lemma \ref{lem:boundary-localization}, we apply the chartwise inequality with 
\[
\chi=\chi_j(1-\sigma_{\widehat\varsigma,j}).
\]
Since the constant in Lemma \ref{lem:chartwise-signed-Kato} is independent of the derivatives of $\chi$, we may subsequently let $\widehat\varsigma\downarrow0$.
\end{rem}

\begin{proof}[Proof of Lemma \ref{lem:chartwise-signed-Kato}]
The argument follows the space--time doubling-of-variables method of \cite{kruvzkov1970first,otto1996l1,carrillo1999uniqueness}, adapted here to the obstacle reaction and the inward boundary kernels.
Choose $\theta>0$ so that
\[
2\theta<\operatorname{dist}\bigl(\operatorname{supp}\varphi,\{0,T\}\bigr)
\]
and for simplicity, set
\[
\Phi(x,t,y,s):=\chi(x)\rho_{\varsigma,j}(x,y)\varphi(t)\varrho_\theta(s-t).
\]
If $\Phi(x,t,y,s)\neq0$, then $x\in\operatorname{supp}\chi\Subset V_j$ and $|x-y|\leq C\varsigma$.
Hence $y\in V_j$ for all small $\varsigma$.
Lemma \ref{lem:inward-kernel-properties} then shows that every $x$-test used below is compactly supported in $D$.
All unmarked integrals below are taken over $D^2\times(0,T)^2$ with respect to $\dd x\,\dd t\,\dd y\,\dd s$.

\medskip
\noindent
\textbf{Step 1. Positive-part inequalities.}

Write
\[
U:=u(y,s),\qquad V:=\tilde u(x,t),\qquad P:=\nabla_yu(y,s),\qquad Q:=\nabla_x\tilde u(x,t).
\]
For fixed $(x,t)$, use $\eta(r)=q_\delta(r-V^+)$ in the entropy inequality for $u$. 
Since $\eta(0)=0$, the $y$-test may touch $\partial D$. 
We obtain
\begin{equation*}
\begin{aligned}
&-\int\partial_s\Phi\int_{\xi(y)}^Uq_\delta(r-V^+)h_l(r)\,\dd\beta(r)-\int q_\delta(\psi(y)-V^+)\Phi\,\dd\nu(y,s)\,\dd x\,\dd t\\
&\leq-\int h_l(U)A(P)\cdot\nabla_y\left[q_\delta(U-V^+)\Phi\right]\\
&\quad+\int f(y,s,U)h_l(U)q_\delta(U-V^+)\Phi+C\mu_{l,1}^u(D_T),
\end{aligned}
\end{equation*}
where $C$ depends only on $\|\chi\|_{L^\infty(D)}$ and $\|\varphi\|_{L^\infty(0,T)}$.
Since $V^+\geq0$, we have
\[
\int_{\xi(y)}^Uq_\delta(r-V^+)h_l(r)\,\dd\beta(r)=\int_{\xi^+(y)}^{U^+}q_\delta(r-V^+)h_l(r)\,\dd\beta(r)
\]
and $q_\delta(U-V^+)=q_\delta(U^+-V^+)$. 
Thus, we have
\begin{equation}\label{eq:entropy for u-v+}
\begin{aligned}
&-\int\partial_s\Phi\int_{\xi^+(y)}^{U^+}q_\delta(r-V^+)h_l(r)\,\dd\beta(r)-\int q_\delta(\psi^+(y)-V^+)\Phi\,\dd\nu(y,s)\,\dd x\, \dd t\\
&\leq-\int\mathbf A_l^+(U)\cdot\nabla_y\left[q_\delta(U^+-V^+)\Phi\right]\\
&\quad+\int\one_{\{U>0\}}f(y,s,U)h_l(U)q_\delta(U^+-V^+)\Phi+C\mu_{l,1}^u(D_T).
\end{aligned}
\end{equation}
Let $\pi_\iota$ be a smooth approximation of $r^+$ as $\iota\downarrow0$ such that $0\leq\pi_\iota'\leq1$, $\pi_\iota(r)\to r^+$, and $\pi_\iota'(r)\to\one_{\{r>0\}}$.
For fixed $(y,s)$, use
\[
\eta(r)=-q_\delta\bigl(U^+-\pi_\iota(r)\bigr)
\]
in the entropy inequality for $\tilde u$. 
The $x$-test is compactly supported in $D$. 
Letting $\iota\downarrow0$ and using dominated convergence theorem gives
\begin{equation}\label{eq:entropy for r+}
\begin{aligned}
&\int\partial_t\Phi\int_{\tilde\xi(x)}^Vq_\delta(U^+-r^+)h_l(r)\,\dd\beta(r)+\int q_\delta(U^+-\psi^+(x))\Phi\,\dd\tilde\nu(x,t)\,\dd y\,\dd s\\
&\leq\int h_l(V)A(Q)\cdot\nabla_x\left[q_\delta(U^+-V^+)\Phi\right]\\
&\quad-\int\tilde f(x,t,V)h_l(V)q_\delta(U^+-V^+)\Phi+\int_{D_T}\chi(x)\varphi(t)\sigma_{\varsigma,j}(x)\,\dd\mu_{l,1}^{\tilde u}(x,t).
\end{aligned}
\end{equation}

\medskip
\noindent
\textbf{Step 2. Extraction of $\mathscr L_{l,\tilde u}^{-}$.}

For every $k\geq0$ and $a,b\in\mathbb R$, we have
\[
\begin{aligned}
\int_b^a q_\delta(k-r^+)h_l(r)\,\dd\beta(r) &=\int_{b^+}^{a^+}q_\delta(k-r)h_l(r)\,\dd\beta(r)+q_\delta(k)\int_b^a\one_{\{r<0\}}h_l(r)\,\dd\beta(r).
\end{aligned}
\]
Define
\[
\zeta_{\delta,\theta,\varsigma}^{-}(x,t):=\int_{D_T}q_\delta(U^+)\Phi(x,t,y,s)\,\dd y\,\dd s.
\]
Then we have
\[
0\leq\zeta_{\delta,\theta,\varsigma}^{-}\leq\chi\varphi\sigma_{\varsigma,j}.
\]
The time term of \eqref{eq:entropy for r+} splits as
\[
\begin{aligned}
&\int\partial_t\Phi\int_{\tilde\xi(x)}^Vq_\delta(U^+-r^+)h_l(r)\,\dd\beta(r)\\
&=\int\partial_t\Phi\int_{\tilde\xi^+(x)}^{V^+}q_\delta(U^+-r)h_l(r)\,\dd\beta(r)+\int_{D_T}\partial_t\zeta_{\delta,\theta,\varsigma}^{-}\left[\int_{\tilde\xi(x)}^V\one_{\{r<0\}}h_l(r)\,\dd\beta(r)\right]\dd x\,\dd t.
\end{aligned}
\]
The obstacle factor satisfies
\[
q_\delta(U^+-\psi^+(x))=\one_{\{\psi(x)\geq0\}}q_\delta(U^+-\psi(x))+\one_{\{\psi(x)<0\}}q_\delta(U^+).
\]
Moreover, note that
\[
\nabla_xq_\delta(U^+-V^+)=-q_\delta'(U^+-V^+)\one_{\{V>0\}}Q.
\]
Since $\nabla V=0$ almost everywhere on $\{V=0\}$ and $A(0)=0$, we have
\[
\begin{aligned}
&\int h_l(V)A(Q)\cdot\nabla_x\left[q_\delta(U^+-V^+)\Phi\right]\\
&=\int\mathbf A_l^+(V)\cdot\nabla_x\left[q_\delta(U^+-V^+)\Phi\right]+\int_{D_T}\one_{\{V<0\}}h_l(V)A(Q)\cdot\nabla_x\zeta_{\delta,\theta,\varsigma}^{-}\,\dd x\,\dd t.
\end{aligned}
\]
The source term splits exactly as
\[
\begin{aligned}
&-\int\tilde f(x,t,V)h_l(V)q_\delta(U^+-V^+)\Phi\\
&=-\int\one_{\{V\geq0\}}\tilde f(x,t,V)h_l(V)q_\delta(U^+-V^+)\Phi-\int_{D_T}\one_{\{V<0\}}\tilde f(x,t,V)h_l(V)\zeta_{\delta,\theta,\varsigma}^{-}\,\dd x\,\dd t.
\end{aligned}
\]

Define
\[
\begin{aligned}
\mathcal T_{\delta,\theta,\varsigma}^{+}&:=-\int\partial_s\Phi\int_{\xi^+(y)}^{U^+}q_\delta(r-V^+)h_l(r)\,\dd\beta(r)+\int\partial_t\Phi\int_{\tilde\xi^+(x)}^{V^+}q_\delta(U^+-r)h_l(r)\,\dd\beta(r),\\
\mathcal O_{\delta,\theta,\varsigma}^{+}&:=-\int q_\delta(\psi^+(y)-V^+)\Phi\,\dd\nu(y,s)\,\dd x\,\dd t+\int_{\{\psi(x)\geq0\}}q_\delta(U^+-\psi(x))\Phi\,\dd\tilde\nu(x,t)\,\dd y\,\dd s,\\
\mathcal A_{\delta,\theta,\varsigma}^{+}&:=-\int\mathbf A_l^+(U)\cdot\nabla_y\left[q_\delta(U^+-V^+)\Phi\right]+\int\mathbf A_l^+(V)\cdot\nabla_x\left[q_\delta(U^+-V^+)\Phi\right],\\
\mathcal S_{\delta,\theta,\varsigma}^{+}&:=\int q_\delta(U^+-V^+)\Phi\Big[\one_{\{U>0\}}f(y,s,U)h_l(U)-\one_{\{V\geq0\}}\tilde f(x,t,V)h_l(V)\Big].
\end{aligned}
\]
Then \eqref{eq:entropy for u-v+} and \eqref{eq:entropy for r+} yield
\begin{equation}\label{eq:positive part entropy}
\begin{aligned}
\mathcal T_{\delta,\theta,\varsigma}^{+}+\mathcal O_{\delta,\theta,\varsigma}^{+}-\mathcal A_{\delta,\theta,\varsigma}^{+}-\mathcal S_{\delta,\theta,\varsigma}^{+}&\leq\mathscr L_{l,\tilde u}^{-}\left(\zeta_{\delta,\theta,\varsigma}^{-}\right)+C\mathfrak e_l\\
&\leq\mathscr L_{l,\tilde u}^{-}\left(\chi\varphi\sigma_{\varsigma,j}\right)+C\mathfrak e_l.
\end{aligned}
\end{equation}

\medskip
\noindent
\textbf{Step 3. Negative-part inequalities.}

We interchange $u$ and $\tilde{u}$, and write
\[
\tilde U:=\tilde u(y,s),\qquad W:=u(x,t),\qquad \tilde P:=\nabla_y\tilde u(y,s),\qquad R:=\nabla_xu(x,t).
\]
For fixed $(x,t)$, use
\[
\eta(r)=-q_\delta(-r-W^-)=-q_\delta(r^--W^-)
\]
in the entropy inequality for $\tilde u$. 
This entropy belongs to $\mathcal E_0$, and hence
\begin{equation}\label{eq:entropy for r--W-}
\begin{aligned}
&\int\partial_s\Phi\int_{\tilde\xi(y)}^{\tilde U}q_\delta(r^--W^-)h_l(r)\,\dd\beta(r)+\int q_\delta(\psi^-(y)-W^-)\Phi\,\dd\tilde\nu(y,s)\,\dd x\,\dd t\\
&\leq\int h_l(\tilde U)A(\tilde P)\cdot\nabla_y\left[q_\delta(\tilde U^--W^-)\Phi\right]\\
&\quad-\int\tilde f(y,s,\tilde U)h_l(\tilde U)q_\delta(\tilde U^--W^-)\Phi+C\mu_{l,1}^{\tilde u}(D_T).
\end{aligned}
\end{equation}

Let $\pi_\iota^-$ be a smooth approximation of $r^-$ with $-1\leq(\pi_\iota^-)' \leq0$ and $\pi_\iota^-(r)\to r^-$. 
Use
\[
\eta(r)=q_\delta\bigl(\tilde U^--\pi_\iota^-(r)\bigr)
\]
in the entropy inequality for $u$. 
Letting $\iota\downarrow0$ and using dominated convergence theorem gives
\begin{equation}\label{eq:entropy for u--r-}
\begin{aligned}
&-\int\partial_t\Phi\int_{\xi(x)}^W q_\delta(\tilde U^--r^-)h_l(r)\,\dd\beta(r)-\int q_\delta(\tilde U^--\psi^-(x))\Phi\,\dd\nu(x,t)\,\dd y\,\dd s\\
&\leq-\int h_l(W)A(R)\cdot\nabla_x\left[q_\delta(\tilde U^--W^-)\Phi\right]\\
&\quad+\int f(x,t,W)h_l(W)q_\delta(\tilde U^--W^-)\Phi+C\mu_{l,1}^u(D_T).
\end{aligned}
\end{equation}

\medskip
\noindent
\textbf{Step 4. Extraction of $\mathscr L_{l,u}^{+}$.}

The time primitive satisfies
\[
\begin{aligned}
&\int_{\xi(x)}^Wq_\delta(\tilde U^--r^-)h_l(r)\,\dd\beta(r)\\
&=\int_{-\xi^-(x)}^{-W^-}q_\delta(\tilde U^-+r)h_l(r)\,\dd\beta(r)+q_\delta(\tilde U^-)\int_{\xi(x)}^W\one_{\{r>0\}}h_l(r)\,\dd\beta(r).
\end{aligned}
\]
Set
\[
\zeta_{\delta,\theta,\varsigma}^{+}(x,t):=\int_{D_T}q_\delta(\tilde U^-)\Phi(x,t,y,s)\,\dd y\,\dd s.
\]
Then we have
\[
0\leq\zeta_{\delta,\theta,\varsigma}^{+}\leq\chi\varphi\sigma_{\varsigma,j}.
\]
Note that
\[
\begin{aligned}
q_\delta(\tilde U^--\psi^-(x))&=\one_{\{\psi(x)\leq0\}}q_\delta(\tilde U^-+\psi(x))+\one_{\{\psi(x)>0\}}q_\delta(\tilde U^-),
\end{aligned}
\]
and
\[
\nabla_xq_\delta(\tilde U^--W^-)=q_\delta'(\tilde U^--W^-)\one_{\{W<0\}}R.
\]

Define
\[
\begin{aligned}
\mathcal T_{\delta,\theta,\varsigma}^{-}&:=\int\partial_s\Phi\int_{-\tilde\xi^-(y)}^{-\tilde U^-}q_\delta(-r-W^-)h_l(r)\,\dd\beta(r)-\int\partial_t\Phi\int_{-\xi^-(x)}^{-W^-}q_\delta(\tilde U^-+r)h_l(r)\,\dd\beta(r),\\
\mathcal O_{\delta,\theta,\varsigma}^{-}&:=\int q_\delta(\psi^-(y)-W^-)\Phi\,\dd\tilde\nu(y,s)\,\dd x\,\dd t-\int_{\{\psi(x)\leq0\}}q_\delta(\tilde U^-+\psi(x))\Phi\,\dd\nu(x,t)\,\dd y\,\dd s,\\
\mathcal A_{\delta,\theta,\varsigma}^{-}&:=-\int\mathbf A_l^-(\tilde U)\cdot\nabla_y\left[q_\delta(\tilde U^--W^-)\Phi\right]+\int\mathbf A_l^-(W)\cdot\nabla_x\left[q_\delta(\tilde U^--W^-)\Phi\right],\\
\mathcal S_{\delta,\theta,\varsigma}^{-}&:=\int q_\delta(\tilde U^--W^-)\Phi\Big[\one_{\{W\leq0\}}f(x,t,W)h_l(W)-\one_{\{\tilde U<0\}}\tilde f(y,s,\tilde U)h_l(\tilde U)\Big].
\end{aligned}
\]
The remaining positive-level terms form $\mathscr L_{l,u}^{+}(\zeta_{\delta,\theta,\varsigma}^{+})$, and therefore combining \eqref{eq:entropy for r--W-} and \eqref{eq:entropy for u--r-}, we have
\begin{equation}\label{eq:negative part entropy}
\begin{aligned}
\mathcal T_{\delta,\theta,\varsigma}^{-}+\mathcal O_{\delta,\theta,\varsigma}^{-}-\mathcal A_{\delta,\theta,\varsigma}^{-}-\mathcal S_{\delta,\theta,\varsigma}^{-}&\leq\mathscr L_{l,u}^{+}\left(\zeta_{\delta,\theta,\varsigma}^{+}\right)+C\mathfrak e_l\\
&\leq\mathscr L_{l,u}^{+}\left(\chi\varphi\sigma_{\varsigma,j}\right)+C\mathfrak e_l.
\end{aligned}
\end{equation}

Adding \eqref{eq:positive part entropy} and \eqref{eq:negative part entropy} gives
\[
\begin{aligned}
&\mathcal T_{\delta,\theta,\varsigma}^{+}+\mathcal T_{\delta,\theta,\varsigma}^{-}+\mathcal O_{\delta,\theta,\varsigma}^{+}+\mathcal O_{\delta,\theta,\varsigma}^{-}-\mathcal A_{\delta,\theta,\varsigma}^{+}-\mathcal A_{\delta,\theta,\varsigma}^{-}-\mathcal S_{\delta,\theta,\varsigma}^{+}-\mathcal S_{\delta,\theta,\varsigma}^{-}\\
&\leq\mathscr L_{l,\tilde u}^{-}\left(\chi\varphi\sigma_{\varsigma,j}\right)+\mathscr L_{l,u}^{+}\left(\chi\varphi\sigma_{\varsigma,j}\right)+C\mathfrak e_l.
\end{aligned}
\]

\medskip
\noindent
\textbf{Step 5. Time and Obstacle terms.}

Define
\[
\begin{aligned}
\mathcal B_{\delta,l}^{+,1}(a,b)&:=\int_{b^+}^{a^+}q_\delta(r-b^+)h_l(r)\,\dd\beta(r),\\
\mathcal B_{\delta,l}^{+,2}(a,b)&:=\int_{b^+}^{a^+}q_\delta(a^+-r)h_l(r)\,\dd\beta(r),\\
\mathcal B_l^+(a,b)&:=\one_{\{a^+>b^+\}}\int_{b^+}^{a^+}h_l(r)\,\dd\beta(r),
\end{aligned}
\]
and
\[
\begin{aligned}
\mathcal B_{\delta,l}^{-,1}(a,b)&:=\int_{-b^-}^{-a^-}q_\delta(-r-a^-)h_l(r)\,\dd\beta(r),\\
\mathcal B_{\delta,l}^{-,2}(a,b)&:=\int_{-b^-}^{-a^-}q_\delta(b^-+r)h_l(r)\,\dd\beta(r),\\
\mathcal B_l^-(a,b)&:=\one_{\{b^->a^-\}}\int_{-b^-}^{-a^-}h_l(r)\,\dd\beta(r).
\end{aligned}
\]
Splitting the positive time primitives at $V^+$ and $U^+$ gives
\[
\begin{aligned}
\mathcal T_{\delta,\theta,\varsigma}^{+}&=-\int\mathcal B_{\delta,l}^{+,1}(U,V)\partial_s\Phi-\int\mathcal B_{\delta,l}^{+,2}(U,V)\partial_t\Phi.
\end{aligned}
\]
The initial-data pieces vanish exactly as in the proof of Lemma \ref{lem:interior-local-Kato}: the term containing $\xi(y)$ is independent of $s$, the term containing $\tilde\xi(x)$ is independent of $t$, and $\Phi$ vanishes at the time endpoints.

Likewise, we have
\[
\mathcal T_{\delta,\theta,\varsigma}^{-}=-\int\mathcal B_{\delta,l}^{-,1}(W,\tilde U)\partial_s\Phi-\int\mathcal B_{\delta,l}^{-,2}(W,\tilde U)\partial_t\Phi.
\]
For $i=1,2$, note that
\[
\left|\mathcal B_{\delta,l}^{\pm,i}(a,b)-\mathcal B_l^\pm(a,b)\right|\leq\omega_{\beta,l}(2\delta).
\]
Thus, we have
\[
\begin{aligned}
\mathcal T_{\delta,\theta,\varsigma}^{+}&=-\int\mathcal B_l^+(U,V)(\partial_t+\partial_s)\Phi+R_{\delta,\theta,\varsigma}^{t,+},\\
\mathcal T_{\delta,\theta,\varsigma}^{-}&=-\int\mathcal B_l^-(W,\tilde U)(\partial_t+\partial_s)\Phi+R_{\delta,\theta,\varsigma}^{t,-},
\end{aligned}
\]
where
\[
\left|R_{\delta,\theta,\varsigma}^{t,+}\right|+\left|R_{\delta,\theta,\varsigma}^{t,-}\right|\leq C\omega_{\beta,l}(2\delta)\left(1+\theta^{-1}\right),
\]
and
\[
(\partial_t+\partial_s)\Phi=\chi(x)\rho_{\varsigma,j}(x,y)\varphi'(t)\varrho_\theta(s-t).
\]

For the obstacle terms, since $V^+\geq\psi^+(x)$, $\tilde U^-\leq\psi^-(y)$, and the maps $r\mapsto r^\pm$ are $1$-Lipschitz, following the proof of  Lemma \ref{lem:interior-local-Kato}, we have
\[
-\mathcal O_{\delta,\theta,\varsigma}^{+}-\mathcal O_{\delta,\theta,\varsigma}^{-}\leq C\frac{\omega_\psi(C\varsigma)}{\delta}\left[\nu(D\times[0,T))+\tilde\nu(D\times[0,T))\right].
\]

\medskip
\noindent
\textbf{Step 6. Diffusion terms.}

Using $\nabla_x\rho_{\varsigma,j} +\nabla_y\rho_{\varsigma,j}=0$, one obtains
\[
\mathcal A_{\delta,\theta,\varsigma}^{+}=-\mathcal D_{\delta,\theta,\varsigma}^{+}+\mathcal G_{\delta,\theta,\varsigma}^{+},
\]
where
\[
\begin{aligned}
\mathcal D_{\delta,\theta,\varsigma}^{+}&:=\int q_\delta'(U^+-V^+)\Phi\left[\mathbf A_l^+(U)-\mathbf A_l^+(V)\right]\cdot\left[\nabla U^+-\nabla V^+\right],\\
\mathcal G_{\delta,\theta,\varsigma}^{+}&:=\int q_\delta(U^+-V^+)\left[\mathbf A_l^+(V)-\mathbf A_l^+(U)\right]\cdot\nabla\chi\rho_{\varsigma,j}(x,y)\varphi(t)\varrho_\theta(s-t).
\end{aligned}
\]
Using $\one_{\{v>0\}}A(\nabla v)=A(\nabla v^+)$, as in the proof of  Lemma \ref{lem:interior-local-Kato}, the monotone part of $\mathcal D_{\delta,\theta,\varsigma}^{+}$ is nonnegative. 
Its truncation mismatch satisfies
\[
\begin{aligned}
|E_{\delta,\theta,\varsigma}^{+}|\leq C\int\one_{\{0<U^+-V^+<\delta\}}\Phi\left[1+|\nabla\mathcal T_{l+2}(u(y,s))|^p+|\nabla\mathcal T_{l+2}(\tilde u(x,t))|^p\right].
\end{aligned}
\]

Similarly, we have
\[
\mathcal A_{\delta,\theta,\varsigma}^{-}=-\mathcal D_{\delta,\theta,\varsigma}^{-}+\mathcal G_{\delta,\theta,\varsigma}^{-},
\]
where
\[
\begin{aligned}
\mathcal G_{\delta,\theta,\varsigma}^{-}&:=\int q_\delta(\tilde U^--W^-)\left[\mathbf A_l^-(W)-\mathbf A_l^-(\tilde U)\right]\cdot\nabla\chi\rho_{\varsigma,j}(x,y)\varphi(t)\varrho_\theta(s-t),
\end{aligned}
\]
and
\[
\begin{aligned}
|E_{\delta,\theta,\varsigma}^{-}|\leq C\int&\one_{\{0<\tilde U^--W^-<\delta\}}\Phi\left[1+|\nabla\mathcal T_{l+2}(\tilde u(y,s))|^p+|\nabla\mathcal T_{l+2}(u(x,t))|^p\right].
\end{aligned}
\]
For fixed $\theta$, first take $\varsigma\downarrow0$ and then $\delta\downarrow0$.
The preceding bounds give
\[
\lim_{\delta\downarrow0}\limsup_{\varsigma\downarrow0}\bigl(|E_{\delta,\theta,\varsigma}^{+}|+|E_{\delta,\theta,\varsigma}^{-}|\bigr)=0.
\]
\medskip
\noindent
\textbf{Step 7. Source terms.}

After $\varsigma\downarrow0$ and then $\delta\downarrow0$, the continuity of translations in $L^1(D_T)$ and the dominated convergence theorem show that the positive source term becomes
\[
\begin{aligned}
\mathcal S_\theta^{+}:=\int_0^T\!\int_0^T\!\int_D&\one_{\{u^+(x,s)>\tilde u^+(x,t)\}}\chi(x)\varphi(t)\varrho_\theta(s-t)\\
&\times\Big[\one_{\{u(x,s)>0\}}f(x,s,u(x,s))h_l(u(x,s))\\
&\hspace{5em}-\one_{\{\tilde u(x,t)\geq0\}}\tilde f(x,t,\tilde u(x,t))h_l(\tilde u(x,t))\Big]\,\dd x\,\dd s\,\dd t.
\end{aligned}
\]
The negative source term becomes
\[
\begin{aligned}
\mathcal S_\theta^{-}:=\int_0^T\!\int_0^T\!\int_D&\one_{\{\tilde u^-(x,s)>u^-(x,t)\}}\chi(x)\varphi(t)\varrho_\theta(s-t)\\
&\times\Big[\one_{\{u(x,t)\leq0\}}f(x,t,u(x,t))h_l(u(x,t))\\
&\hspace{5em}-\one_{\{\tilde u(x,s)<0\}}\tilde f(x,s,\tilde u(x,s))h_l(\tilde u(x,s))\Big]\,\dd x\,\dd s\,\dd t.
\end{aligned}
\]

Interchanging $s$ and $t$ in $\mathcal S_\theta^{-}$ and using the evenness of $\varrho_\theta$ gives the same integral with $\varphi(s)$ in place of $\varphi(t)$. 
Since $|s-t|\leq\theta$ on the support of $\varrho_\theta$, we have
\[
\left|\mathcal S_\theta^{-}-\widehat{\mathcal S}_\theta^{-}\right|\leq C_l\theta\longrightarrow0,\qquad\text{as }\theta\downarrow0,
\]
where $\widehat{\mathcal S}_\theta^{-}$ is obtained by replacing $\varphi(s)$ by $\varphi(t)$. 
This follows from $|\varphi(s)-\varphi(t)|\leq C\theta$ and the integrability of the terms involving $f$ and $\tilde{f}$. 
Note that for every $a,b,F,G\in\mathbb R$, we have
\[
\begin{aligned}
&\one_{\{a^+>b^+\}}\left[\one_{\{a>0\}}F-\one_{\{b\geq0\}}G\right]+\one_{\{b^->a^-\}}\left[\one_{\{a\leq0\}}F-\one_{\{b<0\}}G\right]=\one_{\{a>b\}}(F-G).
\end{aligned}
\]
There exists a remainder $R_{l,\theta}$ such that $|R_{l,\theta}|\leq C_l\theta$ and
\begin{align*}
\mathcal S_\theta^{+}+\mathcal S_\theta^{-}&=\int_0^T\!\int_0^T\!\int_D\one_{\{u(x,s)>\tilde u(x,t)\}}\chi(x)\varphi(t)\varrho_\theta(s-t)\\
&\quad\times\Big[ f(x,s,u(x,s))h_l(u(x,s)) - \tilde f(x,t,\tilde u(x,t))h_l(\tilde u(x,t))\Big]\,\dd x\,\dd s\,\dd t+R_{l,\theta}.
\end{align*}
On $\{u=\tilde u\}$, the difference of the two diagonal source densities is nonpositive. 
Lemma \ref{lem:time-diagonal-source} therefore gives
\[
\begin{aligned}
\limsup_{\theta\downarrow0}\left(\mathcal S_\theta^{+}+\mathcal S_\theta^{-}\right)\leq\int_{D_T}\mathscr S_l(u,\tilde u)\chi\varphi\,\dd x\,\dd t.
\end{aligned}
\]

\medskip
\noindent
\textbf{Step 8. Passage to the limit.}

We take the limits in the order
\[
\varsigma\downarrow0,\qquad\delta\downarrow0,\qquad\theta\downarrow0.
\]
For fixed $\delta$ and $\theta$, the inward kernels are spatial approximate identities. 
The obstacle error tends to zero because $\delta$ is fixed and $\omega_\psi(C\varsigma)\to0$.

For fixed $\theta$, the time commutator and the two truncation mismatches tend to zero as $\delta\downarrow0$.
Finally, the time approximate identity yields as $\theta\downarrow0$
\[
\begin{aligned}
&\int_0^T\!\int_0^T\!\int_D\mathcal B_l^+\bigl(u(x,s),\tilde u(x,t)\bigr)\chi(x)\varphi'(t)\varrho_\theta(s-t)\,\dd x\,\dd s\,\dd t\longrightarrow\int_{D_T}\mathcal B_l^+(u,\tilde u)\chi\varphi'\,\dd x\,\dd t
\end{aligned}
\]
and
\[
\begin{aligned}
&\int_0^T\!\int_0^T\!\int_D\mathcal B_l^-\bigl(u(x,t),\tilde u(x,s)\bigr)\chi(x)\varphi'(t)\varrho_\theta(s-t)\,\dd x\,\dd s\,\dd t\longrightarrow\int_{D_T}\mathcal B_l^-(u,\tilde u)\chi\varphi'\,\dd x\,\dd t.
\end{aligned}
\]
Note that for every $a,b\in\mathbb R$,
\[
\mathcal B_l^+(a,b)+\mathcal B_l^-(a,b)=\mathcal B_l(a,b),
\]
and the localization fluxes satisfy
\[
\begin{aligned}
&\one_{\{u^+>\tilde u^+\}}\left[\mathbf A_l^+(\tilde u)-\mathbf A_l^+(u)\right]+\one_{\{\tilde u^->u^-\}}\left[\mathbf A_l^-(u)-\mathbf A_l^-(\tilde u)\right]=\mathscr G_l(u,\tilde u).
\end{aligned}
\]
Combining the time, obstacle, diffusion, and source limits gives
\[
\begin{aligned}
\mathscr F_l(\chi,\varphi)&\leq\limsup_{\varsigma\downarrow0}\mathscr L_{l,\tilde u}^{-}\left(\chi\varphi\sigma_{\varsigma,j}\right)+\limsup_{\varsigma\downarrow0}\mathscr L_{l,u}^{+}\left(\chi\varphi\sigma_{\varsigma,j}\right)+C\mathfrak e_l,
\end{aligned}
\]
which completes the proof.
\end{proof}

\subsection{Elimination of the boundary zero-level terms}
\begin{lem}[Boundary localization]\label{lem:boundary-localization}
Let $j\in\{0,\ldots,J\}$. For every nonnegative $\varphi\in C_c^\infty((0,T))$,
\[
\mathscr F_l(\chi_j,\varphi)\leq C\mathfrak e_l,
\]
where $C$ depends on $\Vert\varphi\Vert_{L^\infty(0,T)}$ and the fixed partition of unity.
\end{lem}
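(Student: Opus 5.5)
For $j=0$ the function $\chi_0$ has compact support in $D$ (since $\overline{B_0}\subset D$), so Lemma \ref{lem:interior-local-Kato} applied with $\chi=\chi_0$ gives the claim directly. Indeed, the right-hand side of that lemma is exactly the flux and source part of $-\mathscr F_l(\chi_0,\varphi)$ plus $C\mathfrak e_l$.

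For $j\in\{1,\ldots,J\}$ we split
\[
\chi_j=\chi_j(1-\sigma_{\widehat\varsigma,j})+\chi_j\sigma_{\widehat\varsigma,j}.
\]
By Lemma \ref{lem:inward-kernel-properties}, $\chi_j\sigma_{\widehat\varsigma,j}\in C_c^\infty(D)$ and is nonnegative. Hence Lemma \ref{lem:interior-local-Kato} gives
\[
\mathscr F_l(\chi_j\sigma_{\widehat\varsigma,j},\varphi)\leq C\mathfrak e_l,
\]
with $C$ depending only on $\|\chi_j\|_{L^\infty}$ and $\|\varphi\|_{L^\infty}$, since $0\leq\sigma_{\widehat\varsigma,j}\leq1$. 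The other piece, $\chi=\chi_j(1-\sigma_{\widehat\varsigma,j})$, is nonnegative, smooth and supported in $\operatorname{supp}\chi_j\Subset V_j$. So Lemma \ref{lem:chartwise-signed-Kato} applies and yields
\[
\mathscr F_l\bigl(\chi_j(1-\sigma_{\widehat\varsigma,j}),\varphi\bigr)\leq\limsup_{\varsigma\downarrow0}\mathscr L_{l,\tilde u}^{-}\bigl(\chi_j\varphi(1-\sigma_{\widehat\varsigma,j})\sigma_{\varsigma,j}\bigr)+\limsup_{\varsigma\downarrow0}\mathscr L_{l,u}^{+}\bigl(\chi_j\varphi(1-\sigma_{\widehat\varsigma,j})\sigma_{\varsigma,j}\bigr)+C\mathfrak e_l .
\]
Here $C$ is independent of $\nabla\sigma_{\widehat\varsigma,j}$; this is the point stressed in the remark after that lemma. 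Since $\mathscr F_l$ is linear in $\chi$, adding the two inequalities gives
\[
\mathscr F_l(\chi_j,\varphi)\leq(\text{the two }\limsup\text{ terms})+C\mathfrak e_l
\]
for every $\widehat\varsigma>0$.

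Finally, we let $\widehat\varsigma\downarrow0$. Because the left-hand side $\mathscr F_l(\chi_j,\varphi)$ does not depend on $\widehat\varsigma$, Lemma \ref{lem:L-boundary-vanishing} (iterated limits, first $\varsigma$ then $\widehat\varsigma$) sends both boundary-layer functionals to zero, and we obtain $\mathscr F_l(\chi_j,\varphi)\leq C\mathfrak e_l$. The main thing to check is that the splitting is legitimate. The gradient terms in $\mathscr F_l(\chi_j\sigma_{\widehat\varsigma,j},\varphi)$ and $\mathscr F_l(\chi_j(1-\sigma_{\widehat\varsigma,j}),\varphi)$ both contain $\nabla\sigma_{\widehat\varsigma,j}$, which blows up as $\widehat\varsigma\downarrow0$, but these contributions cancel exactly under linearity. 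For this reason we never take a limit in either piece separately. We only need that neither constant depends on derivatives of the cutoff, and this holds for both Lemma \ref{lem:interior-local-Kato} (its constant involves only sup norms) and Lemma \ref{lem:chartwise-signed-Kato}.
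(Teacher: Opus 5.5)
Your proof is correct and follows the paper's argument step for step. You read off the case $j=0$ from Lemma \ref{lem:interior-local-Kato}; for $j\geq1$ you split $\chi_j=\chi_j\sigma_{\widehat\varsigma,j}+\chi_j(1-\sigma_{\widehat\varsigma,j})$, apply the interior and chartwise Kato inequalities (whose constants involve only sup norms), use linearity of $\mathscr F_l$ in $\chi$, and remove the boundary-layer functionals by letting $\widehat\varsigma\downarrow0$ via Lemma \ref{lem:L-boundary-vanishing}.
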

\begin{proof}
For $j=0$, the result follows from Lemma \ref{lem:interior-local-Kato}. 
Fix $j\in\{1,\ldots,J\}$ and $\widehat\varsigma>0$. 
Since $\chi_j\sigma_{\widehat\varsigma,j}\in C_c^\infty(D)$, Lemma \ref{lem:interior-local-Kato} gives
\[
\mathscr F_l\bigl(\chi_j\sigma_{\widehat\varsigma,j},\varphi\bigr)\leq C\mathfrak e_l.
\]
By linearity, we have
\[
\begin{aligned}
\mathscr F_l(\chi_j,\varphi)&=\mathscr F_l\bigl(\chi_j\sigma_{\widehat\varsigma,j},\varphi\bigr)+\mathscr F_l\bigl(\chi_j(1-\sigma_{\widehat\varsigma,j}),\varphi\bigr).
\end{aligned}
\]
Apply Lemma \ref{lem:chartwise-signed-Kato} to the second term on the right-hand side, we have
\[
\begin{aligned}
\mathscr F_l(\chi_j,\varphi)&\leq\limsup_{\varsigma\downarrow0}\mathscr L_{l,\tilde u}^{-}\left(\chi_j\varphi(1-\sigma_{\widehat\varsigma,j})\sigma_{\varsigma,j}\right)\\
&\quad+\limsup_{\varsigma\downarrow0}\mathscr L_{l,u}^{+}\left(\chi_j\varphi(1-\sigma_{\widehat\varsigma,j})\sigma_{\varsigma,j}\right)+C\mathfrak e_l.
\end{aligned}
\]
Letting $\widehat\varsigma\downarrow0$ and using Lemma \ref{lem:L-boundary-vanishing} proves the assertion.
\end{proof}
\begin{thm}[Global truncated Kato inequality]
For every nonnegative $\varphi\in C_c^\infty((0,T))$,
\begin{align}
&-\int_{D_T}\mathcal B_l(u,\tilde u)\varphi'(t)\,\dd x\,\dd t\notag\\
&\leq\int_{D_T}\one_{\{u>\tilde u\}}\left[f(x,t,u)h_l(u)-\tilde f(x,t,\tilde u)h_l(\tilde u)\right]\varphi(t)\,\dd x\,\dd t+C\mathfrak e_l.\label{eq:global-truncated-Kato}
\end{align}
\end{thm}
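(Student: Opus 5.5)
The plan is to sum the chartwise inequalities of Lemma \ref{lem:boundary-localization} over the partition of unity from Lemma \ref{lem:boundary covering}. The functional $\mathscr F_l(\chi,\varphi)$ is linear in $\chi$. Since $\sum_{j=0}^J\chi_j=1$ on $\overline D$, we obtain
\[
\sum_{j=0}^J\mathscr F_l(\chi_j,\varphi)=\mathscr F_l\Bigl(\sum_{j=0}^J\chi_j,\varphi\Bigr).
\]
Each summand is bounded by $C\mathfrak e_l$, with $C$ depending only on $\|\varphi\|_{L^\infty(0,T)}$ and the fixed partition. Summing over the finitely many indices therefore gives
\[
\mathscr F_l\Bigl(\sum_j\chi_j,\varphi\Bigr)\leq (J+1)C\mathfrak e_l,
\]
and the factor $J+1$ is absorbed into the constant.

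Next we evaluate $\mathscr F_l(\sum_j\chi_j,\varphi)$ explicitly. All integrals in $\mathscr F_l$ are over $D_T$. On $D$ we have $\sum_j\chi_j\equiv1$, so on $\overline D$
\[
\nabla\Bigl(\sum_j\chi_j\Bigr)=0.
\]
To see this on the open set, note that $\sum_j\chi_j$ is a smooth function equal to one on $\overline D\supset D$, so its gradient vanishes on $D$, and only values on $D$ enter the integrals. Hence the flux term $\int_{D_T}\mathscr G_l(u,\tilde u)\cdot\nabla\chi\,\varphi$ drops out. The flux $\mathscr G_l$ is integrable because it only involves $\nabla\mathcal T_{l+1}$, so no integrability issue arises in dropping it.

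What remains is
\[
-\int_{D_T}\mathcal B_l(u,\tilde u)\varphi'\,\dd x\,\dd t-\int_{D_T}\mathscr S_l(u,\tilde u)\varphi\,\dd x\,\dd t\leq C\mathfrak e_l.
\]
Rearranging and recalling the definition of $\mathscr S_l$ gives exactly \eqref{eq:global-truncated-Kato}.

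The main point requiring care has already been handled upstream: the constant in Lemma \ref{lem:chartwise-signed-Kato} is independent of derivatives of $\chi$, which is what makes Lemma \ref{lem:boundary-localization} valid with a constant depending only on $\varphi$ and the partition. The only remaining check is that the summation involves finitely many charts. Thus the estimate is uniform and the proof is a short bookkeeping argument.
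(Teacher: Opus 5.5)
Your proposal is correct and follows essentially the same route as the paper: you apply Lemma \ref{lem:boundary-localization} to each $\chi_j$, sum over the finitely many charts using linearity of $\mathscr F_l$ in $\chi$, and drop the flux term because $\sum_{j}\nabla\chi_j=0$ on $D$. Your remarks on the integrability of $\mathscr G_l$ and on absorbing the factor $J+1$ into the constant are the only details the paper leaves implicit.
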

\begin{proof}
Apply Lemma \ref{lem:boundary-localization} to every $\chi_j$ and sum over $j$. 
Since $\sum_{j=0}^J\chi_j=1$, the time and source terms become global.
Then we have
\[
\sum_{j=0}^J\int_{D_T}\mathscr G_l(u,\tilde u)\cdot\nabla\chi_j\,\varphi\,\dd x\,\dd t=0.
\]
This proves \eqref{eq:global-truncated-Kato}.
\end{proof}

\subsection{Averaged strong initial trace}
\begin{thm}[Averaged strong initial trace]\label{thm:averaged-initial-trace}
Let $(u,\nu)$ be a renormalized entropy solution in the sense of Definition \ref{def:entropy solution with ob}. 
Suppose $\xi\geq\psi$ a.e. in $D$.
Then
\begin{equation}\label{eq:averaged-strong-initial-trace}
\lim_{\tau\downarrow0}\frac1\tau\int_0^\tau\|\beta(u(t))-\beta(\xi)\|_{L^1(D)}\,\dd t=0.
\end{equation}
\end{thm}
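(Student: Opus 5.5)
We prove \eqref{eq:averaged-strong-initial-trace} in two halves, one for each one-sided part of $\beta(u)-\beta(\xi)$. For the positive part, $(\beta(u)-\beta(\xi))^+$, we test the entropy inequality \eqref{eq:entropy formula-1} with entropies measuring the excess of $u$ over $\xi$. For the negative part we use entropies measuring the deficit. The natural one-sided tool is to double only in the "constant" variable: we freeze a value $k$ and take $\eta(r)=q_\delta(r-k)$ or $\eta(r)=-q_\delta(k-r)$. The time primitive $\int_\xi^u\eta h_l\,\dd\beta$ already carries the initial value. We then integrate $k$ against a spatially mollified copy of $\xi$. Concretely, with the inward kernel $K_\varsigma(x,y)$, we put $k=\xi(y)$ and multiply by $K_\varsigma(x,y)$.

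\textbf{Step 1 (test functions).} We take $\phi(x,t)=\gamma_\tau(t)K_\varsigma(x,y)$, where $\gamma_\tau(t)=(1-t/\tau)^+$ is regularized to be smooth. Since $-\partial_t\gamma_\tau=\tau^{-1}\one_{(0,\tau)}$, the time term becomes
\[
\frac1\tau\int_0^\tau\!\!\int\!\!\int K_\varsigma\int_{\xi(x)}^{u(x,t)}q_\delta(r-\xi(y))h_l(r)\,\dd\beta(r)\,\dd x\,\dd y\,\dd t .
\]
Here the test touches $t=0$, which Definition~\ref{def:entropy solution with ob} allows. Lemma~\ref{lem:inward-kernel-properties} makes the $x$-test compactly supported in $D$ for each $y$, so non-$\mathcal E_0$ entropies are admissible.

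\textbf{Step 2 (right-hand side).} We bound each remaining term. The diffusion term is bounded by $C\varsigma^{-1}\int_0^\tau\!\int(1+|\nabla\mathcal T_{l+1}u|^{p-1})$ after the nonnegative dissipation is dropped, so it is $o(\tau)$ for fixed $\varsigma$. The source term is $\int_0^\tau\!\int|f(u)|=o(\tau)$, since the integrand is in $L^1$. The defect term is bounded by $\mu_{l,1}^u(D\times[0,\tau))$, which tends to $0$ as $\tau\downarrow0$ for fixed $l$. We do not need a rate here, because the left side gets divided by $\tau$ only through the primitive. More precisely, the defect is not divided by $\tau$, and this is a problem: it must be handled with the weight $\gamma_\tau\le1$, giving $\mu_{l,1}^u(D\times[0,\tau))\to0$, which is acceptable because the left side is already an average.

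The reaction term is the critical one: $-\int\eta(\psi)\phi\,\dd\nu$ appears on the left with a minus sign. For the positive-part entropy we use $\xi(y)\ge\psi(y)$ to get
\[
q_\delta(\psi(x)-\xi(y))\le C\delta^{-1}\omega_\psi(C\varsigma).
\]
Hence this term is at most $C\delta^{-1}\omega_\psi(C\varsigma)\,\nu(D\times[0,\tau))$, which is harmless. For the negative part we use $-q_\delta(\xi(y)-r)$, which produces $+\int q_\delta(\xi(y)-\psi)\phi\,\dd\nu$ on the left with a favorable sign, so it can be discarded.

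\textbf{Step 3 (limits).} We let $\delta\downarrow0$ last in the spatial sense and pass through the order $\tau\downarrow0$, then $\varsigma\downarrow0$, then $l\to\infty$. The primitive converges to $(H_l(u(x,t))-H_l(\xi(y)))^\pm$, up to an error $\omega_{\beta,l}(\delta)$. The error from replacing $\xi(y)$ by $\xi(x)$ is controlled by $\int K_\varsigma|\beta(\xi(x))-\beta(\xi(y))|\to0$, the $L^1$ translation property of $K_\varsigma$. The cutoff $h_l$ is removed using $\beta(u)\in L^\infty(L^1)$, via the tail bound $\sup_t\int_{\{|u|>l\}}|\beta(u)|$. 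We then obtain the result in the form $\limsup_\tau\frac1\tau\int_0^\tau\|(\beta(u)-\beta(\xi))^\pm\|_{L^1}\le\epsilon(l,\varsigma)$.

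\textbf{Main obstacle.} The hardest step is to make the orders of limits compatible. The diffusion error carries $\varsigma^{-1}$ and $\delta^{-1}$ and needs $\tau\downarrow0$ first. The obstacle error needs $\varsigma$ small relative to $\delta$. The uniform-in-$\tau$ tail of $h_l$ requires equi-integrability of $\beta(u(t))$ near $t=0$, which is not automatic from $L^\infty(L^1)$. For that tail we first use the entropy inequality with $\eta=q_\delta(r-l)$ to bound $\frac1\tau\int_0^\tau\!\int(\beta(u)-\beta(l))^+$ by data tails, and only then remove $h_l$.
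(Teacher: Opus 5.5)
Your proposal follows the paper's proof essentially step for step: the same entropies $q_\delta(r-\xi(y))$ and $-q_\delta(\xi(y)-r)$ tested against $K_\varsigma(x,y)(1-t/\tau)^+$, the same obstacle bound $C\delta^{-1}\omega_\psi(C\varsigma)\,\nu(D\times[0,T))$, the same limit order $\tau\downarrow0$, $\varsigma\downarrow0$, $\delta\downarrow0$, and the same one-sided tail estimate with $q_\delta(r-l)$ before $l\to\infty$ (there is no lower tail because $u,\xi\geq\psi\geq-M$). Two slips need fixing: the right-hand error terms are only $o(1)$, not $o(\tau)$, which is enough because the left side is already an average; and $\mu^u_{l,1}(D\times[0,\tau))\to\mu^u_{l,1}(D\times\{0\})$, which need not vanish, so for fixed $l$ you must keep $C\mu^u_{l,1}(D\times[0,T))$ and remove it only as $l\to\infty$ via \eqref{eq:defect-tail-uniqueness} (likewise, the tail estimate needs a larger cutoff $h_n$ with $n\to\infty$).
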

\begin{proof}
Fix $l>M+1$. Based on \eqref{eq:local-beta-l}, the function $H_l$ is bounded, continuous and nondecreasing. 
Note that $q_\delta$ is a smooth approximation of $\one_{(0,\infty)}$ satisfying \eqref{eq:property of q1}--\eqref{eq:property of q2}.
For $a,b\in\mathbb R$, denote the modulus of continuity of $H_l$ by
\[
\omega_l(s):=\sup_{|a-b|\leq s}|H_l(a)-H_l(b)|.
\]
Then, we have
\begin{equation*}
\int_b^a q_\delta(r-b)\,\dd H_l(r)-\int_b^a q_\delta(b-r)\,\dd H_l(r)\leq |H_l(a)-H_l(b)|,
\end{equation*}
and
\begin{equation*}
|H_l(a)-H_l(b)|\leq \int_b^a q_\delta(r-b)\,\dd H_l(r)-\int_b^a q_\delta(b-r)\,\dd H_l(r)+2\omega_l(\delta).
\end{equation*}

Based on the definition of $K_\varsigma$, we have that $x\mapsto K_\varsigma(x,y)$ is compactly supported in $D$ for all sufficiently small $\varsigma$ and each $y\in D$.
Set
\[
\Sigma_\varsigma(x):=\int_DK_\varsigma(x,y)\,\dd y.
\]
Then, we have
\begin{equation}\label{eq:kernel-properties-short}
\sup_{\varsigma>0}\|\Sigma_\varsigma\|_{L^\infty(D)}<\infty,\qquad \|1-\Sigma_\varsigma\|_{L^1(D)}\longrightarrow0.
\end{equation}

Choose the Lipschitz time cutoff
\[
\zeta_\tau(t)=\left(1-{t}/{\tau}\right)^+.
\]
It is enough to use smooth approximations of this cutoff. 
For fixed $y\in D$, apply the entropy inequality \eqref{eq:entropy formula-1} first with
\[
\eta_y^+(r)=q_\delta(r-\xi(y)),\qquad\phi(x,t)=K_\varsigma(x,y)\zeta_\tau(t),
\]
and then with
\[
\eta_y^-(r)=-q_\delta(\xi(y)-r),\qquad\phi(x,t)=K_\varsigma(x,y)\zeta_\tau(t),
\]
which are admissible since $\phi$ is compactly supported in $D$.
We now add the two entropy inequalities and integrate them with respect to $y$.
Since $\xi(y)\geq\psi(y)$ and $|x-y|\leq C\varsigma$ on the support of $K_\varsigma$, we have
\[
q_\delta(\psi(x)-\xi(y))\leq q_\delta(\psi(x)-\psi(y))\leq C\frac{\omega_\psi(C\varsigma)}{\delta}.
\]
Therefore, the positive reaction contribution is bounded by
\[
C\frac{\omega_\psi(C\varsigma)}{\delta}\nu(D\times[0,T)).
\]
The reaction contribution arising from the negative entropy is nonpositive, and therefore we may discard it.
Moreover, since $A(\nabla u)\cdot\nabla u\geq0$, both entropy-dissipation terms are nonpositive, and therefore we may discard them as well.

Since the spatial derivative of the inward kernel is of order $\varsigma^{-1}$, the remaining localization flux is bounded by
\begin{equation}\label{eq:bounded-A-average-initial}
\frac{C}{\varsigma}\int_0^\tau\int_D h_l(u)|A(\nabla u)|\,\dd x\,\dd t.
\end{equation}
Finally, the time derivative of $\zeta_\tau$ produces the averaged term over $(0,\tau)$.
Combining these observations, we obtain
\begin{align}
&\frac1\tau\int_0^\tau\int_{D\times D}K_\varsigma(x,y)|H_l(u(x,t))-H_l(\xi(y))|\,\dd x\,\dd y\,\dd t\notag\\
&\leq\int_{D\times D}K_\varsigma(x,y)|H_l(\xi(x))-H_l(\xi(y))|\,\dd x\,\dd y\notag\\
&\quad+C\omega_l(\delta)+C\frac{\omega_\psi(C\varsigma)}{\delta}\nu(D\times[0,T))\notag\\
&\quad+\frac{C}{\varsigma}\int_0^\tau\int_Dh_l(u)|A(\nabla u)|\,\dd x\,\dd t\notag\\
&\quad+C\int_0^\tau\int_D|f(x,t,u)|\,\dd x\,\dd t+C\mu_{l,1}^u(D\times[0,T)).\label{eq:truncated-doubled-trace-short}
\end{align}

By the triangle inequality,
\begin{align}
&\frac1\tau\int_0^\tau\|H_l(u(t))-H_l(\xi)\|_{L^1(D)}\,\dd t\notag\\
&\leq\frac1\tau\int_0^\tau\int_{D\times D}K_\varsigma(x,y)|H_l(u(x,t))-H_l(\xi(y))|\,\dd x\,\dd y\,\dd t\notag\\
&\quad+\int_{D\times D}K_\varsigma(x,y)|H_l(\xi(x))-H_l(\xi(y))|\,\dd x\,\dd y+2\|H_l\|_{L^\infty(\mathbb R)}\|1-\Sigma_\varsigma\|_{L^1(D)}.
\label{eq:diagonal-short}
\end{align}

Combining \eqref{eq:truncated-doubled-trace-short} and \eqref{eq:diagonal-short}, and then taking the limits in the order
\[
\tau\downarrow0,\qquad\varsigma\downarrow0,\qquad\delta\downarrow0,
\]
we obtain
\begin{equation}\label{eq:truncated-initial-trace-short}
\limsup_{\tau\downarrow0}\frac1\tau\int_0^\tau\|H_l(u(t))-H_l(\xi)\|_{L^1(D)}\,\dd t\leq C\mu_{l,1}^u(D\times[0,T)).
\end{equation}
Indeed, $h_l(u)A(\nabla u)\in L^1(D_T)^d$ by \eqref{eq:A-growth-uniqueness} and $\mathcal T_{l+1}(u)\in L^p(0,T;W_0^{1,p}(D))$.
Thus, for fixed $\varsigma>0$, we have
\[
\frac1\varsigma\int_0^\tau\int_Dh_l(u)|A(\nabla u)|\,\dd x\,\dd t\longrightarrow0,\qquad
\int_0^\tau\int_D|f(x,t,u)|\,\dd x\,\dd t\longrightarrow0
\]
as $\tau\downarrow0$. 
The remaining kernel terms vanish by \eqref{eq:kernel-properties-short} and the property of $K_\varsigma$, while ${\omega_\psi(C\varsigma)}/{\delta}\rightarrow0$ for fixed $\delta$, and $\omega_l(\delta)\to0$ as $\delta\downarrow0$.

We next prove a one-sided initial tail estimate: For every $k>M$,
\begin{equation}\label{eq:one-sided-initial-tail-short}
\limsup_{\tau\downarrow0}\frac1\tau\int_0^\tau\int_D\bigl(\beta(u(x,t))-\beta(k)\bigr)^+\,\dd x\,\dd t\leq\int_D\bigl(\beta(\xi(x))-\beta(k)\bigr)^+\,\dd x.
\end{equation}
For $n>k+1$, use the entropy inequality \eqref{eq:entropy formula-1} with $h_n$ and
\[
\eta(r)=q_\delta(r-k),\qquad \phi(x,t)=\zeta_\tau(t),
\]
by a smooth approximation of $\zeta_\tau$.
This is an admissible boundary test function for $\eta$ because $\eta(0)=0$.
Then, we have
\begin{align*}
\frac1\tau\int_0^\tau\int_D\int_k^uq_{\delta}(r-k)h_n(r)\,\dd\beta(r)\,\dd x\,\dd t&\leq\int_D\int_k^\xi q_{\delta}(r-k)h_n(r)\,\dd\beta(r)\,\dd x\\
&\quad+\int_0^\tau\int_D|f(x,t,u)|\,\dd x\,\dd t+\mu_{n,1}^u(D\times[0,T)).
\end{align*}
The obstacle term vanishes because $\psi\leq M<k$, and the diffusion term is nonpositive. 
Letting successively
\[
n\to\infty,\qquad\tau\downarrow0,\qquad\delta\downarrow0,
\]
yields \eqref{eq:one-sided-initial-tail-short}. 
Moreover, noting that $H_l(r)=\beta(r)$ on $r\in[-l,l]$, we have for $l\geq M$
\[
|\beta(r)-H_l(r)|\leq\bigl(\beta(r)-\beta(l)\bigr)^+\qquad\text{for every }r\geq-M.
\]
Then, we have
\begin{align*}
|\beta(u)-\beta(\xi)|&\leq|H_l(u)-H_l(\xi)|+\bigl(\beta(u)-\beta(l)\bigr)^++\bigl(\beta(\xi)-\beta(l)\bigr)^+.
\end{align*}
Using \eqref{eq:truncated-initial-trace-short} and \eqref{eq:one-sided-initial-tail-short} with $k=l$, we obtain
\begin{align}
&\limsup_{\tau\downarrow0}\frac1\tau\int_0^\tau\|\beta(u(t))-\beta(\xi)\|_{L^1(D)}\,\dd t\leq C\mu_{l,1}^u(D\times[0,T))+2\int_D\bigl(\beta(\xi)-\beta(l)\bigr)^+\,\dd x.\label{eq:final-trace-l-bound}
\end{align}

We now let $l\to\infty$. The fact $\beta(\xi)\in L^1(D)$, the dominated convergence theorem and \eqref{eq:defect-tail-uniqueness} imply that the right-hand side of \eqref{eq:final-trace-l-bound} tends to zero.
This proves \eqref{eq:averaged-strong-initial-trace}.
\end{proof}

\begin{rem}
The Kato inequalities use spatial localization before time diagonalization. 
The initial term produces a boundary-layer estimate \eqref{eq:bounded-A-average-initial}.
Its control requires the time interval to shrink before the spatial localization is removed. 
We therefore derive the averaged initial trace with the limit order
\[
\tau\downarrow0,\qquad\varsigma\downarrow0,\qquad\delta\downarrow0,
\]
which differs from step 8 in the proof of Lemma \ref{lem:chartwise-signed-Kato}.
On the other hand, this averaged trace supplies the initial value in the distributional Kato inequality and does not require the assumption of $L^1(D)$-continuity on $t\mapsto\beta(u(t))$.
\end{rem}

\subsection{The balance residual and the proof of comparison}
\begin{lem}[Truncated balance residual]\label{lem:truncated-balance-residual}
Let $(v,\nu_v)$ be a renormalized entropy solution with source $f_v$.
For $l>M+1$ and $0\leq\phi\in C_c^\infty(D\times[0,T))$, set
\begin{align*}
\mathscr R_l^v(\phi)&:=-\int_{D_T}H_l(v)\partial_t\phi\,\dd x\,\dd t-\int_D H_l(\xi_v)\phi(x,0)\,\dd x\\
&\quad+\int_{D_T}h_l(v)A(\nabla v)\cdot\nabla\phi\,\dd x\,\dd t-\int_{D_T}f_v(x,t,v)h_l(v)\phi\,\dd x\,\dd t -\int_{D\times[0,T)}\phi\,\dd\nu_v.
\end{align*}
Then
\begin{equation*}
|\mathscr R_l^v(\phi)|\leq\int_{D\times[0,T)}\phi\,\dd\mu_{l,1}^v.
\end{equation*}
\end{lem}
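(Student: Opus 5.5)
The idea is to obtain the two-sided bound by applying the entropy inequality \eqref{eq:entropy formula-1} with the two constant entropies $\eta\equiv1$ and $\eta\equiv-1$. Both belong to $\mathcal E$ (they are $C^1$, bounded, with $\eta'=0\ge0$) and have $\|\eta\|_{L^\infty}\le1$, so the defect measure $\mu_{l,1}^v$ from Definition \ref{def:entropy solution with ob} with $K=1$ applies to both. Neither lies in $\mathcal E_0$, however, so admissibility forces $\phi$ to have compact spatial support in $D$. The test functions in the lemma are $0\le\phi\in C_c^\infty(D\times[0,T))$, so this holds. The functions also vanish near $t=T$, since their support is compact in $D\times[0,T)$.

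\textbf{Step 1: $\eta\equiv1$.} The time term is $-\int\partial_t\phi\,(H_l(v)-H_l(\xi_v))$, because $\int_{\xi_v}^{v}h_l\,\dd\beta=H_l(v)-H_l(\xi_v)$. Since $H_l(\xi_v)$ is independent of $t$, integrating in time gives
\[
-\int_{D_T}\partial_t\phi\,H_l(\xi_v)\,\dd x\,\dd t=\int_DH_l(\xi_v)\phi(x,0)\,\dd x .
\]
The reaction term is $-\int\eta(\psi)\phi\,\dd\nu_v=-\int\phi\,\dd\nu_v$. The diffusion term reduces to $-\int h_l(v)A(\nabla v)\cdot\nabla\phi$, since $\nabla\eta(v)=0$, and the source term reduces to $\int f_vh_l(v)\phi$. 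Collecting these gives exactly $\mathscr R_l^v(\phi)\le\int\phi\,\dd\mu_{l,1}^v$.

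\textbf{Step 2: $\eta\equiv-1$.} Every term in the inequality changes sign, except that the reaction term becomes $+\int\phi\,\dd\nu_v$, again matching $-\eta(\psi)=1$. The defect term stays $+\int\phi\,\dd\mu_{l,1}^v$. The result is $-\mathscr R_l^v(\phi)\le\int\phi\,\dd\mu_{l,1}^v$, and combining the two steps gives the claim.

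\textbf{Integrability.} The only point needing care is that each term is well defined. The factor $h_l(v)A(\nabla v)$ involves only $\nabla\mathcal T_{l+1}(v)$, so it lies in $L^{p'}$ by \eqref{eq:A-growth-uniqueness}. The source term is integrable by \eqref{eq:source-integrability-uniqueness}. Finally, $H_l$ is bounded, and this justifies the Fubini step for the initial term. I do not expect any real obstacle: the key observation is that constant entropies are admissible in Definition \ref{def:entropy solution with ob} once the support is interior, and that for constant $\eta$ the obstacle evaluation $\eta(\psi)$ is simply the constant itself.
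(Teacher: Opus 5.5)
Your proposal is correct and is the paper's proof: the paper also applies \eqref{eq:entropy formula-1} with the constant entropies $\eta\equiv1$ and $\eta\equiv-1$ to get the two one-sided bounds. You additionally verify admissibility: constant entropies are not in $\mathcal E_0$, so the compact spatial support of $\phi$ in $D$ is what makes them usable. You also carry out the integration in time that produces the $H_l(\xi_v)\phi(x,0)$ term; the paper leaves both points implicit.
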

\begin{proof}
Use the constant entropy $\eta=1$ in \eqref{eq:entropy formula-1}.
This gives $\mathscr R_l^v(\phi)\leq\int_{D\times[0,T)}\phi\,\dd\mu_{l,1}^v$.
The constant entropy $\eta=-1$ gives the opposite inequality.
\end{proof}
\begin{proof}[Proof of Theorem \ref{thm:main-comparison}]
Let $L_2$ be the constant in \eqref{eq:f-beta-Lipschitz-uniqueness} for $\tilde f$.
On $\{u>\tilde u\}$,
\begin{align*}
f(x,t,u)-\tilde f(x,t,\tilde u) &=f(x,t,u)-\tilde f(x,t,u)+\tilde f(x,t,u)-\tilde f(x,t,\tilde u)\\
&\leq L_2\bigl(\beta(u)-\beta(\tilde u)\bigr)^+.
\end{align*}
Moreover, as $l\to\infty$, we have
\[
\mathcal B_l(u,\tilde u) \longrightarrow\bigl(\beta(u)-\beta(\tilde u)\bigr)^+ \quad\text{a.e. in }D_T,
\]
and the absolute value is bounded by $|\beta(u)|+|\beta(\tilde u)|$.
The source terms converge in $L^1(D_T)$ by \eqref{eq:source-integrability-uniqueness}.
Letting $l\to\infty$ in \eqref{eq:global-truncated-Kato} gives
\begin{equation}
-\int_0^T Y(t)\varphi'(t)\,\dd t \leq L_2\int_0^T Y(t)\varphi(t)\,\dd t, \label{eq:Y-distribution-inequality}
\end{equation}
where
\[
Y(t):=\int_D \bigl(\beta(u(t))-\beta(\tilde u(t))\bigr)^+\,\dd x.
\]

Let $\mathcal G$ be the set of Lebesgue points of $Y$.
Testing \eqref{eq:Y-distribution-inequality} with smooth approximations of $\one_{[s,t]}$ gives
\begin{equation}\label{eq:before gronwall +}
Y(t)\leq Y(s)+L_2\int_s^tY(r)\,\dd r
\end{equation}
for $0<s<t<T$ with $s,t\in\mathcal G$. 
For almost every $s\in(0,t)$, we have
\begin{align*}
Y(s)&\leq\|\beta(u(s))-\beta(\xi)\|_{L^1(D)}+\int_D(\beta(\xi)-\beta(\tilde\xi))^+\,\dd x\\
&\quad+\|\beta(\tilde u(s))-\beta(\tilde\xi)\|_{L^1(D)}.
\end{align*}
Applying Theorem \ref{thm:averaged-initial-trace} to both $u$ and $\tilde u$, we have
\begin{align*}
\limsup_{\tau\downarrow0}\frac1\tau\int_0^\tau Y(s)\,\dd s\leq\int_D(\beta(\xi)-\beta(\tilde\xi))^+\,\dd x.
\end{align*}
We average the inequality \eqref{eq:before gronwall +} over $s\in(0,\tau)$. 
Since $Y\in L^1(0,T)$, we may let $\tau\downarrow0$.
Therefore, we obtain
\[
Y(t)\leq Y_0+L_2\int_0^tY(r)\,\dd r,\qquad Y_0:=\int_D(\beta(\xi)-\beta(\tilde\xi))^+\,\dd x.
\]
Gr\"onwall's inequality proves \eqref{eq:main-comparison}.

If $f=\tilde{f}$ and $\xi=\tilde{\xi}$, apply the estimate in both directions.
This gives $\beta(u)=\beta(\tilde u)$ a.e.
If $\beta$ is strictly increasing, then $u=\tilde u$ a.e. and the generalized gradients agree.
For $0\leq\phi\in C_c^\infty(D\times[0,T))$, Lemma \ref{lem:truncated-balance-residual} gives
\[
\left|\int_{D\times[0,T)}\phi\,\dd(\nu-\tilde\nu)\right| \leq\int_{D\times[0,T)}\phi\,\dd\mu_{l,1}^u +\int_{D\times[0,T)}\phi\,\dd\mu_{l,1}^{\tilde u}.
\]
Letting $l\to\infty$ proves $\nu=\tilde\nu$.
\end{proof}

\subsection{Measure-free supersolutions}
We introduce a one-sided entropy supersolution class as an ambient class for the minimality result below. 
Let $(g,\tilde{\xi})$ satisfy Assumption \ref{assu:assumption for initial} and let $\tilde{\xi}\geq\psi$ a.e. in $D$. 
\begin{defn}[Measure-free entropy supersolution]\label{def:measure-free-supersolution}
A measurable function $v:D\times[0,T]\to\mathbb R$ is a measure-free entropy supersolution to \eqref{eq:ito}--\eqref{eq:lower obstacle} with data $(g,\tilde{\xi})$ if $v\geq\psi$ a.e.,
\[
\beta(v)\in L^\infty(0,T;L^1(D)),\qquad \mathcal T_k(v)\in L^p(0,T;W_0^{1,p}(D))
\]
for every $k>0$, and the following one-sided entropy condition holds:  
For every $K>0$, there exists a family $(\mu_{l,K}^v)_{l>M+1}\subset\mathcal M_+(D\times[0,T))$ such that for every $l>M+1$, $\eta\in\mathcal E$ with $-K\leq\eta\leq0$, and every test function $\phi$ admissible for $\eta$,
\begin{align}
&-\int_{D_T}\partial_t\phi\int_{\tilde{\xi}(x)}^{v(x,t)}\eta(r)h_l(r)\,\dd\beta(r)\,\dd x\,\dd t \nonumber\\
&\leq-\int_{D_T}h_l(v)A(\nabla v)\cdot\nabla\bigl(\eta(v)\phi\bigr)\,\dd x\,\dd t\nonumber\\
&\quad+\int_{D_T}g(x,t,v)\eta(v)h_l(v)\phi\,\dd x\,\dd t+\int_{D\times[0,T)}\phi\,\dd\mu_{l,K}^{v}, \label{eq:measure-free-supersolution}
\end{align}
and
\[
\lim_{l\to\infty}\mu_{l,K}^{v}(D\times[0,T))=0.
\]
\end{defn}
Here ``measure-free'' means that no reaction measure is included in the definition; the vanishing defect family $\mu_{l,K}^v$ is still retained.
Every renormalized entropy solution $(v,\nu_v)$ satisfies Definition \ref{def:measure-free-supersolution}. 
Indeed, if $\eta\leq0$, then
\[
-\int_{D\times[0,T)}\eta(\psi)\phi\,\dd\nu_v\geq0.
\]
Omitting this nonnegative term from the left-hand side of \eqref{eq:entropy formula-1} gives \eqref{eq:measure-free-supersolution}. 

\begin{cor}[Minimality in the measure-free supersolution class]\label{cor:minimality-finite-reaction}Assume the hypotheses of Theorem \ref{thm:main-comparison}.
Let $(u,\nu)$ be a renormalized entropy solution with data $(f,\xi)$, and let $v$ be a measure-free entropy supersolution with data $(g,\tilde{\xi})$ and the same obstacle.
Suppose that $\xi\geq\psi$ a.e. in $D$ and
\[
f(x,t,r)\leq g(x,t,r)\quad\text{for a.e. }(x,t)\in D_T\text{ and every }r\in\mathbb R.
\]
If $L_g$ is a Lipschitz constant for $g$ in \eqref{eq:f-beta-Lipschitz-uniqueness}, then, for almost every $t\in(0,T)$,
\begin{equation}
\int_D\bigl(\beta(u(t))-\beta(v(t))\bigr)^+\,\dd x\leq \mathrm e^{L_gt} \int_D\bigl(\beta(\xi)-\beta(\tilde{\xi})\bigr)^+\,\dd x. \label{eq:minimality-comparison}
\end{equation}
In particular, if the initial data agree, then $\beta(u)\leq\beta(v)$ a.e. in $D_T$.
If $\beta$ is strictly increasing, then $u\leq v$ a.e. in $D_T$.
Thus the renormalized entropy solution under Definition \ref{def:entropy solution with ob}, which has the finite reaction, is the smallest measure-free entropy supersolution with the same data in the $\beta$-order; when $\beta$ is strictly increasing, it is also the smallest in the pointwise order.
\end{cor}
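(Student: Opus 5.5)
The plan is to rerun the proof of Theorem \ref{thm:main-comparison} with $(u,\nu,f,\xi)$ in the role of the first solution and $(v,g,\tilde\xi)$ in the role of $(\tilde u,\tilde f,\tilde\xi)$, and to check that every place where the entropy inequality of the second function enters uses only nonpositive entropies. If that holds, the measure-free inequality \eqref{eq:measure-free-supersolution} suffices: its missing reaction term is exactly the one that was discarded anyway. Concretely, in Lemma \ref{lem:interior-local-Kato} the second function is tested with $r\mapsto-q_\delta(U-r)\le0$. The corresponding reaction contribution in $I_2$, namely $-\int q_\delta(U-\psi(x))\Phi\,\dd\tilde\nu$, was only used through its sign and dropped, so it is simply absent here. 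The first reaction term in $I_2$ belongs to $u$ and is still bounded by $C\omega_\psi(C\varsigma)\delta^{-1}\nu(D\times[0,T))$. This is where finiteness of $\nu$ and the constraint $v\ge\psi$ are used: we have $q_\delta(\psi(y)-V)\le q_\delta(\psi(y)-\psi(x))$ because $V=v(x,t)\ge\psi(x)$. The defect $\mu^v_{l,1}$ replaces $\mu^{\tilde u}_{l,1}$ in $\mathfrak e_l$, and it still vanishes as $l\to\infty$.

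The boundary part is the delicate bookkeeping step, and I expect it to be the main obstacle. In Lemma \ref{lem:chartwise-signed-Kato} the second function is tested with $-q_\delta(U^+-\pi_\iota(r))$ and with $-q_\delta(r^--W^-)$, which are both $\le0$, so \eqref{eq:measure-free-supersolution} applies and the $\tilde\nu$-terms in $\mathcal O^\pm$ disappear. The extraction of $\mathscr L^-_{l,v}$, however, involves the functional $\mathscr L^-_{l,v}$, which contains $-\int\one_{\{\psi<0\}}\zeta\,\dd\nu_v$. For $v$ I would therefore redefine $\mathscr L^-_{l,v}$ without this term. Its positivity follows from \eqref{eq:measure-free-supersolution} with $\eta(r)=-q_\delta(-r)\le0$, exactly as in the Positivity lemma, and the Riesz representation and Lemma \ref{lem:L-boundary-vanishing} go through verbatim. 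For $u$, the functional $\mathscr L^+_{l,u}$ uses entropies $q_\delta\ge0$, which is allowed because $u$ is a genuine solution. Next comes the obstacle estimate. After removing the $\tilde\nu$ terms, $-\mathcal O^+$ reduces to $\int q_\delta(\psi^+(y)-V^+)\Phi\,\dd\nu$. This is still controlled by $\omega_\psi(C\varsigma)/\delta$ times $\nu(D\times[0,T))$, using $V^+\ge\psi^+(x)$. Similarly, $-\mathcal O^-$ reduces to $\int_{\{\psi(x)\le0\}}q_\delta(\tilde U^-+\psi(x))\Phi\,\dd\nu$, where $\tilde U^-\le\psi^-(y)$ gives the same bound. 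Lemma \ref{lem:boundary-localization} and the global Kato inequality \eqref{eq:global-truncated-Kato} then follow unchanged, with $g$ in place of $\tilde f$.

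For the initial trace, Theorem \ref{thm:averaged-initial-trace} applies to $u$ directly. For $v$ I must recheck its proof, because it used $\eta^+_y=q_\delta(r-\xi(y))\ge0$ and the tail estimate \eqref{eq:one-sided-initial-tail-short} with $q_\delta(r-k)\ge0$. A supersolution only gives the negative entropies. It is enough to control $\limsup_{\tau}\tau^{-1}\int_0^\tau\int_D(\beta(u)-\beta(v))^+$, so for $v$ I need only the lower half: $\limsup_\tau\tau^{-1}\int_0^\tau\|(\beta(\tilde\xi)-\beta(v(t)))^+\|_{L^1}\,\dd t=0$. This one-sided averaged trace comes from the proof of Theorem \ref{thm:averaged-initial-trace} using only $\eta^-_y=-q_\delta(\tilde\xi(y)-r)$, which gives a bound on $(H_l(\tilde\xi)-H_l(v))^+$. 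The tail is handled with $\eta(r)=-q_\delta(-k-r)$ for $k>M$, which controls $(\beta(-k)-\beta(v))^+$. Since $v\ge\psi\ge-M$, we have $\beta(v)\ge\beta(-M)$, so this tail is actually trivial. The upper truncation is harmless because $(H_l(a)-H_l(b))^+$ agrees with $(\beta(a)-\beta(b))^+$ up to the tails of $\beta(u)$, and those are controlled for the solution $u$. Then
\[
Y(s)\le\|\beta(u(s))-\beta(\xi)\|_{L^1(D)}+\int_D(\beta(\xi)-\beta(\tilde\xi))^+\,\dd x+\int_D(\beta(\tilde\xi)-\beta(v(s)))^+\,\dd x.
\]
Averaging this, letting $l\to\infty$ with the Lipschitz bound $f(u)-g(v)\le L_g(\beta(u)-\beta(v))^+$ on $\{u>v\}$, and applying Gr\"onwall's inequality yields \eqref{eq:minimality-comparison}. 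Equal initial data give $\beta(u)\le\beta(v)$ a.e., and if $\beta$ is strictly increasing this gives $u\le v$ a.e.
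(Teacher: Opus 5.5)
Your proposal is correct and follows the paper's proof essentially step by step: the same role assignment in the interior doubling, the reaction-free functional replacing $\mathscr L^-_{l,v}$ in the boundary step, and the one-sided averaged trace for $v$ obtained from the negative half of Theorem~\ref{thm:averaged-initial-trace}. One small correction is needed in that trace step. Since $v,\tilde\xi\geq-M$, one has $(\beta(\tilde\xi)-\beta(v))^+\leq(H_l(\tilde\xi)-H_l(v))^++(\beta(\tilde\xi)-\beta(l))^+$, so the term that must vanish as $l\to\infty$ is the upper tail of the initial datum $\beta(\tilde\xi)\in L^1(D)$, not a tail of $\beta(u)$. Consequently, the extra lower-tail entropy $-q_\delta(-k-r)$ you introduce is not needed.
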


\begin{proof}
We only show the changes in the proof of Theorem \ref{thm:main-comparison}.
Put
\[
\mathfrak e_l^*:=\mu_{l,1}^u(D\times[0,T))+\mu_{l,1}^v(D\times[0,T)).
\]
In the interior doubling argument of Lemma \ref{lem:interior-local-Kato}, we place $u$ in the first position and $v$ in the second.
Use
\[
r\mapsto q_\delta(r-V)\quad\text{for }u,\qquad r\mapsto-q_\delta(U-r) \quad\text{for }v.
\]
The second entropy belongs to $\mathcal E$ with $-1\leq-q_\delta\leq0$.
The calculation in Lemma \ref{lem:interior-local-Kato} therefore gives
\begin{align}
&-\int_{D_T}\mathcal B_l(u,v)\chi\varphi'\,\dd x\,\dd t \nonumber\\
&\quad\leq\int_{D_T}\one_{\{u>v\}}\bigl[h_l(v)A(\nabla v)-h_l(u)A(\nabla u)\bigr]\cdot\nabla\chi\,\varphi\,\dd x\,\dd t\nonumber\\
&\qquad+\int_{D_T}\one_{\{u>v\}}\bigl[f(x,t,u)h_l(u)-g(x,t,v)h_l(v)\bigr]\chi\varphi\,\dd x\,\dd t+C\mathfrak e_l^*. \label{eq:minimality-local-Kato}
\end{align}
There is no reaction term in the entropy inequality of $v$.
The term associated with $\nu$ is bounded, as before, by
\[
C\frac{\omega_\psi(C\varsigma)}{\delta}\nu(D\times[0,T))
\]
and vanishes when $\varsigma\downarrow0$ for fixed $\delta$.

The boundary localization in the proof of Lemma \ref{lem:chartwise-signed-Kato} uses only the negative entropy inequality for $v$.
For a smooth function $\zeta\in C^1_c(\overline{D}\times(0,T))$, define
\begin{align*}
\mathscr L_{l,v}^{-,0}(\zeta)&:=-\int_{D_T}\partial_t\zeta \int_{\tilde{\xi}}^v\one_{\{r<0\}}h_l(r)\,\dd\beta(r)\,\dd x\,\dd t+\int_{D_T}\one_{\{v<0\}}h_l(v)A(\nabla v)\cdot\nabla\zeta\,\dd x\,\dd t \\
&\quad-\int_{D_T}g(x,t,v)\one_{\{v<0\}}h_l(v)\zeta\,\dd x\,\dd t +\int_{D_T}\zeta\,\dd\mu_{l,1}^v.
\end{align*}
Approximating $-\one_{(-\infty,0)}$ by nonpositive entropies $-q_\delta$ in \eqref{eq:measure-free-supersolution} shows that $\mathscr L_{l,v}^{-,0}(\zeta)\geq0$ for $\zeta\geq0$.
Hence it is a nonnegative Radon measure. 
The proof of Lemma \ref{lem:L-boundary-vanishing} applies without change.
In the chartwise argument, $\mathscr L_{l,v}^{-}$ is replaced by $\mathscr L_{l,v}^{-,0}$, while $\mathscr L_{l,u}^{+}$ is unchanged.
All terms that contained a reaction for $v$ are absent.
Boundary localization and summation of the partition of unity now give
\begin{align}
&-\int_{D_T}\mathcal B_l(u,v)\varphi'(t)\,\dd x\,\dd t \nonumber\\
&\quad\leq\int_{D_T}\one_{\{u>v\}} \bigl[f(x,t,u)h_l(u)-g(x,t,v)h_l(v)\bigr]\varphi(t) \,\dd x\,\dd t+C\mathfrak e_l^*. \label{eq:minimality-global-Kato}
\end{align}

We now deal with the initial value of $v$.
Note that $H_l(r)=\int_0^rh_l(q)\,\dd\beta(q)$.
Use the entropy $-q_\delta(\tilde{\xi}(y)-r)$ in \eqref{eq:measure-free-supersolution}, test with the inward kernel as in the proof of Theorem \ref{thm:averaged-initial-trace}, and use the cutoff $(1-t/\tau)^+$.
The negative half of the proof of Theorem \ref{thm:averaged-initial-trace} gives
\[
\limsup_{\tau\downarrow0}\frac1\tau\int_0^\tau\int_D\bigl(H_l(\tilde{\xi})-H_l(v(t))\bigr)^+\,\dd x\,\dd t\leq C\mu_{l,1}^v(D\times[0,T)).
\]
Here no obstacle term occurs.
Since $v,\tilde{\xi}\geq-M$,
\[
\bigl(\beta(\tilde{\xi})-\beta(v)\bigr)^+\leq\bigl(H_l(\tilde{\xi})-H_l(v)\bigr)^++\bigl(\beta(\tilde{\xi})-\beta(l)\bigr)^+.
\]
Letting $l\to\infty$ yields the one-sided averaged trace
\begin{equation}
\lim_{\tau\downarrow0}\frac1\tau\int_0^\tau\int_D\bigl(\beta(\tilde{\xi})-\beta(v(t))\bigr)^+\,\dd x\,\dd t=0. \label{eq:measure-free-initial-trace}
\end{equation}

Let $l\to\infty$ in \eqref{eq:minimality-global-Kato}.
As in the proof of Theorem \ref{thm:main-comparison}, we obtain
\[
-\int_0^TY(t)\varphi'(t)\,\dd t\leq L_g\int_0^TY(t)\varphi(t)\,\dd t,\qquad Y(t):=\int_D\bigl(\beta(u(t))-\beta(v(t))\bigr)^+\,\dd x.
\]
For almost every $s>0$,
\begin{align*}
Y(s)&\leq\|\beta(u(s))-\beta(\xi)\|_{L^1(D)}+\int_D\bigl(\beta(\xi)-\beta(\tilde{\xi})\bigr)^+\,\dd x\\
&\quad+\int_D\bigl(\beta(\tilde{\xi})-\beta(v(s))\bigr)^+\,\dd x.
\end{align*}
Average this inequality over $s\in(0,\tau)$ and let $\tau\downarrow0$.
Theorem \ref{thm:averaged-initial-trace} and \eqref{eq:measure-free-initial-trace} give the initial value in \eqref{eq:minimality-comparison}.
Gr\"onwall's inequality completes the proof.
\end{proof}

\begin{rem}[Minimality within the measure-free class]
Minimality is often included in potential-theoretic definitions of obstacle solutions; see, e.g., \cite{korte2009obstacle,moring2024two}.
Let $\mathscr S(g,\tilde\xi)$ be a supersolution class for the same obstacle and data such that every member satisfies Definition \ref{def:measure-free-supersolution}.
Then Corollary \ref{cor:minimality-finite-reaction} gives
\[
\beta(u)\leq\beta(v)\qquad\text{a.e. in }D_T
\]
for every $v\in\mathscr S(g,\tilde\xi)$, where $(u,\nu)$ is the renormalized entropy solution, which has the finite reaction. 
If $\beta$ is strictly increasing, then $u\leq v$ a.e. in $D_T$.
In particular, the comparison applies to any variational supersolution for which the negative one-sided entropy inequality and the vanishing defect estimate in Definition \ref{def:measure-free-supersolution} have been established.
\end{rem}

\section{Existence for time-dependent obstacles}
\label{sec:existence-general}
This section treats a more general equation than the one used in Section \ref{sec:comparison-dirichlet} in two structural respects: the diffusion field may depend on the solution, and the obstacle may depend on time.
To obtain existence, however, we impose stronger coercivity, initial data, and obstacle assumptions than those used in Section \ref{sec:comparison-dirichlet}.
We consider
\begin{equation}
\partial_t\beta(u)=\operatorname{div}A(u,\nabla u)+f(x,t,u)+\nu \quad\text{in }D_T, \label{eq:general-obstacle-equation}
\end{equation}
with the initial and boundary conditions in \eqref{eq:ito} and with $u\geq\psi$ a.e. in $D_T$.
The uniqueness result of Section \ref{sec:comparison-dirichlet} is used only when $A$ is independent of its first variable and $\psi$ is independent of time.

We use the same two-parameter penalization scheme and compactness argument for both an interior localization and a localization that reaches the boundary.
The boundary-reaching case is allowed when the full obstacle is uniformly below the homogeneous Dirichlet value on the relevant boundary portion.
Thus, the same construction covers two localization regimes.

Throughout this section, we assume that $p\in[2,\infty)$.
Denote $p':={p}/{(p-1)}$.
\subsection{Assumptions, solution concept, and main existence theorem}
We first give the assumptions used in this section.
\begin{assumption}[Time nonlinearity]\label{assu:existence-beta}
There is a convex function $\mathcal J\in C^1(\mathbb R)$ such that $\mathcal J'=\beta$.
The function $\beta$ is continuous and strictly increasing, $\beta(0)=0$, and
\[
\lim_{r\to\pm\infty}\beta(r)=\pm\infty.
\]
\end{assumption}
We use the energy function
\begin{equation}
B(r):=r\beta(r)-\mathcal J(r)+\mathcal J(0)=\int_0^r s\,\dd\beta(s). \label{eq:existence-energy}
\end{equation}
The Stieltjes integral in \eqref{eq:existence-energy} is well defined because $\beta$ is continuous and nondecreasing.
It also gives
\begin{equation}
B(r)-B(s)\geq s\bigl(\beta(r)-\beta(s)\bigr), \qquad r,s\in\mathbb R.\label{eq:energy-convexity}
\end{equation}
Also, for every $a>0$ there is a constant $C_a$ such that
\begin{equation}
|\beta(r)|\leq a B(r)+C_a, \qquad r\in\mathbb R. \label{eq:beta-controlled-by-energy}
\end{equation}
Indeed, the quotient $B(r)/|\beta(r)|$ tends to infinity as $|r|\to\infty$.
This follows directly from
\[
B(r)=\int_0^r\bigl(\beta(r)-\beta(s)\bigr)\,\dd s.
\]

\begin{assumption}[Diffusion field: Alt--Luckhaus structure] \label{assu:existence-A} 
The map $A:\mathbb R\times\mathbb R^d\longrightarrow\mathbb R^d$ is continuous.
There are constants $a_0,a_1>0$ such that 
\begin{align}
\bigl(A(r,\zeta)-A(r,\widehat\zeta)\bigr)\cdot(\zeta-\widehat\zeta) &\geq a_0|\zeta-\widehat\zeta|^p, \label{eq:A-uniform-monotonicity}\\
|A(r,\zeta)| &\leq a_1\bigl(1+B(r)^{1/p'}+|\zeta|^{p-1}\bigr) \label{eq:A-growth} 
\end{align} 
for every $r\in\mathbb R$ and every $\zeta,\widehat\zeta\in\mathbb R^d$. 
\end{assumption} 
\begin{assumption}[Carrillo--Wittbold structure condition] \label{assu:existence-A-comparison} 
There are continuous functions 
\[ 
C_A:\mathbb R^2\longrightarrow[0,\infty), \qquad q_A,\widehat q_A:\mathbb R^2\longrightarrow\mathbb R^d
\] 
such that 
\begin{align} 
&\bigl(A(r,\zeta)-A(s,\widehat\zeta)\bigr)\cdot(\zeta-\widehat\zeta) +C_A(r,s) \bigl(1+|\zeta|^p+|\widehat\zeta|^p\bigr)|r-s| \nonumber\\ 
&\hspace{4cm} \geq q_A(r,s)\cdot\zeta+\widehat q_A(r,s)\cdot\widehat\zeta \label{eq:CW-structure} 
\end{align} 
for every $r,s\in\mathbb R$ and every $\zeta,\widehat\zeta\in\mathbb R^d$. 
\end{assumption} 
Assumption \ref{assu:existence-A} is an Alt--Luckhaus-type growth and coercivity condition together with uniform $p$-monotonicity in the gradient variable. 
Assumption \ref{assu:existence-A-comparison} is used only in the penalized comparison arguments in Subsection \ref{subsec:comparison penalized}.
The Alt--Luckhaus assumptions imply the coercivity estimate 
\begin{equation} 
A(r,\zeta)\cdot\zeta \geq c_{\mathrm{coer}}|\zeta|^p-C_{\mathrm{coer}}\bigl(1+B(r)\bigr) \label{eq:A-coercivity} 
\end{equation} 
for some constants $c_{\mathrm{coer}},C_{\mathrm{coer}}>0$. 

\begin{rem}[The range of $p$ in the existence section]\label{rem:restriction-uniform-p-monotonicity}
The Alt--Luckhaus existence framework \cite[Theorem 1.7]{alt1983quasilinear} is formulated for $1<p<\infty$.
In the present paper we retain the quantitative monotonicity condition
\[
[A(r,\zeta)-A(r,\widehat\zeta)]\cdot(\zeta-\widehat\zeta)\geq a_0|\zeta-\widehat\zeta|^p,
\]
which is also part of the quantitative structure considered in the Alt--Luckhaus setting.
The restriction $p\geq2$ records the range in which this particular coefficient class is nonempty.
Indeed, if $1<p<2$, fix $r\in\mathbb R$ and a unit vector $e$.
Applying the displayed inequality to $\zeta_i=(i/n)e$ and $\zeta_{i-1}=((i-1)/n)e$, and summing over $i=1,\ldots,n$, gives
\[
[A(r,e)-A(r,0)]\cdot e\geq a_0 n^{2-p},
\]
which is impossible for a finite-valued map as $n\to\infty$.
For $1<p<2$, the singular $p$-Laplace field instead satisfies a weighted monotonicity inequality.
Treating that structure would require a different strong-convergence argument.
We therefore state the present existence result for $p\geq2$.
\end{rem}

\begin{assumption}[Source and initial value] \label{assu:existence-data} 
The map $ f:D\times[0,T]\times\mathbb R\longrightarrow\mathbb R $ is Carath\'eodory. 
There are constants $c_f>0$ and a nonnegative function $f_0\in L^{p'}(D_T)$ such that 
\begin{equation} 
|f(x,t,r)| \leq f_0(x,t)+c_f B(r)^{1/p'} \label{eq:f-energy-growth} 
\end{equation} 
for almost every $(x,t)\in D\times[0,T]$ and every $r\in\mathbb R$. 
For the comparison arguments, define $\widehat f(x,t,z):=f(x,t,\beta^{-1}(z))$ for $(x,t,z)\in D\times[0,T]\times\mathbb{R}$. We assume that, for some $L_f>0$, 
\begin{equation} 
|\widehat f(x,t,z)-\widehat f(x,t,w)| \leq L_f|z-w| \label{eq:fhat-Lipschitz}
\end{equation} 
for almost every $(x,t)\in D\times[0,T]$ and all $z,w\in\mathbb R$. 
The initial value $\xi$ is measurable, 
\[ 
B(\xi)\in L^1(D), \qquad \xi\geq\psi(\cdot,0) \quad\text{a.e. in }D. 
\] 
\end{assumption} 
\begin{rem}[Different roles of the source assumptions] 
The Carath\'eodory property and \eqref{eq:f-energy-growth} are sufficient for weak existence. 
The Lipschitz condition \eqref{eq:fhat-Lipschitz} is used only in the comparison and monotonicity arguments for the penalized equations. 
\end{rem}

\begin{assumption}[Decomposed obstacle]\label{assu:existence-obstacle}
The full obstacle $\psi\in C(\overline D\times[0,T])$.
There exists $\psi_1\in L^p(0,T;W^{1,p}(D))\cap L^\infty(D_T)\cap C([0,T];L^1(D))$ such that
\[
\operatorname{Tr}\psi_1\leq0\quad\text{for a.e. }t\in(0,T),
\]
such that
\begin{equation}
r_{\psi_1}:=\partial_t\beta(\psi_1)-\operatorname{div}A(\psi_1,\nabla\psi_1)-f(x,t,\psi_1)\in L^\infty(D_T)\label{eq:obstacle-residual}
\end{equation}
in the sense of distributions.
We also assume that
\[
\psi_{1}(0)\leq\xi\quad\text{a.e. in }D.
\]
Set $\psi_2:=\psi-\psi_1$ and assume that $\psi_2\geq0$ a.e. in $D_T$.
There exists a compact set $K_2\subset\overline D$ such that
\begin{equation}
\psi_2=0\quad\text{a.e. on }(D\setminus K_2)\times(0,T).\label{eq:psi2-localization}
\end{equation}
Set
\[
M_{\mathrm{obs}}:=\max\left\{\|\psi\|_{L^\infty(\overline D\times[0,T])},\|\psi_1\|_{L^\infty(D_T)}\right\}.
\]
If $K_2\cap\partial D\neq\varnothing$, assume that there is $\sigma_\partial>0$ such that
\begin{equation}
\psi(x,t)\leq-\sigma_\partial\quad\text{for every }(x,t)\in(K_2\cap\partial D)\times[0,T].\label{eq:K2-boundary-gap}
\end{equation}
\end{assumption}
\begin{rem}[Role of the decomposition]
The component $\psi_1$ supplies the boundary trace and residual control used in the first penalization. 
The correction $\psi_2$ is localized in $K_2$.
If $K_2\Subset D$, then $\psi=\psi_1$ near the boundary.
If $K_2$ reaches the boundary, the strict inequality \eqref{eq:K2-boundary-gap} replaces this local agreement.
\end{rem}

The regularity and the boundedness of $\psi_1$ and the continuity of $\beta$ imply $\beta(\psi_1)\in C([0,T];L^1(D))$.
By \eqref{eq:obstacle-residual}, the growth of $A$, and
$\psi_1\in L^p(0,T;W^{1,p}(D))\cap L^\infty(D_T)$, we have
\[
 \partial_t\beta(\psi_1)
 \in L^{p'}(0,T;W^{-1,p'}(D))+L^{p'}(D_T).
\]
The integration-by-parts argument yields the initial-value identity with initial datum $\psi_1(0)$ (see \eqref{eq:AL-initial-value}). 
Thus $\psi_1$ is admissible as the lower barrier in Lemma \ref{lem:comparison-lower-barrier}.

The dependence of $A$ on $u$ produces one extra term in the renormalized identity.
We include it in the definition.

\begin{defn}[Renormalized entropy solution for $A=A(u,\nabla u)$]\label{def:general-renormalized-obstacle-solution}
A pair $(u,\nu)$ is a renormalized entropy solution of \eqref{eq:general-obstacle-equation} if the following properties hold.
\begin{itemize}
\item[(i)] $u\geq\psi$ a.e. in $D_T$, $B(u)\in L^\infty(0,T;L^1(D))$, and $\mathcal T_k(u)\in L^p(0,T;W_0^{1,p}(D))$ for every $k>0$.
\item[(ii)] $\nu$ is a nonnegative finite Radon measure on $D\times[0,T)$.
\item[(iii)] For every $K>0$, there exists a family $(\mu_{l,K}^u)_{l>M_{\mathrm{obs}}+1}\subset\mathcal M_+(D\times[0,T))$ such that for every $l>M_{\mathrm{obs}}+1$, $\eta\in\mathcal E$ with $\|\eta\|_{L^\infty(\mathbb R)}\leq K$, and every test function $\phi$ admissible for $\eta$ from Definition \ref{def:entropy solution with ob},
\begin{align}
&-\int_{D_T}\partial_t\phi\int_\xi^u\eta(r)h_l(r)\,\dd\beta(r)\,\dd x\,\dd t-\int_{D\times[0,T)}\eta(\psi)\phi\,\dd\nu\nonumber\\
&\leq-\int_{D_T}A(u,\nabla u)\cdot\nabla\bigl(\eta(u)\phi\bigr)h_l(u)\,\dd x\,\dd t+\int_{D_T}f(x,t,u)\eta(u)h_l(u)\phi\,\dd x\,\dd t \nonumber\\
&\qquad+\int_{D_T}\nabla\phi\cdot\int_0^u A(r,0)\eta(r)h_l'(r)\,\dd r\,\dd x\,\dd t+\int_{D\times[0,T)}\phi\,\dd\mu_{l,K}^u, \label{eq:general-renormalized-inequality}
\end{align}
and
\begin{equation}
\lim_{l\to\infty}\mu_{l,K}^u(D\times[0,T))=0. \label{eq:general-tail-condition}
\end{equation}
\end{itemize}
\end{defn}

This definition is an energy-strengthened extension of Definition \ref{def:entropy solution with ob} to diffusion fields depending on the solution.
If $A$ is independent of its first variable and $A(0)=0$, then $A(r,0)=0$, so the additional term in \eqref{eq:general-renormalized-inequality} vanishes.
Moreover, \eqref{eq:beta-controlled-by-energy} gives
\[
B(u)\in L^\infty(0,T;L^1(D))\quad\Longrightarrow\quad
\beta(u)\in L^\infty(0,T;L^1(D)).
\]
Hence, every solution in the sense of Definition \ref{def:general-renormalized-obstacle-solution} belongs to the class of Definition \ref{def:entropy solution with ob}.

\begin{thm}[Existence for a time-dependent obstacle]\label{thm:existence-general-time-obstacle}
Let $p\geq2$, and let Assumptions \ref{assu:existence-beta}--\ref{assu:existence-obstacle} hold.
Then there exists a renormalized entropy solution $(u,\nu)$ in the sense of Definition \ref{def:general-renormalized-obstacle-solution}.
Moreover,
\[
\nu=\nu_1\,\dd x\,\dd t+\nu_2\in\mathcal M_+(D\times[0,T)),
\]
where $\nu_1\in L^\infty(D_T)$ and $\nu_2\in\mathcal M_+(D\times[0,T))$, and
\begin{align*}
0\leq\nu_1&\leq r_{\psi_1}^+\quad\text{a.e. in }D_T,\\
\nu_1[\beta(u)-\beta(\psi_1)]&=0\quad\text{a.e. in }D_T,\\
\nu_1&=0\quad\text{a.e. on }\{\psi_1<\psi\},
\end{align*}
and $\nu_2$ is concentrated on $(K_2\cap D)\times[0,T)$.
\end{thm}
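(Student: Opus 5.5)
We construct the solution by two ordered penalizations, as announced in the introduction. For $\varepsilon>0$ we first solve, by the Alt--Luckhaus scheme (with the space--time approximation from the appendix), the penalized problem
\[
\partial_t\beta(u_\varepsilon)=\operatorname{div}A(u_\varepsilon,\nabla u_\varepsilon)+f(x,t,u_\varepsilon)+\tfrac1\varepsilon\bigl(\beta(\psi_1)-\beta(u_\varepsilon)\bigr)^+,
\]
with zero Dirichlet data and initial value $\xi$. The penalized comparison results of Subsection \ref{subsec:comparison penalized} are based on Assumption \ref{assu:existence-A-comparison} and \eqref{eq:fhat-Lipschitz}. We use them in two ways. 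First, Lemma \ref{lem:comparison-lower-barrier} gives $u_\varepsilon\geq\psi_1-o(1)$, since $\psi_1$ is a subsolution up to the bounded residual $r_{\psi_1}$. Second, comparison shows that $\varepsilon\mapsto u_\varepsilon$ is monotone (increasing as $\varepsilon\downarrow0$), and that the penalty $\nu_{1,\varepsilon}:=\varepsilon^{-1}(\beta(\psi_1)-\beta(u_\varepsilon))^+$ satisfies $0\leq\nu_{1,\varepsilon}\leq r_{\psi_1}^+$. The latter follows by comparing with $\psi_1$ on the set where the penalty is active, in the spirit of the Lewy--Stampacchia inequality. The energy estimate, obtained by testing with $u_\varepsilon$ and using \eqref{eq:A-coercivity}, \eqref{eq:f-energy-growth}, \eqref{eq:beta-controlled-by-energy} and Gr\"onwall, gives uniform bounds for $B(u_\varepsilon)$ in $L^\infty(0,T;L^1(D))$ and for $u_\varepsilon$ in $L^p(0,T;W^{1,p}_0(D))$. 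Monotone convergence, a weak-$*$ limit $\nu_1$ of $\nu_{1,\varepsilon}$ in $L^\infty$, and the uniform $p$-monotonicity \eqref{eq:A-uniform-monotonicity} then allow us to pass to the limit. Here the time pairing is legitimate because all terms are functions, so a standard Minty/Landes argument gives strong gradient convergence. The limit $w$ solves the $\psi_1$-obstacle problem with reaction $\nu_1$. The relations $\nu_1[\beta(w)-\beta(\psi_1)]=0$ and $0\leq\nu_1\leq r_{\psi_1}^+$ pass to the limit, since $\varepsilon\nu_{1,\varepsilon}\to0$ forces $\beta(w)\geq\beta(\psi_1)$, while $\nu_{1,\varepsilon}$ vanishes where $u_\varepsilon>\psi_1$.

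In the second stage we add the penalty $\tfrac1\kappa(\beta(\psi)-\beta(u))^+$ with $\kappa>0$, keeping the $\psi_1$-penalty. We may either take the $\varepsilon$-limit first, or run a diagonal double limit, ordered so that comparison still yields monotonicity in both parameters. The resulting approximations $u_\kappa$ increase as $\kappa\downarrow0$ and satisfy $u_\kappa\geq w$. The key new estimate is a uniform mass bound for $\nu_{2,\kappa}:=\kappa^{-1}(\beta(\psi)-\beta(u_\kappa))^+$ in $L^1(D_T)$. Since $\psi_2=0$ off $K_2$ and $u_\kappa\geq\psi_1-o(1)$, the density $\nu_{2,\kappa}$ is asymptotically supported in $K_2\times[0,T]$. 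We test the equation with a cutoff $\chi\in C_c^\infty$ equal to $1$ near $K_2\cap D$. If $K_2\Subset D$, we take $\chi\in C_c^\infty(D)$. If $K_2$ meets $\partial D$, we use \eqref{eq:K2-boundary-gap} and continuity of $\psi$: near $K_2\cap\partial D$ we have $\psi<-\sigma_\partial/2$, and a truncation argument, testing with $\mathcal T_k(u_\kappa-\psi)^-$-type functions near the boundary, shows that the penalty vanishes in a boundary strip for small $\kappa$. Since all other terms are bounded by the energy estimate, this gives $\sup_\kappa\|\nu_{2,\kappa}\|_{L^1}<\infty$. Along a subsequence $\nu_{2,\kappa}\rightharpoonup\nu_2$ weak-$*$ in $\mathcal M_+$, and $\nu_2$ is concentrated on $(K_2\cap D)\times[0,T)$. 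Moreover $u_\kappa\uparrow u\geq\psi$, because the bounded mass of $\nu_{2,\kappa}$ forces $\|(\beta(\psi)-\beta(u_\kappa))^+\|_{L^1}\leq C\kappa$.

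The main obstacle is identifying the flux $A(u,\nabla u)$ in this second limit, because $\nu_2$ is only a measure and cannot be paired with $u$. We follow the announced strategy. Let $v_\lambda$ be the one-sided time regularization $v_\lambda=\mathcal T_k(u)_\lambda$, obtained by solving $\lambda\partial_tv_\lambda+v_\lambda=\mathcal T_k(u)$ with initial datum $\mathcal T_k(\xi)$. We then test the $\kappa$-equation with $\mathcal T_k(u_\kappa)-v_\lambda$ (suitably truncated), and use $v_\lambda\geq\psi-c_\lambda$ with $c_\lambda\to0$, which follows from the uniform continuity of $\psi$ and $u\geq\psi$. The penalty contribution $\int\nu_{2,\kappa}(\mathcal T_k(u_\kappa)-v_\lambda)$ is then bounded below by $-c_\lambda\sup_\kappa\|\nu_{2,\kappa}\|_{L^1}$ plus a nonnegative term, up to errors from the truncation level. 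The time term has a good sign by the Landes construction. The $p$-monotonicity then yields
\[
\limsup_{\kappa\to0}\int_{D_T}|\nabla\mathcal T_k(u_\kappa)-\nabla\mathcal T_k(u)|^p\leq C c_\lambda+o_k(1),
\]
so the truncated gradients converge strongly and the flux is identified (this is Lemma \ref{lem:minty-obstacle-reactions}). With strong gradient convergence we pass to the limit in the renormalized inequality for $u_\kappa$ with multiplier $\eta(u_\kappa)h_l(u_\kappa)\phi$. The penalty terms converge as follows: $\int\eta(u_\kappa)\phi\,\nu_{2,\kappa}$ is close to $\int\eta(\psi)\phi\,\nu_{2,\kappa}$ because $u_\kappa\approx\psi$ where the penalty is active, and the latter tends to $\int\eta(\psi)\phi\,\dd\nu_2$ because $\eta(\psi)\phi$ is continuous. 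The $\nu_1$ term is handled similarly, using $\nu_1=0$ on $\{\psi_1<\psi\}$, which follows since on that set $u>\psi_1$ forces the first penalty to vanish in the limit. The defect measures are $\mu^u_{l,K}:=K\limsup_\kappa|\nabla u_\kappa|^p|h_l'(u_\kappa)|\,\dd x\,\dd t$. Their tail condition \eqref{eq:general-tail-condition} comes from the standard estimate $\int_{\{l<|u_\kappa|<l+1\}}|\nabla u_\kappa|^p\to0$ uniformly as $l\to\infty$, obtained by testing with $\mathcal T_1(u_\kappa-\mathcal T_l(u_\kappa))$ for $l>M_{\mathrm{obs}}+1$, where the penalties drop out, together with the energy bound.
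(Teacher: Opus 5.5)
Your proposal has genuine gaps. The decisive one is in the first stage, where the bounded reaction comes from; further gaps arise in the second stage near the boundary.

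With the plain penalty $\varepsilon^{-1}(\beta(\psi_1)-\beta(u_\varepsilon))^+$, the tools you cite give neither $u_\varepsilon\geq\psi_1-o(1)$ nor the uniform bound $0\leq\nu_{1,\varepsilon}\leq r_{\psi_1}^+$. Lemma \ref{lem:comparison-lower-barrier} is proved only for the penalty $\alpha_\varepsilon\kappa_\delta(\beta(v)-\beta(z))+P_\varepsilon(\cdot,\cdot,v)$. There, on $\{z>v\}$, one has $\kappa_\delta=1$, so the source difference is $r_z-\alpha_\varepsilon-P_\varepsilon(\cdot,\cdot,v)\leq0$. For your penalty the corresponding difference $r_{\psi_1}-\varepsilon^{-1}(\beta(\psi_1)-\beta(u_\varepsilon))$ has no sign, and the Kato inequality gives at best an $L^1$ bound on $\nu_{1,\varepsilon}$.

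The usual route to the pointwise (Lewy--Stampacchia) bound compares $u_\varepsilon$ with the shifted barrier $\beta^{-1}(\beta(\psi_1)-\varepsilon\|r_{\psi_1}^+\|_{L^\infty})$. That route fails here. When $\beta$ is nonlinear and $A$ depends on $u$, the shifted function is not a subsolution with controlled residual, and it need not even be Sobolev. Without the $L^\infty$ bound you lose three things:
the weak-$*$ limit of $\nu_{1,\varepsilon}$;
your claim that ``all terms are functions'' in the first Minty argument;
the conclusion $\nu_1\leq r_{\psi_1}^+$ of the theorem.

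In a diagonal scheme, $u\geq\psi_1-o(1)$ also does not localize your second penalty to $K_2$, since off $K_2$ it equals $\kappa^{-1}(\beta(\psi_1)-\beta(u_\kappa))^+$. The paper builds the bound into the penalty instead. Its density $\alpha_\varepsilon=(r_{\psi_1}-P_\varepsilon(\cdot,\cdot,\psi_1))^+\leq r_{\psi_1}^+$ is bounded by construction and yields $u_{\delta,\varepsilon}\geq\psi_1$ exactly. This is why both penalties appear together in \eqref{eq:two-parameter-penalty}.

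The second stage fails when $K_2\cap\partial D\neq\varnothing$. Your claim that the $\psi$-penalty vanishes in a boundary strip is unsupported: $u_\kappa$ has no continuity up to $\partial D$, so nothing keeps $\{u_\kappa<\psi\}$ away from $K_2\cap\partial D$. Moreover, a cutoff equal to $1$ near $K_2$ has nonzero trace there and is not an admissible test. What \eqref{eq:K2-boundary-gap} actually provides is a zero-trace majorant $H_*\geq\psi+\sigma_*$ on $K_2$ (Lemma \ref{lem:penalty-footprint-majorant}). Testing with $u-H_*$ gives $P(u-H_*)\leq-\sigma_*P$ on the penalty's support, which yields the energy and mass bounds simultaneously.

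This also repairs a circularity in your scheme. Testing with $u_\kappa$ produces $+\int\nu_{2,\kappa}\psi^+$, so no $\kappa$-uniform energy bound exists before the mass bound. For the same reason, the weak-$*$ limit may charge $(K_2\cap\partial D)\times[0,T]$. For tests reaching the lateral boundary, that mass must be discarded via $\eta(\psi)\leq\eta(0)=0$ there. The penalty term should be passed to the limit with the one-sided bound $\eta(u_\kappa)\leq\eta(\psi)$ on $\{u_\kappa<\psi\}$; your ``$u_\kappa\approx\psi$'' is never established.

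Finally, your time regularization starts from $\mathcal T_k(\xi)\notin W_0^{1,p}(D)$, so $v_\lambda$ is not an admissible test. You need $\xi_k\in W_0^{1,p}(D)\cap L^\infty(D)$ with $\xi_k\geq\psi(\cdot,0)$ on $K_2\cap D$ and $\int_D\mathcal B_\beta(\xi\mid\xi_k)\,\dd x\to0$, as in Lemma \ref{lem:initial-energy-approximation}. As a minor point, your defect density $K|\nabla u|^p|h_l'(u)|$ does not dominate $\eta h_l'\phi\,A(u,\nabla u)\cdot\nabla u$ when $A(u,0)\neq0$.
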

The theorem does not assert a separate support identity $\nu_2(\{u>\psi\})=0$ or a classical complementarity relation for the possibly singular
measure $\nu_2$. 
Its interaction with the obstacle is encoded by the term $\eta(\psi)\,\dd\nu_2$ in \eqref{eq:general-renormalized-inequality}. 

The proof uses two monotone penalization limits in \eqref{eq:two-parameter-penalty}.
For fixed $\varepsilon$, the limit $\delta\downarrow0$ enforces the lower barrier $\psi_1$ and produces the bounded reaction density $\nu_{1,\varepsilon}$.
The uniform estimate in Lemma \ref{lem:uniform-energy-general-obstacle} gives an $\varepsilon$-independent bound for the total mass of $P_\varepsilon(\cdot,\cdot,u_\varepsilon)\,\dd x\,\dd t$.
Thus, as $\varepsilon\downarrow0$, a subsequence converges weakly-* to a finite Radon measure.

This measure-valued limit prevents a direct time pairing with the limit solution in flux identification.
We instead use the comparison function $z_{k,\lambda}$, the one-sided time regularization of the truncated solution $\mathcal{T}_k(u)$.
The relative-energy identity controls the time term.
Then, on the support of the penalty term, we have
\[
z_{k,\lambda}\geq\psi-c_{\lambda}.
\]
Therefore the penalty contribution is bounded by $c_\lambda$ times the uniform penalty mass.
This error vanishes as $\lambda\downarrow0$ and uniform monotonicity gives strong convergence of the gradients.
The final subsection then passes the limit to the renormalized entropy inequality.

\subsection{Alt-Luckhaus weak solutions and time calculus} 
We record the Alt--Luckhaus weak formulation used for the penalized equations. 
Set 
\[ 
\mathbb V:=W^{1,p}_0(D), \qquad \mathbb V^*:=W^{-1,p'}(D). 
\] 
Consider the auxiliary initial-boundary value problem 
\begin{align} 
\partial_t\beta(v) &= \operatorname{div}A(v,\nabla v)+g(x,t,v) &&\text{in }D_T, \label{eq:AL-auxiliary-problem}\\ 
v&=0 &&\text{on }\partial D\times(0,T),\nonumber\\ 
v(0)&=\xi &&\text{in }D.\nonumber 
\end{align}
\begin{defn}[Alt--Luckhaus weak solution] \label{def:AL-weak-solution} 
A function $v$ is a weak solution of \eqref{eq:AL-auxiliary-problem} if the following properties hold. 
\begin{enumerate} 
\item 
\[ 
v\in L^p(0,T;\mathbb V), \qquad \beta(v)\in L^\infty(0,T;L^1(D)), \qquad \partial_t\beta(v)\in L^{p'}(0,T;\mathbb V^*). 
\] 
\item The initial value is attained in the Alt--Luckhaus sense: 
\begin{equation} 
\int_0^T \left\langle\partial_t\beta(v),\zeta\right\rangle_{\mathbb V^*,\mathbb V} \,\dd t +\int_{D_T} \bigl(\beta(v)-\beta(\xi)\bigr)\partial_t\zeta \,\dd x\,\dd t=0\label{eq:AL-initial-value}
\end{equation} 
for every
\[ 
\zeta\in L^p(0,T;\mathbb V) \cap W^{1,1}(0,T;L^\infty(D)), \qquad \zeta(T)=0. 
\] 
\item The functions 
\[ 
A(v,\nabla v)\in L^{p'}(D_T)^d, \qquad g(\cdot,\cdot,v)\in L^{p'}(D_T), 
\] 
and for every $\varphi\in L^p(0,T;\mathbb V)$
\begin{equation} 
\int_0^T \left\langle\partial_t\beta(v),\varphi\right\rangle_{\mathbb V^*,\mathbb V} \,\dd t+\int_{D_T}A(v,\nabla v)\cdot\nabla\varphi\,\dd x\,\dd t=\int_{D_T}g(x,t,v)\varphi\,\dd x\,\dd t. \label{eq:AL-weak-equation} 
\end{equation} 
\end{enumerate} 
\end{defn} 

\begin{lem}[Homogeneous Alt--Luckhaus chain rule]\label{lem:AL-time-chain-rule}
Let $v$ satisfy items {\rm(1)} and {\rm(2)} of Definition \ref{def:AL-weak-solution} with initial datum $\xi$, and assume that $B(\xi)\in L^1(D)$.
Then
\[
B(v)\in L^\infty(0,T;L^1(D)).
\]
Moreover, the function
\begin{equation}
\mathscr E_v(t):=\int_D B(\xi)\,\dd x+\int_0^t\left\langle\partial_\tau\beta(v),v\right\rangle_{\mathbb V^*,\mathbb V}\,\dd\tau\label{eq:AL-energy-representative-definition}
\end{equation}
belongs to the absolutely continuous function space $AC([0,T])$ and satisfies
\begin{equation}
\mathscr E_v(t)=\int_D B(v(t))\,\dd x\quad\text{for almost every }t\in(0,T).\label{eq:AL-energy-representative-identification}
\end{equation}
Consequently,
\begin{equation*}
\mathscr E_v(t)-\mathscr E_v(s)=\int_s^t\left\langle\partial_\tau\beta(v),v\right\rangle_{\mathbb V^*,\mathbb V}\,\dd\tau \quad (0\leq s\leq t\leq T). 
\end{equation*}
\end{lem}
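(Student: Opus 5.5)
The plan follows the classical Alt--Luckhaus time-discretisation argument: the energy inequality \eqref{eq:energy-convexity} is applied to backward difference quotients, and the result is passed to the limit using the weak form of the initial condition \eqref{eq:AL-initial-value}. Extend $v$ by $v(t)=0$ and $\beta(v(t))$ by $\beta(\xi)$ for $t<0$. For $h>0$ we consider the backward quotient $\partial^{-h}\beta(v)(t):=h^{-1}\bigl(\beta(v(t))-\beta(v(t-h))\bigr)$. By \eqref{eq:energy-convexity}, applied with $r=v(t-h)$ and $s=v(t)$ and then reversed, we get pointwise
\[
v(t)\bigl(\beta(v(t))-\beta(v(t-h))\bigr)\geq B(v(t))-B(v(t-h))\geq v(t-h)\bigl(\beta(v(t))-\beta(v(t-h))\bigr).
\]
Multiplying by a nonnegative $\varphi\in C_c^\infty([0,T))$, dividing by $h$ and integrating, we obtain two-sided bounds for $\int\varphi\,\partial^{-h}B(v)$ after a discrete integration by parts in time. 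These are of the form $-\int_0^T\varphi'(t)\int_DB(v)\,\dd x\,\dd t$ plus an initial contribution $\varphi(0)\int_DB(\xi)$, up to $o(1)$ errors, on the one side, and pairings $\int\varphi\, v(t)\,\partial^{-h}\beta(v)$ and $\int\varphi\, v(t-h)\,\partial^{-h}\beta(v)$ on the other.

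The next task is to identify the limits of these pairings. From \eqref{eq:AL-initial-value} and $\partial_t\beta(v)\in L^{p'}(0,T;\mathbb V^*)$, the Steklov quotient $\partial^{-h}\beta(v)$ converges to $\partial_t\beta(v)$ strongly in $L^{p'}(0,T;\mathbb V^*)$. The initial convention $\beta(v)=\beta(\xi)$ for $t<0$ is exactly what makes this convergence hold up to $t=0$ with no boundary layer. Since $v$ and its shift both converge in $L^p(0,T;\mathbb V)$, both pairings converge to $\int_0^T\varphi\langle\partial_t\beta(v),v\rangle\,\dd t$. The products $v\,\partial^{-h}\beta(v)$ are a priori only in $L^1$, and justifying this first for $L^\infty$ or truncated test functions is the delicate point. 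I would first work with $\varphi\geq0$ and with $B$ replaced by its truncated version $B_k(r)=\int_0^r\mathcal T_k(s)\,\dd\beta(s)$, which is still convex-compatible via $\mathcal T_k$, and then let $k\to\infty$ by monotone convergence. Once the limits are identified, we get
\[
-\int_0^T\varphi'\int_DB(v)\,\dd x\,\dd t=\varphi(0)\int_DB(\xi)\,\dd x+\int_0^T\varphi\,\langle\partial_t\beta(v),v\rangle\,\dd t
\]
for all such $\varphi$, and by linearity for all $\varphi\in C_c^\infty([0,T))$.

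Choosing $\varphi$ as smooth approximations of $\one_{[0,t]}$ and using Lebesgue points then gives \eqref{eq:AL-energy-representative-identification} for a.e.\ $t$. The bound $B(v)\in L^\infty(0,T;L^1(D))$ follows from $|\mathscr E_v(t)|\leq\|B(\xi)\|_{L^1}+\|\partial_t\beta(v)\|_{L^{p'}\mathbb V^*}\|v\|_{L^p\mathbb V}$. Absolute continuity of $\mathscr E_v$ is immediate, since $t\mapsto\langle\partial_t\beta(v),v\rangle$ lies in $L^1(0,T)$, and the increment formula is just the definition. I expect the main obstacle to be the rigorous passage $h\downarrow0$ near $t=0$. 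There the backward quotient involves $\beta(\xi)$, which is only in $L^1$ (with $B(\xi)\in L^1$), so the pairing with $v(t-h)=0$ for $t<h$ must be shown to vanish. It must also be checked, via \eqref{eq:AL-initial-value} and the truncation $B_k$, that the initial term produces exactly $\int_DB(\xi)$ rather than a defect.
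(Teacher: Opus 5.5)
The paper does not prove this lemma itself. It cites Alt--Luckhaus [Lemma 1.5] with $b=\beta$ and $u^D=0$, whose proof is the convexity/difference-quotient argument you are reconstructing. So the route is the right one, and your upper half, with the truncated energy $B_k(r)=\int_0^r\mathcal T_k\,\dd\beta$, is fine. But the step you yourself flag as the main obstacle is a genuine gap, and it carries the real content of the lemma.

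\textbf{Where the argument fails.} Your extension ($v=0$ but $\beta(v)=\beta(\xi)$ for $t<0$) is not of the form $(w,\beta(w))$, so the pointwise sandwich is false on $(0,h)$.
\begin{itemize}
\item If $B(v(t-h))$ is read as $B(\xi)$, which you need in order to produce $\varphi(0)\int_D B(\xi)$, the right-hand inequality says $B(v(t))\geq B(\xi)$. This is false in general.
\item If it is read as $B(0)=0$, the left-hand inequality fails and no $B(\xi)$ appears.
\end{itemize}
The pairing with $v(t-h)=0$ does not need to be shown to vanish; it is identically zero, and that is exactly the problem.

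The consistent extension $v=\xi$ on $t<0$ restores the inequality. Then, however, the lower bound contains the initial layer
\[
\frac1h\int_0^h\varphi(t)\int_D\mathcal T_k(\xi)\bigl(\beta(v(t))-\beta(\xi)\bigr)\,\dd x\,\dd t .
\]
Since $\mathcal T_k(\xi)$ need not lie in $\mathbb V$, \eqref{eq:AL-initial-value} cannot be used to make this vanish. You would need $\beta(v(t))\to\beta(\xi)$ weakly in $L^1(D)$ (in Ces\`aro mean), which requires equi-integrability, which in turn requires the energy bound you are trying to prove. So the claim that the initial term produces exactly $\int_D B(\xi)$ rather than a defect is left open.

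\textbf{The missing idea: decouple the two halves.}
\begin{enumerate}
\item With $v=\xi$ on $t<0$, first prove only the upper inequality
\[
B_k(v(t))-B_k(v(t-h))\leq\mathcal T_k(v(t))\bigl(\beta(v(t))-\beta(v(t-h))\bigr).
\]
It pairs only with the unshifted $\mathcal T_k(v(t))\in\mathbb V\cap L^\infty(D)$.
\item Test \eqref{eq:AL-initial-value} with $\zeta_h(t)=\frac1h\int_t^{t+h}\varphi(s)\mathcal T_k(v(s))\,\dd s$. This identifies the pairing, initial layer included, as $\int_0^T\langle\partial_t\beta(v),\zeta_h\rangle\,\dd t$. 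It also replaces your claim that the quotient converges strongly in $L^{p'}(0,T;\mathbb V^*)$, which is problematic because $\beta(v(t))\in L^1(D)\not\subset\mathbb V^*$ when $p\leq d$.
\item Let $h\downarrow0$ at Lebesgue points, then let $k\to\infty$ pointwise in $t$ (not under $\varphi'$, which has no sign). This gives $\int_DB(v(t))\,\dd x\leq\mathscr E_v(t)$ for a.e.\ $t$, hence $B(v)\in L^\infty(0,T;L^1(D))$.
\item Now \eqref{eq:beta-controlled-by-energy} makes $\{\beta(v(t))\}$ equi-integrable. Dunford--Pettis, together with \eqref{eq:AL-initial-value} tested against $C_c^\infty(D)$, gives $\frac1h\int_0^h\beta(v)\,\dd t\rightharpoonup\beta(\xi)$ weakly in $L^1(D)$.
\item This kills the initial layer in the lower inequality paired with $\mathcal T_k(v(t-h))$. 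The two halves together then give the identity.
\end{enumerate}
Equivalently, the missing lower bound at $t=0$ is the lower semicontinuity of $w\mapsto\int_D\Psi(w)$, where $B=\Psi\circ\beta$ and $\Psi$ is the convex conjugate of $\mathcal J$ shifted by $\mathcal J(0)$. It is applied under the weak convergence of $\beta(v(t))$ as $t\downarrow0$, which is again available only after the energy bound.
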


\begin{proof}
This is the homogeneous-boundary case of \cite[Lemma 1.5, pp. 315--317]{alt1983quasilinear}.
In the notation of that paper, take $b=\beta$ and $u^D=0$. 
The terms involving $u^D$ and $\partial_t u^D$ therefore vanish. 
Therefore, formula \eqref{eq:AL-energy-representative-identification} follows directly from the cited lemma, and \eqref{eq:AL-energy-representative-definition} defines the corresponding absolutely continuous representative on $[0,T]$.
\end{proof}
\begin{thm}[Alt--Luckhaus weak existence] \label{thm:AL-weak-existence} 
Let Assumptions \ref{assu:existence-beta}--\ref{assu:existence-A} hold, and let $B(\xi)\in L^1(D)$. 
Suppose that $g:D\times[0,T]\times\mathbb R\rightarrow\mathbb R$ is Carath\'eodory and that there are $g_0\in L^{p'}(D_T)$, $g_0\geq0$, and $c_g>0$ such that 
\begin{equation} 
|g(x,t,r)| \leq g_0(x,t)+c_g\bigl(1+B(r)^{1/p'}\bigr) \label{eq:AL-source-growth} 
\end{equation} 
for almost every $(x,t)\in D\times[0,T]$ and every $r\in\mathbb R$. 
Then \eqref{eq:AL-auxiliary-problem} has a weak solution in the sense of Definition \ref{def:AL-weak-solution}. 
Moreover, 
\begin{equation*} 
B(v)\in L^\infty(0,T;L^1(D)) 
\end{equation*} 
and 
\begin{equation*} 
\operatorname*{ess\,sup}_{0<t<T} \int_D B(v(t))\,\dd x+\int_{D_T}|\nabla v|^p\,\dd x\,\dd t\leq C, 
\end{equation*} 
where $C$ depends only on the structural constants, $T$, $D$, $\Vert B(\xi)\Vert_{L^{1}(D)}$, and $\|g_0\|_{L^{p'}(D_T)}$. 
\end{thm}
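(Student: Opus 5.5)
This is essentially the Alt--Luckhaus existence theorem \cite[Theorem 1.7]{alt1983quasilinear} in the homogeneous Dirichlet case, so I will reduce to that result rather than rebuild the scheme. I also need to check that the structural assumptions here (Assumption \ref{assu:existence-beta}, the growth and coercivity \eqref{eq:A-growth}, \eqref{eq:A-coercivity}, and the source growth \eqref{eq:AL-source-growth}) match those of the cited theorem. The construction has four steps.

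\textbf{Discretization.} Fix $N$ and set $h=T/N$. Given $v_{k-1}$, with $v_0$ determined by $\beta(v_0)=\beta(\xi)$, I solve the stationary problem
\[
\frac{\beta(v_k)-\beta(v_{k-1})}{h}-\operatorname{div}A(v_k,\nabla v_k)=g_k(x,v_k)
\]
in $\mathbb V$, where $g_k$ is the time average of $g$ over $((k-1)h,kh)$. Solvability follows from pseudomonotone operator theory. The key input is that $v\mapsto\beta(v)$ is monotone, while $A(v,\nabla v)$ is monotone in the gradient with lower order dependence on $v$. I would first truncate $\beta$ and the $B$-growth terms, obtain a solution by Leray--Lions, and then remove the truncation using the a priori bound below. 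The space--time approximation recorded in the appendix would be used for the data.

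\textbf{Energy estimate.} Test the $k$-th equation with $v_k$ and use \eqref{eq:energy-convexity} in the form
\[
(\beta(v_k)-\beta(v_{k-1}))v_k\geq B(v_k)-B(v_{k-1}).
\]
The diffusion term is bounded below by \eqref{eq:A-coercivity}. The source term is handled by Young's inequality:
\[
|g_k v_k|\leq g_0|v_k|+c_g\bigl(1+B(v_k)^{1/p'}\bigr)|v_k|,
\]
where $|v_k|$ is absorbed by $\epsilon\|\nabla v_k\|_p^p$ through Poincaré, and $B^{1/p'}|v|\leq \epsilon|v|^p+C_\epsilon B$. A discrete Gronwall argument then gives
\[
\max_k\int_D B(v_k)\,\dd x+h\sum_k\|\nabla v_k\|_p^p\leq C,
\]
which is the claimed bound. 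By \eqref{eq:A-growth} it also bounds $A(v,\nabla v)$ in $L^{p'}(D_T)^d$ and $g(\cdot,\cdot,v)$ in $L^{p'}(D_T)$. The equation then bounds the discrete time derivative of $\beta$ in $L^{p'}(0,T;\mathbb V^*)$.

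\textbf{Compactness and the main obstacle.} Let $\bar v_N$ be the piecewise constant interpolants. The difficulty is strong compactness, since $\beta$ may be degenerate or singular and so gives no direct estimate on $\bar v_N$ itself. I would follow the Alt--Luckhaus device. Test the difference of the equations at times $t$ and $t+\tau$ with $\bar v_N(t+\tau)-\bar v_N(t)$ to get
\[
\int_0^{T-\tau}\!\!\int_D(\beta(\bar v_N(t+\tau))-\beta(\bar v_N(t)))(\bar v_N(t+\tau)-\bar v_N(t))\leq C\tau.
\]
Combined with the spatial $W^{1,p}$ bound and monotonicity of $\beta$, a Kolmogorov-type argument yields strong $L^1$ compactness of $\beta(\bar v_N)$, and hence a.e.\ convergence of $\bar v_N$ because $\beta$ is strictly increasing. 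Weak limits then give $\chi=A$-flux in $L^{p'}$, and the limits $\partial_t\beta(v)\in L^{p'}(0,T;\mathbb V^*)$ and \eqref{eq:AL-initial-value} pass directly.

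\textbf{Identification of the flux.} The monotone (Minty) step identifies $\chi=A(v,\nabla v)$. It needs the limsup inequality
\[
\limsup_N\int_{D_T}A(\bar v_N,\nabla\bar v_N)\cdot\nabla\bar v_N\leq\int_{D_T}\chi\cdot\nabla v,
\]
which comes from the discrete energy equality and lower semicontinuity of $\int_D B(\bar v_N(T))$, compared against the limit energy identity given by the chain rule Lemma \ref{lem:AL-time-chain-rule}. Since \eqref{eq:A-uniform-monotonicity} holds, this in fact gives strong convergence of the gradients. Finally, the $B$ bound passes to $v$ by Fatou's lemma.
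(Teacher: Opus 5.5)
Your proposal follows the paper's route: the paper's proof is a reduction to the homogeneous-Dirichlet case of \cite[Theorem 1.7]{alt1983quasilinear}, with the $(x,t)$-dependence of $g$ handled by the piecewise constant time averages of Lemma \ref{lem:AL-space-time-source}, so your four-step outline of the time-discrete scheme is background the paper does not re-prove. The one matching step you leave implicit is the rewriting $z=\beta(v)$, $\widetilde A(z,\zeta)=A(\beta^{-1}(z),\zeta)$, $\widetilde g(x,t,z)=g(x,t,\beta^{-1}(z))$ into the Alt--Luckhaus form $a(b(u),\nabla u)$, $f(b(u))$, which is where the fact that $\beta$ is a homeomorphism (Assumption \ref{assu:existence-beta}) is used; if you relied on your own sketch instead of the citation, its Minty step would need an extra argument (uniform integrability of $B(\bar v_N)$, or a truncated Minty argument) to pass to the limit in $A(\bar v_N,\Theta)$ under the $B^{1/p'}$ growth in \eqref{eq:A-growth}.
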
 
\begin{proof} 
In the notation of \cite[Section 1.1]{alt1983quasilinear}, set $z=\beta(v)$ and
\[
\widetilde A(z,\zeta):=A(\beta^{-1}(z),\zeta),\qquad\widetilde g(x,t,z):=g(x,t,\beta^{-1}(z)).
\]
Assumption \ref{assu:existence-beta} makes $\beta$ a homeomorphism of $\mathbb R$.
The function $B$ in \eqref{eq:existence-energy} is the energy associated with the time nonlinearity in the cited construction.
Conditions \eqref{eq:A-uniform-monotonicity}, \eqref{eq:A-growth}, and \eqref{eq:A-coercivity} give, respectively, the strict spatial monotonicity, growth, and coercivity estimates used in the elliptic time steps.
Condition \eqref{eq:AL-source-growth} gives the $L^{p'}$ lower-order bound.
For the $x,t$ dependence, use the piecewise constant time averages from \cite[Section 1.10]{alt1983quasilinear}; Lemma \ref{lem:AL-space-time-source} verifies their growth, local-uniform convergence, and weak composition convergence.
Finally, $B(\xi)\in L^1(D)$ is exactly the initial-energy hypothesis.
Thus the scalar homogeneous-Dirichlet specialization of \cite[Theorem 1.7]{alt1983quasilinear} yields the existence of weak solution and estimate.
\end{proof}

\begin{lem}[Evaluation against a fixed spatial function]\label{lem:AL-fixed-spatial-test}
Let $v$ satisfy items {\rm(1)} and {\rm(2)} of Definition \ref{def:AL-weak-solution}.
For every $H\in\mathbb V\cap L^\infty(D)$, the scalar function
\[
y_H(t):=\int_D[\beta(v(t))-\beta(\xi)]H\,\dd x
\]
has an absolutely continuous representative satisfying
\[
y_H(0)=0,\qquad y_H'(t)=\langle\partial_t\beta(v(t)),H\rangle_{\mathbb V^*,\mathbb V}\quad\text{for a.e. }t.
\]
Consequently, for every $t\in[0,T]$ in terms of this
representative,
\begin{equation}
\int_0^t\langle\partial_s\beta(v),H\rangle_{\mathbb V^*,\mathbb V}\,\dd s=\int_D[\beta(v(t))-\beta(\xi)]H\,\dd x.\label{eq:AL-fixed-spatial-test}
\end{equation}
\end{lem}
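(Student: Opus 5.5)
The plan is to test the Alt--Luckhaus initial-value identity \eqref{eq:AL-initial-value} with separated test functions $\zeta(x,t)=H(x)\,\varphi(t)$, where $\varphi\in C^1([0,T])$ and $\varphi(T)=0$. Such $\zeta$ belong to $L^p(0,T;\mathbb V)\cap W^{1,1}(0,T;L^\infty(D))$ because $H\in\mathbb V\cap L^\infty(D)$. Put
\[
g_H(t):=\langle\partial_t\beta(v(t)),H\rangle_{\mathbb V^*,\mathbb V}.
\]
Since $\partial_t\beta(v)\in L^{p'}(0,T;\mathbb V^*)$, we have $g_H\in L^{p'}(0,T)\subset L^1(0,T)$. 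Since $\beta(v),\beta(\xi)\in L^\infty(0,T;L^1(D))$ and $H\in L^\infty(D)$, also $y_H\in L^\infty(0,T)$. The identity then reads
\[
\int_0^T g_H(t)\varphi(t)\,\dd t+\int_0^T y_H(t)\varphi'(t)\,\dd t=0
\]
for all such $\varphi$.

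First restrict to $\varphi\in C_c^\infty((0,T))$. The identity says $y_H'=g_H$ in $\mathcal D'(0,T)$. Since $g_H\in L^1(0,T)$, the standard characterization of distributions with $L^1$ derivative gives a constant $c$ such that
\[
y_H(t)=c+\int_0^t g_H(s)\,\dd s\quad\text{for a.e. }t.
\]
We take the right-hand side as the absolutely continuous representative, so that $y_H'=g_H$ a.e.

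Next, identify $c$ with the functions allowed in \eqref{eq:AL-initial-value} that do not vanish at $t=0$. Take $\varphi\in C^1([0,T])$ with $\varphi(T)=0$ and $\varphi(0)=1$, for instance $\varphi(t)=1-t/T$. Using the absolutely continuous representative, integrate by parts:
\[
\int_0^T y_H\varphi'\,\dd t=-y_H(0)\varphi(0)-\int_0^T g_H\varphi\,\dd t=-c-\int_0^T g_H\varphi\,\dd t.
\]
Substituting this into the identity gives $c=0$, that is, $y_H(0)=0$. Then \eqref{eq:AL-fixed-spatial-test} is exactly the representation formula evaluated at $t$.

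The only point needing care is the measurability and integrability bookkeeping. One must check that $t\mapsto\langle\partial_t\beta(v)(t),H\rangle$ is a well-defined $L^1$ function, and that Fubini applies, so that
\[
\int_{D_T}(\beta(v)-\beta(\xi))H\varphi'\,\dd x\,\dd t=\int_0^T y_H\varphi'\,\dd t.
\]
Both are immediate from the stated integrability. I do not expect a genuine obstacle here; this is the main, though routine, verification.
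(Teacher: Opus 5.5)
Your proposal is correct and follows the paper's own proof: test \eqref{eq:AL-initial-value} with $\zeta=\varphi(t)H(x)$, use compactly supported $\varphi$ to get $y_H'=\langle\partial_t\beta(v),H\rangle_{\mathbb V^*,\mathbb V}$ in distributions, and then use a $\varphi$ with $\varphi(0)\neq0$ and integrate by parts to force $y_H(0)=0$. The paper takes $\varphi\in W^{1,1}(0,T)$ and notes that $y_H\in W^{1,p'}(0,T)$; your choice of $C^1$ cutoffs and $L^1$ derivatives changes nothing essential.
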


\begin{proof}
For $\eta\in W^{1,1}(0,T)$ with $\eta(T)=0$, the function $\zeta(x,t):=\eta(t)H(x)$ is admissible in \eqref{eq:AL-initial-value}.
Hence
\[
\int_0^T\eta(t)\langle\partial_t\beta(v(t)),H\rangle_{\mathbb V^*,\mathbb V}\,\dd t+\int_0^T\eta'(t)y_H(t)\,\dd t=0.
\]
Taking first $\eta\in C_c^\infty(0,T)$ shows in distributions that $y_H'=\langle\partial_t\beta(v),H\rangle_{\mathbb V^*,\mathbb V}\in L^{p'}(0,T)$.
Thus $y_H$ has a representative in $W^{1,p'}(0,T)\subset AC([0,T])$.
Integrating by parts for a general admissible $\eta$ gives
\[
0=-\eta(0)y_H(0).
\]
Since $\eta(0)$ is arbitrary, we have $y_H(0)=0$.
The fundamental theorem of calculus for absolutely continuous functions now gives \eqref{eq:AL-fixed-spatial-test}.
\end{proof}
For $a,b\in\mathbb R$, define the relative energy
\begin{equation}\label{eq:def-relative-energy}
\mathcal B_\beta(a\mid b):=B(a)-B(b)-b[\beta(a)-\beta(b)]=\int_b^a(r-b)\,\dd\beta(r)\geq0.
\end{equation}

\begin{lem}[Relative-energy chain rule]\label{lem:relative-energy-chain-rule}
Let $v$ satisfy items {\rm(1)} and {\rm(2)} of Definition \ref{def:AL-weak-solution} with initial datum $v_0$, and assume that $B(v_0)\in L^1(D)$.
Let
\[
z\in L^p(0,T;\mathbb V)\cap W^{1,1}(0,T;L^\infty(D)),\qquad z_0:=z(0).
\]
Then
\[
t\longmapsto\int_D\mathcal B_\beta(v(t)\mid z(t))\,\dd x
\]
has a nonnegative representative in $AC([0,T])$.
We denote this representative by the same expression.
For every $t\in[0,T]$,
\begin{align}
&\int_D\mathcal B_\beta(v(t)\mid z(t))\,\dd x-\int_D\mathcal B_\beta(v_0\mid z_0)\,\dd x\nonumber\\
&=\int_0^t\langle\partial_\tau\beta(v),v-z\rangle_{\mathbb V^*,\mathbb V}\,\dd\tau-\int_0^t\!\int_D[\beta(v)-\beta(z)]\partial_\tau z\,\dd x\,\dd\tau.\label{eq:relative-energy-chain-rule}
\end{align}
\end{lem}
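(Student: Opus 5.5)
The plan is to reduce the relative-energy identity to the homogeneous chain rule of Lemma~\ref{lem:AL-time-chain-rule} and to the fixed-spatial-test identity of Lemma~\ref{lem:AL-fixed-spatial-test}. Pointwise we have
\[
\mathcal B_\beta(v\mid z)=\bigl[B(v)-B(z)+z\beta(z)\bigr]-z\beta(v),
\]
where $B(z)-z\beta(z)=-\mathcal J(z)+\mathcal J(0)$. We therefore split
\[
\int_D\mathcal B_\beta(v(t)\mid z(t))\,\dd x=\int_D B(v(t))\,\dd x+\int_D\bigl(\mathcal J(z(t))-\mathcal J(0)\bigr)\dd x-\int_D z(t)\beta(v(t))\,\dd x
\]
and handle the three pieces separately. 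The first piece is $\mathscr E_v(t)$ a.e., which is absolutely continuous with derivative $\langle\partial_t\beta(v),v\rangle$ by Lemma~\ref{lem:AL-time-chain-rule}.

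For the second piece, $z\in W^{1,1}(0,T;L^\infty(D))$ gives $z\in C([0,T];L^\infty(D))$, so $z$ is uniformly bounded. Since $\mathcal J\in C^1$ with $\mathcal J'=\beta$, the classical chain rule for Bochner $W^{1,1}$ functions composed with locally Lipschitz maps gives
\[
\frac{\dd}{\dd t}\int_D\mathcal J(z)\,\dd x=\int_D\beta(z)\,\partial_t z\,\dd x\in L^1(0,T).
\]
This piece is therefore AC.

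The third piece, the mixed term $\int_D z\beta(v)$, is the main obstacle, since neither factor is regular in both senses. The target formula is
\[
\frac{\dd}{\dd t}\int_D z\beta(v)\,\dd x=\langle\partial_t\beta(v),z\rangle+\int_D\beta(v)\,\partial_t z\,\dd x.
\]
I would first prove it for $z$ piecewise affine in time with values in $\mathbb V\cap L^\infty$, using Lemma~\ref{lem:AL-fixed-spatial-test} on each interval together with the product rule for $y_H$. An alternative is to test \eqref{eq:AL-initial-value} with $\zeta=\eta(t)z$ for $\eta\in W^{1,1}$, which is admissible since $\eta z\in L^p(\mathbb V)\cap W^{1,1}(L^\infty)$. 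This yields, in the distributional sense on $(0,T)$ and with the correct initial value,
\[
\frac{\dd}{\dd t}\int_D(\beta(v)-\beta(v_0))z\,\dd x=\langle\partial_t\beta(v),z\rangle+\int_D(\beta(v)-\beta(v_0))\,\partial_t z\,\dd x.
\]
The right-hand side is in $L^1(0,T)$ because $\beta(v)\in L^\infty(L^1)$ and $\partial_t z\in L^1(L^\infty)$. Adding $\int_D\beta(v_0)z$, whose derivative is $\int_D\beta(v_0)\partial_t z$ since $\beta(v_0)\in L^1$ by \eqref{eq:beta-controlled-by-energy}, gives the mixed term with vanishing initial defect. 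Care is needed to check that the pairing in \eqref{eq:AL-initial-value} with $\zeta=\eta z$ produces exactly $\int\eta\langle\partial_t\beta(v),z\rangle$ plus the $\partial_t z$ term, which is a direct expansion of $\partial_t\zeta=\eta' z+\eta\partial_t z$.

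Summing the three derivatives:
\[
\langle\partial_t\beta(v),v\rangle+\int_D\beta(z)\partial_tz-\langle\partial_t\beta(v),z\rangle-\int_D\beta(v)\partial_tz,
\]
which is exactly the integrand in \eqref{eq:relative-energy-chain-rule}. At $t=0$ the values match $\int_D\mathcal B_\beta(v_0\mid z_0)$, using $\mathscr E_v(0)=\int B(v_0)$ and $y(0)=0$. Integrating from $0$ to $t$ gives \eqref{eq:relative-energy-chain-rule} for the AC representative. This representative coincides a.e. with the pointwise relative energy, which is $\geq0$. By continuity the representative is nonnegative everywhere.
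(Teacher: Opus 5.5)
Your proposal is correct and follows the paper's proof essentially step by step. The paper uses the same three-way split into $\int_D B(v)$, the $z$-only part (written there as $z\beta(z)-B(z)=\mathcal J(z)-\mathcal J(0)$), and the cross term $\int_D\beta(v)z$; it handles the cross term exactly by your testing argument, with $\zeta=\vartheta(t)z$ in \eqref{eq:AL-initial-value} and $\vartheta(T)=0$, and proves nonnegativity by the same continuity argument, so the piecewise-affine detour you mention first is not needed.
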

\begin{proof}
Lemma \ref{lem:AL-time-chain-rule} gives an absolutely continuous representative of the homogeneous energy and for every $t\in(0,T)$,
\begin{equation}
\int_D B(v(t))\,\dd x=\int_D B(v_0)\,\dd x+\int_0^t\langle\partial_\tau\beta(v),v\rangle_{\mathbb V^*,\mathbb V}\,\dd\tau.\label{eq:relative-proof-v-energy}
\end{equation}
It also gives $B(v)\in L^\infty(0,T;L^1(D))$.
Hence \eqref{eq:beta-controlled-by-energy} yields
\[
\beta(v)\in L^\infty(0,T;L^1(D)),\qquad\beta(v_0)\in L^1(D).
\]
We next derive the product rule for $\int_D\beta(v(t))z(t)\,\dd x$.
Let $\vartheta\in C^1([0,T])$ satisfy $\vartheta(T)=0$ and use the test function $\zeta(x,t)=\vartheta(t)z(x,t)$ in the Alt--Luckhaus initial-value identity \eqref{eq:AL-initial-value}.
Then we have
\begin{align}
0&=\int_0^T\vartheta\langle\partial_t\beta(v),z\rangle_{\mathbb V^*,\mathbb V}\,\dd t+\int_0^T\vartheta'(t)\int_D[\beta(v)-\beta(v_0)]z\,\dd x\,\dd t\nonumber\\
&\quad+\int_0^T\vartheta(t)\int_D[\beta(v)-\beta(v_0)]\partial_tz\,\dd x\,\dd t.\label{eq:relative-proof-cross-test}
\end{align}
Taking first $\vartheta\in C_c^\infty(0,T)$ shows that
\[
t\longmapsto\int_D[\beta(v(t))-\beta(v_0)]z(t)\,\dd x
\]
has an absolutely continuous representative whose derivative is
\[
\langle\partial_t\beta(v),z\rangle_{\mathbb V^*,\mathbb V}+\int_D[\beta(v)-\beta(v_0)]\partial_tz\,\dd x.
\]
Using a general $\vartheta$ in \eqref{eq:relative-proof-cross-test} shows that this representative vanishes at $t=0$.
Since
\[
t\longmapsto\int_D\beta(v_0)z(t)\,\dd x
\]
is absolutely continuous, we obtain
\begin{align}
&\int_D\beta(v(t))z(t)\,\dd x-\int_D\beta(v_0)z_0\,\dd x=\int_0^t\langle\partial_\tau\beta(v),z\rangle_{\mathbb V^*,\mathbb V}\,\dd\tau+\int_0^t\!\int_D\beta(v)\partial_\tau z\,\dd x\,\dd\tau.\label{eq:relative-proof-cross-product}
\end{align}

It remains to compute the part depending only on $z$.
Note that the range of $z$ is bounded because $z\in W^{1,1}(0,T;L^\infty(D))$.
Since $\mathcal J\in C^1(\mathbb R)$ and $\mathcal J'=\beta$, using the definition of $B$, the scalar chain rule gives, for almost every $(x,t)$,
\[
\partial_t\!\left[z\beta(z)-B(z)\right]=\partial_t\!\left[\mathcal J(z)-\mathcal J(0)\right]=\beta(z)\partial_tz.
\]
After integration in space and time,
\begin{align}
&\int_D[z(t)\beta(z(t))-B(z(t))]\,\dd x-\int_D[z_0\beta(z_0)-B(z_0)]\,\dd x=\int_0^t\!\int_D\beta(z)\partial_\tau z\,\dd x\,\dd\tau.\label{eq:relative-proof-z-energy}
\end{align}

Adding \eqref{eq:relative-proof-v-energy} and \eqref{eq:relative-proof-z-energy}, and subtracting \eqref{eq:relative-proof-cross-product}, using the definition of $\mathcal B_\beta$, we have \eqref{eq:relative-energy-chain-rule}.
The preceding three identities provide absolutely continuous representatives of
\[
\int_D B(v(t))\,\dd x,\qquad\int_D\beta(v(t))z(t)\,\dd x,\qquad\int_D[z(t)\beta(z(t))-B(z(t))]\,\dd x.
\]
Their combination is therefore an absolutely continuous representative of
\[
t\longmapsto\int_D\mathcal B_\beta(v(t)\mid z(t))\,\dd x.
\]
Moreover, note that for almost every $t$, this integral is nonnegative.
If its continuous representative were negative at some time, it would be negative on an interval of positive measure, a contradiction.
Thus the representative is nonnegative on $[0,T]$.
\end{proof}

\subsection{Comparison and the two-parameter penalized problem}\label{subsec:comparison penalized}
We record the comparison result for two equations with ordered sources and zero boundary values.
\begin{lem}[Penalized comparison]\label{lem:penalized-comparison-general-A}
Assume Assumptions \ref{assu:existence-beta}--\ref{assu:existence-A-comparison}.
For $i=1,2$, let $v_i$ be a weak solution
\begin{equation*}
\partial_t\beta(v_i)=\operatorname{div}A(v_i,\nabla v_i)+g_i(x,t,\beta(v_i)) 
\end{equation*}
with zero boundary values and initial datum $\xi_i$.
Suppose that
\begin{align*}
g_1(x,t,z)&\leq g_2(x,t,z),\\
|g_2(x,t,z)-g_2(x,t,w)|&\leq L|z-w|
\end{align*}
for almost every $(x,t)\in D\times[0,T]$ and all $z,w\in\mathbb R$.
Then, we have
\begin{align}
&\int_D\bigl(\beta(v_1(t))-\beta(v_2(t))\bigr)^+\,\dd x\leq \mathrm e^{Lt} \int_D\bigl(\beta(\xi_1)-\beta(\xi_2)\bigr)^+\,\dd x \label{eq:penalized-comparison-estimate}
\end{align}
for almost every $t\in(0,T)$.
Moreover, if $\beta(\xi_1)\leq\beta(\xi_2)$, then
\[
\beta(v_1)\leq\beta(v_2)\quad\text{a.e. in }D_T.
\]
\end{lem}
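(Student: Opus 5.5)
We will prove the penalized comparison (Lemma \ref{lem:penalized-comparison-general-A}) by a Carrillo--Wittbold type doubling argument. This is simpler than Section \ref{sec:comparison-dirichlet} because no reaction measures are present and both $v_i$ are genuine weak solutions with $v_i\in L^p(0,T;\mathbb V)$. The plan is to derive the Kato inequality
\begin{equation*}
-\int_{D_T}\bigl(\beta(v_1)-\beta(v_2)\bigr)^+\varphi'\,\dd x\,\dd t\leq\int_{D_T}\one_{\{v_1>v_2\}}\bigl[g_1(x,t,\beta(v_1))-g_2(x,t,\beta(v_2))\bigr]\varphi\,\dd x\,\dd t
\end{equation*}
for nonnegative $\varphi\in C_c^\infty((0,T))$. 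We then recover the initial datum and close with Gr\"onwall's inequality exactly as in the proof of Theorem \ref{thm:main-comparison}.

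\textbf{Step 1: entropy inequalities.} Since $\beta$ is strictly increasing, $\{v_1>v_2\}=\{\beta(v_1)>\beta(v_2)\}$. Using the weak formulation \eqref{eq:AL-weak-equation} together with the Alt--Luckhaus chain rule (the argument behind Lemma \ref{lem:AL-time-chain-rule}, applied to $\int_k^{v}q_\delta(r-k)\,\dd\beta(r)$), each $v_i$ satisfies the Kruzhkov-type inequalities. These come from testing with $q_\delta(v_1-k)\phi$ and $-q_\delta(k-v_2)\phi$ for constants $k$, with $\phi\geq0$ smooth and compactly supported in time. Because $v_i=0$ on $\partial D$, these tests are admissible up to the boundary when $k\geq0$, respectively $k\leq0$. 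The nonnegative dissipation term $q_\delta'(v-k)A(v,\nabla v)\cdot\nabla v\,\phi$ is kept, since it is needed below.

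\textbf{Step 2: doubling in the interior and in the boundary charts.} We double variables with the kernels $\chi(x)\rho_{\varsigma,j}(x,y)\varphi(t)\varrho_\theta(s-t)$ from Lemma \ref{lem:inward-kernel-properties}. We take $k=v_2(x,t)$ in the inequality for $v_1(y,s)$, and $k=v_1(y,s)$ in the inequality for $v_2(x,t)$; near the boundary we use the positive/negative-part splitting of Lemma \ref{lem:chartwise-signed-Kato}. The zero-level boundary functionals are then removed as in Lemmas \ref{lem:L-boundary-vanishing} and \ref{lem:boundary-localization}. The obstacle terms are absent here. The time terms are handled exactly as in Lemma \ref{lem:interior-local-Kato}, and the source terms via Lemma \ref{lem:time-diagonal-source}: $g_1-g_2\leq0$ on the diagonal, which is where $g_1\leq g_2$ enters.

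\textbf{Step 3: the diffusion terms (main obstacle).} Unlike Section \ref{sec:comparison-dirichlet}, the field now depends on the solution, so monotonicity alone no longer controls
\begin{equation*}
q_\delta'(U-V)\bigl[A(U,P)-A(V,Q)\bigr]\cdot(P-Q).
\end{equation*}
We apply \eqref{eq:CW-structure} with $r=U$, $s=V$. The error term $C_A(U,V)(1+|P|^p+|Q|^p)|U-V|q_\delta'(U-V)$ is at most $C\one_{\{0<U-V<\delta\}}(1+|P|^p+|Q|^p)$, which vanishes after $\varsigma\downarrow0$ and then $\delta\downarrow0$ by dominated convergence, because $v_i\in L^p(0,T;\mathbb V)$. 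Local boundedness of $C_A$ is handled by truncation, and the level sets $\{v_1=v_2\}$ carry no dissipation issue since $\nabla v_1=\nabla v_2$ there.

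The remaining terms $q_A(U,V)\cdot P+\widehat q_A(U,V)\cdot Q$, multiplied by $q_\delta'(U-V)$, must be written as divergences of the potentials
\begin{equation*}
\int^{U}q_\delta'(\cdot-V)q_A(\cdot,V)\qquad\text{and}\qquad\int^{V}q_\delta'(U-\cdot)\widehat q_A(U,\cdot),
\end{equation*}
using the doubled chain rule. After integration by parts, these potentials combine with the identity $\nabla_x\rho+\nabla_y\rho=0$ and are absorbed into the localization flux, which disappears in the limit $\varsigma\downarrow0$ because on the diagonal the potentials reduce to quantities of order $\delta$. This is the technically delicate point; we follow the standard Carrillo--Wittbold computation and expect the bookkeeping, especially near $\partial D$, to be the main difficulty.

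\textbf{Step 4: conclusion.} Summing over the partition of unity eliminates the localization fluxes and yields the Kato inequality above. On $\{v_1>v_2\}$ we have
\begin{equation*}
g_1(\beta(v_1))-g_2(\beta(v_2))\leq L\bigl(\beta(v_1)-\beta(v_2)\bigr)^+.
\end{equation*}
The initial values are attained strongly: Lemma \ref{lem:AL-fixed-spatial-test} combined with the energy continuity of Lemma \ref{lem:AL-time-chain-rule} gives $\beta(v_i(t))\to\beta(\xi_i)$ in $L^1(D)$ in the averaged sense, which replaces Theorem \ref{thm:averaged-initial-trace}. Gr\"onwall's inequality then gives \eqref{eq:penalized-comparison-estimate}. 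The ordered case $\beta(\xi_1)\leq\beta(\xi_2)$ follows immediately, since the right-hand side of \eqref{eq:penalized-comparison-estimate} then vanishes.
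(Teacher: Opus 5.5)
Your skeleton is the paper's: a Kato inequality for $(\beta(v_1)-\beta(v_2))^+$, the bound $g_1(\beta(v_1))-g_2(\beta(v_2))\leq L(\beta(v_1)-\beta(v_2))^+$ on $\{v_1>v_2\}$, and Gr\"onwall. The paper does not rederive the Kato inequality, though. It takes it for the two weak solutions directly from Carrillo--Wittbold (their Proposition~1.3 and Theorem~2.3, under \eqref{eq:CW-structure}).

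\textbf{The gap is in your Step~3.} In the spatially doubled setting you have $q_\delta'(U-V)\,q_A(U,V)\cdot P=\nabla_yG_1(U,V)$ and $q_\delta'(U-V)\,\widehat q_A(U,V)\cdot Q=\nabla_xG_2(U,V)$. Here $G_1$ is a primitive in $U$ and $G_2$ a primitive in $V$. In general these are different functions of $(U,V)$, because the form $q_\delta'(a-b)[q_A\,\dd a+\widehat q_A\,\dd b]$ is not exact. Integrate by parts and use $\nabla_x\rho_{\varsigma,j}=-\nabla_y\rho_{\varsigma,j}$. Besides a genuine localization flux $\int G_2\cdot\nabla\chi\,\rho_{\varsigma,j}\varphi\varrho_\theta$, you are left with
\[
\int (G_1-G_2)(U,V)\cdot\chi(x)\,\nabla_y\rho_{\varsigma,j}(x,y)\,\varphi(t)\varrho_\theta(s-t).
\]
Its kernel gradient has size $\varsigma^{-1}$. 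The identity $\nabla_x\rho+\nabla_y\rho=0$ cancels such a term only when the same function multiplies both gradients, as for a Kruzhkov entropy flux. Here it does not, so the term is of order $\|G_1-G_2\|_{L^\infty}/\varsigma$ and diverges in your limit order ($\varsigma\downarrow0$ at fixed $\delta$), whatever the size of the potentials. Moving the derivative back would need derivatives of $q_A,\widehat q_A$, which are only continuous. The potentials are also not $O(\delta)$ in general. Putting $r=s$ in \eqref{eq:CW-structure} forces $q_A(r,r)=\widehat q_A(r,r)=0$, so $G_1,G_2$ are controlled only by the modulus of continuity of $q_A,\widehat q_A$.

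\textbf{What does work.} If $q_A,\widehat q_A$ are locally Lipschitz, then, since they vanish on the diagonal, $|q_A(U,V)\cdot P|\leq C|U-V|\,|P|$. These terms are then absorbed directly into your $C_A$ error, with no potentials needed. For merely continuous $q_A,\widehat q_A$, your route has no substitute argument.

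\textbf{The Section~\ref{sec:comparison-dirichlet} machinery is the wrong vehicle here.} Both $v_i$ lie in $L^p(0,T;W_0^{1,p}(D))$, so $q_\delta(v_1(x,t)-v_2(x,s))$ is already a zero-trace test. You can therefore double only in time, as in the elliptic--parabolic theory of Otto and Carrillo--Wittbold. The structure condition is then applied at a single spatial point, and the $\varsigma^{-1}$ commutator never appears. In addition, the sign splitting of Lemma~\ref{lem:chartwise-signed-Kato} used $\one_{\{v>0\}}A(\nabla v)=A(\nabla v^+)$, which relies on $A(0)=0$ and fails once $A(r,0)\neq0$. Likewise, $A(v,\nabla v)\cdot\nabla v$ is no longer nonnegative; only \eqref{eq:A-coercivity} holds.

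Either cite the Carrillo--Wittbold Kato inequality, as the paper does, or redo it with time doubling only. Your Step~4 is then fine, provided you add one ingredient. Lemmas \ref{lem:AL-fixed-spatial-test} and \ref{lem:AL-time-chain-rule} give weak convergence of $\beta(v_i(t))$ and convergence of the energy. To upgrade this to strong $L^1$ convergence at $t=0$, use the strict convexity of $z\mapsto B(\beta^{-1}(z))$.
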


\begin{proof}
The chain-rule argument in \cite[Proposition 1.3]{carrillo1999uniqueness} gives the corresponding renormalized inequalities for the weak solution.
The Kato inequality in \cite[Theorem 2.3]{carrillo1999uniqueness}, whose diffusion hypothesis is \eqref{eq:CW-structure}, gives
\begin{align*}
Y(t)&\leq Y(0)+\int_0^t\!\int_{\{v_1>v_2\}}\bigl[g_1(x,s,\beta(v_1))-g_2(x,s,\beta(v_2))\bigr]\,\dd x\,\dd s,
\end{align*}
where
\[
Y(t):=\int_D\bigl(\beta(v_1(t))-\beta(v_2(t))\bigr)^+\,\dd x.
\]
On $\{v_1>v_2\}$, strict monotonicity of $\beta$ gives
\begin{align*}
&g_1(x,t,\beta(v_1))-g_2(x,t,\beta(v_2))\\
&\quad\leq g_2(x,t,\beta(v_1))-g_2(x,t,\beta(v_2))\leq L\bigl(\beta(v_1)-\beta(v_2)\bigr)^+.
\end{align*}
Gr\"onwall's inequality proves \eqref{eq:penalized-comparison-estimate}.
\end{proof}
Fix $K_P>0$.
Let $\kappa\in C^\infty(\mathbb R;[0,1])$ be nonincreasing, with $\kappa=1$ on $(-\infty,0]$ and $\kappa=0$ on $[1,\infty)$.
For $\delta>0$, set
\[
\kappa_\delta(z):=\kappa(z/\delta),\qquad\text{for }z\in\mathbb{R}.
\]
For $\varepsilon>0$ and $(x,t,r)\in D\times[0,T]\times\mathbb{R}$, define
\begin{align*}
P_\varepsilon(x,t,r)&:=\frac{1}{\varepsilon} \min\bigl\{(\beta(\psi(x,t))-\beta(r))^+,K_P\bigr\}, \\
\alpha_\varepsilon(x,t) &:=\bigl(r_{\psi_1}(x,t)-P_\varepsilon(x,t,\psi_1(x,t))\bigr)^+. 
\end{align*}
Then for a.e. $(x,t)\in D\times[0,T]$ and every $r\in\mathbb R$,
\begin{equation}
0\leq P_\varepsilon(x,t,r)\leq\frac{K_P}{\varepsilon},\qquad 0\leq\alpha_\varepsilon(x,t)\leq r_{\psi_1}^+(x,t).\label{eq:alpha-uniform-bound}
\end{equation}
For $\delta,\varepsilon\in(0,1)$, consider
\begin{align}
\partial_t\beta(u_{\delta,\varepsilon})&=\operatorname{div}A(u_{\delta,\varepsilon},\nabla u_{\delta,\varepsilon})+f(x,t,u_{\delta,\varepsilon})\nonumber\\
&\quad+\alpha_\varepsilon(x,t)\kappa_\delta\bigl(\beta(u_{\delta,\varepsilon})-\beta(\psi_1)\bigr)+P_\varepsilon(x,t,u_{\delta,\varepsilon})&&\text{in }D_T,\label{eq:two-parameter-penalty}\\
u_{\delta,\varepsilon}(0)&=\xi&&\text{in }D,\nonumber\\
\qquad u_{\delta,\varepsilon}&=0\quad&&\text{on }\partial D\times(0,T). \nonumber
\end{align}

For fixed $\delta,\varepsilon\in(0,1)$, define 
\begin{align*} 
g_{\delta,\varepsilon}(x,t,r) &:= f(x,t,r) +\alpha_\varepsilon(x,t) \kappa_\delta\bigl(\beta(r)-\beta(\psi_1(x,t))\bigr)+P_\varepsilon(x,t,r). 
\end{align*} 
This is a Carath\'eodory function. 
Note that $0\leq\kappa_\delta\leq1$. 
Then  
\begin{equation*} 
|g_{\delta,\varepsilon}(x,t,r)| \leq f_0(x,t)  +\|r_{\psi_1}^+\|_{L^\infty(D_T)}+\frac{K_P}{\varepsilon} +c_f B(r)^{1/p'}. 
\end{equation*} 
For fixed $\delta$ and $\varepsilon$, the right-hand side has the form required in \eqref{eq:AL-source-growth}. 
Theorem \ref{thm:AL-weak-existence} therefore gives a weak solution $u_{\delta,\varepsilon}$ of \eqref{eq:two-parameter-penalty} in the sense of Definition \ref{def:AL-weak-solution}. 
The representation of the source as a function of $\beta(r)$ and the Lipschitz properties of $\widehat{f}$,  $\kappa_\delta$, and $P_\varepsilon$ allow Lemma \ref{lem:penalized-comparison-general-A} to be applied.
Hence the weak solution is unique, and $u_{\delta,\varepsilon}$ is well defined.

The next lemma extends the comparison result to a lower barrier with nonpositive boundary trace.

\begin{lem}[Comparison with the lower barrier]\label{lem:comparison-lower-barrier}
Assume Assumptions \ref{assu:existence-beta}--\ref{assu:existence-data}.
Let 
\[
z\in L^p(0,T;W^{1,p}(D))\cap L^\infty(D_T)
\] 
satisfy $\operatorname{Tr}z\leq0$ for a.e. $t\in(0,T)$, and
\begin{equation}
\partial_t\beta(z) = \operatorname{div}A(z,\nabla z) +f(x,t,z)+r_z \quad\text{in }\mathcal D'(D_T), \label{eq:barrier-residual-equation}
\end{equation}
where $r_z\in L^\infty(D_T)$.
Assume that $z$ has initial datum $\xi_z$ in the Alt--Luckhaus sense in Definition \ref{def:AL-weak-solution}.
For $\delta,\varepsilon>0$, let $v$ be an Alt--Luckhaus weak solution with zero boundary values and initial datum $\xi_v$ of
\begin{align*}
\partial_t\beta(v) &= \operatorname{div}A(v,\nabla v) +f(x,t,v)+ \alpha_\varepsilon\kappa_\delta \bigl(\beta(v)-\beta(z)\bigr)+P_\varepsilon(x,t,v),
\end{align*}
where $P_\varepsilon$ is the penalty defined above and
\[
\alpha_\varepsilon := \bigl(r_z-P_\varepsilon(\cdot,\cdot,z)\bigr)^+.
\]
If $\xi_z\leq\xi_v$ a.e. in $D$, then $z\leq v$ a.e. in $D_T$.
\end{lem}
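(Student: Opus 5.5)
The plan is to derive a Kato-type inequality for
\[
Y(t):=\int_D\bigl(\beta(z(t))-\beta(v(t))\bigr)^+\,\dd x
\]
and close it with Gr\"onwall's inequality. The structure mirrors Lemma \ref{lem:penalized-comparison-general-A}: $z$ plays the role of $v_1$ with source $f(x,t,z)+r_z$, and $v$ plays the role of $v_2$ with source $g_{\delta,\varepsilon}$. Two points differ from that lemma. The barrier $z$ does not have zero boundary values, only $\operatorname{Tr}z\leq0$. Its source $r_z$ is a fixed $L^\infty$ function rather than a function of $\beta(z)$. I would therefore run the Carrillo--Wittbold argument (\cite[Proposition 1.3 and Theorem 2.3]{carrillo1999uniqueness}) directly on the pair $(z,v)$, using the structure condition \eqref{eq:CW-structure} for the diffusion. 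The chain rule for $z$ is justified by its Alt--Luckhaus initial identity together with $\partial_t\beta(z)\in L^{p'}(0,T;\mathbb V^*)+L^{p'}(D_T)$.

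\textbf{Boundary.} Since $\operatorname{Tr}z\leq0=\operatorname{Tr}v$, the functions $(z-v)^+$ and $q_\delta(z-v)$-type truncations belong to $L^p(0,T;W_0^{1,p}(D))$. Hence all test functions built from $\one_{\{z>v\}}$, namely smooth approximations $H_\sigma(z-v)$ of $\one_{\{z>v\}}$ multiplied by $\varphi(t)$, are admissible in both weak formulations. No boundary-layer term appears, and the doubling only needs to be performed in time, or in the interior with a global cutoff. This step is the one I expect to require the most care: the renormalized inequality of \cite{carrillo1999uniqueness} must be verified for a subsolution with nonpositive (rather than zero) trace. I would first reduce to the zero-trace setting by testing with $H_\sigma(z-v)$, which vanishes on $\partial D$.

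\textbf{Source term.} In the Kato inequality the source contributes
\[
\int_0^t\!\int_{\{z>v\}}\Bigl[f(x,s,z)+r_z-f(x,s,v)-\alpha_\varepsilon\kappa_\delta\bigl(\beta(v)-\beta(z)\bigr)-P_\varepsilon(x,s,v)\Bigr]\,\dd x\,\dd s .
\]
On $\{z>v\}$, strict monotonicity of $\beta$ gives $\beta(v)-\beta(z)<0$, so $\kappa_\delta=1$ there. Three facts then control the residual.
\begin{itemize}
\item By definition, $\alpha_\varepsilon\geq r_z-P_\varepsilon(\cdot,\cdot,z)$.
\item The map $r\mapsto P_\varepsilon(x,t,r)$ is nonincreasing, so $P_\varepsilon(v)\geq P_\varepsilon(z)$ on $\{z>v\}$.
\item Combining the two, the residual part satisfies
\[
r_z-\alpha_\varepsilon-P_\varepsilon(v)\leq P_\varepsilon(z)-P_\varepsilon(v)\leq0 .
\]
\end{itemize}
The remaining part $f(z)-f(v)$ is bounded by $L_f(\beta(z)-\beta(v))^+$ by \eqref{eq:fhat-Lipschitz}.

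\textbf{Conclusion.} These bounds give
\[
Y(t)\leq Y(0)+L_f\int_0^tY(s)\,\dd s .
\]
The assumption $\xi_z\leq\xi_v$ gives $Y(0)=0$, since the initial term is recovered exactly as in Lemma \ref{lem:penalized-comparison-general-A} via the Alt--Luckhaus initial identities of both functions. Gr\"onwall's inequality then yields $Y\equiv0$. Hence $\beta(z)\leq\beta(v)$, and by strict monotonicity of $\beta$, $z\leq v$ a.e. in $D_T$.
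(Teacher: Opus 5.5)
Your estimate of the reaction terms on $\{z>v\}$ and the Gr\"onwall closure coincide with the paper's. The gap is in the boundary step, which is the only point where this lemma goes beyond Lemma \ref{lem:penalized-comparison-general-A}.

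That $H_\sigma(z-v)$ vanishes on $\partial D$ is a statement about the diagonal test, and the diagonal test cannot produce the time term. The pairing $\langle\partial_t(\beta(z)-\beta(v)),H_\sigma(z-v)\rangle$ has no chain rule, because $\beta(z)-\beta(v)$ is not a function of $z-v$. You must double variables.

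If you double only in time, your trace observation survives, since $\operatorname{Tr}\bigl(z(\cdot,s)-v(\cdot,t)\bigr)=\operatorname{Tr}z(\cdot,s)\leq0$. But then $z$ and $v$ sit at the same point $x$. The part $H_\sigma'(z-v)\bigl[q_A(z,v)\cdot\nabla z+\widehat q_A(z,v)\cdot\nabla v\bigr]$ produced by \eqref{eq:CW-structure} is then neither signed nor an exact $x$-derivative, so the argument does not close in general. An admissible example is $A(r,\zeta)=|\zeta|^{p-2}\zeta+F(r)$ with $F$ bounded and merely continuous, where $C_A=0$ and $q_A=-\widehat q_A=F(r)-F(s)$. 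When $q_A=\widehat q_A=0$, by contrast, your time-only route would go through.

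These terms become divergences only when $z$ and $v$ carry separate spatial variables, e.g.\ $H_\sigma'\bigl(z(y)-v(x)\bigr)q_A\bigl(z(y),v(x)\bigr)\cdot\nabla_yz(y)=\nabla_y\int_{v(x)}^{z(y)}H_\sigma'(\tau-v(x))\,q_A(\tau,v(x))\,\dd\tau$. This is why the paper doubles in space too. With spatial doubling, however, ``no boundary-layer term appears'' is false:
for $y\in\partial D$ the test for $z$ has trace $H_\sigma\bigl(\operatorname{Tr}z(y,s)-v(x,t)\bigr)$, positive wherever $v(x,t)<\operatorname{Tr}z(y,s)$;
for $x\in\partial D$ the test for $v$ has trace $H_\sigma\bigl(z(y,s)\bigr)$, positive wherever $z(y,s)>0$.
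No single kernel makes both tests admissible.

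The missing idea is the sign splitting of Lemma \ref{lem:chartwise-signed-Kato}, transplanted to the pair $(z,v)$. The paper uses the inward kernels $\rho_{\varsigma,j}$, for which one spatial variable may reach $\partial D$ while the other stays in a compact subset of $D$. Write $(z-v)^+=(z^+-v^+)^++(v^--z^-)^+$.

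In the positive branch, put $z$ in the boundary-touching variable and use $h_l(r)q_\gamma(r-v^+)$. Its trace $q_\gamma(\operatorname{Tr}z-v^+)$ vanishes precisely because $\operatorname{Tr}z\leq0$, i.e.\ $\operatorname{Tr}z^+=0$. The companion test for $v$, generated by $h_l(r)q_\gamma(z^+-r^+)$, is compactly supported.

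In the negative branch the roles are exchanged. The test $h_l(r)q_\gamma(r^--z^-)$ in the equation for $v$ has zero trace because $v$ has homogeneous boundary values. The companion test for $z$ is interior, so nothing about $\operatorname{Tr}z^-$ is required. This is where the one-sided hypothesis $\operatorname{Tr}z\leq0$ is actually used.

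Only after adding the two branches, letting $\gamma\downarrow0$, $\theta\downarrow0$, $\varsigma\downarrow0$ and then $l\to\infty$, do you obtain the integral inequality for $Y$. Your source estimate and Gr\"onwall's inequality then apply to it.
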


\begin{proof}
The proof uses the interior Kato inequality and the sign boundary splitting from Lemmas \ref{lem:interior-local-Kato} and \ref{lem:chartwise-signed-Kato}.
The only additional point is that $z$ has a nonpositive boundary trace, while the estimate for diffusion $A(v,\nabla v)$ can be done as in \cite[proof of Proposition 3.2]{carrillo1999uniqueness}. 
First, the growth assumptions and the boundedness of $z$ imply
\[
\partial_t\beta(z)\in L^{p'}(0,T;W^{-1,p'}(D)).
\]
Thus the integration-by-parts argument of \cite[Proposition 1.3]{carrillo1999uniqueness} applies to $z$ and $v$ in the interior.

For $\gamma>0$, let $q_\gamma$ be the one-sided cutoff fixed in \eqref{eq:property of q1}--\eqref{eq:property of q2}, with the parameter there renamed from $\delta$ to $\gamma$.
It is a $C^1$ version of the function $H_\varepsilon$ introduced in \cite[Lemma 3.1]{carrillo1999uniqueness} and used in the proof of \cite[Proposition 3.2]{carrillo1999uniqueness}.
It approximates $\operatorname{sign}_0^+$ and localizes the comparison to $(0,\gamma)$.
For $l>0$, use the cutoff $h_l$ fixed in \eqref{eq:admissible-cutoff-family}.

Use the time mollifier and the inward spatial kernels from Lemma \ref{lem:inward-kernel-properties}.
In the cited Kato argument, take $z$ as the first function, with source $f(x,t,z)+r_z$, and take $v$ as the second function, with source
\[
f(x,t,v)+\alpha_\varepsilon\kappa_\delta(\beta(v)-\beta(z))+P_\varepsilon(x,t,v).
\]

For the positive branch, use the function
\[
h_l(r)q_\gamma(r-v^+)
\]
in the equation for $z$.
Since $\operatorname{Tr}z\leq0$, we have $\operatorname{Tr}z^+=0$.
Hence this test has zero trace.
The companion test in the equation for $v$ is generated by $h_l(r)q_\gamma(z^+-r^+)$.

For the negative branch, interchange the two functions as in the negative-part calculation in the proof of Lemma \ref{lem:chartwise-signed-Kato}.
The test in the equation for $v$ is generated by
\[
 h_l(r)q_\gamma(r^--z^-).
\]
It has zero trace because $v$ has homogeneous boundary values.
The companion test in the equation for $z$ is compactly supported in the other spatial variable.
Thus no zero-trace assumption on $z^-$ is needed.

Adding the two branches uses
\[
(z-v)^+=(z^+-v^+)^++(v^--z^-)^+.
\]
With $l$ fixed, we follow the limit order from \cite[Proposition 3.2]{carrillo1999uniqueness}: first let the sign-cutoff parameter $\gamma\downarrow0$, then the time mollifier parameter $\theta\downarrow0$, and finally the inward spatial mollifier parameter $\varsigma\downarrow0$. 
We then let $l\to\infty$. 
Note that there is no reflection measure in the inequality. 
Then, using the estimates in \cite[proof of Theorem 2.3]{carrillo1999uniqueness}, we have for almost every $\tau\in(0,T)$,
\begin{align}
Y(\tau)&\leq Y(0)+\int_0^\tau\!\int_{\{z>v\}}[f(x,t,z)-f(x,t,v)]\,\dd x\,\dd t\nonumber\\
&\quad+\int_0^\tau\!\int_{\{z>v\}}\left[r_z-\alpha_\varepsilon\kappa_\delta(\beta(v)-\beta(z))-P_\varepsilon(x,t,v)\right]\,\dd x\,\dd t,\label{eq:lower-barrier-Kato}
\end{align}
where
\[
Y(t):=\int_D[\beta(z(t))-\beta(v(t))]^+\,\dd x.
\]
The initial term is $Y(0)$ by the initial-trace identities for $z$ and $v$.

On $\{z>v\}$, strict monotonicity of $\beta$ gives $\kappa_\delta(\beta(v)-\beta(z))=1$.
Since $P_\varepsilon(x,t,\cdot)$ is nonincreasing,
\begin{align*}
r_z-\alpha_\varepsilon-P_\varepsilon(x,t,v)&\leq r_z-(r_z-P_\varepsilon(x,t,z))^+-P_\varepsilon(x,t,z)\leq0.
\end{align*}
Moreover, \eqref{eq:fhat-Lipschitz} gives
\[
\one_{\{z>v\}}[f(x,t,z)-f(x,t,v)]\leq L_f[\beta(z)-\beta(v)]^+.
\]
Therefore
\[
Y(\tau)\leq Y(0)+L_f\int_0^\tau Y(t)\,\dd t.
\]
Since $\xi_z\leq\xi_v$, the initial-trace identities give $Y(0)=0$.
Gronwall's inequality yields $Y=0$, and hence $z\leq v$ a.e. in $D_T$.
\end{proof}
\begin{rem}[Order of the Kato limits]
At the penalized level, the terms $\alpha_\varepsilon\kappa_\delta$ and $P_\varepsilon$ are nonlinear source terms.  
The passage above therefore contains no pairing with a
reaction measure, and the sign--time--space order in \cite{carrillo1999uniqueness} applies unchanged.
\end{rem}

\begin{lem}[Localization of the $P_\varepsilon$-penalty]\label{lem:principal-penalty-localization}
Let $u_{\delta,\varepsilon}$ be a solution of \eqref{eq:two-parameter-penalty}.
Then we have
\[
u_{\delta,\varepsilon}\geq\psi_1\quad\text{a.e. in }D_T.
\]
Moreover, we have
\begin{equation}
\{P_\varepsilon(\cdot,\cdot,u_{\delta,\varepsilon})>0\} \subset\{\psi_2>0\}\subset K_2\times[0,T]\quad\text{up to a null set}.\label{eq:principal-penalty-localization}
\end{equation}
\end{lem}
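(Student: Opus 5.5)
The first assertion is an application of Lemma \ref{lem:comparison-lower-barrier} with $z:=\psi_1$. By Assumption \ref{assu:existence-obstacle}, $\psi_1\in L^p(0,T;W^{1,p}(D))\cap L^\infty(D_T)$, $\operatorname{Tr}\psi_1\leq0$, and \eqref{eq:barrier-residual-equation} holds with $r_z=r_{\psi_1}\in L^\infty(D_T)$. The remark after that assumption records that $\psi_1$ attains the initial datum $\psi_1(0)$ in the Alt--Luckhaus sense. With this choice of $z$, the coefficient $\alpha_\varepsilon=(r_{\psi_1}-P_\varepsilon(\cdot,\cdot,\psi_1))^+$ in Lemma \ref{lem:comparison-lower-barrier} is exactly the one in \eqref{eq:two-parameter-penalty}, and the equation for $v=u_{\delta,\varepsilon}$ is the penalized one. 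Since $\psi_1(0)\leq\xi$ a.e., the lemma gives $\psi_1\leq u_{\delta,\varepsilon}$ a.e. in $D_T$.

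For the localization \eqref{eq:principal-penalty-localization}, I use that $\beta$ is strictly increasing. Then $P_\varepsilon(x,t,u_{\delta,\varepsilon})>0$ holds exactly when $\beta(\psi)>\beta(u_{\delta,\varepsilon})$, that is, when $u_{\delta,\varepsilon}<\psi$. On the complement of a null set we have $u_{\delta,\varepsilon}\geq\psi_1$, so on this set $\psi_1\leq u_{\delta,\varepsilon}<\psi=\psi_1+\psi_2$, which forces $\psi_2>0$. This proves the first inclusion up to a null set.

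For the second inclusion, \eqref{eq:psi2-localization} says $\psi_2=0$ a.e. on $(D\setminus K_2)\times(0,T)$. Hence $\{\psi_2>0\}\subset K_2\times[0,T]$ up to a null set; the time endpoints and $\partial D$ have measure zero.

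I expect no real obstacle here. The only point needing care is checking that $\psi_1$ meets every hypothesis of Lemma \ref{lem:comparison-lower-barrier}, in particular the Alt--Luckhaus initial-value identity. That identity follows from $\partial_t\beta(\psi_1)\in L^{p'}(0,T;W^{-1,p'}(D))+L^{p'}(D_T)$ together with $\beta(\psi_1)\in C([0,T];L^1(D))$, as already noted in the paper.
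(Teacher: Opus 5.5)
Your proposal is correct and follows the paper's proof essentially step for step. You apply Lemma \ref{lem:comparison-lower-barrier} with $z=\psi_1$, using the hypotheses recorded after Assumption \ref{assu:existence-obstacle} (including the Alt--Luckhaus initial identity with datum $\psi_1(0)\leq\xi$), to get $u_{\delta,\varepsilon}\geq\psi_1$; then $P_\varepsilon(\cdot,\cdot,u_{\delta,\varepsilon})>0$ forces $\psi_1\leq u_{\delta,\varepsilon}<\psi$, hence $\psi_2>0$, which by \eqref{eq:psi2-localization} happens only on $K_2\times[0,T]$ up to a null set (only the monotonicity of $\beta$, not its strictness, is needed for the implication $P_\varepsilon>0\Rightarrow u_{\delta,\varepsilon}<\psi$).
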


\begin{proof}
Since $\psi_1$ satisfies the lower-barrier equation with residual $r_{\psi_1}$, Lemma \ref{lem:comparison-lower-barrier} gives
\[
u_{\delta,\varepsilon}\geq\psi_1\quad\text{a.e. in }D_T.
\]
If $P_\varepsilon(x,t,u_{\delta,\varepsilon}(x,t))>0$, then the definition of $P_\varepsilon$ gives
\[
u_{\delta,\varepsilon}(x,t)<\psi(x,t).
\]
Since we also have $u_{\delta,\varepsilon}(x,t)\geq\psi_1(x,t)$, we obtain
\[
\psi_2(x,t)=\psi(x,t)-\psi_1(x,t)>0.
\]
Therefore, we have $(x,t)\in K_2\times[0,T]$, and \eqref{eq:principal-penalty-localization} follows.
\end{proof}
\begin{lem}[A zero-boundary majorant]\label{lem:penalty-footprint-majorant}
Under Assumption \ref{assu:existence-obstacle}, there exists
\[
H_*\in W_0^{1,p}(D)\cap L^\infty(D)\quad\text{and}\quad\sigma_*>0
\]
such that
\begin{equation}
H_*(x)\geq\psi(x,t)+\sigma_*\quad\text{for every }(x,t)\in K_2\times[0,T].\label{eq:K2-majorant}
\end{equation}
\end{lem}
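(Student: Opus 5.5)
The majorant will be built as an explicit Lipschitz function of the distance to $\partial D$, with separate treatment of the two localization regimes in Assumption \ref{assu:existence-obstacle}. Set $M:=M_{\mathrm{obs}}$, so that $\psi\leq M$ on $\overline D\times[0,T]$. The required inequality \eqref{eq:K2-majorant} has to hold only on the compact set $K_2\times[0,T]$. Wherever $K_2$ stays away from $\partial D$, a large constant cut off near the boundary suffices. Near $K_2\cap\partial D$, the strict boundary gap \eqref{eq:K2-boundary-gap} lets a function vanishing on $\partial D$ still lie above $\psi$.

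\emph{Case $K_2\Subset D$.} Let $d_0:=\operatorname{dist}(K_2,\partial D)>0$ and define
\[
H_*(x):=(M+1)\min\bigl\{1,\operatorname{dist}(x,\partial D)/d_0\bigr\}.
\]
This function is Lipschitz on $\overline D$ and vanishes on $\partial D$, so it lies in $W_0^{1,p}(D)\cap L^\infty(D)$. It equals $M+1\geq\psi+1$ on $K_2$, so $\sigma_*=1$ works.

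\emph{Case $K_2\cap\partial D\neq\varnothing$.} Here $\psi$ is uniformly continuous on the compact set $\overline D\times[0,T]$. By \eqref{eq:K2-boundary-gap}, $\psi\leq-\sigma_\partial$ on $(K_2\cap\partial D)\times[0,T]$. The first step, which I expect to be the only delicate one, is to find $r>0$ such that
\[
\psi(x,t)\leq-\tfrac{\sigma_\partial}{2}\quad\text{for all }x\in K_2\text{ with }\operatorname{dist}(x,\partial D)<r\text{ and all }t\in[0,T].
\]
The issue is that such an $x$ is close to some boundary point, but that boundary point need not lie in $K_2$. I would argue by contradiction using compactness. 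Suppose there are $x_n\in K_2$ with $\operatorname{dist}(x_n,\partial D)\to0$, and $t_n\in[0,T]$, with $\psi(x_n,t_n)>-\sigma_\partial/2$. Passing to a subsequence, $x_n\to x_\infty$ and $t_n\to t_\infty$. Since $K_2$ is closed, $x_\infty\in K_2$, and since $\operatorname{dist}(x_n,\partial D)\to0$, also $x_\infty\in\partial D$. Continuity of $\psi$ then gives $\psi(x_\infty,t_\infty)\geq-\sigma_\partial/2$, which contradicts \eqref{eq:K2-boundary-gap}. So the point $x_\infty$ itself lies in $K_2\cap\partial D$, and the gap applies there.

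With this $r$, define
\[
H_*(x):=(M+1)\min\bigl\{1,\operatorname{dist}(x,\partial D)/r\bigr\},
\]
which is again Lipschitz on $\overline D$ with zero boundary trace and nonnegative. Take $\sigma_*:=\min\{1,\sigma_\partial/2\}$ and check $K_2\times[0,T]$ in two parts. Where $\operatorname{dist}(x,\partial D)\geq r$, we have $H_*=M+1\geq\psi+\sigma_*$. Where $\operatorname{dist}(x,\partial D)<r$, we have $H_*\geq0\geq\psi+\sigma_\partial/2\geq\psi+\sigma_*$. Points of $K_2\cap\partial D$ fall in the second part, with $H_*=0$ and $\psi\leq-\sigma_\partial$ there.

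Membership $H_*\in W_0^{1,p}(D)$ holds because $D$ is a bounded Lipschitz domain: a Lipschitz function on $\overline D$ vanishing on $\partial D$ belongs to $W_0^{1,p}(D)$ (for instance by approximating with $(H_*-\epsilon)^+$, which has compact support in $D$).
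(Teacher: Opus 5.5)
Your proof is correct and follows essentially the same route as the paper. It uses a rescaled distance function $\operatorname{dist}(x,\partial D)$ when $K_2\cap\partial D=\varnothing$, and, when $K_2$ meets $\partial D$, the same compactness/contradiction argument to obtain a boundary strip on which $\psi$ stays strictly negative on $K_2$, followed by a distance function rescaled to that strip width. Your cap at $M+1$ and the choice $\sigma_*=\min\{1,\sigma_\partial/2\}$ instead of $\min\{1,\sigma_\partial/4\}$ are cosmetic. The only case left implicit is $K_2=\varnothing$: there $d_0$ is undefined, but the claim is vacuous and $H_*\equiv0$ works, as the paper notes.
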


\begin{proof}
Since $D$ is Lipschitz, let $d_{\partial D}(x):=\operatorname{dist}(x,\partial D)\in W^{1,\infty}(D)\cap W_0^{1,p}(D)$.
If $K_2=\varnothing$, then $\psi_2\equiv0$ and the  penalty term involving $P_\varepsilon$ vanishes because $u_{\delta,\varepsilon}\geq\psi_1=\psi$.
We can take $H_*\equiv0$ and $\sigma_*=1$.
Hence we only consider the case $K_2\neq\varnothing$.

If $K_2\cap\partial D=\varnothing$, with the compactness of $K_2$, set   
\[
\sigma_*:=1,\qquad d_0:=\operatorname{dist}(K_2,\partial D)>0.
\]
Then,
\[
H_*(x):=(M_{\mathrm{obs}}+\sigma_*)\frac{d_{\partial D}(x)}{d_0}
\]
belongs to $W_0^{1,p}(D)\cap L^\infty(D)$ and satisfies $H_*\geq M_{\mathrm{obs}}+\sigma_*\geq\psi+\sigma_*$ on $K_2\times[0,T]$.

Suppose now that $K_2\cap\partial D\neq\varnothing$.
Choose
\[
\sigma_*=\min\{1,\frac{\sigma_\partial}{4}\}.
\]
We claim that there exists $\rho>0$ such that
\begin{equation}
\psi(x,t)\leq-2\sigma_*\quad\text{whenever }x\in K_2, \ d_{\partial D}(x)<\rho,\ t\in[0,T].\label{eq:K2-boundary-strip-gap}
\end{equation}
Otherwise there would be sequences $x_n\in K_2$ and $t_n\in[0,T]$ with $d_{\partial D}(x_n)\to0$ and $\psi(x_n,t_n)>-2\sigma_*$.
By compactness, after extraction, $x_n\to x_*\in K_2\cap\partial D$ and $t_n\to t_*$.
Continuity gives
\[
\psi(x_*,t_*)\geq-2\sigma_*>-\sigma_\partial,
\]
which is contrary to \eqref{eq:K2-boundary-gap}.
Thus \eqref{eq:K2-boundary-strip-gap} holds. 
Define
\[
H_*(x):=(M_{\mathrm{obs}}+\sigma_*)\frac{d_{\partial D}(x)}{\rho}.
\]
If $x\in K_2$ and $d_{\partial D}(x)<\rho$, then $H_*(x)\geq0\geq\psi(x,t)+\sigma_*$.
If $d_{\partial D}(x)\geq\rho$, then $H_*(x)\geq M_{\mathrm{obs}}+\sigma_*\geq\psi(x,t)+\sigma_*$.
Hence \eqref{eq:K2-majorant} holds on $K_2\times[0,T]$.
\end{proof}
\begin{rem}[Why the strict boundary gap is needed]
Lemma \ref{lem:penalty-footprint-majorant} is the only point at which the strict boundary gap is used.
The function $H_*$ has zero trace and satisfies $H_*\geq\psi+\sigma_*$ on $K_2$.
Testing with $u_{\delta,\varepsilon}-H_*$ therefore yields the coercive contribution
\[
\sigma_*\int_{D_T}P_\varepsilon(x,t,u_{\delta,\varepsilon})\,\dd x\,\dd t,
\]
which gives the uniform mass bound for the localized penalty.
\end{rem}
The next lemma gives estimates independent of both parameters.

\begin{lem}[Uniform estimate]\label{lem:uniform-energy-general-obstacle}
There is a constant $C$, independent of $\delta,\varepsilon\in(0,1)$, such that
\begin{align}
&\operatorname*{ess\,sup}_{0<t<T}\int_D B(u_{\delta,\varepsilon}(t))\,\dd x +\int_{D_T}|\nabla u_{\delta,\varepsilon}|^p\,\dd x\,\dd t \nonumber\\
&\quad +\int_{D_T}P_\varepsilon(x,t,u_{\delta,\varepsilon})\,\dd x\,\dd t+\int_{D_T}P_\varepsilon(x,t,u_{\delta,\varepsilon})(\psi-u_{\delta,\varepsilon})^+\,\dd x\,\dd t\nonumber\\
&\leq C\left(1+\|B(\xi)\|_{L^1(D)}+\|r_{\psi_1}^+\|_{L^1(D_T)}+\|f_0\|_{L^{p'}(D_T)}^{p'}\right).\label{eq:uniform-energy-general-obstacle}
\end{align}
\end{lem}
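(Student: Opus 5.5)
The plan is to test the weak formulation \eqref{eq:AL-weak-equation} of \eqref{eq:two-parameter-penalty} with $\varphi=(u_{\delta,\varepsilon}-H_*)\one_{[0,t]}$. Here $H_*$ is the zero-trace majorant of Lemma \ref{lem:penalty-footprint-majorant}, so $\varphi\in L^p(0,T;\mathbb V)$. Write $u=u_{\delta,\varepsilon}$.

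\emph{Time term.} Lemma \ref{lem:AL-time-chain-rule} gives the $u$-part:
\[
\int_0^t\langle\partial_\tau\beta(u),u\rangle\,\dd\tau=\int_D B(u(t))\,\dd x-\int_D B(\xi)\,\dd x .
\]
Lemma \ref{lem:AL-fixed-spatial-test} with $H=H_*$ gives the $H_*$-part:
\[
\int_0^t\langle\partial_\tau\beta(u),H_*\rangle\,\dd\tau=\int_D[\beta(u(t))-\beta(\xi)]H_*\,\dd x .
\]
Since $H_*\in L^\infty$, \eqref{eq:beta-controlled-by-energy} with small $a$ bounds the second quantity by $\tfrac14\int_D B(u(t))+C(1+\|B(\xi)\|_{L^1})$, so it is absorbed on the left.

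\emph{Diffusion.} By \eqref{eq:A-coercivity},
\[
A(u,\nabla u)\cdot\nabla u\ge c|\nabla u|^p-C(1+B(u)).
\]
By \eqref{eq:A-growth} and Young's inequality,
\[
|A\cdot\nabla H_*|\le \epsilon|\nabla u|^p+C_\epsilon\bigl(1+B(u)\bigr)|\nabla H_*|^p .
\]
This needs $\nabla H_*\in L^\infty$, which holds for the distance-function construction. The $B(u)$ terms are then handled by Gronwall.

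\emph{Sources.} The $f$-term, by \eqref{eq:f-energy-growth} and Young's inequality, produces $\|f_0\|_{L^{p'}}^{p'}$ and $\int B(u)$ and $\int|u|^p$ terms. The $\int|u|^p$ term is controlled by Poincar\'e and absorbed into the gradient term. The $\alpha_\varepsilon\kappa_\delta$ term is supported where $\beta(u)\le\beta(\psi_1)+\delta$, i.e.\ where $u$ is bounded near $\psi_1$ (up to $\delta<1$, uniformly). Using \eqref{eq:alpha-uniform-bound}, this term is bounded by $C\|r_{\psi_1}^+\|_{L^1}$.

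\emph{Penalty.} This is the key sign step. We have $\int P_\varepsilon(u)(u-H_*)$, and it appears on the right with the sign of a source. By Lemma \ref{lem:principal-penalty-localization}, $P_\varepsilon(u)>0$ only on $K_2\times[0,T]$ and only where $u<\psi$. There, by \eqref{eq:K2-majorant},
\[
u-H_*\le u-\psi-\sigma_*=-(\psi-u)^+-\sigma_* .
\]
Hence
\[
\int P_\varepsilon(u)(u-H_*)\le-\sigma_*\int P_\varepsilon(u)-\int P_\varepsilon(u)(\psi-u)^+ ,
\]
and both quantities move to the left with positive sign. This is why the bound is uniform in $\varepsilon$, even though $P_\varepsilon\le K_P/\varepsilon$ is not.

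\emph{Conclusion.} Collecting the estimates, with $E(t):=\int_D B(u(t))$, we obtain for a.e.\ $t$
\[
E(t)+c\!\int_0^t\!\!\int_D|\nabla u|^p+\sigma_*\!\int_0^t\!\!\int_D P_\varepsilon(u)+\int_0^t\!\!\int_D P_\varepsilon(u)(\psi-u)^+\le C_0+C\int_0^tE(\tau)\,\dd\tau .
\]
Here $C_0$ is the right-hand side of \eqref{eq:uniform-energy-general-obstacle}. Gronwall's inequality yields the claim with $t=T$ together with the ess sup bound.

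\emph{Main obstacle.} The delicate points are the justification of the time-truncated test, handled via the absolutely continuous representatives above, and the control of the $A\cdot\nabla H_*$ term through the energy-dependent growth. The penalty sign argument itself is straightforward once Lemma \ref{lem:principal-penalty-localization} is in hand.
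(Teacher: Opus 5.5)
Your proposal is correct and follows the paper's proof essentially step for step. You test with $u_{\delta,\varepsilon}-H_*$, treat the time term with Lemmas \ref{lem:AL-time-chain-rule} and \ref{lem:AL-fixed-spatial-test}, and absorb the $H_*$-part via \eqref{eq:beta-controlled-by-energy}; you use coercivity, growth, Young and Poincar\'e for the diffusion and source, and bound the $\alpha_\varepsilon$-term by $C\|r_{\psi_1}^+\|_{L^1(D_T)}$ through the support of $\kappa_\delta$. You then extract $\sigma_*\int P_\varepsilon$ and $\int P_\varepsilon(\psi-u_{\delta,\varepsilon})^+$ from Lemma \ref{lem:principal-penalty-localization} and \eqref{eq:K2-majorant}, and close with Gr\"onwall. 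One minor remark: $\nabla H_*\in L^\infty$ is not actually needed, since Young's inequality with $\nabla H_*\in L^p$ already gives $|A\cdot\nabla H_*|\leq\epsilon|\nabla u_{\delta,\varepsilon}|^p+C_\epsilon\bigl(1+B(u_{\delta,\varepsilon})+|\nabla H_*|^p\bigr)$.
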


\begin{proof}
Let $H_*$ and $\sigma_*$ be given by Lemma \ref{lem:penalty-footprint-majorant}.
Since $H_*\in W_0^{1,p}(D)\cap L^\infty(D)$, use $u_{\delta,\varepsilon}-H_*$ as a test function in \eqref{eq:two-parameter-penalty}. For almost every $t\in(0,T)$, we have 
\begin{align*}
\int_0^t\langle\partial_s\beta(u_{\delta,\varepsilon}),u_{\delta,\varepsilon}-H_*\rangle_{\mathbb V^*,\mathbb V}\,\dd s&=-\int_0^t\!\int_DA(u_{\delta,\varepsilon},\nabla u_{\delta,\varepsilon})\cdot\nabla(u_{\delta,\varepsilon}-H_*)\,\dd x\,\dd s\\
&\quad+\int_0^t\!\int_Df(x,s,u_{\delta,\varepsilon})(u_{\delta,\varepsilon}-H_*)\,\dd x\,\dd s\\
&\quad+\int_0^t\!\int_D\alpha_\varepsilon(x,s)\kappa_\delta(\beta(u_{\delta,\varepsilon})-\beta(\psi_1))(u_{\delta,\varepsilon}-H_*)\,\dd x\,\dd s\\
&\quad+\int_0^t\!\int_DP_\varepsilon(x,s,u_{\delta,\varepsilon})(u_{\delta,\varepsilon}-H_*)\,\dd x\,\dd s.
\end{align*}
Now, we estimate each term. 
On the set where $P_\varepsilon(x,t,u_{\delta,\varepsilon})>0$, we have $u_{\delta,\varepsilon}<\psi$. 
Lemma \ref{lem:principal-penalty-localization} shows that the penalty term involving $P_\varepsilon$ is active only on $K_2\times[0,T]$.
Since $H_*\geq\psi+\sigma_*$ on $K_2\times[0,T]$, we have
\begin{align*}
&P_\varepsilon(x,t,u_{\delta,\varepsilon}) (u_{\delta,\varepsilon}-H_*)\leq-P_\varepsilon(x,t,u_{\delta,\varepsilon})(\psi-u_{\delta,\varepsilon})^+-\sigma_*P_\varepsilon(x,t,u_{\delta,\varepsilon}).
\end{align*}

For the time term, by Lemma \ref{lem:AL-time-chain-rule}, we have, for almost every $t\in(0,T)$,
\[
\int_0^t\left\langle\partial_s\beta(u_{\delta,\varepsilon}),u_{\delta,\varepsilon}\right\rangle_{\mathbb V^*,\mathbb V}\,\dd s=\int_D B(u_{\delta,\varepsilon}(t))\,\dd x-\int_D B(\xi)\,\dd x.
\]
Moreover, Lemma \ref{lem:AL-fixed-spatial-test} gives, for almost every $t\in(0,T)$,
\[
\int_0^t\left\langle\partial_s\beta(u_{\delta,\varepsilon}),H_*\right\rangle_{\mathbb V^*,\mathbb V}\,\dd s=\int_D[\beta(u_{\delta,\varepsilon}(t))-\beta(\xi)]H_*\,\dd x.
\]
Since \eqref{eq:beta-controlled-by-energy} holds and $H_*\in L^\infty(D)$, we have
\begin{align*}
&\int_0^t\left\langle\partial_s\beta(u_{\delta,\varepsilon}),u_{\delta,\varepsilon}-H_*\right\rangle_{\mathbb V^*,\mathbb V}\,\dd s\geq\frac12\int_D B(u_{\delta,\varepsilon}(t))\,\dd x-C\left(1+\int_D B(\xi)\,\dd x\right).
\end{align*}
By \eqref{eq:A-coercivity}, \eqref{eq:A-growth}, and Young's inequality,
\begin{align*}
&-\int_0^t\!\int_D A(u_{\delta,\varepsilon},\nabla u_{\delta,\varepsilon})\cdot\nabla(u_{\delta,\varepsilon}-H_*)\,\dd x\,\dd s\\
&\qquad\leq-\frac{c_{\mathrm{coer}}}{2}\int_0^t\!\int_D|\nabla u_{\delta,\varepsilon}|^p\,\dd x\,\dd s+C\int_0^t\!\int_D(1+B(u_{\delta,\varepsilon}))\,\dd x\,\dd s.
\end{align*}
The source term is estimated by \eqref{eq:f-energy-growth}, Young's inequality, and Poincar\'e's inequality.
\begin{align*}
&\int_0^t\!\int_D|f(x,s,u_{\delta,\varepsilon})||u_{\delta,\varepsilon}-H_*|\,\dd x\,\dd s\\
&\quad\leq\frac{c_{\mathrm{coer}}}{4}\int_0^t\!\int_D|\nabla u_{\delta,\varepsilon}|^p\,\dd x\,\dd s+C\bigg(1+\int_0^t\!\int_D\bigl(1+B(u_{\delta,\varepsilon})+|f_0|^{p'}\bigr)\,\dd x\,\dd s\bigg).
\end{align*}
For the term involving $\alpha_\varepsilon$, using Lemma \ref{lem:principal-penalty-localization}, on the support of $\kappa_\delta(\beta(u_{\delta,\varepsilon})-\beta(\psi_1))$, one has
\[
0\leq\beta(u_{\delta,\varepsilon})-\beta(\psi_1)<\delta.
\]
Since $|\psi_1|\leq M_{\mathrm{obs}}$, $\delta<1$, and $\beta$ is strictly increasing, $u_{\delta,\varepsilon}$ is bounded there independently of the parameters $\delta$ and $\varepsilon$.
Using \eqref{eq:alpha-uniform-bound}, the penalty term involving $\alpha_\varepsilon$ tested against $u_{\delta,\varepsilon}-H_*$ is bounded above by $C\|r_{\psi_1}^+\|_{L^1(D_T)}$.

Combining all the preceding estimates, and using Gr\"onwall's inequality, we have \eqref{eq:uniform-energy-general-obstacle}, which completes the proof.
\end{proof}

\subsection{The first penalization limit \texorpdfstring{$\delta\downarrow0$}{as delta tends to zero}}\label{subsec:first-penalty-limit}
For fixed $\varepsilon>0$ and $z\in\mathbb{R}$, the map $\alpha_\varepsilon\kappa_\delta(z-\beta(\psi_1))$ is nondecreasing with $\delta$.
Lemma \ref{lem:penalized-comparison-general-A} therefore gives almost everywhere
\begin{equation}
u_{\delta_2,\varepsilon}\leq u_{\delta_1,\varepsilon},\quad\text{if }0<\delta_2<\delta_1<1. \label{eq:delta-monotonicity-revised}
\end{equation}
Combining with Lemma \ref{lem:principal-penalty-localization} and Lemma \ref{lem:uniform-energy-general-obstacle}, we obtain the first limit.

\begin{prop}[First penalization limit in the Alt--Luckhaus class]\label{prop:first-penalty-limit}
Let Assumptions \ref{assu:existence-beta}--\ref{assu:existence-obstacle} hold. 
For every fixed $\varepsilon\in(0,1)$, there exist
\[
u_\varepsilon\in L^p(0,T;\mathbb V), \qquad \nu_{1,\varepsilon}\in L^\infty(D_T),
\]
such that, as $\delta\downarrow0$,
\begin{align*}
u_{\delta,\varepsilon} &\longrightarrow u_\varepsilon &&\text{a.e. in $D_T$ and strongly in $L^p(D_T)$},\\
u_{\delta,\varepsilon} &\rightharpoonup u_\varepsilon &&\text{weakly in $L^p(0,T;\mathbb V)$},\\
B(u_{\delta,\varepsilon}) &\longrightarrow B(u_\varepsilon) &&\text{strongly in $L^1(D_T)$},\\
\beta(u_{\delta,\varepsilon}) &\longrightarrow\beta(u_\varepsilon) &&\text{strongly in $L^1(D_T)$},\\
\alpha_\varepsilon\kappa_\delta \bigl(\beta(u_{\delta,\varepsilon})-\beta(\psi_1)\bigr) &\overset{*}{\rightharpoonup}\nu_{1,\varepsilon} &&\text{weakly-* in }L^\infty(D_T).
\end{align*}
Moreover,
\begin{align}
u_\varepsilon&\geq\psi_1 &&\text{a.e. in }D_T, \label{eq:first-limit-obstacle}\\
0\leq\nu_{1,\varepsilon}&\leq r_{\psi_1}^+ &&\text{a.e. in }D_T, \label{eq:nu1-epsilon-bound}\\
\nu_{1,\varepsilon}&=0 &&\text{a.e. on }\{u_\varepsilon>\psi_1\}. \label{eq:nu1-contact}
\end{align}
Furthermore,
\[
B(u_\varepsilon)\in L^\infty(0,T;L^1(D)),
\]
and $u_\varepsilon$ is an Alt--Luckhaus weak solution, in the sense of Definition \ref{def:AL-weak-solution}, of
\begin{equation}
\partial_t\beta(u_\varepsilon) = \operatorname{div}A(u_\varepsilon,\nabla u_\varepsilon) + f(x,t,u_\varepsilon) +\nu_{1,\varepsilon}+ P_\varepsilon(x,t,u_\varepsilon)  \quad\text{in }D_T, \label{eq:epsilon-equation}
\end{equation}
with homogeneous boundary data and initial datum $\xi$.
\end{prop}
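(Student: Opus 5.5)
The plan is to pass to the limit $\delta\downarrow0$ for fixed $\varepsilon$ by combining monotonicity, the uniform bounds, and a Minty-type monotonicity argument for the flux. By \eqref{eq:delta-monotonicity-revised}, the family $u_{\delta,\varepsilon}$ is nonincreasing as $\delta\downarrow0$. By Lemma \ref{lem:principal-penalty-localization} it is bounded below by $\psi_1$. Hence $u_{\delta,\varepsilon}$ converges a.e. to a measurable limit $u_\varepsilon\geq\psi_1$, which is \eqref{eq:first-limit-obstacle}.

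Lemma \ref{lem:uniform-energy-general-obstacle} bounds $\nabla u_{\delta,\varepsilon}$ in $L^p$ and $B(u_{\delta,\varepsilon})$ in $L^\infty(0,T;L^1)$ uniformly. Along a subsequence this gives weak convergence in $L^p(0,T;\mathbb V)$, and by monotonicity the whole family converges. The monotone sandwich $\psi_1\leq u_{\delta,\varepsilon}\leq u_{\delta_0,\varepsilon}$, together with dominated convergence, gives strong convergence of $u_{\delta,\varepsilon}$ in $L^p(D_T)$ and of $B(u_{\delta,\varepsilon})$, $\beta(u_{\delta,\varepsilon})$ in $L^1(D_T)$; the majorants are $|u_{\delta_0,\varepsilon}|+M_{\mathrm{obs}}$ and the corresponding $B$, $\beta$ values. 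Weak-* lower semicontinuity then gives $B(u_\varepsilon)\in L^\infty(0,T;L^1(D))$.

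The reaction terms $\alpha_\varepsilon\kappa_\delta(\cdot)$ lie in $[0,r_{\psi_1}^+]$ by \eqref{eq:alpha-uniform-bound}, so a weak-* limit $\nu_{1,\varepsilon}$ exists and satisfies \eqref{eq:nu1-epsilon-bound}. For \eqref{eq:nu1-contact}, on $\{\beta(u_\varepsilon)-\beta(\psi_1)>\gamma\}$ we have $\beta(u_{\delta,\varepsilon})\geq\beta(u_\varepsilon)$ by the monotone ordering. Hence $\kappa_\delta$ vanishes there once $\delta<\gamma$, so $\nu_{1,\varepsilon}=0$ on that set. Letting $\gamma\downarrow0$ and using that $\beta$ is strictly increasing gives the claim. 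The source terms converge: $f(\cdot,\cdot,u_{\delta,\varepsilon})$ and $P_\varepsilon(\cdot,\cdot,u_{\delta,\varepsilon})$ converge a.e. and are dominated in $L^{p'}$ thanks to \eqref{eq:f-energy-growth}, $P_\varepsilon\leq K_P/\varepsilon$, and the monotone majorant of $B$.

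The main obstacle is identifying the flux $A(u_{\delta,\varepsilon},\nabla u_{\delta,\varepsilon})\rightharpoonup A(u_\varepsilon,\nabla u_\varepsilon)$, which requires strong convergence of the gradients. I would compare two parameters $\delta_2<\delta_1$ directly, using $w=u_{\delta_1,\varepsilon}-u_{\delta_2,\varepsilon}\geq0$ as a test in the difference of the two equations. This gives
\[
\int\bigl\langle\partial_t(\beta(u_{\delta_1,\varepsilon})-\beta(u_{\delta_2,\varepsilon})),w\bigr\rangle+\int\bigl(A(u_{\delta_1,\varepsilon},\nabla u_{\delta_1,\varepsilon})-A(u_{\delta_2,\varepsilon},\nabla u_{\delta_2,\varepsilon})\bigr)\cdot\nabla w=\int(\text{source differences})\,w.
\]
The time term is handled through the relative-energy identity (Lemma \ref{lem:relative-energy-chain-rule}, or the analogous identity for the difference of two Alt--Luckhaus solutions); since both solutions share the initial datum $\xi$, it is nonnegative up to controllable terms. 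The gradient difference is split using \eqref{eq:A-uniform-monotonicity} plus a term
\[
\bigl(A(u_{\delta_1,\varepsilon},\nabla u_{\delta_2,\varepsilon})-A(u_{\delta_2,\varepsilon},\nabla u_{\delta_2,\varepsilon})\bigr)\cdot\nabla w,
\]
which tends to zero by continuity of $A$, the growth bound \eqref{eq:A-growth}, Vitali's theorem, and the a.e. convergence of $u_{\delta,\varepsilon}$. The right-hand side tends to zero because $w\to0$ in $L^p$ while the sources are bounded in $L^{p'}$.

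If this direct pairing fails (the time term is delicate in the Alt--Luckhaus class), I would instead use the standard Minty/Landes argument. One tests the $\delta$-equation with $u_{\delta,\varepsilon}-u_\varepsilon$ and uses a Landes time regularization of $u_\varepsilon$; this is legitimate here because the limit reaction $\nu_{1,\varepsilon}$ is still an $L^\infty$ function. Either route gives $\nabla u_{\delta,\varepsilon}\to\nabla u_\varepsilon$ in $L^p$, which identifies the flux. Finally, $\partial_t\beta(u_{\delta,\varepsilon})$ is bounded in $L^{p'}(0,T;\mathbb V^*)$, so it converges weakly to $\partial_t\beta(u_\varepsilon)$. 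Passing to the limit in \eqref{eq:AL-initial-value} and \eqref{eq:AL-weak-equation} shows that $u_\varepsilon$ is an Alt--Luckhaus weak solution of \eqref{eq:epsilon-equation} with initial datum $\xi$.
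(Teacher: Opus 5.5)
Your compactness step (monotone limit, the sandwich $\psi_1\le u_{\delta,\varepsilon}\le u_{\delta_0,\varepsilon}$, dominated convergence for $u_{\delta,\varepsilon}$, $B(u_{\delta,\varepsilon})$ and $\beta(u_{\delta,\varepsilon})$, the weak-* limit of the reaction, convergence of $f$ and $P_\varepsilon$) is the paper's. Your proof of \eqref{eq:nu1-contact} differs from the paper's and is correct. On $\{\beta(u_\varepsilon)-\beta(\psi_1)>\gamma\}$ the ordering $u_{\delta,\varepsilon}\ge u_\varepsilon$ makes $\kappa_\delta$ vanish once $\delta<\gamma$. The paper instead bounds $\int_{D_T}\alpha_\varepsilon\kappa_\delta(\beta(u_{\delta,\varepsilon})-\beta(\psi_1))\,[\beta(u_{\delta,\varepsilon})-\beta(\psi_1)]\,\dd x\,\dd t$ by $\delta\|r_{\psi_1}^+\|_{L^1(D_T)}$ and uses a weak-*/strong pairing, which does not need the ordering. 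Convergence of the whole family of reactions also needs uniqueness of the weak-* limit. That comes from the limit equation once the flux is identified.

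The genuine problem is your primary route to the flux. The time term
\[
\int_0^T\bigl\langle\partial_t\bigl(\beta(u_{\delta_1,\varepsilon})-\beta(u_{\delta_2,\varepsilon})\bigr),u_{\delta_1,\varepsilon}-u_{\delta_2,\varepsilon}\bigr\rangle_{\mathbb V^*,\mathbb V}\,\dd t
\]
has no sign for nonlinear $\beta$, even with a common initial datum and $u_{\delta_1,\varepsilon}\ge u_{\delta_2,\varepsilon}$. Already for functions $a\ge b$ of time alone with $a(0)=b(0)$, $\int_0^T(\beta(a)-\beta(b))'(a-b)\,\dd t$ becomes negative in the following way. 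First build up $\beta(a)-\beta(b)$ where $\beta$ is steep. Then release it, keeping $a-b$ fixed, by shifting both values into a region where $\beta$ is flat.

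Lemma \ref{lem:relative-energy-chain-rule} cannot repair this. It needs the second argument in $W^{1,1}(0,T;L^\infty(D))$, and the cross pairings $\langle\partial_t\beta(u_{\delta_1,\varepsilon}),u_{\delta_2,\varepsilon}\rangle$ are weak--weak products that need not converge. The mixed term $\bigl(A(u_{\delta_1,\varepsilon},\nabla u_{\delta_2,\varepsilon})-A(u_{\delta_2,\varepsilon},\nabla u_{\delta_2,\varepsilon})\bigr)\cdot\nabla w$ is not handled by Vitali either. Here $\nabla u_{\delta_2,\varepsilon}$ converges only weakly, so $|\nabla u_{\delta_2,\varepsilon}|^p$ is not known to be uniformly integrable, and continuity of $A$ in $r$ is not uniform in $\zeta$. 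The gradient slot must be frozen at a fixed field, as in Minty's trick.

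Your fallback is viable but heavier than necessary, and your stated reason for it points the wrong way. The fact that $\nu_{1,\varepsilon}\in L^\infty$ is exactly what makes time regularization unnecessary. The limit equation with the unidentified flux $w_\varepsilon$ already gives $\partial_t\beta(u_\varepsilon)\in L^{p'}(0,T;\mathbb V^*)$. The Alt--Luckhaus initial identity passes to the limit by dominated convergence with the majorant $\beta(u_{\delta_0,\varepsilon})-\beta(\psi_1)$.

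So do your final step first, as the paper does. Apply Lemma \ref{lem:AL-time-chain-rule} to $u_\varepsilon$ itself and compare its energy identity with that of $u_{\delta_j,\varepsilon}$ at a.e. $\tau$. This gives $\int_0^\tau\!\int_D A(u_{\delta_j,\varepsilon},\nabla u_{\delta_j,\varepsilon})\cdot\nabla u_{\delta_j,\varepsilon}\,\dd x\,\dd t\to\int_0^\tau\!\int_D w_\varepsilon\cdot\nabla u_\varepsilon\,\dd x\,\dd t$. Minty with a fixed $\Theta$ then identifies $w_\varepsilon$. No regularization and no strong gradient convergence are needed, though the latter does follow from \eqref{eq:A-uniform-monotonicity}.

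The Landes regularization is what the paper uses in Lemma \ref{lem:minty-obstacle-reactions}, where the reaction is only a measure. If you use it here, you must regularize a truncation $\mathcal T_k(u_\varepsilon)$ starting from an approximation $\xi_k$. This is because Lemma \ref{lem:relative-energy-chain-rule} requires $z\in W^{1,1}(0,T;L^\infty(D))$ and $\xi$ need not lie in $\mathbb V$.
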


\begin{proof}
By \eqref{eq:delta-monotonicity-revised}, the pointwise limit $u_\varepsilon:=\lim_{\delta\downarrow0}u_{\delta,\varepsilon}$ exists almost everywhere in $D_T$.
Moreover, note that
\[
 \psi_1\leq u_{\delta,\varepsilon} \leq u_{\delta_0,\varepsilon} \quad\text{a.e. in }D_T
\]
for every fixed $\delta_0\in(0,1)$ and every $0<\delta\leq\delta_0$.
Hence the dominated convergence theorem and Lemma \ref{lem:uniform-energy-general-obstacle} give
\[
u_{\delta,\varepsilon}\longrightarrow u_\varepsilon \quad\text{strongly in }L^p(D_T),\qquad u_{\delta,\varepsilon}\rightharpoonup u_\varepsilon \quad\text{weakly in }L^p(0,T;\mathbb V).
\]
Since the nonnegative function $B$ is nonincreasing on $(-\infty,0]$ and nondecreasing on $[0,\infty)$, it follows that
\[
0\leq B(u_{\delta,\varepsilon}) \leq B(\psi_1)+B(u_{\delta_0,\varepsilon}) \quad\text{a.e. in }D_T.
\]
The right-hand side belongs to $L^1(D_T)$. 
Then the pointwise convergence and the dominated convergence theorem give
\[
B(u_{\delta,\varepsilon}) \longrightarrow B(u_\varepsilon) \quad\text{strongly in }L^1(D_T).
\]
For the convergence of $\beta$, using \eqref{eq:beta-controlled-by-energy} and Vitali's theorem, we have
\[
\beta(u_{\delta,\varepsilon}) \longrightarrow\beta(u_\varepsilon)\quad\text{strongly in }L^1(D_T).
\]
Moreover, Fatou's lemma and \eqref{eq:uniform-energy-general-obstacle} give
\[
B(u_\varepsilon)\in L^\infty(0,T;L^1(D)).
\]
Set
\[
r^{(1)}_{\delta,\varepsilon} := \alpha_\varepsilon\kappa_\delta \bigl(\beta(u_{\delta,\varepsilon})-\beta(\psi_1)\bigr)\geq0.
\]
By \eqref{eq:alpha-uniform-bound} and definition of $\kappa_\delta$, we have $r^{(1)}_{\delta,\varepsilon}\leq r_{\psi_1}^+$  almost everywhere in $D_T$.
Then, after passing to a subsequence, we have
\[
r^{(1)}_{\delta,\varepsilon} \overset{*}{\rightharpoonup}\nu_{1,\varepsilon} \quad\text{in }L^\infty(D_T),
\]
and \eqref{eq:nu1-epsilon-bound} follows.
Since $u_{\delta,\varepsilon}\geq\psi_1$, passage to the pointwise limit gives \eqref{eq:first-limit-obstacle}.
Moreover, on the support of $r^{(1)}_{\delta,\varepsilon}$, we have
\[
0\leq \beta(u_{\delta,\varepsilon})-\beta(\psi_1) \leq\delta,\qquad0\leq r^{(1)}_{\delta,\varepsilon}\leq r^+_{\psi_1}.
\]
Then, we have
\[
\int_{D_T} r^{(1)}_{\delta,\varepsilon} \bigl[ \beta(u_{\delta,\varepsilon})-\beta(\psi_1) \bigr] \,\dd x\,\dd t \longrightarrow0,\quad\text{as }\delta\to0.
\]
Using the weak-* convergence of $r^{(1)}_{\delta,\varepsilon}$ in $L^\infty(D_T)$ and the strong $L^1(D_T)$-convergence of $\beta(u_{\delta,\varepsilon})$, we obtain
\[
\nu_{1,\varepsilon} \bigl[ \beta(u_\varepsilon)-\beta(\psi_1) \bigr] =0 \quad\text{a.e. in }D_T.
\]
Since $\beta$ is strictly increasing, this implies \eqref{eq:nu1-contact}.

It remains to identify the flux and to verify that the limit retains the Alt--Luckhaus formulation.
Fix an arbitrary sequence $\delta_j\downarrow0$.
By the uniform estimates \eqref{eq:uniform-energy-general-obstacle} and \eqref{eq:A-growth}, after passing to a subsequence,
\[
A(u_{\delta_j,\varepsilon},\nabla u_{\delta_j,\varepsilon})\rightharpoonup w_\varepsilon \quad\text{weakly in }L^{p'}(D_T)^d.
\]
For fixed $\varepsilon$, the continuity and boundedness of the penalty imply
\[
P_\varepsilon(\cdot,\cdot,u_{\delta_j,\varepsilon})\longrightarrow P_\varepsilon(\cdot,\cdot,u_\varepsilon) \quad\text{strongly in }L^q(D_T),
\]
for every finite $q$.
For the convergence of $f$, the almost everywhere convergence of $u_{\delta_j,\varepsilon}$ and the continuity of $f(x,t,\cdot)$ give the almost everywhere convergence of $f(\cdot,\cdot,u_{\delta_j,\varepsilon})$ to $f(\cdot,\cdot,u_\varepsilon)$ on $D_T$.
Moreover, note that the order bounds imply $B(u_{\delta_j,\varepsilon}) \leq B(\psi_1)+B(u_{\delta_1,\varepsilon})$ almost everywhere on $D_T$, while the right-hand side is integrable. 
Combining \eqref{eq:f-energy-growth},  the dominated convergence theorem yields
\[
f(\cdot,\cdot,u_{\delta_j,\varepsilon}) \longrightarrow f(\cdot,\cdot,u_\varepsilon) \quad\text{strongly in }L^{p'}(D_T).
\]
Combining all these convergences and passing to the limit, with smooth test functions, gives
\begin{equation}
\partial_t\beta(u_\varepsilon)=\operatorname{div}w_\varepsilon + f(x,t,u_\varepsilon)+\nu_{1,\varepsilon}+ P_\varepsilon(x,t,u_\varepsilon)\quad\text{in }\mathcal D'(D_T).\label{eq:first-limit-unidentified-flux}
\end{equation}
The growth assumptions imply that the right-hand side belongs to $L^{p'}(0,T;\mathbb V^*)$.
Hence
\[
\partial_t\beta(u_\varepsilon) \in L^{p'}(0,T;\mathbb V^*),
\]
and, by density, \eqref{eq:first-limit-unidentified-flux} holds in the dual formulation for every $\varphi\in L^p(0,T;\mathbb V)$.

We next pass to the limit $j\to\infty$ in the Alt--Luckhaus initial-value identity.
For every $\zeta\in L^p(0,T;\mathbb V) \cap W^{1,1}(0,T;L^\infty(D))$ satisfying $\zeta(T)=0$, we have
\[
\int_0^T \left\langle \partial_t\beta(u_{\delta_j,\varepsilon}),\zeta \right\rangle_{\mathbb V^*,\mathbb V}\,\dd t + \int_{D_T} [\beta(u_{\delta_j,\varepsilon})-\beta(\xi)] \partial_t\zeta\,\dd x\,\dd t=0.
\]
The equations and the preceding weak convergence of each term give the weak convergence of $\partial_t\beta(u_{\delta_j,\varepsilon})$ to $\partial_t\beta(u_\varepsilon)$ in $L^{p'}(0,T;\mathbb V^*)$.
Furthermore, we have $\beta(u_{\delta_j,\varepsilon})\downarrow\beta(u_\varepsilon)$ almost everywhere on $D_T$,
and
\[
0\leq \beta(u_{\delta_j,\varepsilon}(t))-\beta(u_\varepsilon(t)) \leq \beta(u_{\delta_1,\varepsilon}(t))-\beta(\psi_1(t)).
\]
The $L^1(D)$-norm of the right-hand side is essentially bounded in time.
Therefore, for every $\partial_t\zeta\in L^1(0,T;L^\infty(D))$, the dominated convergence theorem gives
\[
\int_{D_T} [\beta(u_{\delta_j,\varepsilon})-\beta(u_\varepsilon)] \partial_t\zeta\,\dd x\,\dd t \longrightarrow0,\qquad\text{as }j\to\infty.
\]
It follows that
\[
\int_0^T \left\langle \partial_t\beta(u_\varepsilon),\zeta \right\rangle_{\mathbb V^*,\mathbb V}\,\dd t + \int_{D_T} [\beta(u_\varepsilon)-\beta(\xi)] \partial_t\zeta\,\dd x\,\dd t =0.
\]
Thus $u_\varepsilon$ satisfies the Alt--Luckhaus initial-value condition.

We now identify $w_\varepsilon$ in \eqref{eq:first-limit-unidentified-flux} with a Minty's argument.
With the weak-* convergence of $r^{(1)}_{\delta_j,\varepsilon}$ in $L^\infty(D_T)$ and the strong convergence of $u_{\delta_j,\varepsilon}$ in $L^1(D_T)$, we have for every $\tau\in[0,T]$
\begin{equation}\label{eq:first-reaction-product}
\int_0
^\tau\int_{D}r^{(1)}_{\delta_j,\varepsilon}u_{\delta_j,\varepsilon}\, \dd x\,\dd t \longrightarrow \int_0
^\tau\int_{D}\nu_{1,\varepsilon}u_\varepsilon\, \dd x\,\dd t\quad\text{as }j\to\infty.
\end{equation}
Applying Lemma \ref{lem:AL-time-chain-rule} to $u_{\delta_j,\varepsilon}$ and to $u_\varepsilon$, we have for almost every $\tau\in(0,T)$,
\begin{align*}
\int_0^\tau\int_D A(u_{\delta_j,\varepsilon},\nabla u_{\delta_j,\varepsilon})\cdot\nabla u_{\delta_j,\varepsilon}\, \dd x\,\dd t&= \int_0^\tau\int_D \bigl[f(x,t,u_{\delta_j,\varepsilon})+ r^{(1)}_{\delta_j,\varepsilon}+ P_\varepsilon(x,t,u_{\delta_j,\varepsilon})  \bigr]u_{\delta_j,\varepsilon} \, \dd x\,\dd t\\
&\quad- \int_D B(u_{\delta_j,\varepsilon}(\tau)) \, \dd x+ \int_D B(\xi)\, \dd x,
\end{align*}
whereas
\begin{align*}
\int_0^\tau\int_D w_\varepsilon\cdot\nabla u_\varepsilon \, \dd x\,\dd t&=\int_0^\tau\int_D \bigl[ f(x,t,u_\varepsilon)+\nu_{1,\varepsilon}+ P_\varepsilon(x,t,u_\varepsilon)\bigr]u_\varepsilon \, \dd x\,\dd t\\
&\quad- \int_D B(u_\varepsilon(\tau))\, \dd x+ \int_D B(\xi)\, \dd x.
\end{align*}
After passing to a further subsequence, the strong $L^1(D_T)$ convergence of $B(u_{\delta_j,\varepsilon})$ gives
\[
B(u_{\delta_j,\varepsilon}(\tau))\longrightarrow B(u_\varepsilon(\tau))\quad\text{strongly in }L^1(D)
\]
for almost every $\tau\in(0,T)$.  
Then convergence of the energy, the source term and the reaction product \eqref{eq:first-reaction-product} give for almost every $\tau\in(0,T)$,
\begin{equation}
\lim_{j\to\infty} \int_0^\tau\int_DA(u_{\delta_j,\varepsilon},\nabla u_{\delta_j,\varepsilon})\cdot\nabla u_{\delta_j,\varepsilon} \, \dd x\,\dd t= \int_0^\tau\int_D w_\varepsilon\cdot\nabla u_\varepsilon \, \dd x\,\dd t. \label{eq:first-limit-energy-convergence}
\end{equation}
For every $\Theta\in L^p(D\times(0,\tau))^d$, pointwise monotonicity gives
\[
\int_0^\tau\int_{D}[A(u_{\delta_j,\varepsilon},\nabla u_{\delta_j,\varepsilon})-A(u_{\delta_j,\varepsilon},\Theta)]\cdot(\nabla u_{\delta_j,\varepsilon}-\Theta)\,\dd x\,\dd t\geq0.
\]
For fixed $\Theta$, continuity of $A$ and the a.e. convergence of $u_{\delta_j,\varepsilon}$ give
\[
A(u_{\delta_j,\varepsilon},\Theta)\longrightarrow A(u_\varepsilon,\Theta)\quad\text{a.e. in }D\times(0,\tau).
\]
Moreover,
\[
|A(u_{\delta_j,\varepsilon},\Theta)|^{p'}\leq C\bigl(1+B(u_{\delta_j,\varepsilon})+|\Theta|^p\bigr).
\]
The strong $L^1$ convergence of $B(u_{\delta_j,\varepsilon})$ implies uniform integrability of the right-hand side.
Vitali's theorem therefore yields strong $L^{p'}(D\times(0,\tau))^d$ convergence of these comparison fluxes.
Using \eqref{eq:first-limit-energy-convergence} and the remaining weak convergences, we obtain
\[
\int_0^\tau\int_{D}[w_\varepsilon-A(u_\varepsilon,\Theta)]\cdot(\nabla u_\varepsilon-\Theta)\,\dd x\,\dd t\geq0\qquad\text{for every }\Theta\in L^p(D\times(0,\tau))^d.
\]
This is the classical Minty--Browder identification; see
\cite{minty1962monotone,leray1965quelques}.
Indeed, fix an arbitrary $Z\in L^p(D\times(0,\tau))^d$ and take $\Theta=\nabla u_\varepsilon-\theta Z$, $\theta>0$.
After division by $\theta$ and passage to the limit $\theta\downarrow0$, the continuity and growth of $A$ give
\[
\int_0^\tau\int_{D}[w_\varepsilon-A(u_\varepsilon,\nabla u_\varepsilon)]\cdot Z\,\dd x\,\dd t\geq0.
\]
Replacing $Z$ by $-Z$ gives equality for every $Z\in L^p(D\times(0,\tau))^d$.
Hence
\[
w_\varepsilon=A(u_\varepsilon,\nabla u_\varepsilon)\quad\text{a.e. in }D\times(0,\tau).
\]
Since $\tau$ can be chosen arbitrarily close to $T$, the identity holds almost everywhere in $D_T$.
Consequently, \eqref{eq:first-limit-unidentified-flux} becomes \eqref{eq:epsilon-equation}, and all three requirements of Definition \ref{def:AL-weak-solution} are satisfied.

The sequence $\delta_j\downarrow0$ was arbitrary.
The monotone pointwise limit is unique, and the limiting reaction is uniquely determined by \eqref{eq:epsilon-equation}.
Hence the stated convergences hold for the whole family as $\delta\downarrow0$.
\end{proof}

\subsection{The second penalization limit \texorpdfstring{$\varepsilon\downarrow0$}{as epsilon tends to zero}}

We next compare the family $(u_\varepsilon)$. 
The contact property of the first reaction supplies the required sign.

\begin{lem}[Monotonicity in $\varepsilon$]\label{lem:epsilon-monotonicity-revised}
Let Assumptions \ref{assu:existence-beta}--\ref{assu:existence-obstacle} hold. 
If $0<\varepsilon_2<\varepsilon_1<1$, then
\[
u_{\varepsilon_1}\leq u_{\varepsilon_2}\quad\text{a.e. in }D_T.
\]
\end{lem}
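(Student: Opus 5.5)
The plan is to compare $u_{\varepsilon_1}$ and $u_{\varepsilon_2}$ directly at the level of the limiting equations \eqref{eq:epsilon-equation}, not at the doubly penalized level. The $\delta$-level data are not ordered favorably: $P_\varepsilon(\cdot,\cdot,\psi_1)$ increases as $\varepsilon$ decreases, so $\alpha_{\varepsilon_2}\leq\alpha_{\varepsilon_1}$, which is the wrong direction. The point is that the contact property \eqref{eq:nu1-contact} of the first reaction supplies the missing sign.

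\textbf{Step 1 (Kato inequality).} By Proposition \ref{prop:first-penalty-limit}, each $u_{\varepsilon_i}$ is an Alt--Luckhaus weak solution with zero boundary values and initial datum $\xi$. Its source is
\[
F_i:=f(x,t,u_{\varepsilon_i})+\nu_{1,\varepsilon_i}+P_{\varepsilon_i}(x,t,u_{\varepsilon_i})\in L^{p'}(D_T).
\]
The term $\nu_{1,\varepsilon_i}$ is a fixed bounded function and not a function of $u$, so Lemma \ref{lem:penalized-comparison-general-A} does not apply verbatim. Its proof, however, only uses the Kato inequality of \cite[Theorem 2.3]{carrillo1999uniqueness}, with diffusion hypothesis \eqref{eq:CW-structure}, and that inequality accepts arbitrary $L^1$ sources. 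We apply it with $u_{\varepsilon_1}$ first and $u_{\varepsilon_2}$ second, exactly as in the proof of Lemma \ref{lem:comparison-lower-barrier}. This gives, for a.e. $\tau$,
\[
Y(\tau)\leq Y(0)+\int_0^\tau\!\int_{\{u_{\varepsilon_1}>u_{\varepsilon_2}\}}(F_1-F_2)\,\dd x\,\dd t,\qquad Y(t):=\int_D\bigl(\beta(u_{\varepsilon_1}(t))-\beta(u_{\varepsilon_2}(t))\bigr)^+\,\dd x,
\]
with $Y(0)=0$ because the initial data coincide. The initial term is handled by the Alt--Luckhaus initial-value identity.

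\textbf{Step 2 (sign of the source difference).} On $\{u_{\varepsilon_1}>u_{\varepsilon_2}\}$, Proposition \ref{prop:first-penalty-limit} gives $u_{\varepsilon_1}>u_{\varepsilon_2}\geq\psi_1$, so \eqref{eq:nu1-contact} yields $\nu_{1,\varepsilon_1}=0$ a.e. there. Also $-\nu_{1,\varepsilon_2}\leq0$. For the penalty we split
\[
P_{\varepsilon_1}(u_{\varepsilon_1})-P_{\varepsilon_2}(u_{\varepsilon_2})=\bigl[P_{\varepsilon_1}(u_{\varepsilon_1})-P_{\varepsilon_1}(u_{\varepsilon_2})\bigr]+\bigl[P_{\varepsilon_1}(u_{\varepsilon_2})-P_{\varepsilon_2}(u_{\varepsilon_2})\bigr].
\]
The first bracket is $\leq0$ because $P_\varepsilon(x,t,\cdot)$ is nonincreasing. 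The second is $\leq0$ because $1/\varepsilon_1<1/\varepsilon_2$. Finally, \eqref{eq:fhat-Lipschitz} gives $f(u_{\varepsilon_1})-f(u_{\varepsilon_2})\leq L_f(\beta(u_{\varepsilon_1})-\beta(u_{\varepsilon_2}))^+$. Hence
\[
Y(\tau)\leq L_f\int_0^\tau Y\,\dd t,
\]
so Gr\"onwall gives $Y\equiv0$. Strict monotonicity of $\beta$ then yields $u_{\varepsilon_1}\leq u_{\varepsilon_2}$ a.e.

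\textbf{Main obstacle.} The delicate point is justifying the Kato inequality of Step 1 with the non-autonomous bounded reaction $\nu_{1,\varepsilon_i}$ included in the source. One must check that the doubling argument of Lemmas \ref{lem:interior-local-Kato} and \ref{lem:chartwise-signed-Kato}, in the Carrillo--Wittbold limit order, uses only $L^1$-integrability of the sources. Here all sources lie in $L^{p'}(D_T)$ for fixed $\varepsilon$, and both functions have zero trace, so the signed boundary splitting applies unchanged. The time-diagonal source estimate (Lemma \ref{lem:time-diagonal-source}) also needs $F_1-F_2\leq0$ on the diagonal set $\{u_{\varepsilon_1}=u_{\varepsilon_2}\}$. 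If that condition is awkward to verify, the fallback is to use the Carrillo--Wittbold form, where the source appears directly on $\{u_{\varepsilon_1}>u_{\varepsilon_2}\}$ after the limits.
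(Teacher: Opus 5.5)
Your argument is essentially the paper's proof: you apply the Kato inequality to the two equations \eqref{eq:epsilon-equation} with $u_{\varepsilon_1}$ in the first position, use the contact relation \eqref{eq:nu1-contact} to remove $\nu_{1,\varepsilon_1}$ on $\{u_{\varepsilon_1}>u_{\varepsilon_2}\}$ while discarding $-\nu_{1,\varepsilon_2}\leq0$, and use monotonicity of $P_\varepsilon$ in $r$ and in $\varepsilon$, followed by the $L_f$ bound for $f$ and Gr\"onwall; your split through $P_{\varepsilon_1}(x,t,u_{\varepsilon_2})$ instead of the paper's $P_{\varepsilon_2}(x,t,u_{\varepsilon_1})$ is an equivalent variant. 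On the diagonal point you flag, the paper takes your fallback: it uses the Kato inequality with the source integrated only over the strict set $\{u_{\varepsilon_1}>u_{\varepsilon_2}\}$, as in the proof of Lemma \ref{lem:penalized-comparison-general-A}, and does not separately verify the sign of $F_1-F_2$ on $\{u_{\varepsilon_1}=u_{\varepsilon_2}\}$.
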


\begin{proof}
Apply the Kato inequality from the proof of Lemma \ref{lem:penalized-comparison-general-A} to \eqref{eq:epsilon-equation} with $u_{\varepsilon_1}$ in the first position.
On $\{u_{\varepsilon_1}>u_{\varepsilon_2}\}$, relation \eqref{eq:nu1-contact} gives $\nu_{1,\varepsilon_1}=0$, while $-\nu_{1,\varepsilon_2}\leq0$.
Also,
\[
P_{\varepsilon_1}(x,t,r)\leq P_{\varepsilon_2}(x,t,r)
\]
and $r\mapsto P_{\varepsilon_2}(x,t,r)$ is nonincreasing.
Thus, on this set,
\begin{align*}
&P_{\varepsilon_1}(x,t,u_{\varepsilon_1})-P_{\varepsilon_2}(x,t,u_{\varepsilon_2})\leq P_{\varepsilon_2}(x,t,u_{\varepsilon_1})-P_{\varepsilon_2}(x,t,u_{\varepsilon_2})\leq0.
\end{align*}
The term involving $f$ is bounded by $L_f$ times the positive part of $\beta(u_{\varepsilon_1})-\beta(u_{\varepsilon_2})$.
Gr\"onwall's inequality completes the proof.
\end{proof}

The second limit contains a reaction that converges only as a measure.
We use the relative energy defined in \eqref{eq:def-relative-energy}.
We first prepare the initial value.
This step is needed because the initial datum is not assumed to belong to the energy space.

\begin{lem}[Initial approximation above the obstacle]\label{lem:initial-energy-approximation}
Let Assumptions \ref{assu:existence-beta}--\ref{assu:existence-obstacle} hold.
For every integer $k>M_{\mathrm{obs}}+2$, there exists
\[
\xi_k\in W_0^{1,p}(D)\cap L^\infty(D)
\]
such that
\begin{equation}
|\xi_k|\leq k\quad\text{a.e. in }D,\qquad \xi_k\geq\psi(\cdot,0)\quad\text{a.e. on }K_2\cap D,\label{eq:initial-approximation-properties}
\end{equation}
and
\begin{equation}
E_k:=\int_D\mathcal B_\beta(\xi\mid\xi_k)\,\dd x \longrightarrow0\quad\text{as }k\to\infty. \label{eq:initial-relative-energy-approximation}
\end{equation}
\end{lem}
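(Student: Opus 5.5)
Since $\xi$ only satisfies $B(\xi)\in L^1(D)$, I would build $\xi_k$ in three steps: truncate, cut off near the boundary, and mollify. Then I would restore the obstacle inequality on $K_2$ by taking a maximum with a zero-trace majorant.

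\textbf{Step 1 (truncation).} Set $\xi^{(k)}:=\mathcal T_{k-1}(\xi)$. For $\xi\geq b$ the integrand $\int_b^\xi(r-b)\,\dd\beta(r)$ of $\mathcal B_\beta(\xi\mid b)$ is bounded by $B(\xi)$ whenever $b\geq0$ (and symmetrically for negative values). Hence $\mathcal B_\beta(\xi\mid\xi^{(k)})\leq B(\xi)\one_{\{|\xi|>k-1\}}$, and dominated convergence gives $\int_D\mathcal B_\beta(\xi\mid\xi^{(k)})\,\dd x\to0$.

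\textbf{Step 2 (relative-energy continuity).} On bounded ranges, $\mathcal B_\beta(a\mid b)$ is jointly continuous. The elementary inequality I would aim for is
\[
\mathcal B_\beta(a\mid c)\leq 2\mathcal B_\beta(a\mid b)+C_k\,\omega_{\beta,k}(|b-c|)+C_k|b-c|
\]
for $|b|,|c|\leq k$. It follows from $\mathcal B_\beta(a\mid c)-\mathcal B_\beta(a\mid b)=\mathcal B_\beta(b\mid c)+(b-c)(\beta(a)-\beta(b))$. The cross term $(b-c)(\beta(a)-\beta(b))$ is not obviously small when $a$ is unbounded. I would handle it by splitting $\{|\xi|\leq k'\}$ from its complement, and on the complement use $|\beta(a)|\leq aB(a)+C_a$ from \eqref{eq:beta-controlled-by-energy}. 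For fixed $k$ this reduces the problem to approximating the bounded function $\xi^{(k)}$ in $L^1$ by $W_0^{1,p}\cap L^\infty$ functions with values in $[-k,k]$.

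Choose $g_m:=(\xi^{(k)}*\rho_{1/m})\,\chi_m$, where $\chi_m\in C_c^\infty(D)$ is a cutoff increasing to $\one_D$. Then $g_m\to\xi^{(k)}$ in $L^1$ and a.e., $|g_m|\leq k-1$, and $g_m\in C_c^\infty(D)$. By dominated convergence in the splitting above, $\int_D\mathcal B_\beta(\xi\mid g_m)\,\dd x$ can be made close to $\int_D\mathcal B_\beta(\xi\mid\xi^{(k)})\,\dd x$. A diagonal choice $m=m(k)$ then gives \eqref{eq:initial-relative-energy-approximation} for $g_k:=g_{m(k)}$.

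\textbf{Step 3 (obstacle constraint on $K_2$).} I expect this step to be the main obstacle, since mollification destroys $\xi\geq\psi(\cdot,0)$. I would use continuity of $\psi(\cdot,0)$ together with Lemma \ref{lem:penalty-footprint-majorant}. Define
\[
\xi_k:=\max\{g_k,\ \min\{\psi(\cdot,0)^\sharp,H_*\}\},
\]
where $\psi(\cdot,0)^\sharp$ is a $W^{1,p}$ (e.g.\ Lipschitz) upper approximation of $\psi(\cdot,0)$ within $\sigma_*/2$, obtained by uniform continuity.

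On $K_2\cap D$, the inequalities $H_*\geq\psi+\sigma_*$ and $\psi^\sharp\geq\psi$ give $\xi_k\geq\psi(\cdot,0)$. The function $\min\{\psi^\sharp,H_*\}$ has zero trace, so $\xi_k\in W_0^{1,p}\cap L^\infty$ with $|\xi_k|\leq k$ for $k>M_{\mathrm{obs}}+2$.

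For the energy, the modification only raises $g_k$ toward a level that is at most $\psi(\cdot,0)+\sigma_*/2$, up to a gap. Since $\xi\geq\psi(\cdot,0)$, replacing $g_k$ by $\max\{g_k,w\}$ with $w\leq\xi+o(1)$ should not increase $\mathcal B_\beta(\xi\mid\cdot)$ beyond a vanishing error. The reason is that $b\mapsto\mathcal B_\beta(a\mid b)$ is nonincreasing for $b\leq a$. I would make this precise by letting the approximation tolerance of $\psi^\sharp$ tend to zero with $k$. If that turns out to be delicate, I would instead use $\max\{g_k,\psi^\sharp_k\}\min\{1,H_*/\sigma_*\}$-type cutoffs near $\partial D$, where $\psi<0$.
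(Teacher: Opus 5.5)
Your proposal is correct and has the same architecture as the paper: truncate, approximate by $C_c^\infty(D)$ functions with values in $[-k,k]$ using dominated convergence, then take a pointwise maximum with a barrier that enforces $\xi_k\geq\psi(\cdot,0)$ on $K_2\cap D$. It differs in how the barrier is built and how the energy is controlled after the maximum.

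The paper constructs $\theta_{k,j}=\mathcal T_k(\eta_j(c_j+e_j))$ from uniform approximations of $\psi(\cdot,0)$ and cutoffs $\eta_j\in C_c^\infty(D)$ that are $<1$ only in thin boundary strips, where $\psi(\cdot,0)<0$ along $K_2$. This barrier has zero trace and converges to $\psi(\cdot,0)$ a.e. Hence $\max\{\mathcal T_k(a_{k,j}),\theta_{k,j}\}\to\max\{\overline\xi_k,\psi(\cdot,0)\}=\overline\xi_k$ a.e., and one more dominated convergence finishes.

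You instead reuse $H_*$ from Lemma \ref{lem:penalty-footprint-majorant}, which already encodes the boundary gap, so no separate boundary-strip construction is needed. You then control the maximum through the monotonicity of $b\mapsto\mathcal B_\beta(a\mid b)$ on $(-\infty,a]$. This gives a pointwise, quantitative bound that needs only $w_k\le\xi+\epsilon_k$, not a.e. convergence of the barrier. Concretely, let $w_k:=\min\{\psi_k^\sharp,H_*\}$ with $\psi(\cdot,0)\le\psi_k^\sharp\le\psi(\cdot,0)+\epsilon_k$.
\begin{itemize}
\item Where $g_k<w_k\le\xi$, monotonicity gives $\mathcal B_\beta(\xi\mid w_k)\le\mathcal B_\beta(\xi\mid g_k)$.
\item Where $w_k>\xi$, both values lie in $[-M_{\mathrm{obs}},M_{\mathrm{obs}}+1]$, and $\mathcal B_\beta(\xi\mid w_k)\le\epsilon_k[\beta(\xi+\epsilon_k)-\beta(\xi)]\le\epsilon_k\,\omega_{\beta,M_{\mathrm{obs}}}(\epsilon_k)$.
\end{itemize}
Thus $E_k\le\int_D\mathcal B_\beta(\xi\mid g_k)\,\dd x+|D|\,\epsilon_k\,\omega_{\beta,M_{\mathrm{obs}}}(\epsilon_k)$.

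Your Step 2 three-point inequality is unnecessary. As in the paper, $\mathcal B_\beta(\xi\mid b)\le B(\xi)+k|\beta(\xi)|+C_k$ for $|b|\le k$ already gives the domination, and continuity of $\mathcal B_\beta(\xi\mid\cdot)$ does the rest.

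Two points need fixing.

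First, the tolerance of $\psi^\sharp$ must tend to zero, as you anticipated. With a fixed $\sigma_*/2$ the error does not vanish when $\xi=\psi(\cdot,0)$ on a set of positive measure. The tolerance is unrelated to $\sigma_*$: the constraint on $K_2\cap D$ comes from $\psi_k^\sharp\ge\psi(\cdot,0)$ and $H_*\ge\psi(\cdot,0)+\sigma_*$, so any $\epsilon_k\downarrow0$ works.

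Second, $\min\{\psi^\sharp,H_*\}$ does \emph{not} have zero trace in general. Its trace is $\min\{\psi^\sharp|_{\partial D},0\}$, which is negative, for instance on $K_2\cap\partial D$ where $\psi\le-\sigma_\partial$. The conclusion $\xi_k\in W_0^{1,p}(D)$ still holds: $\xi_k-g_k=(w_k-g_k)^+$, and its trace is $(\operatorname{Tr}w_k)^+=0$ because $w_k\le H_*$ and $\operatorname{Tr}g_k=0$. With that justification the argument goes through.
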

\begin{proof}
Fix an integer $k>M_{\mathrm{obs}}+2$, and set
\[
\overline\xi_k:=\mathcal T_k(\xi).
\]
Since $\xi\geq\psi(\cdot,0)$ and $\psi(\cdot,0)\geq-M_{\mathrm{obs}}>-k$, we have
\[
\overline\xi_k=\min\{\xi,k\}\geq\psi(\cdot,0)\quad\text{a.e. in }D.
\]
Moreover, since $\overline\xi_k=\xi$ on $\{\xi\leq k\}$, we have
\[
0\leq\mathcal B_\beta(\xi\mid\overline\xi_k)\leq B(\xi)\one_{\{\xi>k\}}.
\]
Since $B(\xi)\in L^1(D)$, we therefore have
\begin{equation}
\int_D\mathcal B_\beta(\xi\mid\overline\xi_k)\,\dd x\longrightarrow0,\qquad\text{as }k\to\infty.\label{eq:initial-truncation-relative-energy-unified}
\end{equation}

We next construct an approximation that remains above $\psi(\cdot,0)$ on $K_2\cap D$ and vanishes on $\partial D$.
Choose $c_j\in C^\infty(\mathbb R^d)$ such that as $j\to\infty$
\[
c_j\longrightarrow\psi(\cdot,0)\quad\text{uniformly on }\overline D,
\]
and set
\[
e_j:=\|c_j-\psi(\cdot,0)\|_{L^\infty(D)}+j^{-1}.
\]
Then, we have
\[
c_j+e_j\geq\psi(\cdot,0)\quad\text{on }\overline D.
\]

If $K_2\cap\partial D=\varnothing$, then we choose $\eta_j\in C_c^\infty(D)$ such that $0\leq\eta_j\leq1$, $\eta_j=1$ on a neighborhood of $K_2$, and $\eta_j(x)\to1$ for every $x\in D$.
Suppose that $K_2\cap\partial D\neq\varnothing$. 
Since \eqref{eq:K2-boundary-gap} holds at $t=0$ and $\psi(\cdot,0)$ is uniformly continuous, there is $r_0>0$ such that
\[
\psi(x,0)\leq-\frac{\sigma_\partial}{2}\quad\text{for every }x\in K_2\text{ satisfying }d_{\partial D}(x)<r_0.
\]
Choose $r_j\downarrow0$ with $r_j<r_0$, and choose $\eta_j\in C_c^\infty(D)$ such that
\[
0\leq\eta_j\leq1,\qquad\{\eta_j<1\}\subset\{x\in D:d_{\partial D}(x)<r_j\},\qquad\eta_j(x)\to1
\]
for every $x\in D$.
Since $c_j+e_j\to\psi(\cdot,0)$ uniformly on $\overline{D}$ as $j\to\infty$, we have, for all sufficiently large $j$,
\[
c_j+e_j<0\quad\text{on }K_2\cap\{\eta_j<1\}.
\]
Therefore, on this set $K_2\cap\{\eta_j<1\}$, we have
\[
\eta_j(c_j+e_j)\geq c_j+e_j\geq\psi(\cdot,0).
\]
Where $\eta_j=1$, we also have $\eta_j(c_j+e_j)=c_j+e_j\geq\psi(\cdot,0)$.
Thus, we have
\[
\eta_j(c_j+e_j)\geq\psi(\cdot,0)\quad\text{on }K_2\cap D
\]
for all sufficiently large $j$.

Since $c_j+e_j\to\psi(\cdot,0)$ uniformly and $|\psi(\cdot,0)|\leq M_{\mathrm{obs}}$, we have $|c_j+e_j|\leq M_{\mathrm{obs}}+1<k$ for all sufficiently large $j$.
Define
\[
\theta_{k,j}:=\mathcal T_k(\eta_j(c_j+e_j)).
\]
Then we have $\theta_{k,j}\in W_0^{1,p}(D)\cap L^\infty(D)$, $|\theta_{k,j}|\leq k$, and
\[
\theta_{k,j}\geq\psi(\cdot,0)\quad\text{on }K_2\cap D.
\]
Moreover, since $\eta_j\to1$ pointwise and $c_j+e_j\to\psi(\cdot,0)$ uniformly on $\overline{D}$, we have as $j\to\infty$
\[
\theta_{k,j}\longrightarrow\psi(\cdot,0) \quad\text{a.e. in }D.
\]

Choose $a_{k,j}\in C_c^\infty(D)$ such that, after passing to a subsequence if necessary, it converges to $\overline\xi_k$ strongly in $L^p(D)$ and almost everywhere in $D$ as $j\to\infty$.
Set
\[
 b_{k,j}:=\max\{\mathcal T_k(a_{k,j}),\theta_{k,j}\}.
\]
Then, from the properties of $\theta_{k,j}$, we have $b_{k,j}\in W_0^{1,p}(D)$, $|b_{k,j}|\leq k$, and
\[
b_{k,j}\geq\psi(\cdot,0)\quad\text{on }K_2\cap D.
\]
Furthermore, since $\overline\xi_k\geq\psi(\cdot,0)$ a.e. in $D$, we have as $j\to\infty$
\[
b_{k,j}\longrightarrow\max\{\overline\xi_k,\psi(\cdot,0)\}=\overline\xi_k\quad\text{a.e. in }D.
\]

To prove \eqref{eq:initial-relative-energy-approximation}, for fixed $k$ and all $j$, we have
\[
0\leq\mathcal B_\beta(\xi\mid b_{k,j})\leq B(\xi)+k|\beta(\xi)|+C_k.
\]
Based on \eqref{eq:beta-controlled-by-energy}, the pointwise convergence of $b_{k,j}$, and the dominated convergence theorem, we have
\[
\int_D\mathcal B_\beta(\xi\mid b_{k,j})\,\dd x\longrightarrow\int_D\mathcal B_\beta(\xi\mid\overline\xi_k)\,\dd x.
\]
Choose $j(k)$ so large that the absolute difference between these two integrals is at most $1/k$, and set $\xi_k:=b_{k,j(k)}$.
Then \eqref{eq:initial-approximation-properties} holds.
Since \eqref{eq:initial-truncation-relative-energy-unified} also holds, we obtain \eqref{eq:initial-relative-energy-approximation}.
\end{proof}

For a Banach space $X$, $z\in L^p(0,T;X)$, and $z_0\in X$, define the one-sided time regularization \cite{landes1981existence}
\begin{equation}
R_\lambda[z,z_0](t):=\mathrm e^{-t/\lambda}z_0+\frac1\lambda\int_0^t\mathrm e^{-(t-s)/\lambda}z(s)\,\dd s.\label{eq:one-sided-time-regularization}
\end{equation}
Then $R_\lambda[z,z_0]\in W^{1,p}(0,T;X)$, and it satisfies
\begin{equation}
\lambda\partial_tR_\lambda[z,z_0]+R_\lambda[z,z_0]=z, \qquad R_\lambda[z,z_0](0)=z_0, \label{eq:one-sided-time-regularization-equation}
\end{equation}
and as $\lambda\downarrow0$,
\begin{equation*}
R_\lambda[z,z_0]\longrightarrow z \quad\text{strongly in }L^p(0,T;X). 
\end{equation*}
These facts follow directly from \eqref{eq:one-sided-time-regularization} and the continuity of translations in $L^p(0,T;X)$.
If $X$ is a Banach lattice of functions, then the nonnegative weights in \eqref{eq:one-sided-time-regularization} also preserve pointwise order and pointwise bounds.

We now give the compactness used in the second penalization limit.
By Lemma \ref{lem:epsilon-monotonicity-revised}, for a sequence $\varepsilon_n\downarrow0$, we have
\[
\psi_1\leq u_{\varepsilon_n}\leq u_{\varepsilon_{n+1}}\quad\text{a.e. in }D_T.
\]
Let $u$ be the pointwise limit. 
On the other hand, by \eqref{eq:uniform-energy-general-obstacle}, after passing to a subsequence, there exists $v\in L^p(0,T;W_0^{1,p}(D))$ such that $u_{\varepsilon_n}$ weakly converges to $v$ in $L^p(0,T;W_0^{1,p}(D))$.
Since $u_{\varepsilon_n}\to u$ a.e. in $D_T$, the weak limit is necessarily $v=u$.  
Hence
\[
u\in L^p(0,T;W_0^{1,p}(D)),\qquad\nabla u_{\varepsilon_n}\rightharpoonup\nabla u\quad\text{weakly in }L^p(D_T)^d.
\]
Fatou's lemma applied to the energy term gives $B(u)\in L^1(D_T)$.
Since $\psi_1\leq u_{\varepsilon_n}\leq u$, we have
\[
|u_{\varepsilon_n}|\leq|\psi_1|+|u|,\qquad B(u_{\varepsilon_n})\leq B(\psi_1)+B(u).
\]
The dominated convergence theorem therefore gives
\[
u_{\varepsilon_n}\to u\quad\text{strongly in }L^p(D_T),\qquad B(u_{\varepsilon_n})\to B(u)\quad\text{strongly in }L^1(D_T).
\]
The estimate \eqref{eq:beta-controlled-by-energy} implies uniform integrability of $\{\beta(u_{\varepsilon_n})\}_n$.  
Since
$\beta(u_{\varepsilon_n})\to\beta(u)$ a.e., Vitali's theorem yields
\[
\beta(u_{\varepsilon_n})\to\beta(u)\quad\text{strongly in }L^1(D_T).
\]
The convergence of the source term in $L^{p'}(D_T)$ follows from \eqref{eq:f-energy-growth} and Vitali's theorem.

Regard the nonnegative measures $P_{\varepsilon_n}(\cdot,\cdot,u_{\varepsilon_n})\,\dd x\,\dd t$ as measures on the compact space $K_2\times[0,T]$.
The uniform estimate \eqref{eq:uniform-energy-general-obstacle} gives, after extraction, a measure $\overline\nu_2\in\mathcal M_+(K_2\times[0,T])$.
Together with the remaining uniform estimates in Lemma \ref{lem:uniform-energy-general-obstacle}, we obtain
\begin{equation}\label{eq:second-limit-convergences}
\begin{aligned}
u_{\varepsilon_n}&\to u &&\text{a.e. in $D_T$ and strongly in }L^p(D_T),\\
B(u_{\varepsilon_n})&\to B(u)&&\text{strongly in }L^1(D_T),\\
\beta(u_{\varepsilon_n})&\to\beta(u)&&\text{strongly in }L^1(D_T),\\
\nabla u_{\varepsilon_n}&\rightharpoonup\nabla u&&\text{weakly in }L^p(D_T)^d,\\
A(u_{\varepsilon_n},\nabla u_{\varepsilon_n})&\rightharpoonup w&&\text{weakly in }L^{p'}(D_T)^d,\\
f(\cdot,\cdot,u_{\varepsilon_n})&\to f(\cdot,\cdot,u)&&\text{strongly in }L^{p'}(D_T),\\
\nu_{1,\varepsilon_n}&\overset{*}{\rightharpoonup}\nu_1&&\text{weakly-* in }L^\infty(D_T),\\
P_{\varepsilon_n}(\cdot,\cdot,u_{\varepsilon_n})\,\dd x\,\dd t&\overset{*}{\rightharpoonup}\overline\nu_2&&\text{weakly-* in }\mathcal M(K_2\times[0,T]).
\end{aligned}
\end{equation}
Moreover, Fatou's lemma and \eqref{eq:uniform-energy-general-obstacle} give
\[
\int_D B(u(t))\,\dd x\leq\liminf_{n\to\infty}\int_D B(u_{\varepsilon_n}(t))\,\dd x\leq C
\]
for a.e. $t\in(0,T)$. 
Therefore we have $B(u)\in L^\infty(0,T;L^1(D))$.
Define
\begin{equation}
\nu_2:=\overline\nu_2|_{(K_2\cap D)\times[0,T)},\label{eq:def for nu2}
\end{equation}
The extended limit $\overline\nu_2$ may also contain lateral-boundary or terminal-time mass. 
These parts are not included in the reaction measure $\nu_2$ of the solution: the terminal part is invisible to admissible tests, while the boundary part either does not meet the test or can be discarded in the boundary-touching entropy inequality.
This is verified explicitly in Subsection \ref{subsec:entropy-passage}.
Here and below, $\nu_2$ is extended by zero from $(K_2\cap D)\times[0,T)$ to $D\times[0,T)$.

Note that \eqref{eq:uniform-energy-general-obstacle} indicates
\[
\int_{D_T}\min\{(\beta(\psi)-\beta(u_{\varepsilon_n}))^+,K_P\}\,\dd x\,\dd t\leq C\varepsilon_n.
\]
Since $u_{\varepsilon_n}\to u$ almost everywhere, Fatou's lemma implies
\[
\min\{(\beta(\psi)-\beta(u))^+,K_P\}=0\quad\text{a.e. in }D_T.
\]
The strict monotonicity of $\beta$ therefore gives
\begin{equation}
u\geq\psi\quad\text{a.e. in }D_T.\label{eq:second-limit-obstacle-order}
\end{equation}

Furthermore, with \eqref{eq:nu1-contact}, we have $\nu_{1,\varepsilon_n}(u_{\varepsilon_n}-\psi_1)=0$ a.e. Passing to the limit $n\to\infty$ and using the weak-* convergence of $\nu_{1,\varepsilon_{n}}$ in $L^\infty(D_T)$ and strong convergence of $u_{\varepsilon_n}$, we have
\begin{equation}
\nu_1(u-\psi_1)=0 \quad\text{a.e. in }D_T. \label{eq:second-limit-contact}
\end{equation}

We now identify the weak flux limit. 
\begin{lem}[Flux identification in the second penalization limit]\label{lem:minty-obstacle-reactions}
Let Assumptions \ref{assu:existence-beta}--\ref{assu:existence-obstacle} hold.
For the sequence fixed in \eqref{eq:second-limit-convergences},
\begin{equation}
\nabla u_{\varepsilon_n}\longrightarrow\nabla u \quad\text{strongly in }L^p(D_T)^d,\label{eq:minty-gradient-strong}
\end{equation}
and
\begin{equation}
A(u_{\varepsilon_n},\nabla u_{\varepsilon_n}) \longrightarrow A(u,\nabla u) \quad\text{strongly in }L^{p'}(D_T)^d.\label{eq:minty-flux-strong}
\end{equation}
Consequently,
\begin{equation}
w=A(u,\nabla u) \quad\text{a.e. in }D_T. \label{eq:minty-flux-identification}
\end{equation}
\end{lem}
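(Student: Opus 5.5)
The approach is a relative-energy (Minty-type) argument with the one-sided time regularization of a truncation of $u$, as announced in the introduction. Fix $k>M_{\mathrm{obs}}+2$ and take $\xi_k$ from Lemma \ref{lem:initial-energy-approximation}. For $\lambda>0$ set
\[
z_{k,\lambda}:=R_\lambda[\mathcal T_k(u),\xi_k],
\]
which lies in $L^p(0,T;\mathbb V)\cap W^{1,\infty}(0,T;L^\infty(D))$ with $|z_{k,\lambda}|\leq k$ and $z_{k,\lambda}\to\mathcal T_k(u)$ in $L^p(0,T;\mathbb V)$ as $\lambda\downarrow0$.

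\textbf{Step 1 (relative-energy identity).} Test \eqref{eq:epsilon-equation} for $u_{\varepsilon_n}$ with $u_{\varepsilon_n}-z_{k,\lambda}$ and apply Lemma \ref{lem:relative-energy-chain-rule} with $v=u_{\varepsilon_n}$ and $z=z_{k,\lambda}$. This gives
\begin{align*}
&\int_D\mathcal B_\beta(u_{\varepsilon_n}(\tau)\mid z_{k,\lambda}(\tau))\,\dd x+\int_0^\tau\!\int_D A(u_{\varepsilon_n},\nabla u_{\varepsilon_n})\cdot\nabla(u_{\varepsilon_n}-z_{k,\lambda})\,\dd x\,\dd t\\
&=E_k-\int_0^\tau\!\int_D[\beta(u_{\varepsilon_n})-\beta(z_{k,\lambda})]\partial_tz_{k,\lambda}\,\dd x\,\dd t\\
&\quad+\int_0^\tau\!\int_D\bigl[f(x,t,u_{\varepsilon_n})+\nu_{1,\varepsilon_n}+P_{\varepsilon_n}(x,t,u_{\varepsilon_n})\bigr](u_{\varepsilon_n}-z_{k,\lambda})\,\dd x\,\dd t .
\end{align*}
Since $\partial_t z_{k,\lambda}=\lambda^{-1}(\mathcal T_k(u)-z_{k,\lambda})$, the time term converges as $n\to\infty$ to $-\lambda^{-1}\int[\beta(u)-\beta(z_{k,\lambda})](\mathcal T_k(u)-z_{k,\lambda})$. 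On $\{|u|\leq k\}$ this integrand is $\geq0$ by monotonicity of $\beta$. The remaining part, on $\{u>k\}$, is controlled by $\lambda^{-1}\int_{\{u>k\}}(\ldots)$. I expect to handle it in the standard Landes way: on $\{u>k\}$ one has $\mathcal T_k(u)-z_{k,\lambda}\geq0$ and $\beta(u)\geq\beta(z_{k,\lambda})$, so the sign is again favorable. Hence the whole time term has limsup $\leq 0$. The source terms $f$ and $\nu_{1,\varepsilon_n}$ pass to the limit by strong/weak-* convergence. They produce $\int(f+\nu_1)(u-\mathcal T_k(u))$ after $\lambda\downarrow0$, which tends to $0$ as $k\to\infty$ by integrability.

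\textbf{Step 2 (penalty term; the main obstacle).} The term $\int P_{\varepsilon_n}(u_{\varepsilon_n}-z_{k,\lambda})$ cannot be paired directly, because only weak-* measure convergence is available. I would split it as $u_{\varepsilon_n}-z_{k,\lambda}=(u_{\varepsilon_n}-\psi)+(\psi-z_{k,\lambda})$. The first part is $\leq0$ on $\operatorname{supp}P_{\varepsilon_n}$. For the second part I would show a lower bound
\[
z_{k,\lambda}\geq\psi-c_\lambda\quad\text{on }K_2\times[0,T],\qquad c_\lambda\to0 .
\]
This follows from $\mathcal T_k(u)\geq\psi$ (by \eqref{eq:second-limit-obstacle-order}, since $k>M_{\mathrm{obs}}$), from $\xi_k\geq\psi(\cdot,0)$ on $K_2\cap D$, from positivity of the weights in \eqref{eq:one-sided-time-regularization}, and from the uniform continuity of $\psi$ in time:
\[
R_\lambda[\psi,\psi(0)]\geq\psi-\omega_\psi^t(\sqrt\lambda)-2M_{\mathrm{obs}}\mathrm e^{-1/\sqrt\lambda}.
\]
Combined with Lemma \ref{lem:principal-penalty-localization} and the uniform mass bound \eqref{eq:uniform-energy-general-obstacle}, this gives a penalty contribution $\leq Cc_\lambda$ uniformly in $n$.

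\textbf{Step 3 (conclusion).} Write
\[
A(u_{\varepsilon_n},\nabla u_{\varepsilon_n})\cdot\nabla(u_{\varepsilon_n}-z_{k,\lambda})=[A(u_{\varepsilon_n},\nabla u_{\varepsilon_n})-A(u_{\varepsilon_n},\nabla z_{k,\lambda})]\cdot\nabla(u_{\varepsilon_n}-z_{k,\lambda})+A(u_{\varepsilon_n},\nabla z_{k,\lambda})\cdot\nabla(u_{\varepsilon_n}-z_{k,\lambda}).
\]
The last product converges, by Vitali applied to $A(u_{\varepsilon_n},\nabla z_{k,\lambda})$ as in Proposition \ref{prop:first-penalty-limit}, to $\int A(u,\nabla z_{k,\lambda})\cdot\nabla(u-z_{k,\lambda})$. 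This tends to $0$ as $\lambda\downarrow0$ and then $k\to\infty$. Dropping the nonnegative relative energy and using \eqref{eq:A-uniform-monotonicity}, I obtain
\[
\limsup_{k}\limsup_{\lambda}\limsup_n a_0\|\nabla(u_{\varepsilon_n}-z_{k,\lambda})\|_{L^p}^p\leq \limsup_k E_k=0.
\]
Then \eqref{eq:minty-gradient-strong} follows by the triangle inequality with $\|\nabla(z_{k,\lambda}-u)\|_{L^p}$. Passing to a subsequence, $\nabla u_{\varepsilon_n}\to\nabla u$ a.e. Continuity of $A$, the growth \eqref{eq:A-growth}, uniform integrability from the $L^1$ convergence of $B(u_{\varepsilon_n})$, and Vitali then give \eqref{eq:minty-flux-strong}. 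Uniqueness of weak limits yields \eqref{eq:minty-flux-identification}.
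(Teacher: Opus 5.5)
Your proposal is correct and follows the paper's proof essentially step by step. You use the same comparison function $z_{k,\lambda}=R_\lambda[\mathcal T_k(u),\xi_k]$ tested against the $\varepsilon_n$-equation, the relative-energy chain rule with the favorable sign of $\lambda^{-1}[\beta(u)-\beta(z_{k,\lambda})][\mathcal T_k(u)-z_{k,\lambda}]$, the bound $z_{k,\lambda}\geq\psi-c_\lambda$ on $K_2$ paired with the uniform penalty mass, and uniform $p$-monotonicity followed by Vitali; the only differences are cosmetic (your explicit $\sqrt\lambda$-splitting for $c_\lambda$, and iterated upper limits in place of the paper's diagonal choice of $\lambda_k$).
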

\begin{proof}
Fix $k>M_{\mathrm{obs}}+2$ and let $\xi_k$ be given by Lemma \ref{lem:initial-energy-approximation}.
Since $u\geq\psi\geq-M_{\mathrm{obs}}>-k$,
\begin{equation}
\mathcal T_k(u)\geq\psi\quad\text{a.e. in }D_T.\label{eq:truncation-above-obstacle}
\end{equation}
If $K_2=\varnothing$, then the penalty term involving $P_\varepsilon$ vanishes, and no obstacle error is present.
Hence assume below that $K_2\neq\varnothing$. 
Define
\[
\omega_{\psi,K_2}(\rho):= \sup_{\substack{x\in K_2,\ t,s\in[0,T]\\|t-s|\leq\rho}}|\psi(x,t)-\psi(x,s)|
\]
and, for $\lambda>0$,
\begin{align}
z_{k,\lambda}&:=R_\lambda[\mathcal T_k(u),\xi_k], \label{eq:regularized-truncation-definition}\\
c_\lambda&:=\sup_{0\leq t\leq T}\left[e^{-t/\lambda}\omega_{\psi,K_2}(t)+\frac1\lambda\int_0^t e^{-(t-s)/\lambda}\omega_{\psi,K_2}(t-s)\,\dd s\right].\label{eq:obstacle-time-deficit}
\end{align}
The weights in \eqref{eq:one-sided-time-regularization} are nonnegative and have total mass one.
Hence we have $|z_{k,\lambda}|\leq k$. 
Moreover,
\begin{equation}
z_{k,\lambda}\to\mathcal T_k(u)\quad\text{strongly in }L^p(0,T;\mathbb V),\qquad z_{k,\lambda}(0)=\xi_k,\label{eq:zklambda-convergence}
\end{equation}
and
\begin{equation}
\partial_tz_{k,\lambda}=\frac{\mathcal T_k(u)-z_{k,\lambda}}{\lambda},\qquad|\partial_tz_{k,\lambda}|\leq\frac{2k}{\lambda}\quad \text{on }D_T.\label{eq:zklambda-time-derivative}
\end{equation}
Using $\xi_k\geq\psi(\cdot,0)$ on $K_2\cap D$ and \eqref{eq:truncation-above-obstacle}, we obtain
\begin{equation}
z_{k,\lambda}\geq\psi-c_\lambda\quad\text{a.e. on }(K_2\cap D)\times(0,T).\label{eq:regularization-above-obstacle-up-to-error}
\end{equation}
The uniform continuity of $\psi$ and the property of $R_\lambda$ give
\begin{equation}
c_\lambda\longrightarrow0\quad\text{as }\lambda\downarrow0.\label{eq:regularization-error-zero}
\end{equation}

Testing \eqref{eq:epsilon-equation} with $u_{\varepsilon_n}-z_{k,\lambda}$, we have
\begin{align}
\int_0^T\langle\partial_t\beta(u_{\varepsilon_n}),u_{\varepsilon_n}-z_{k,\lambda}\rangle_{\mathbb V^*,\mathbb V}\dd t&=-\int_0^T\!\int_DA(u_{\varepsilon_n},\nabla u_{\varepsilon_n})\cdot\nabla(u_{\varepsilon_n}-z_{k,\lambda})\,\dd x\,\dd t\nonumber\\
&\quad+\int_0^T\!\int_Df(x,t,u_{\varepsilon_n})(u_{\varepsilon_n}-z_{k,\lambda})\,\dd x\,\dd t\nonumber\\
&\quad+\int_0^T\!\int_D(u_{\varepsilon_n}-z_{k,\lambda})\nu_{1,\varepsilon_n}\,\dd x\,\dd t\nonumber\\
&\quad+\int_0^T\!\int_DP_{\varepsilon_n}(x,t,u_{\varepsilon_n})(u_{\varepsilon_n}-z_{k,\lambda})\,\dd x\,\dd t.\label{eq:testu-z}
\end{align}
For fixed $n,k,\lambda$, set
\[
I_{n,k,\lambda}:=\int_0^T\left\langle\partial_t\beta(u_{\varepsilon_n}),u_{\varepsilon_n}-z_{k,\lambda}\right\rangle_{\mathbb V^*,\mathbb V}\,\dd t.
\]
Applying Lemma \ref{lem:relative-energy-chain-rule} with $v=u_{\varepsilon_n}$ and $z=z_{k,\lambda}$, and using the nonnegativity at $t=T$ of the absolutely continuous representative of the relative energy, we have
\[
I_{n,k,\lambda}\geq-E_k+\int_{D_T}[\beta(u_{\varepsilon_n})-\beta(z_{k,\lambda})]\partial_tz_{k,\lambda}\,\dd x\,\dd t.
\]

Since $\beta(u_{\varepsilon_n})\to\beta(u)$ strongly in $L^1(D_T)$, with $\partial_tz_{k,\lambda}\in L^\infty(D_T)$, we may pass to the limit $n\to\infty$ to obtain
\begin{equation}\label{eq:time-term-after-n-limit}
\liminf_{n\to\infty}I_{n,k,\lambda} \geq-E_k+\int_{D_T}[\beta(u)-\beta(z_{k,\lambda})]\partial_tz_{k,\lambda}\,\dd x\,\dd t.
\end{equation}
By \eqref{eq:zklambda-time-derivative}, we have
\begin{equation}
\int_{D_T}[\beta(u)-\beta(z_{k,\lambda})]\partial_tz_{k,\lambda}\,\dd x\,\dd t=\frac1\lambda\int_{D_T}[\beta(u)-\beta(z_{k,\lambda})] [\mathcal T_k(u)-z_{k,\lambda}]\,\dd x\,\dd t\geq0. \label{eq:positive-regularized-time-term}
\end{equation}
Indeed, \eqref{eq:positive-regularized-time-term} follows from monotonicity of $\beta$ when $u\leq k$; when $u>k$, both factors are nonnegative, and $u<-k$ is impossible because $u\geq-M_{\mathrm{obs}}>-k$.

On the set where $P_{\varepsilon_n}(\cdot,\cdot,u_{\varepsilon_n})>0$, we have $u_{\varepsilon_n}<\psi$, and this set is contained in $(K_2\cap D)\times[0,T]$ up to a null set.
Hence \eqref{eq:regularization-above-obstacle-up-to-error} and \eqref{eq:uniform-energy-general-obstacle} give
\begin{align}
&\int_{D_T}P_{\varepsilon_n}(\cdot,\cdot,u_{\varepsilon_n}) (u_{\varepsilon_n}-z_{k,\lambda})\,\dd x\,\dd t \leq c_\lambda\int_{D_T} P_{\varepsilon_n}(\cdot,\cdot,u_{\varepsilon_n})\,\dd x\,\dd t \leq Cc_\lambda. \label{eq:penalty-comparison-error}
\end{align}
Subtracting $\int A(u_{\varepsilon_n},\nabla z_{k,\lambda})\cdot\nabla(u_{\varepsilon_n}-z_{k,\lambda})$ from \eqref{eq:testu-z} and using \eqref{eq:A-uniform-monotonicity}, we obtain
\begin{align}
a_0\int_{D_T}|\nabla u_{\varepsilon_n}-\nabla z_{k,\lambda}|^p\,\dd x\,\dd t&\leq\int_{D_T}(f(\cdot,\cdot,u_{\varepsilon_n})+\nu_{1,\varepsilon_n})(u_{\varepsilon_n}-z_{k,\lambda})\,\dd x\,\dd t-I_{n,k,\lambda}\nonumber\\
&\quad-\int_{D_T}A(u_{\varepsilon_n},\nabla z_{k,\lambda})\cdot(\nabla u_{\varepsilon_n}-\nabla z_{k,\lambda})\,\dd x\,\dd t+Cc_\lambda. \label{eq:coercive-comparison-estimate}
\end{align}
For fixed $k$ and $\lambda$, using continuity and growth of $A$, and the strong $L^1$ convergence of $B(u_{\varepsilon_n})$, we have
\[
A(u_{\varepsilon_n},\nabla z_{k,\lambda})\to A(u,\nabla z_{k,\lambda})\quad\text{strongly in }L^{p'}(D_T)^d.
\]
Thus \eqref{eq:second-limit-convergences}, \eqref{eq:time-term-after-n-limit}, and \eqref{eq:positive-regularized-time-term} allow us to pass to the limit $n\to\infty$ in \eqref{eq:coercive-comparison-estimate}.

Moreover, if $k>M_{\mathrm{obs}}$, then $\mathcal T_k(u)=u$ on $\{u=\psi_1\}$.
Then using \eqref{eq:second-limit-contact}, we have
\[
\int_{D_T}\nu_1(u-\mathcal T_k(u))\,\dd x\,\dd t=0.
\]
Then let $\lambda\downarrow0$ in the preceding estimate and use \eqref{eq:zklambda-convergence} and \eqref{eq:regularization-error-zero}.
We obtain 
\begin{align*} a_0\liminf_{\lambda\downarrow0}\limsup_{n\to\infty} \int_{D_T}|\nabla u_{\varepsilon_n}-\nabla z_{k,\lambda}|^p\,\dd x\,\dd t&\leq E_k+\int_{D_T}(f(\cdot,\cdot,u)+\nu_1)(u-\mathcal T_k(u))\,\dd x\,\dd t\\
&\quad-\int_{D_T}A(u,\nabla\mathcal T_k(u)) \cdot\nabla(u-\mathcal T_k(u))\,\dd x\,\dd t\\
&=:\mathfrak R_k.
\end{align*}
We now verify that the nonnegative $\mathfrak R_k$ converges to 0 as $k\to\infty$.
First, by Lemma \ref{lem:initial-energy-approximation}, we have $E_k\to0$.
Moreover, since $(f(\cdot,\cdot,u)+\nu_1)\in L^{p'}(D_T)$ and $\mathcal T_k(u)\to u$ strongly in $L^p(D_T)$, we have
\[
\int_{D_T}(f(x,t,u)+\nu_1)(u-\mathcal T_k(u))\,\dd x\,\dd t \longrightarrow0.
\]
Finally, since $\mathcal T_k(u)\to u$ strongly in $L^p(0,T;W_0^{1,p}(D))$ and the growth condition for $A$ holds, we have
\[
\int_{D_T}A(u,\nabla\mathcal T_k(u))\cdot\nabla(u-\mathcal T_k(u))\,\dd x\,\dd t\longrightarrow0.
\]
Consequently, we have $\mathfrak R_k\to0$.

For each $k$, choose $\lambda_k>0$ so small that
\begin{align*}
\limsup_{n\to\infty}\|\nabla u_{\varepsilon_n}-\nabla z_{k,\lambda_k}\|_{L^p(D_T)}^p\leq\frac{2}{a_0}\mathfrak R_k+\frac1k
\end{align*}
and
\[
\|\nabla z_{k,\lambda_k}-\nabla\mathcal T_k(u)\|_{L^p(D_T)}\leq\frac1k.
\]
Since
\begin{align*}
\|\nabla u_{\varepsilon_n}-\nabla u\|_{L^p(D_T)}&\leq\|\nabla u_{\varepsilon_n}-\nabla z_{k,\lambda_k}\|_{L^p(D_T)}+\|\nabla z_{k,\lambda_k}-\nabla\mathcal T_k(u)\|_{L^p(D_T)}\\
&\quad+\|\nabla\mathcal T_k(u)-\nabla u\|_{L^p(D_T)},
\end{align*}
we may first take the upper limit as $n\to\infty$ and then let $k\to\infty$.
Therefore, we obtain
\[
\nabla u_{\varepsilon_n}\longrightarrow\nabla u \quad\text{strongly in }L^p(D_T)^d.
\]
This proves \eqref{eq:minty-gradient-strong}.

Finally, note that
\[
|A(u_{\varepsilon_n},\nabla u_{\varepsilon_n})|^{p'} \leq C\bigl(1+B(u_{\varepsilon_n})+|\nabla u_{\varepsilon_n}|^p\bigr).
\]
The family on the right-hand side is uniformly integrable based on the strong convergence of both $B(u_{\varepsilon_n})$ and $|\nabla u_{\varepsilon_n}|^p$ in $L^1(D_T)$. 
Continuity of $A$ and Vitali's theorem yield
\[
A(u_{\varepsilon_n},\nabla u_{\varepsilon_n})\to A(u,\nabla u)\quad\text{strongly in }L^{p'}(D_T)^d.
\]
Thus $w=A(u,\nabla u)$, and the proof is complete.
\end{proof}
\begin{rem}[Time regularization in the flux identification]\label{rem:time regularization}
This regularization is essential in the second penalization limit.
After passage to the limit, the right-hand side of the equation for $\partial_t\beta(u_{\varepsilon_n})$ contains the weak-* limit of the penalty measures.  
Thus its limit need not belong to the Alt--Luckhaus space for the time-derivative $L^{p'}(0,T;\mathbb{V}^*)$, and a direct comparison with $u$ does not identify the time term.

The truncation $\mathcal T_k(u)$ already has the required spatial regularity and zero boundary trace.  
The time regularization $z_{k,\lambda}$ supplies the missing time derivative, so that the relative-energy identity can be applied before taking the measure limit.
Note that \eqref{eq:regularization-above-obstacle-up-to-error} gives $z_{k,\lambda}\geq\psi-c_\lambda$ on $(K_2\cap D)\times(0,T)$.  
Hence \eqref{eq:penalty-comparison-error} bounds the resulting term by
\[
c_\lambda\|P_{\varepsilon_n}(\cdot,\cdot,u_{\varepsilon_n})\|_{L^1(D_T)}\leq Cc_\lambda.
\]
The uniform mass estimate makes this error uniform in $n$.  
We can therefore take the limits in the order $n\to\infty$, then $\lambda\downarrow0$, and finally $k\to\infty$.
\end{rem}

\begin{prop}[Limit as $\varepsilon\downarrow0$]\label{prop:second-penalty-limit}
Let Assumptions \ref{assu:existence-beta}--\ref{assu:existence-obstacle} hold. 
Let $\varepsilon_n$, $u$, $\nu_1$, and $\nu_2$ be fixed by \eqref{eq:second-limit-convergences} and \eqref{eq:def for nu2}.
Then $u\geq\psi$ a.e. in $D_T$ and
\begin{equation}
\partial_t\beta(u) = \operatorname{div}A(u,\nabla u)+f(x,t,u)+\nu_1+\nu_2 \quad\text{in }\mathcal D'(D_T). \label{eq:limit-distribution-equation}
\end{equation}
In addition,
\begin{equation}
0\leq\nu_1\leq r_{\psi_1}^+,\qquad \nu_1\bigl(\beta(u)-\beta(\psi_1)\bigr)=0,\qquad \nu_1=0\quad\text{a.e. on }\{\psi_1<\psi\}. \label{eq:nu1-final-properties}
\end{equation}
\end{prop}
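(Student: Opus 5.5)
The plan is to pass to the limit $n\to\infty$ in the weak formulation \eqref{eq:epsilon-equation} for $u_{\varepsilon_n}$, tested against $\varphi\in C_c^\infty(D_T)$, using the convergences in \eqref{eq:second-limit-convergences} together with Lemma \ref{lem:minty-obstacle-reactions}. The inequality $u\geq\psi$ is already \eqref{eq:second-limit-obstacle-order}.

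For a fixed test function $\varphi$ the weak formulation reads
\[
-\int_{D_T}\beta(u_{\varepsilon_n})\partial_t\varphi\,\dd x\,\dd t+\int_{D_T}A(u_{\varepsilon_n},\nabla u_{\varepsilon_n})\cdot\nabla\varphi\,\dd x\,\dd t=\int_{D_T}\bigl(f(x,t,u_{\varepsilon_n})+\nu_{1,\varepsilon_n}+P_{\varepsilon_n}(x,t,u_{\varepsilon_n})\bigr)\varphi\,\dd x\,\dd t.
\]
The time term converges by the strong $L^1$ convergence of $\beta(u_{\varepsilon_n})$. The flux term converges by \eqref{eq:minty-flux-strong}, and the source term by strong $L^{p'}$ convergence. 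The term with $\nu_{1,\varepsilon_n}$ converges by weak-* convergence in $L^\infty$.

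For the penalty term I restrict $\varphi$, which is continuous on the compact set $K_2\times[0,T]$, and use Lemma \ref{lem:principal-penalty-localization}. That lemma places the density inside $K_2\times[0,T]$, so the integral equals $\int\varphi\,\dd\overline\nu_2$ in the limit. Since $\operatorname{supp}\varphi\Subset D\times(0,T)$, the restriction of $\overline\nu_2$ to $\partial D\times[0,T]$ and to $\{T\}$ is not seen. This gives $\int\varphi\,\dd\nu_2$, and hence \eqref{eq:limit-distribution-equation}.

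For \eqref{eq:nu1-final-properties}, the bound $0\leq\nu_1\leq r_{\psi_1}^+$ passes to weak-* limits from \eqref{eq:nu1-epsilon-bound}. The pointwise bounds define a weak-* closed convex set, which is the same thing as testing against nonnegative $L^1$ functions. The contact relation \eqref{eq:second-limit-contact} gives $\nu_1=0$ a.e. on $\{u>\psi_1\}$. Strict monotonicity of $\beta$ then turns this into $\nu_1(\beta(u)-\beta(\psi_1))=0$.

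For the last property, note that on $\{\psi_1<\psi\}$ we have $u\geq\psi>\psi_1$ a.e., so $\nu_1=0$ there as well.

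The main care point is the contact identity \eqref{eq:second-limit-contact}, which was obtained by passing to the limit in a product of a weak-* and a strongly convergent sequence. I will check it on bounded sets: $u_{\varepsilon_n}-\psi_1\to u-\psi_1$ strongly in $L^1$, and $\nu_{1,\varepsilon_n}\geq0$. Then
\[
\int_{D_T}\nu_1(u-\psi_1)\zeta\,\dd x\,\dd t=\lim_{n\to\infty}\int_{D_T}\nu_{1,\varepsilon_n}(u_{\varepsilon_n}-\psi_1)\zeta\,\dd x\,\dd t=0
\]
for bounded $\zeta\geq0$, and the product $\nu_1(u-\psi_1)$ is nonnegative. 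Everything else is routine.
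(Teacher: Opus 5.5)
Your proposal is correct and follows the paper's proof. You pass to the limit in \eqref{eq:epsilon-equation} using \eqref{eq:second-limit-convergences} and Lemma~\ref{lem:minty-obstacle-reactions}, handle the penalty term through the localization of the measures on $K_2\times[0,T]$, and obtain the properties of $\nu_1$ from weak-* closedness and the contact identity \eqref{eq:second-limit-contact}.

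The only deviation is in showing $\nu_1=0$ on $\{\psi_1<\psi\}$. You combine $u\geq\psi$ with the contact relation, whereas the paper uses $0\leq\nu_{1,\varepsilon_n}\leq\alpha_{\varepsilon_n}$ together with $\alpha_{\varepsilon_n}\to0$ a.e. on that set. Both arguments are valid, and yours is the one the paper itself invokes later in the existence proof.
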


\begin{proof}
The obstacle inequality is \eqref{eq:second-limit-obstacle-order}.
The bound $0\leq\nu_1\leq r_{\psi_1}^+$ follows from \eqref{eq:nu1-epsilon-bound} and weak-* convergence.
Furthermore, we have
\[
\alpha_\varepsilon=\bigl(r_{\psi_1}-P_\varepsilon(\cdot,\cdot,\psi_1)\bigr)^+\longrightarrow0\quad\text{a.e. on }\{\psi_1<\psi\}.
\]
Since $0\leq\alpha_\varepsilon\leq r_{\psi_1}^{+}\in L^\infty(D_T)$, the dominated convergence theorem gives
\[
\alpha_{\varepsilon_n}\one_{\{\psi_1<\psi\}}\longrightarrow0\quad\text{strongly in }L^1(D_T).
\]
Using $0\leq\nu_{1,\varepsilon_n}\leq\alpha_{\varepsilon_n}$ and testing with $\one_{\{\psi_1<\psi\}}$, we obtain
\[
0\leq\int_{\{\psi_1<\psi\}}\nu_1\,\dd x\,\dd t=\lim_{n\to\infty}\int_{\{\psi_1<\psi\}}\nu_{1,\varepsilon_n}\,\dd x\,\dd t\leq\lim_{n\to\infty}\int_{\{\psi_1<\psi\}}\alpha_{\varepsilon_n}\,\dd x\,\dd t=0.
\]
Thus
\[
\nu_1=0\quad\text{a.e. on }\{\psi_1<\psi\}.
\]
The contact identity follows from \eqref{eq:second-limit-contact}, or equivalently from its formulation in terms of $\beta$.

Lemma \ref{lem:minty-obstacle-reactions} gives $w=A(u,\nabla u)$.
Taking $\varepsilon=\varepsilon_n$ in \eqref{eq:epsilon-equation}, passing to the limit $n\to\infty$ and using \eqref{eq:second-limit-convergences}, we have \eqref{eq:limit-distribution-equation}.
\end{proof}

\subsection{Passage to the entropy inequality and proof of existence}\label{subsec:entropy-passage}
\begin{lem}[Convergence of the truncation defect measures]\label{lem:defect-measure-convergence}
Let Assumptions \ref{assu:existence-beta}--\ref{assu:existence-obstacle} hold.
For every $K>0$, $\varepsilon\in(0,1)$, and $l>M_{\mathrm{obs}}+1$, define
\begin{align}
\dd\mu_{l,K}^{u_\varepsilon}&:=K\|h_l'\|_{L^\infty(\mathbb R)}\bigl(A(u_\varepsilon,\nabla u_\varepsilon)-A(u_\varepsilon,0)\bigr)\cdot\nabla u_\varepsilon\one_{\{l<|u_\varepsilon|<l+1\}}\,\dd x\,\dd t,\nonumber\\
\dd\mu_{l,K}^{u}&:=K\|h_l'\|_{L^\infty(\mathbb R)}\bigl(A(u,\nabla u)-A(u,0)\bigr)\cdot\nabla u\one_{\{l<|u|<l+1\}}\,\dd x\,\dd t.\label{eq:defect-measures-in-lemma}
\end{align}
Then these are nonnegative finite Radon measures and
\[
\|\mu_{l,K}^{u_{\varepsilon_n}}-\mu_{l,K}^u\|_{\mathcal M(D_T)}\longrightarrow0.
\]
\end{lem}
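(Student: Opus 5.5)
The plan is to show that the densities converge in $L^1(D_T)$. The total variation norm of a difference of absolutely continuous measures is the $L^1$ norm of the difference of densities, so this suffices. Write
\[
g_n:=\bigl(A(u_{\varepsilon_n},\nabla u_{\varepsilon_n})-A(u_{\varepsilon_n},0)\bigr)\cdot\nabla u_{\varepsilon_n}\one_{\{l<|u_{\varepsilon_n}|<l+1\}},
\]
and let $g$ denote the analogous density built from $u$.

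\textbf{Nonnegativity and finiteness.} By \eqref{eq:A-uniform-monotonicity} with $\widehat\zeta=0$, we have
\[
\bigl(A(r,\zeta)-A(r,0)\bigr)\cdot\zeta\geq a_0|\zeta|^p\geq0,
\]
so both densities are nonnegative. Finiteness follows from the bound
\[
|g_n|\leq C\bigl(1+B(u_{\varepsilon_n})+|\nabla u_{\varepsilon_n}|^p\bigr),
\]
which comes from \eqref{eq:A-growth} and Young's inequality. The right-hand side is integrable by Lemma \ref{lem:uniform-energy-general-obstacle}; the same bound holds for $g$.

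\textbf{Convergence.} I would use the strong convergences already established: by \eqref{eq:second-limit-convergences} and Lemma \ref{lem:minty-obstacle-reactions}, $u_{\varepsilon_n}\to u$ a.e., $B(u_{\varepsilon_n})\to B(u)$ in $L^1$, and $\nabla u_{\varepsilon_n}\to\nabla u$ in $L^p$. Passing to a subsequence, $\nabla u_{\varepsilon_n}\to\nabla u$ a.e. The family $1+B(u_{\varepsilon_n})+|\nabla u_{\varepsilon_n}|^p$ converges in $L^1$, hence is uniformly integrable, and so $\{g_n\}$ is uniformly integrable. By continuity of $A$, the product $(A(u_{\varepsilon_n},\nabla u_{\varepsilon_n})-A(u_{\varepsilon_n},0))\cdot\nabla u_{\varepsilon_n}$ converges a.e. to the corresponding expression in $u$.

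\textbf{The indicator.} This is the main obstacle. The indicator $\one_{\{l<|u_{\varepsilon_n}|<l+1\}}$ converges to $\one_{\{l<|u|<l+1\}}$ a.e. off the level sets $\{|u|=l\}\cup\{|u|=l+1\}$. On those level sets we have $\nabla u=0$ a.e., by the Sobolev property of $\mathcal T_k$. There the limiting product vanishes whatever the indicator does: $g_n$ is bounded by $C(|A(u_{\varepsilon_n},\nabla u_{\varepsilon_n})|+|A(u_{\varepsilon_n},0)|)|\nabla u_{\varepsilon_n}|\to0$ a.e. Hence $g_n\to g$ a.e. on $D_T$, and Vitali's theorem gives $g_n\to g$ in $L^1(D_T)$.

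\textbf{Full sequence.} Every subsequence of the fixed sequence has a further subsequence along which $g_n\to g$ in $L^1$. Therefore the convergence holds along the whole sequence, and multiplying by the constant $K\|h_l'\|_{L^\infty}$ gives the claimed convergence in $\mathcal M(D_T)$.
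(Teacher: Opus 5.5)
Your proof is correct and uses the same ingredients as the paper: the strong convergences of $\nabla u_{\varepsilon_n}$ in $L^p$ and of $B(u_{\varepsilon_n})$ in $L^1$, the continuity and growth of $A$, and the vanishing of $\nabla u$ a.e.\ on the level sets $\{|u|=l\}\cup\{|u|=l+1\}$ to handle the indicators. The only difference is in how the argument is organized. The paper first proves $\bigl(A(u_{\varepsilon_n},\nabla u_{\varepsilon_n})-A(u_{\varepsilon_n},0)\bigr)\cdot\nabla u_{\varepsilon_n}\to\bigl(A(u,\nabla u)-A(u,0)\bigr)\cdot\nabla u$ in $L^1(D_T)$, pairing strong $L^{p'}$ convergence of the flux difference with strong $L^p$ convergence of the gradients, and then handles the indicator mismatch by dominated convergence against the fixed limit density; you instead apply Vitali's theorem once to the full truncated density.
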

\begin{proof}
Set
\[
\mathfrak A_n:=A(u_{\varepsilon_n},\nabla u_{\varepsilon_n})-A(u_{\varepsilon_n},0),\qquad\mathfrak A:=A(u,\nabla u)-A(u,0).
\]
The continuity and growth of $A$, together with Lemma \ref{lem:minty-obstacle-reactions} and the strong $L^1(D_T)$ convergence of $B(u_{\varepsilon_n})$, give
\[
\mathfrak A_n\to\mathfrak A\quad\text{strongly in }L^{p'}(D_T)^d.
\]
Consequently,
\[
\mathfrak A_n\cdot\nabla u_{\varepsilon_n}\longrightarrow\mathfrak A\cdot\nabla u\quad\text{strongly in }L^1(D_T).
\]
Outside $\{|u|=l\}\cup\{|u|=l+1\}$, the indicators of the two truncations converge almost everywhere.
Stampacchia's lemma (see, for example, \cite[Chapter 5, Exercises 17,18]{evans1998partial}) gives $\mathfrak A\cdot\nabla u=0$ almost everywhere on these level sets.
Therefore, we have
\begin{align*}
&\|(\mathfrak A_n\cdot\nabla u_{\varepsilon_n})\one_{\{l<|u_{\varepsilon_n}|<l+1\}}-(\mathfrak A\cdot\nabla u)\one_{\{l<|u|<l+1\}}\|_{L^1(D_T)}\\
&\leq\|\mathfrak A_n\cdot\nabla u_{\varepsilon_n}-\mathfrak A\cdot\nabla u\|_{L^1(D_T)}\\
&\quad+\|(\mathfrak A\cdot\nabla u)[\one_{\{l<|u_{\varepsilon_n}|<l+1\}}-\one_{\{l<|u|<l+1\}}]\|_{L^1(D_T)}\longrightarrow0,
\end{align*}
which completes the proof.
\end{proof}

Since these measures are absolutely continuous in $D_T$, their extensions by zero to $\overline D\times[0,T]$ do not create any lateral-boundary or terminal-time mass.

\begin{proof}[Proof of Theorem \ref{thm:existence-general-time-obstacle}]
Fix $K>0$, $l>M_{\mathrm{obs}}+1$, an entropy $\eta\in\mathcal E$ with $\|\eta\|_{L^\infty(\mathbb R)}\leq K$, and a test function $\phi$ admissible for $\eta$.
Apply the time-chain rule of \cite[Lemma 1.4]{carrillo1999uniqueness} to the Alt--Luckhaus weak solution $u_\varepsilon$ of \eqref{eq:epsilon-equation}. 
We obtain
\begin{align}
-\int_{D_T} \partial_t\phi \int_\xi^{u_\varepsilon} \eta(r)h_l(r)\,\dd\beta(r)\,\dd x\,\dd t &= -\int_{D_T} A(u_\varepsilon,\nabla u_\varepsilon) \cdot \nabla \bigl(\eta(u_\varepsilon)h_l(u_\varepsilon)\phi \bigr)\,\dd x\,\dd t \nonumber\\
&\quad+ \int_{D_T} f(x,t,u_\varepsilon) \eta(u_\varepsilon)h_l(u_\varepsilon)\phi\,\dd x\,\dd t\nonumber\\
&\quad+ \int_{D_T}\bigl(      \nu_{1,\varepsilon}+P_\varepsilon(x,t,u_\varepsilon) \bigr)\eta(u_\varepsilon)h_l(u_\varepsilon)\phi\,\dd x\,\dd t. \label{eq:epsilon-renormalized-identity}
\end{align}

For the diffusion term, write
\begin{align*}
-A(u_\varepsilon,\nabla u_\varepsilon) \cdot \nabla \bigl(  \eta(u_\varepsilon)h_l(u_\varepsilon)\phi \bigr) &= -h_l(u_\varepsilon) A(u_\varepsilon,\nabla u_\varepsilon) \cdot \nabla \bigl(  \eta(u_\varepsilon)\phi \bigr)\\
&\quad-\eta(u_\varepsilon)h_l'(u_\varepsilon)\phi \bigl(  A(u_\varepsilon,\nabla u_\varepsilon)  -  A(u_\varepsilon,0) \bigr) \cdot\nabla u_\varepsilon\\
&\quad-\eta(u_\varepsilon)h_l'(u_\varepsilon)\phi A(u_\varepsilon,0)\cdot\nabla u_\varepsilon.
\end{align*}
The second term on the right-hand side is bounded above by $\phi\,\mu_{l,K}^{u_\varepsilon}$.
For the last term, the spatial chain rule gives
\[
-\int_{D_T} \eta(u_\varepsilon)h_l'(u_\varepsilon)\phi A(u_\varepsilon,0)\cdot\nabla u_\varepsilon\,\dd x\,\dd t = \int_{D_T} \nabla\phi\cdot \int_0^{u_\varepsilon} A(r,0)\eta(r)h_l'(r)\,\dd r\,\dd x\,\dd t.
\]
For the first term and fixed $l$, the factors involving $\eta$ and $\eta'$ are evaluated only on the bounded state range limited by $h_l$ and $h_l'$. 
The strong convergence of $u_{\varepsilon_n}$ in $L^p(0,T;\mathbb V)$ and the strong convergence of the flux in $L^{p'}(D_T)^d$ therefore give the convergence of the first diffusion term.

On the support of $P_\varepsilon(x,t,u_\varepsilon)$, we have $\psi_1\leq u_\varepsilon<\psi$.
Since $l>M_{\mathrm{obs}}+1$, one has $h_l(u_\varepsilon)=1$ on this set, and the monotonicity of $\eta$ gives
\[
P_\varepsilon(x,t,u_\varepsilon) \eta(u_\varepsilon)h_l(u_\varepsilon) \leq P_\varepsilon(x,t,u_\varepsilon)\eta(\psi).
\]
Furthermore, note that $\nu_{1,\varepsilon}=0$ almost everywhere on $\{u_\varepsilon>\psi_1\}$, and $u_\varepsilon\geq\psi_1$. 
Hence
\[
\nu_{1,\varepsilon} \eta(u_\varepsilon)h_l(u_\varepsilon) = \nu_{1,\varepsilon}\eta(\psi_1).
\]

We now let $\varepsilon=\varepsilon_n\downarrow0$.
Lemma \ref{lem:minty-obstacle-reactions} gives
\[
u_{\varepsilon_n}\to u \quad\text{strongly in }L^p(0,T;\mathbb V),
\]
and
\[
A(u_{\varepsilon_n},\nabla u_{\varepsilon_n}) \to A(u,\nabla u) \quad\text{strongly in }L^{p'}(D_T)^d.
\]
Consequently, all diffusion terms in \eqref{eq:epsilon-renormalized-identity} converge to their corresponding limits.
The time term converges by the strong $L^1(D_T)$ convergence of $\beta(u_{\varepsilon_n})$, and the source term converges by the strong $L^{p'}(D_T)$ convergence of $f(\cdot,\cdot,u_{\varepsilon_n})$. 

It remains to pass to the $P_\varepsilon$-penalty term in the entropy inequality.
Since $\psi$, $\eta$, and $\phi$ are continuous, we have $\eta(\psi)\phi\in C(K_2\times[0,T])$.
Since $\phi$ vanishes in a neighborhood of $t=T$, the terminal part of $\overline\nu_2$ does not contribute.
Therefore, we have
\begin{align*}
&\int_{D_T}P_{\varepsilon_n}(x,t,u_{\varepsilon_n})\eta(\psi)\phi\,\dd x\,\dd t\longrightarrow\int_{(K_2\cap D)\times[0,T)}\eta(\psi)\phi\,\dd\overline\nu_2+\int_{(K_2\cap\partial D)\times[0,T)}\eta(\psi)\phi\,\dd\overline\nu_2.
\end{align*}

If $\phi$ is compactly supported in $D\times[0,T)$, then the second integral is zero.
Suppose that $\phi$ reaches the lateral boundary.
Since $\phi$ is admissible, we have $\eta(0)=0$.
Moreover, since \eqref{eq:K2-boundary-gap} holds, we have $\psi<0$ on the relevant lateral boundary.
Since $\eta$ is nondecreasing, we therefore have
\[
\eta(\psi)\leq\eta(0)=0.
\]
Since $\phi\geq0$ and $\overline\nu_2\geq0$, we obtain
\[
\int_{(K_2\cap\partial D)\times[0,T)}\eta(\psi)\phi\,\dd\overline\nu_2\leq0.
\]
Consequently, in both admissible cases, we have
\begin{align*}
\limsup_{n\to\infty}\int_{D_T}P_{\varepsilon_n}(x,t,u_{\varepsilon_n})\eta(\psi)\phi\,\dd x\,\dd t\leq\int_{D\times[0,T)}\eta(\psi)\phi\,\dd\nu_2.
\end{align*}
This is exactly the upper bound required in the passage to the renormalized entropy inequality.

Furthermore, since $\phi$ vanishes in a neighborhood of $t=T$, the reaction convergence also gives
\[
\int_{D_T}\nu_{1,\varepsilon_n}\eta(\psi_1)\phi\,\dd x\,\dd t\longrightarrow\int_{D_T}\nu_1\eta(\psi_1)\phi\,\dd x\,\dd t.
\]
By the contact property in \eqref{eq:second-limit-contact},  we have $\nu_1=0$ a.e. on $\{u>\psi_1\}$.
Since $u\geq\psi$ and therefore $\nu_1=0$ on $\{\psi_1<\psi\}$, we have
\[
\nu_1\eta(\psi_1)=\nu_1\eta(\psi)\quad\text{a.e. in }D_T.
\]
For fixed $l$ and $K$, Lemma \ref{lem:defect-measure-convergence} gives the convergence of the defect measures in total variation.
Combining these convergences and passing to the limit $n\to\infty$ in \eqref{eq:epsilon-renormalized-identity} gives \eqref{eq:general-renormalized-inequality}.

It remains to verify the tail condition \eqref{eq:general-tail-condition}.
By the growth assumption on $A$, the integrability of $B(u)$, \eqref{eq:A-uniform-monotonicity}, and $\nabla u\in L^p(D_T)^d$, one has
\[
0\leq\bigl(A(u,\nabla u)-A(u,0)\bigr)\cdot\nabla u \in L^1(D_T).
\]
Hence, using \eqref{eq:admissible-cutoff-family},
\begin{align*}
\mu_{l,K}^u(D\times[0,T))&= K\|h_l'\|_{L^\infty(\mathbb R)} \int_{\{l<|u|<l+1\}}\bigl(A(u,\nabla u)-A(u,0)\bigr)\cdot\nabla u \,\dd x\,\dd t\\
&\leq K\sup_{j>0}\|h_j'\|_{L^\infty(\mathbb R)} \int_{\{|u|>l\}}\bigl(A(u,\nabla u)-A(u,0)\bigr)\cdot\nabla u \,\dd x\,\dd t.
\end{align*}
Since $\one_{\{|u|>l\}}\to0$ a.e. in $D_T$, the dominated convergence theorem gives
\[
\mu_{l,K}^u(D\times[0,T)) \longrightarrow0 \qquad\text{as }l\to\infty.
\]
Thus \eqref{eq:general-tail-condition} holds.
Finally, the compactness argument gives
\[
u\in L^p(0,T;W_0^{1,p}(D)),\qquad B(u)\in L^\infty(0,T;L^1(D)).
\]
Moreover, $\nu=\nu_1\,\dd x\,\dd t+\nu_2$ is a finite nonnegative Radon measure on $D\times[0,T)$.
Together with \eqref{eq:second-limit-obstacle-order}, this verifies items (i)--(iii) of
Definition \ref{def:general-renormalized-obstacle-solution}.
This completes the proof.
\end{proof}

\begin{cor}[Well-posedness in the common class]\label{cor:wellposedness-common-class}
Assume the hypotheses of Theorem \ref{thm:existence-general-time-obstacle}.
Assume in addition that $A$ is independent of its first variable, $A(0)=0$, and that $\psi$ is independent of time.
Then the solution constructed by Theorem \ref{thm:existence-general-time-obstacle} is the unique renormalized entropy solution with the prescribed data.
\end{cor}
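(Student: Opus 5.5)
The plan is to combine existence from Theorem \ref{thm:existence-general-time-obstacle} with uniqueness from Theorem \ref{thm:main-comparison}. The proof therefore reduces to checking that, under the extra hypotheses, the constructed solution and any competitor fall within the scope of the comparison theorem.

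\textbf{Step 1: embedding the constructed solution.} Let $(u,\nu)$ be the pair produced by Theorem \ref{thm:existence-general-time-obstacle}. Since $A$ does not depend on its first variable and $A(0)=0$, we have $A(r,0)=0$. Hence the extra term $\int\nabla\phi\cdot\int_0^uA(r,0)\eta(r)h_l'(r)\,\dd r$ in \eqref{eq:general-renormalized-inequality} vanishes, and \eqref{eq:general-renormalized-inequality} becomes exactly \eqref{eq:entropy formula-1}. The integrability requirement $\beta(u)\in L^\infty(0,T;L^1(D))$ follows from $B(u)\in L^\infty(0,T;L^1(D))$ via \eqref{eq:beta-controlled-by-energy}, as already remarked after Definition \ref{def:general-renormalized-obstacle-solution}. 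The admissible test classes in the two definitions coincide. The cutoff index condition $l>M_{\mathrm{obs}}+1$ implies $l>M+1$, because $M\leq M_{\mathrm{obs}}$.

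\textbf{Step 2: verifying the assumptions of Section \ref{sec:comparison-dirichlet}.} Assumption \ref{assu:assumption for b} follows from Assumption \ref{assu:existence-beta}. For Assumption \ref{assu:assumption for a}, continuity of $A$ and monotonicity \eqref{eq:A-monotonicity-uniqueness} follow from \eqref{eq:A-uniform-monotonicity}. The growth bound \eqref{eq:A-growth-uniqueness} follows from \eqref{eq:A-growth}: since $A$ is independent of $r$, we may evaluate \eqref{eq:A-growth} at $r=0$, where $B(0)=0$. For Assumption \ref{assu:assumption for initial}, the Carath\'eodory property holds by hypothesis. The integrability $f(\cdot,\cdot,0)\in L^1$ follows from \eqref{eq:f-energy-growth} with $B(0)=0$ and $f_0\in L^{p'}(D_T)\subset L^1(D_T)$. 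The Lipschitz bound \eqref{eq:f-beta-Lipschitz-uniqueness} is exactly \eqref{eq:fhat-Lipschitz} rewritten through $\beta^{-1}$. Finally, $\beta(\xi)\in L^1(D)$ follows from $B(\xi)\in L^1(D)$ and \eqref{eq:beta-controlled-by-energy}.

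\textbf{Step 3: the obstacle, which I expect to be the main point.} Assumption \ref{assu:assumption for barrier} needs $\psi\in C(\overline D)$, which we have, and $\psi\leq0$ on $\partial D$, which must be extracted from the decomposition. On $\partial D\setminus K_2$ we use $\psi_2=0$ near those points (a.e., then everywhere by continuity). There $\psi=\psi_1$, and $\operatorname{Tr}\psi_1\leq0$ together with continuity of $\psi$ yields $\psi\leq0$. On $K_2\cap\partial D$ the strict gap \eqref{eq:K2-boundary-gap} gives $\psi\leq-\sigma_\partial<0$ directly. The delicate part is passing from the trace inequality for $\psi_1$ to a pointwise statement: near a boundary point outside $K_2$, the function $\psi_1=\psi$ is continuous up to the boundary, so its trace equals its boundary values. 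The compatibility condition $\xi\geq\psi$ holds by Assumption \ref{assu:existence-data}.

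\textbf{Step 4: conclusion.} Apply Theorem \ref{thm:main-comparison} to the constructed solution and any other renormalized entropy solution with the same data $(f,\xi)$ and obstacle $\psi$. Since $\beta$ is strictly increasing, the theorem gives $u_1=u_2$ a.e. and $\nu_1=\nu_2$. Therefore the constructed solution is the unique one.
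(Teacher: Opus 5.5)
Your proposal is correct and is essentially the paper's own argument. You embed the constructed solution into the class of Definition \ref{def:entropy solution with ob} via $A(r,0)=0$ and \eqref{eq:beta-controlled-by-energy}, check $\psi\leq0$ on $\partial D$ by splitting into $K_2\cap\partial D$ and $\partial D\setminus K_2$, and then apply Theorem \ref{thm:main-comparison} with strictly increasing $\beta$.

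There is one small slip in Step 1, which the paper's proof also glosses over. The inclusion $\{l>M_{\mathrm{obs}}+1\}\subset\{l>M+1\}$ runs the wrong way for membership, since Definition \ref{def:entropy solution with ob} asks for the defect family for every $l>M+1$. This is harmless, because every step of the comparison proof ends with $l\to\infty$, so only large $l$ are ever used.
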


\begin{proof}
Under these additional assumptions, the solution constructed in Theorem \ref{thm:existence-general-time-obstacle} belongs to the class of Definition \ref{def:entropy solution with ob}.
Indeed, \eqref{eq:beta-controlled-by-energy} and $B(u)\in L^\infty(0,T;L^1(D))$ give $\beta(u)\in L^\infty(0,T;L^1(D))$, while $A(r,0)=0$ makes the additional term in \eqref{eq:general-renormalized-inequality} vanish.
It remains to verify the boundary condition for the static obstacle.
On $K_2\cap\partial D$, \eqref{eq:K2-boundary-gap} gives
$\psi<0$.
If $x_0\in\partial D\setminus K_2$, then, since $K_2$ is closed, $\psi_2=0$ a.e. in a neighborhood of $x_0$ inside $D$.
Hence $\psi=\psi_1$ there in the trace sense, and $\operatorname{Tr}\psi_1\leq0$ gives $\psi(x_0)\leq0$ by the continuity of $\psi$.
Therefore $\psi\leq0$ on $\partial D$, so Assumption \ref{assu:assumption for barrier} is satisfied.
Condition \eqref{eq:fhat-Lipschitz} is exactly the Lipschitz condition with respect to $\beta(u)$ used in Theorem \ref{thm:main-comparison}.
Since $\beta$ is strictly increasing, Theorem \ref{thm:main-comparison} gives the uniqueness of $u$ and of $\nu$.
\end{proof}

\medskip
\noindent\textbf{Acknowledgments.}
The author would like to thank Professor Kai Du, Shanghai Center for Mathematical Sciences at Fudan University, for helpful discussions.
\par\medskip

\appendix

\section{The space--time dependent source in the Alt--Luckhaus scheme}

Alt and Luckhaus already observe in \cite[Section 1.10]{alt1983quasilinear} that the lower-order term may depend on $x$ and $t$.
In the time-discrete construction, the time-dependent coefficient is replaced by a piecewise constant time average.
We record the approximation property needed to apply that observation under the Carath\'eodory and energy-growth assumptions used in this paper.

Let $N\in\mathbb N$, let $h=T/N$, and set
\[
    I_k:=((k-1)h,kh],\qquad k=1,\ldots,N.
\]
For $t\in I_k$, define
\begin{equation*}
    f_h(x,t,r):=\frac1h\int_{I_k}f(x,s,r)\,\dd s,\qquad (f_0)_h(x,t):=\frac1h\int_{I_k}f_0(x,s)\,\dd s. 
\end{equation*}

\begin{lem}[Space--time source approximation]\label{lem:AL-space-time-source}
Let Assumption \ref{assu:existence-beta} hold. 
Suppose that $f:D\times[0,T]\times\mathbb R\longrightarrow\mathbb R$ is Carath\'eodory and that
\begin{equation}
    |f(x,t,r)|\leq f_0(x,t)+c_fB(r)^{1/p'}, \qquad f_0\in L^{p'}(D_T),\quad f_0\geq0. \label{eq:appendix-source-growth}
\end{equation}
Then the following statements hold.

\begin{enumerate}
\item The function $f_h$ is Carath\'eodory and
\begin{equation}
    |f_h(x,t,r)|\leq(f_0)_h(x,t)+c_fB(r)^{1/p'}.\label{eq:appendix-averaged-growth}
\end{equation}
Moreover,
\begin{equation}
\|(f_0)_h\|_{L^{p'}(D_T)}\leq\|f_0\|_{L^{p'}(D_T)}. \label{eq:appendix-average-contraction}
\end{equation}

\item For every $R>0$,
\begin{equation}
    \Big\|\sup_{|r|\leq R}|f_h(\cdot,\cdot,r)-f(\cdot,\cdot,r)|\Big\|_{L^{p'}(D_T)}\longrightarrow0\qquad\text{as }h\downarrow0. \label{eq:appendix-local-uniform-convergence}
\end{equation}

\item
Let $v_h\to v$ in measure in $D\times[0,T]$ and suppose that
\begin{equation}
    \sup_h\int_{D_T}B(v_h)\,\dd x\,\dd t<\infty. \label{eq:appendix-energy-bound}
\end{equation}
Then
\begin{equation}
    f_h(\cdot,\cdot,v_h)\longrightarrow f(\cdot,\cdot,v)\quad\text{in measure in }D\times[0,T], \label{eq:appendix-composition-measure}
\end{equation}
and
\begin{equation}
    \sup_h\|f_h(\cdot,\cdot,v_h)\|_{L^{p'}(D_T)}<\infty.\label{eq:appendix-composition-bound}
\end{equation}
Consequently,
\begin{equation}
    f_h(\cdot,\cdot,v_h)\rightharpoonup f(\cdot,\cdot,v)\quad\text{weakly in }L^{p'}(D_T). \label{eq:appendix-composition-weak}
\end{equation}
\end{enumerate}
\end{lem}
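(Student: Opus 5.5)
Part (1) follows directly from Jensen's inequality on each $I_k$. The average of a Carath\'eodory function over $I_k$ is measurable in $(x,t)$ because it is piecewise constant in $t$ and measurable in $x$ by Fubini. It is continuous in $r$ by dominated convergence, since the growth bound \eqref{eq:appendix-source-growth} gives the integrable majorant $f_0(x,s)+c_fB(r')^{1/p'}$, uniform for $r'$ near $r$. Averaging the growth bound over $I_k$ gives \eqref{eq:appendix-averaged-growth}, because $B(r)$ does not depend on $s$. For \eqref{eq:appendix-average-contraction}, Jensen gives $|(f_0)_h|^{p'}\le \frac1h\int_{I_k}|f_0|^{p'}\,\dd s$ pointwise, and integrating over $I_k\times D$ and summing over $k$ yields the contraction.

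For part (2) I would argue as in the Scorza--Dragoni setting. Put $g_R(x,t):=\sup_{|r|\le R}|f_h(x,t,r)-f(x,t,r)|$; by continuity in $r$ this supremum can be taken over rational $r$, so $g_R$ is measurable. It is dominated by $(f_0)_h+f_0+2c_f\max_{|r|\le R}B(r)^{1/p'}$, and the family of these majorants is equi-integrable in $L^{p'}$ because $(f_0)_h\to f_0$ in $L^{p'}$. It therefore suffices to prove $g_R\to0$ in measure. Fix $\epsilon>0$ and take a finite $\epsilon$-net $\{r_i\}$ of $[-R,R]$. For each fixed $r_i$ the function $f(\cdot,\cdot,r_i)$ lies in $L^{p'}(D_T)$, so the piecewise constant time averages converge in $L^{p'}$; this is the standard martingale or approximation-by-continuous-functions argument. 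The errors between net points are controlled by the modulus $\omega_R(x,t,\epsilon):=\sup_{|r-r'|\le\epsilon,\,|r|,|r'|\le R}|f(x,t,r)-f(x,t,r')|$. This modulus tends to $0$ pointwise as $\epsilon\downarrow0$ and is dominated in $L^{p'}$, so $\|\omega_R(\epsilon)\|_{L^{p'}}\to0$. Averaging commutes with the bound, since $(\omega_R)_h$ converges in $L^{p'}$ to $\omega_R$ for fixed $\epsilon$, and this gives \eqref{eq:appendix-local-uniform-convergence}. I expect this step to be the main technical point, namely interchanging the supremum over $r$ with the time averaging. The net argument is what handles it.

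For part (3) the bound \eqref{eq:appendix-composition-bound} is immediate from \eqref{eq:appendix-averaged-growth}: $|f_h(v_h)|^{p'}\le C\bigl((f_0)_h^{p'}+B(v_h)\bigr)$, combined with \eqref{eq:appendix-average-contraction} and \eqref{eq:appendix-energy-bound}. For the convergence in measure \eqref{eq:appendix-composition-measure}, I would split
\[
f_h(v_h)-f(v)=[f_h(v_h)-f(v_h)]+[f(v_h)-f(v)].
\]
On $\{|v_h|\le R\}$ the first term is bounded by $g_R$, which tends to $0$ in measure by part (2). The set $\{|v_h|>R\}$ has measure at most $\sup_h\int B(v_h)/\min_{|r|\ge R}B(r)$, which is uniformly small for large $R$ because $B\to\infty$ at $\pm\infty$. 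The second term tends to $0$ in measure by the Carath\'eodory property, applying a.e. convergence along subsequences and the subsequence principle. Finally, the weak convergence \eqref{eq:appendix-composition-weak} follows from convergence in measure together with the uniform $L^{p'}$ bound: for $p'>1$ the family is equi-integrable in $L^1$ and weakly compact, so Vitali's theorem identifies every weak limit point as $f(\cdot,\cdot,v)$.
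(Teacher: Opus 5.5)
Your proposal is correct. Parts (1) and (3) follow the paper's proof essentially step by step: Jensen for the contraction, the Chebyshev-type bound from $\inf_{|q|\ge R}B(q)\to\infty$, the split of $f_h(v_h)-f(v)$, the subsequence principle, and identification of weak limits. Part (2) is where you take a genuinely different route.

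The paper regards $f(x,t,\cdot)$ as a map $\mathcal F_R$ with values in the separable space $C([-R,R])$. It notes that $\mathcal F_R$ is strongly measurable and lies in $L^{p'}(D_T;C([-R,R]))$, and then quotes convergence of piecewise-constant time averages in this Bochner space. That gives (2) in one line, but it rests on a Pettis-type measurability argument and on the vector-valued averaging lemma.

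Your $\epsilon$-net argument replaces this with scalar statements. Averaging the pointwise bound $|f(x,s,r)-f(x,s,r_i)|\le\omega_R(x,s,\epsilon)$ over $I_k$ gives $|f_h(x,t,r)-f_h(x,t,r_i)|\le(\omega_R(\epsilon))_h(x,t)$. Hence
\[
g_R\le(\omega_R(\epsilon))_h+\max_i\bigl|f_h(\cdot,\cdot,r_i)-f(\cdot,\cdot,r_i)\bigr|+\omega_R(\epsilon).
\]
The contraction from part (1) then yields
\[
\limsup_{h\downarrow0}\|g_R\|_{L^{p'}(D_T)}\le2\|\omega_R(\epsilon)\|_{L^{p'}(D_T)},
\]
and the right-hand side tends to zero by dominated convergence.

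This uses only the scalar averaging lemma at finitely many points and the modulus of continuity, which is precisely the quantitative content of the strong measurability the paper invokes. Your version is more elementary and fully self-contained; the paper's is shorter and packages the result as a single Bochner-norm convergence.

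With this direct $L^{p'}$ estimate, your preliminary reduction to convergence in measure via equi-integrability is unnecessary, though harmless. The reference to Scorza--Dragoni is also never actually used.
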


\begin{proof}
For fixed $r$, the function $(x,t)\mapsto f_h(x,t,r)$ is measurable.
Fix $R>0$.
For almost every $x$, the function $s\mapsto f_0(x,s)$ is integrable on $(0,T)$.
If $r_j\to r$ and $|r_j|,|r|\leq R$, then
\[
f(x,s,r_j)\longrightarrow f(x,s,r)
\]
for almost every $s$, while
\[
|f(x,s,r_j)|\leq f_0(x,s)+c_f\sup_{|q|\leq R}B(q)^{1/p'}.
\]
Dominated convergence theorem therefore shows that $r\mapsto f_h(x,t,r)$ is continuous.
Hence $f_h$ is Carath\'eodory.

Averaging \eqref{eq:appendix-source-growth} gives \eqref{eq:appendix-averaged-growth}.
Jensen's inequality gives
\[
    |(f_0)_h(x,t)|^{p'}\leq\frac1h\int_{I_k}|f_0(x,s)|^{p'}\,\dd s\qquad (t\in I_k).
\]
After integration over $D_T$, this proves \eqref{eq:appendix-average-contraction}.

We next prove \eqref{eq:appendix-local-uniform-convergence}.
For fixed $R>0$, define
\[
\mathcal F_R(x,t):=f(x,t,\cdot)
\]
as a function with values in the separable Banach space $C([-R,R])$.
The Carath\'eodory property implies that $\mathcal F_R$ is strongly measurable.
Moreover,
\[
\|\mathcal F_R(x,t)\|_{C([-R,R])}\leq f_0(x,t)+c_f\sup_{|q|\leq R}B(q)^{1/p'}.
\]
Thus
\[
\mathcal F_R\in L^{p'}\bigl(D_T;C([-R,R])\bigr).
\]
Piecewise constant time averages converge strongly to the original function in this Bochner space.
Hence
\[
\|(\mathcal F_R)_h-\mathcal F_R\|_{L^{p'}(D_T;C([-R,R]))}\longrightarrow0,
\]
which is precisely \eqref{eq:appendix-local-uniform-convergence}.

We now prove the composition statement.
Note that $B(r)/|\beta(r)|$ tends to infinity as $|r|\to\infty$. Then, we have
\[
\inf_{|q|\geq R}B(q)\longrightarrow\infty\qquad\text{as }R\to\infty.
\]
Then, we have
\[
\bigl|\{|v_h|\geq R\}\bigr|\leq\frac{1}{\inf_{|q|\geq R}B(q)}\int_{D_T}B(v_h)\,\dd x\,\dd t,
\]
uniformly in $h$.
Fatou's lemma gives $B(v)\in L^1(D_T)$, and therefore the same estimate holds for $v$.

On
\[
E_{h,R}:=\{|v_h|\leq R,\ |v|\leq R\},
\]
we have
\begin{align*}
|f_h(x,t,v_h)-f(x,t,v)|&\leq\sup_{|q|\leq R}|f_h(x,t,q)-f(x,t,q)|\nonumber\\
&+|f(x,t,v_h)-f(x,t,v)|.
\end{align*}
The first term converges to zero in measure by \eqref{eq:appendix-local-uniform-convergence}.
For the second term, let $(h_j)$ be an arbitrary subsequence. 
Since $v_{h_j}\to v$ in measure, a further subsequence converges to $v$ almost everywhere in $D_T$. 
The Carath\'eodory property then gives $f(x,t,v_{h_j})\to f(x,t,v)$ a.e. along that further subsequence. 
Hence $f(\cdot,\cdot,v_h)\to f(\cdot,\cdot,v)$ in measure on every bounded state region.

Since the complement of $E_{h,R}$ has uniformly small measure for large $R$, this proves \eqref{eq:appendix-composition-measure}.

Finally, \eqref{eq:appendix-averaged-growth} gives
\begin{equation*}
|f_h(x,t,v_h)|^{p'}\leq C\bigl((f_0)_h^{p'}+B(v_h)\bigr).
\end{equation*}
Equations \eqref{eq:appendix-average-contraction} and \eqref{eq:appendix-energy-bound} therefore imply \eqref{eq:appendix-composition-bound}.

Since $p'>1$, the sequence $f_h(\cdot,\cdot,v_h)$ is weakly relatively compact in $L^{p'}(D_T)$.
Every weakly convergent subsequence has, by \eqref{eq:appendix-composition-measure}, the same limit $f(\cdot,\cdot,v)$.
This proves \eqref{eq:appendix-composition-weak}.
\end{proof}

\bibliographystyle{plain}
\bibliography{bi_PDE}

\end{document}